\newcommand{\gen}{{\rm gen}}
\newcommand{\alg}{{\rm alg}}
\newcommand{\fgen}{{\rm fg}}
\newcommand{\dfg}{{\rm dfg}}
\newcommand{\sExt}{\mathcal{E}\mathrm{xt}}
\newcommand{\sHom}{\mathcal{H}\mathrm{om}}
\newcommand{\cdim}{\mathrm{cdim}}
\newcommand{\CoMod}{\mathrm{CoMod}}
\newcommand{\FI}{\mathbf{FI}}
\newcommand{\Fl}{\mathbf{Fl}}
\DeclareMathOperator{\injdim}{inj.dim.}
\DeclareMathOperator{\II}{Ind.Inj.}
\DeclareMathOperator{\usHom}{\underline{\cH{}om}}
\author{Steven V Sam}
\address{Department of Mathematics, University of California, San Diego, CA}
\email{\href{mailto:ssam@ucsd.edu}{ssam@ucsd.edu}}
\urladdr{\url{http://math.ucsd.edu/~ssam/}}
\thanks{SS was supported by a Miller research fellowship and NSF grant DMS-1500069.}
\author{Andrew Snowden}
\address{Department of Mathematics, University of Michigan, Ann Arbor, MI}
\email{\href{mailto:asnowden@umich.edu}{asnowden@umich.edu}}
\urladdr{\url{http://www-personal.umich.edu/~asnowden/}}
\thanks{AS was supported by NSF grants DMS-1303082 and DMS-1453893 and a Sloan Fellowship.}
\title[GL-equivariant modules II]{GL-equivariant modules over polynomial rings\\ in infinitely many variables. II}
\subjclass[2010]{
13A50, 
13C05, 
13D02, 
14M15, 
05E05
}
\date{October 18, 2018}
\begin{document}

\begin{abstract}
Twisted commutative algebras (tca's) have played an important role in the nascent field of representation stability. Let $A_d$ be the tca freely generated by $d$ indeterminates of degree~1. In a previous paper, we determined the structure of the category of $A_1$-modules (which is equivalent to the category of $\FI$-modules). In this paper, we establish analogous results for the category of $A_d$-modules, for any $d$. Modules over $A_d$ are closely related to the structures used by the authors in previous works studying syzygies of Segre and Veronese embeddings, and we hope the results of this paper will eventually lead to improvements on those works. Our results also have implications in asymptotic commutative algebra.
\end{abstract}

\maketitle
\tableofcontents

\section{Introduction}

In recent years, twisted commutative algebras (tca's) have played an important role in the nascent field of representation stability. The best known example is the twisted commutative algebra $\Sym(\bC\langle 1 \rangle)$ freely generated by a single indeterminate of degree one. Modules over this tca are equivalent to the $\FI$-modules of Church--Ellenberg--Farb \cite{fimodule}, and have received a great deal of attention. In \cite{symc1}, we studied the module theory of this tca, and established a number of fundamental structural results. The purpose of this paper is to extend these results to tca's freely generated by any number of degree one generators. This is, we believe, an important step in the development of tca theory, and connects to a number of concrete applications.

\subsection{The spectrum}

Let $A$ be the tca $\Sym(E\langle 1 \rangle)$, where $E=\bC^d$; this is the tca freely generated by $d$ elements of degree~1. We identify $A$ with the polynomial ring $\Sym(E \otimes \bC^{\infty})$ in variables $\{x_{i,j}\}_{\substack{1 \le i \le d\\ 1 \le j}}$, equipped with its natural $\GL_{\infty}$-action; $A$-modules are required to admit a compatible polynomial $\GL_{\infty}$-action. (See \S\ref{s:tca} for complete definitions.) The goal of this paper is to understand the structure of the module category $\Mod_A$ as best we can.

As a first step, we introduce the prime spectrum of a tca. This is defined similarly to the spectrum of a commutative ring, but (as far as we are concerned in this paper) is just a topological space. The spectrum of a tca gives a coarse view of its module category, so determining the spectrum is a good first step in analyzing the structure of modules.

We explicitly determine the spectrum of $A$. To state our result, we must make a definition. The {\bf total Grassmannian} of $E$, denoted $\Gr(E)$, is the following topological space. As a set, it is the disjoint union of the topological spaces $\Gr_r(E)$ for $0 \le r \le d$. (Here we are using the topological space underlying the scheme $\Gr_r(E)$ parametrizing rank $r$ quotients of $E$.) A set $Z \subset \Gr(E)$ is closed if each $Z \cap \Gr_r(E)$ is closed and moreover $Z$ is downwards closed in the sense that if a quotient $E \to U$ belongs to $Z$ (meaning the closed point of $\Gr_r(E)$ it corresponds to belongs to $Z$) then any quotient of $U$ also belongs to $Z$. We prove:

\begin{theorem} \label{thm:spec}
The spectrum of $A$ is canonically homeomorphic to $\Gr(E)$.
\end{theorem}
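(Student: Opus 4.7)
The plan is to identify $\GL_\infty$-invariant prime ideals of $A$ with irreducible $\GL_\infty$-stable closed subsets of $\Spec(A)$, and then classify the latter via orbit analysis.

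Viewing $A = \Sym(E \otimes \bC^\infty)$ as the ring of polynomial functions on $X = \Hom(\bC^\infty, E^*)$, the space of ``infinite matrices,'' I first analyze $\GL_\infty$-orbits on $X$. A matrix $M \in X$ has a well-defined (finite-dimensional) row span $V \subseteq E^*$, and this is a complete $\GL_\infty$-orbit invariant, so orbits are in bijection with finite-dimensional subspaces $V \subseteq E^*$, equivalently with points of $\Gr(E)$ (via $V = U^*$ for a quotient $E \twoheadrightarrow U$). The Zariski closure of the orbit indexed by $U$ is the set of matrices whose row span is contained in $U^*$, which decomposes into orbits indexed by further quotients of $U$; this yields precisely the downwards-closure condition in the statement.

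The main technical step is then the classification of $\GL_\infty$-stable closed subsets of $X$: I claim these are parametrized by subsets $S \subseteq \Gr(E)$ which are downwards-closed and meet each $\Gr_r(E)$ in a Zariski-closed subvariety. One direction is automatic; the converse requires, given such $S$ with ``top stratum'' a closed subvariety $Y \subseteq \Gr_r(E)$, constructing an explicit $\GL_\infty$-invariant ideal of $A$ cutting out the corresponding subset of $X$. This can be done by combining (a) the Pl\"ucker-type equations forcing the row span to have rank at most $r$ with (b) the equations pulled back from the ideal of $Y$ in the Pl\"ucker embedding of $\Gr_r(E)$, viewed as functions of the maximal minors of the universal matrix. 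Passing from closed sets to primes, the irreducible closed subsets of $\Gr(E)$ (in the stated topology) are precisely the downwards closures of irreducible closed subvarieties of individual $\Gr_r(E)$, and these are in bijection with points of $\Gr(E)$ as a scheme; this gives the bijection $\Phi : \Gr(E) \to \Spec(A)$ on underlying sets.

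The main obstacle is the classification in the previous paragraph: verifying that \emph{every} $\GL_\infty$-stable closed subset of $X$ has the claimed form. I would attack this by truncating to $X_n = \Hom(\bC^n, E^*)$, analyzing $\GL_n$-stable closed subvarieties of this finite-dimensional space via classical invariant theory (where the ring of $\GL_n$-invariants on $X_n$ is generated by the entries of the universal matrix modulo column operations, so that invariant closed sets are controlled by row-span strata), and passing to the limit $n \to \infty$ carefully. Once the classification is in place, verifying that $\Phi$ is a homeomorphism reduces to matching the closed-set descriptions on both sides, which is formal given the orbit/closure analysis above.
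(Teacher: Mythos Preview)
Your overall strategy---identify $\Spec(A)$ with $\GL$-invariant primes, stratify by rank of the row span, and match each stratum with a Grassmannian---is the same as the paper's. The paper's execution is cleaner in one key respect: rather than truncating to some large $\bC^n$ and attempting to classify all $\GL_n$-stable closed subsets of $\Hom(\bC^n,E^*)$ at once, it uses the boundedness of $A/\fa_r$. Since $\ell(A/\fa_r)\le r$, the prime spectrum of the tca $A/\fa_r$ is canonically the $\GL_r$-fixed locus of $\Spec(\vert (A/\fa_r)(\bC^r)\vert)$. On the open stratum where the map $E\to(\bC^r)^*$ is surjective, $\GL_r$ acts \emph{freely} with quotient $\Gr_r(E)$, and the paper proves a short general lemma that for a free action of a connected group the map $\vert X\vert^G\to\vert X/G\vert$ is a homeomorphism. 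This immediately gives $Y_r\cong\vert\Gr_r(E)\vert$ on each stratum, after which checking that closures match is a direct argument with the partial flag variety $\Fl_{r,s}(E)$. Your approach would eventually arrive at the same place, but the ``pass to the limit carefully'' step is exactly what the boundedness trick lets you avoid.

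One genuine error in your sketch: the parenthetical about classical invariant theory is wrong. For $n\ge d$, the ring of $\GL_n$-invariants on $\Sym(E\otimes\bC^n)$ is just $\bC$; there are no nonconstant polynomial invariants of a single $d\times n$ matrix under one-sided $\GL_n$-action. What you actually need is not the ring of invariants but the poset of $\GL_n$-invariant \emph{radical ideals}, and these are not controlled by invariants in the way you suggest. The correct statement is that $\GL_n$-orbits on $\Hom(\bC^n,E^*)$ are parametrized by the image subspace $V\subseteq E^*$, and orbit closures are determined by inclusion of images---but establishing that \emph{every} $\GL_n$-stable closed set arises from a downwards-closed, stratum-wise-Zariski-closed subset of $\Gr(E)$ still requires an argument (essentially the one the paper gives via the free-action lemma), not an appeal to invariant theory.
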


In the course of proving this theorem, we classify the irreducible closed subsets of $\Gr(E)$: each is the closure of a unique irreducible closed subset of $\Gr_r(E)$, for some $r$. This provides a wealth of interesting prime ideals in $A$: for example, when $d=3$ the space $\Gr_1(E)$ is $\bP^2$, and so each irreducible planar curve gives a prime ideal of $A$. This shows that for $d>1$ there is interesting geometry contained in $A$, contrary to the more rigid structure when $d=1$.

In joint work with Rohit Nagpal (which he kindly allowed us to include in this paper), we show:

\begin{theorem}[with R.~Nagpal]
The space $\Gr(E)$ has Krull dimension $\binom{d+1}{2}$.
\end{theorem}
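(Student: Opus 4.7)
The strategy is to pinch the Krull dimension via an explicit chain of length $\binom{d+1}{2}$ and a rank-sensitive ``dimension'' function on irreducibles. Using the classification of irreducibles noted after Theorem~\ref{thm:spec}, every irreducible closed subset of $\Gr(E)$ has the form $\overline{W}$ for a unique irreducible closed $W \subset \Gr_r(E)$ with some $0 \le r \le d$. Set
\[
c(\overline{W}) := \dim W + \binom{r+1}{2}.
\]
Since $f(r) := r(d-r) + \binom{r+1}{2} = rd - \binom{r}{2}$ is strictly increasing on $\{0, 1, \ldots, d\}$ with $f(d) = \binom{d+1}{2}$, we always have $c(Z) \le \binom{d+1}{2}$, with equality only at $Z = \overline{\Gr_d(E)} = \Gr(E)$.

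The key step is that $c$ is strictly increasing along any proper inclusion $\overline{W} \subsetneq \overline{W'}$. The same-rank case follows from $\dim W < \dim W'$. In the cross-rank case $W \subset \Gr_r(E)$, $W' \subset \Gr_{r'}(E)$ with $r < r'$, the inclusion forces $W \subset q(W')$, where $q(W') := p_1(p_2^{-1}(W')) \subset \Gr_r(E)$ is described via the projections $p_1, p_2$ of the two-step flag variety $\Fl(r, r'; E) \subset \Gr_r(E) \times \Gr_{r'}(E)$ to its two factors; this is the irreducible closed set of rank-$r$ quotients arising as further quotients of members of $W'$. The fibers of $p_2$ are Grassmannians of rank-$r$ quotients of a rank-$r'$ space, of dimension $r(r'-r)$, so $\dim W \le \dim q(W') \le \dim W' + r(r'-r)$. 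A direct binomial calculation then yields
\[
c(\overline{W'}) - c(\overline{W}) \;\ge\; \binom{r'+1}{2} - \binom{r+1}{2} - r(r'-r) \;=\; \binom{r'-r+1}{2} \;\ge\; 1,
\]
so any chain in $\Gr(E)$ has length at most $\binom{d+1}{2}$.

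For the matching lower bound, I fix a complete flag $F_1 \subset \cdots \subset F_{d-1} \subset E$. At each level $r \in \{1, \ldots, d\}$, take the chain of Schubert subvarieties $\Sigma_{(1^{d-r})} \subsetneq \Sigma_{(1^{d-r-1})} \subsetneq \cdots \subsetneq \Sigma_{()} = \Gr_r(E)$ inside $\Gr_r(E)$, of consecutive dimensions $(r-1)(d-r), (r-1)(d-r)+1, \ldots, r(d-r)$. The smallest member $\Sigma_{(1^{d-r})}$ parametrizes rank-$r$ quotients whose kernel lies in $F_{d-1}$; since any $(d-r+1)$-dimensional subspace of $E$ meets $F_{d-1}$ in dimension at least $d-r$, every rank-$(r-1)$ quotient of $E$ lifts to one of these rank-$r$ quotients, and hence the $\Gr(E)$-closure of $\Sigma_{(1^{d-r})}$ strictly contains $\overline{\Gr_{r-1}(E)}$. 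Concatenating these level-$r$ chains for $r = 0, 1, \ldots, d$ produces a chain of $1 + \sum_{r=1}^{d}(d-r+1) = 1 + \binom{d+1}{2}$ irreducibles in $\Gr(E)$, matching the upper bound. I expect the main obstacle to be the cross-rank dimension inequality---identifying $q(W')$, realizing it via the flag variety, and extracting an inequality tight enough for the binomials to collapse cleanly to $\binom{r'-r+1}{2}$; the rest reduces to standard Schubert geometry and binomial arithmetic.
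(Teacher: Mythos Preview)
Your proof is correct, and your overall two-step strategy (upper bound via a dimension-type invariant, lower bound via an explicit flag-based chain) matches the paper's. The organization differs in useful ways, though.

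For the upper bound, the paper proves the single-step inequality (your case $r'=r+1$) as a standalone lemma and then runs a telescoping sum over the sets $S_r$ of indices at each rank in a maximal chain. Your potential function $c(\overline{W})=\dim W+\binom{r+1}{2}$ packages the same incidence estimate more cleanly: once you know $\dim W\le \dim W'+r(r'-r)$, the binomial identity $\binom{r'+1}{2}-\binom{r+1}{2}-r(r'-r)=\binom{r'-r+1}{2}$ gives strict monotonicity in one line, with no need to assume the chain is maximal or to track the intermediate $\delta_r,\delta'_r$. This is a genuine simplification.

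For the lower bound, the paper's explicit chain is different from yours: at level $\Gr_{d-r}(E)$ it uses the varieties $Y_{r,i}=\{r\text{-planes in }V_{r+1}\text{ containing }V_{r-i}\}$, which have dimensions $0,1,\ldots,r$, whereas your level-$r$ chain runs through the top dimensions $(r-1)(d-r),\ldots,r(d-r)$ and ends at the full $\Gr_r(E)$. Both chains have total length $\binom{d+1}{2}$. Your linking step---checking that every rank-$(r-1)$ quotient further quotients some member of $\Sigma_{(1^{d-r})}$ via the dimension count $\dim(K\cap F_{d-1})\ge d-r$---is exactly the kind of verification the paper does for its chain, so nothing is missing there.
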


From this, we deduce:

\begin{corollary}
The category $\Mod_A$ has Krull--Gabriel dimension $\binom{d+1}{2}$.
\end{corollary}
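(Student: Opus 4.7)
The plan is to show that the Krull--Gabriel dimension of $\Mod_A$ equals the Krull dimension of its spectrum; combined with Theorem~\ref{thm:spec} and the preceding dimension theorem, this immediately gives $\binom{d+1}{2}$. This is a tca analog of Gabriel's classical theorem that for a commutative noetherian ring $R$ the Krull--Gabriel dimension of $\Mod_R$ coincides with the Krull dimension of $R$.

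Recall that the Krull--Gabriel filtration $\mathcal{C}^{(0)} \subset \mathcal{C}^{(1)} \subset \cdots$ of $\Mod_A$ by Serre subcategories is defined so that $\mathcal{C}^{(\alpha)}$ is the preimage of the finite-length objects under the quotient $\Mod_A \to \Mod_A / \bigcup_{\beta < \alpha} \mathcal{C}^{(\beta)}$; the Krull--Gabriel dimension of $\Mod_A$ is then the smallest $\alpha$ with $\mathcal{C}^{(\alpha)} = \Mod_A$. Using Theorem~\ref{thm:spec} together with the structure theory of $A$-modules developed earlier in the paper, I would first establish a bijection between Serre subcategories of $\Mod_A$ closed under directed colimits and specialization-closed subsets of $\Gr(E)$; under this bijection, the simple objects in the Serre quotient by such a subset $Z$ should correspond to generic points of irreducible closed subsets of $\Gr(E)$ not contained in $Z$. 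A transfinite induction on $\alpha$ then identifies $\mathcal{C}^{(\alpha)}$ with the Serre subcategory of $A$-modules whose support in $\Gr(E)$ has Krull dimension at most $\alpha$, and the corollary follows.

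The main obstacle I expect is constructing, for each irreducible closed subset $Z \subset \Gr_r(E)$, a ``generic simple'' $A$-module $S_Z$ with support equal to the downward closure of $Z$, and verifying its simplicity in the appropriate Serre quotient. Writing $E \twoheadrightarrow U$ for the family of quotients parametrized by the generic point of $Z$, one expects $S_Z$ to be built by pulling back an analogous generic simple over the smaller tca $\Sym(U\langle 1 \rangle)$ along this quotient and then localizing along $Z$ inside $\Gr_r(E)$; this should support an induction on $d$, though carrying it out requires careful interaction between base change, localization, and the Serre structure on $\Mod_A$. Once the generic simples are in hand, both the dimension inequality (by exhibiting an object whose support-dimension equals the total Krull dimension of $\Gr(E)$) and the reverse inequality (via Noetherian induction on the support of an arbitrary finitely generated $A$-module) should follow routinely.
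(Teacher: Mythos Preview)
Your overall strategy is right: the corollary follows once one shows that the Krull--Gabriel filtration on $\Mod_A$ coincides with the filtration by support-dimension in $\Spec(A)=\Gr(E)$. But your proposed implementation is much harder than necessary, and the ``main obstacle'' you identify---constructing generic simples $S_Z$ for each irreducible closed $Z$---is exactly what the paper avoids.

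The paper's route is this. It isolates a purely module-theoretic condition (P): for every prime $\fp$ and every finitely generated $A/\fp$-module $M$, in any descending chain $M \supset N_0 \supset N_1 \supset \cdots$ the successive quotients $N_i/N_{i+1}$ eventually have nonzero annihilator in $A/\fp$. A short direct induction then shows that (P) forces $\cB_i=\cC_i$ for all $i$, where $\cB_\bullet$ is the Krull--Gabriel filtration and $\cC_\bullet$ the support-dimension filtration; no generic simples, no classification of localizing subcategories, no bijection with specialization-closed subsets is needed. The verification of (P) for $A=\bA(E)$ is then a one-liner using boundedness: pick $n>\ell(M)+\ell(A/\fp)$, evaluate everything on $\bC^n$, and note that $(A/\fp)(\bC^n)$ is an honest commutative domain, so one can tensor with its fraction field and the descending chain of finite-dimensional vector spaces $N_i(\bC^n)\otimes \Frac$ stabilizes. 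This is the key maneuver---the specialization trick reduces the tca statement to elementary commutative algebra and completely bypasses the delicate construction you were worried about.

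Your approach would likely work in principle, and the picture of generic simples indexed by points of $\Gr(E)$ is morally correct (it matches Proposition~\ref{prop:ind-inj} on indecomposable injectives). But carrying it out requires nontrivial input about $\Mod_{A,r}$ that the paper only develops later, whereas the (P)-argument needs nothing beyond noetherianity and boundedness.
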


\subsection{Structure theory}

Let $\fa_r \subset A$ be the $r$th determinantal ideal. If we think of the variables $\{x_{i,j}\}$ as the entries of a $d \times \infty$ matrix, then $\fa_r$ is generated by the $(r+1) \times (r+1)$ minors of this matrix. Alternatively, in terms of representation theory, $\fa_r$ is generated by the representation $\lw^{r+1}(E) \otimes \lw^{r+1}(\bC^{\infty})$ occurring in the Cauchy decomposition of $A$. Let $\Mod_{A,\le r}$ be the full subcategory of $\Mod_A$ spanned by modules supported on $\fa_r$ (i.e., locally annihilated by a power of $\fa_r$). Equivalently, $\Mod_{A, \le r}$ is the category of modules whose support in $\Gr(E)$ is contained in $\bigcup_{s \le r} \Gr_s(E)$. These categories give a filtration of $\Mod_A$:
\begin{displaymath}
\Mod_{A,\le 0} \subset \Mod_{A,\le 1} \subset \cdots \subset \Mod_{A, \le d}=\Mod_A.
\end{displaymath}
We call this the {\bf rank stratification}. Let
\begin{displaymath}
\Mod_{A,r} = \frac{\Mod_{A,\le r}}{\Mod_{A, \le r-1}}
\end{displaymath}
be the Serre quotient category. Intuitively, $\Mod_{A,r}$ is the piece of $\Mod_A$ corresponding to $\Gr_r(E) \subset \Gr(E)$. Our approach to studying $\Mod_A$ is to first understand the structure of the pieces $\Mod_{A,r}$, and then understand how these pieces fit together to build $\Mod_A$.

Every object of $\Mod_{A,r}$ is locally annihilated by a power of $\fa_r$. We concentrate on the subcategory $\Mod_{A,r}[\fa_r]$ consisting of objects annihilated by $\fa_r$. The following theorem completely describes this category:

\begin{theorem}
Let $\cQ$ be the tautological bundle on $\Gr_r(E)$ and let $B$ be the tca $\Sym(\cQ\langle 1 \rangle)$ on $\Gr_r(E)$. Then $\Mod_{A,r}[\fa_r]$ is equivalent to $\Mod_{B,0}$, the category of $B$-modules locally annihilated by a power of $\cQ\langle 1 \rangle \subset B$.
\end{theorem}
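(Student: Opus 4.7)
The plan is to model the equivalence on the $d = 1$ case treated in \cite{symc1}, where the \emph{generic} category $\Mod_{A_1} / \Mod_{A_1}^{\mathrm{tors}}$ was shown to be equivalent to the \emph{torsion} category $\Mod_{A_1}^{\mathrm{tors}}$ via a nontrivial functor. In our setting this should happen fiberwise over $\Gr_r(E)$ via the resolution of singularities $\pi \colon \widetilde V \to V(\fa_r)$, where $\widetilde V = \Spec_{\Gr_r(E)}(B)$; concretely, a closed point of $\widetilde V$ is a pair $(E \twoheadrightarrow U,\, U \to \bC^{\infty})$ with $U$ of rank $r$, and $\pi$ sends this pair to the composite $E \to \bC^{\infty}$. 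This $\pi$ is an isomorphism over the open stratum $U_r = V(\fa_r) \setminus V(\fa_{r-1})$ of rank-exactly-$r$ matrices.

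The central algebraic input is the identification
\[
A/\fa_r \;=\; \bigoplus_{\ell(\lambda) \le r} \mathbf{S}_\lambda(E) \otimes \mathbf{S}_\lambda(\bC^{\infty}) \;=\; \Gamma(\Gr_r(E), B),
\]
from Cauchy decomposition (with $\fa_r$ generated by $\lw^{r+1} E \otimes \lw^{r+1} \bC^{\infty}$, killing precisely the Schur summands of length $> r$) together with the Borel--Weil formula $H^0(\Gr_r(E), \mathbf{S}_\lambda \cQ) = \mathbf{S}_\lambda E$ for $\ell(\lambda) \le r$. This yields an adjunction $\pi^\ast \colon \Mod_{A/\fa_r} \rightleftarrows \Mod_B \colon \pi_\ast$, with left adjoint $\pi^\ast(M) = B \otimes_{A/\fa_r} M$ (sheafified) and right adjoint $\pi_\ast(N) = \Gamma(\Gr_r(E), N)$. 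The desired equivalence should be built from this adjunction combined with the $\cQ\langle 1 \rangle$-torsion functor, paralleling the way the $d = 1$ equivalence in \cite{symc1} combines global sections with local cohomology at the augmentation. I emphasize that the naive assignments $\Phi(N) = \pi_\ast N$ and $\Psi(M) = \Gamma_{\cQ\langle 1\rangle}(\pi^\ast M)$ each fail in isolation (for instance, $\pi_\ast$ applied to a skyscraper on the zero section lands on the origin of $V(\fa_r)$, hence dies in the Serre quotient), so the correct functors must be defined by appropriate derived or shifted versions.

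The main obstacle is verifying that the resulting functors are mutually inverse equivalences of abelian categories. The key technical inputs are (i) the rationality of $\pi$, i.e., $\pi_\ast \cO_{\widetilde V} = \cO_{V(\fa_r)}$ and $R^i \pi_\ast \cO_{\widetilde V} = 0$ for $i > 0$ (classical for determinantal varieties), which ensures the adjunction has the required exactness, and (ii) a classification of simple objects on both sides, which should show them to be indexed by the same data --- namely, points of $\Gr_r(E)$ together with a polynomial $\GL$-character. Given matching simples, a Grothendieck-abelian recognition principle promotes the bijection to a full equivalence, with the unit/counit isomorphisms verified at the level of simples (and then extended by compatibility with filtered colimits and extensions), where the rationality vanishing of $\pi$ eliminates higher-cohomology obstructions.
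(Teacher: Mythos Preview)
Your proposal has a genuine gap: you never actually define the functor realizing the equivalence. You correctly identify the relevant adjunction $\pi^\ast \dashv \pi_\ast$ and correctly observe that the naive compositions $\pi_\ast$ and $\Gamma_{\cQ\langle 1\rangle} \circ \pi^\ast$ each fail, but ``appropriate derived or shifted versions'' is not a definition, and the later appeal to a ``Grothendieck-abelian recognition principle'' from matching simples is not valid in general---a bijection on simples does not promote to an equivalence of abelian categories without substantial additional structure (one would need, at minimum, control over all extension groups, and your rationality input $R^i\pi_\ast\cO_{\widetilde V}=0$ does not supply this).

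The paper sidesteps this entirely by factoring through the \emph{generic} category of $B$ rather than targeting $\Mod_{B,0}$ directly. The functor is simply $M \mapsto T'(\pi^\ast M \otimes_{\pi^\ast A} B)$, i.e., pull back to the resolution and then localize at the $B$-torsion. This is manifestly well-defined, and the proof that it induces an equivalence $\Mod_{A,r}[\fa_r] \simeq \Mod_B^{\gen}$ is almost tautological once one evaluates on $\bC^n$: the scheme $\fH_n^{=r}$ of rank-exactly-$r$ maps $\bC^n \to E^\ast$ is literally isomorphic to the scheme $\fU_n'$ of surjections $\bC^n \to \cQ^\ast$ over $\Gr_r(E)$ (this is your observation that $\pi$ is an isomorphism over the open stratum), so the two quotient categories agree after specialization, and one bootstraps full faithfulness and essential surjectivity from there. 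The nontrivial content you are remembering from \cite{symc1}---the passage from generic to torsion---is then isolated in a separate, general result (\S\ref{s:at0} of the paper): for any rank-$r$ bundle $\cQ$ over any base, $\Mod_{\bA(\cQ)}^{\gen} \simeq \Mod_{\bA(\cQ)}^0$, proved via an identification of both with polynomial representations of the general affine group $\GL(\bV) \ltimes (\cQ \otimes \bV)$. Splitting the argument this way means neither half needs derived functors, simple-object bookkeeping, or rationality of the determinantal resolution.
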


Every finitely generated object of $\Mod_{A,r}$ admits a finite length filtration with graded pieces in $\Mod_{A,r}[\fa_r]$. Thus, for many purposes, the above theorem is sufficient for understanding $\Mod_{A,r}$. For example, it immediately implies:

\begin{corollary}
The Grothendieck group of $\Mod_{A,r}$ is canonically isomorphic to $\Lambda \otimes \rK(\Gr_r(E))$, where $\Lambda$ is the ring of symmetric functions, and thus is free of rank $\binom{d}{r}$ over $\Lambda$.
\end{corollary}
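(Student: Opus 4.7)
The plan is to chain together three Grothendieck group computations, each arising from reducing to a simpler full subcategory via a finite filtration.

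First, the paragraph preceding the corollary records that every finitely generated object of $\Mod_{A,r}$ has a finite filtration with graded pieces in $\Mod_{A,r}[\fa_r]$, so the inclusion induces an isomorphism $\rK(\Mod_{A,r}[\fa_r]) \xrightarrow{\sim} \rK(\Mod_{A,r})$. Using the preceding theorem, I identify $\Mod_{A,r}[\fa_r]$ with $\Mod_{B,0}$, where $B = \Sym(\cQ\langle 1\rangle)$ on $\Gr_r(E)$. Applying the same filtration trick one level down, a finitely generated $B$-module locally annihilated by a power of $B_+ = \cQ\langle 1\rangle\cdot B$ admits a finite filtration by the submodules $\ker(B_+^n\cdot -)$ whose subquotients are $B_+$-annihilated, reducing the computation to $\rK(\Mod_{B,0}[B_+])$.

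An object of $\Mod_{B,0}[B_+]$ is a quasicoherent sheaf on $\Gr_r(E)$ equipped with a compatible polynomial $\GL_\infty$-action. Since polynomial $\GL_\infty$-representations split canonically into isotypic components indexed by partitions $\lambda$, each finitely generated such object decomposes as a finite direct sum of tensor products of a coherent sheaf on $\Gr_r(E)$ with a Schur functor $S_\lambda(\bC^\infty)$. This yields an isomorphism $\rK(\Mod_{B,0}[B_+]) \cong \rK(\Gr_r(E)) \otimes \Lambda$, the $\Lambda$ factor being the free abelian group on the Schur classes $[S_\lambda]$. The rank assertion then follows from the classical fact that $\rK(\Gr_r(E))$ is free of rank $\binom{d}{r}$ over $\bZ$, with basis given by Schubert classes.

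The main technical obstacle is the bookkeeping needed to conclude that Grothendieck groups really transport across Serre quotients via the filtrations above: one must check that the graded pieces computed after quotienting by $\Mod_{A,\le r-1}$ give a well-defined class in $\rK(\Mod_{A,r})$, and that the isotypic decomposition of polynomial $\GL_\infty$-representations is compatible with the underlying sheaf-of-modules structure on $\Gr_r(E)$. Both are standard in spirit but require some care, since finitely generated objects of a Serre quotient only admit representatives, not genuine filtrations, in the parent category.
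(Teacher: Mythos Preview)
Your proposal is correct and follows essentially the same route as the paper: reduce $\rK(\Mod_{A,r})$ to $\rK(\Mod_{A,r}[\fa_r])$ by the filtration stated just before the corollary, pass to $B$-modules via the preceding theorem, and then compute the Grothendieck group there as $\Lambda \otimes \rK(\Gr_r(E))$. The only cosmetic difference is that the body of the paper phrases the equivalence as $\Mod_{A,r}[\fa_r] \simeq \Mod_B^{\gen}$ rather than $\Mod_{B,0}$ (these two categories are themselves equivalent), and computes the last step via the generic category (Corollary~\ref{cor:Ktheory-generic}) rather than via $\Mod_{B,0}[B_+]$; your filtration by $\ker(B_+^n\cdot -)$ and the paper's filtration by $B_+^n M$ in Proposition~\ref{prop:tors}(e) yield the same reduction.
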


We now describe how $\Mod_A$ is built from its graded pieces. For this we introduce two functors. Let $M$ be an $A$-module. We define $\Gamma_{\le r}(M)$ to be the maximal submodule of $M$ supported on $\fa_r$, and we define $\Sigma_{>r}(M)$ to be the universal module to which $M$ maps that has no non-zero submodule supported on $\fa_r$. We call $\Sigma_{>r}(M)$ the {\bf saturation} of $M$ with respect to $\fa_r$. The functor $\Sigma_{>r}$ can be identified with the composition
\begin{displaymath}
\Mod_A \to \Mod_A/\Mod_{A, \le r} \to \Mod_A,
\end{displaymath}
where the first functor is the localization functor and the second is the section functor (i.e., the right adjoint to localization). The functors $\Gamma_{\le r}$ and $\Sigma_{>r}$ are left-exact, and we consider their right derived functors. We refer to $\rR^i \Gamma_{\le r}$ as {\bf local cohomology} with respect to the ideal $\fa_r$. The most important result in this paper is the following finiteness theorem:

\begin{theorem} \label{thm:fin}
Let $M$ be a finitely generated $A$-module. Then $\rR^i \Gamma_{\le r}(M)$ and $\rR^i \Sigma_{>r}(M)$ are finitely generated for all $i$ and vanish for $i \gg 0$.
\end{theorem}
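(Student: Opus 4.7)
The starting point is the standard four-term exact sequence
\begin{displaymath}
0 \to \Gamma_{\le r}(M) \to M \to \Sigma_{>r}(M) \to \rR^1\Gamma_{\le r}(M) \to 0,
\end{displaymath}
together with the isomorphisms $\rR^i\Sigma_{>r}(M) \cong \rR^{i+1}\Gamma_{\le r}(M)$ for $i \ge 1$. Given that $M$ is finitely generated, these show that the finiteness and bounded-vanishing assertions for $\Gamma_{\le r}$ and $\Sigma_{>r}$ are equivalent, so I would concentrate on the local cohomology functors $\rR^i\Gamma_{\le r}$.

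My approach is a double induction. The outer induction is on $d$, the number of generators of $A$, with base $d=1$ supplied by the corresponding finiteness theorem in \cite{symc1}. The inner induction is a descending one on $r$, with base $r=d$ trivial since $\fa_d = 0$ forces $\Gamma_{\le d}(M)=M$ and $\rR^i \Gamma_{\le d}(M) = 0$ for $i>0$. For the inductive step, I would use the short exact sequence
\begin{displaymath}
0 \to \Gamma_{\le r}(M) \to M \to M' \to 0
\end{displaymath}
where $M'$ is $\fa_r$-torsion free. The submodule $\Gamma_{\le r}(M)$ is finitely generated (by Noetherianity), is concentrated in $\Mod_{A,\le r}$, and has vanishing higher local cohomology, so it contributes nothing past $\rR^0$. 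Thus the problem is to handle $M'$.

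To treat $M'$, I would use the rank stratification to produce a finite filtration whose graded pieces lie in a single stratum $\Mod_{A,s}$ for some $s>r$. Each such stratum-piece admits a further finite filtration (by the powers of $\fa_s$ annihilating a finitely generated object) whose successive quotients lie in $\Mod_{A,s}[\fa_s]$. Invoking the preceding equivalence $\Mod_{A,s}[\fa_s] \simeq \Mod_{B,0}$, with $B = \Sym(\cQ\langle 1 \rangle)$ on $\Gr_s(E)$, the computation of $\rR^i\Gamma_{\le r}$ translates into a computation in a tca whose fiber has at most $s < d$ generators. The outer inductive hypothesis can then be applied fiberwise. Assembling the local data into a genuine finitely generated $A$-module is effected by a pushforward across $\Gr_s(E)$, which is proper; finite generation and bounded cohomological amplitude should be preserved because the Grassmannian is proper and the relevant sheaves of tca-modules are coherent in a suitable sense.

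\textbf{Main obstacle.} The delicate step is the globalization: moving from fiberwise finiteness on $\Gr_s(E)$ to honest finite generation as an $A$-module, and simultaneously obtaining a uniform bound on the cohomological degree. Controlling this likely requires constructing an explicit and sufficiently well-behaved injective resolution for objects of $\Mod_{A,s}[\fa_s]$ under the equivalence with $\Mod_{B,0}$, so that the derived pushforward from $\Gr_s(E)$ to a point can be shown to preserve finite generation and to vanish above a computable degree. Properly setting up the equivalences so that the derived functors $\rR^i\Gamma_{\le r}$ factor compatibly through the stratification is, I expect, where most of the technical effort will have to go.
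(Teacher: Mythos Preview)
Your outer induction on $d$ does not close. When you filter the $\fa_r$-torsion-free module $M'$ by the rank stratification, the top stratum $s=d$ must appear (unless $M'$ happens to be supported on $V(\fa_{d-1})$, which you cannot assume). For $s=d$ the Grassmannian $\Gr_d(E)$ is a point, $\cQ=E$, and $B=\bA(E)=A$; the equivalence $\Mod_{A,d}[\fa_d]\simeq\Mod_{B,0}$ is just the equivalence $\Mod_A^{\gen}\simeq\Mod_A^0$, with no reduction in the number of generators. Your assertion that the fiber tca ``has at most $s<d$ generators'' is simply false for this piece, and the inductive hypothesis on $d$ gives you nothing.

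There is a second, related gap. Even for $s<d$, the equivalence you invoke identifies the \emph{subquotient} category $\Mod_{A,s}[\fa_s]$ with $\Mod_{B,0}$ (equivalently $\Mod_B^{\gen}$); it does not identify $A$-modules annihilated by $\fa_s$ with $B$-modules. The functor $\rR\Gamma_{\le r}$ you are trying to compute is a functor on $\Mod_A$, and you have not said what it corresponds to after passing to the subquotient and then across to $B$. You flag this as the main obstacle, but it is not a packaging issue: making this step precise is essentially the entire content of the theorem, and an induction on $d$ does not supply it.

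The paper avoids both problems by running only a descending induction on $r$, with no induction on $d$. The non-formal input is proved directly: for a finitely generated object of the single stratum $\Mod_{A,r+1}$, the derived section functor $\rR S_{>r}$ lands in $\rD^b_{\fgen}(A)$ (Corollary~\ref{cor:Srfin}). This comes from identifying $\Mod_{A,r+1}[\fa_{r+1}]$ with $\Mod_B^{\gen}$ (Theorem~\ref{thm:ABequiv}) and then giving a geometric model for the section functor of the generic category as a pushforward from an open subset of a Grassmannian (Theorem~\ref{thm:geoS}, Corollary~\ref{cor:Sfin}); finite generation then follows from properness of $\pi$ (Proposition~\ref{prop:finpushfwd}). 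With this in hand, the inductive step is short: apply $\rR\Sigma_{>r}$ to the triangle $\rR\Gamma_{\le r+1}(M)\to M\to\rR\Sigma_{>r+1}(M)$, use $\Sigma_{>r}\Sigma_{>r+1}=\Sigma_{>r+1}$ on the right term, and the single-stratum result on the left.
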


This result has a number of important corollaries. Write $\rD^b_{\fgen}$ for the bounded derived category with finitely generated cohomology groups. Write $\cD = \langle \cT_1, \ldots, \cT_n \rangle$ to indicate that the triangulated category $\cD$ admits a semi-orthogonal decomposition into subcategories $\cT_i$.

\begin{corollary}
We have a semi-orthogonal decomposition:
\begin{displaymath}
\rD^b_{\fgen}(\Mod_A) = \langle \rD^b_{\fgen}(\Mod_{A,0}), \ldots, \rD^b_{\fgen}(\Mod_{A,d}) \rangle.
\end{displaymath}
\end{corollary}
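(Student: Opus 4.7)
The plan is to iterate a two-term semi-orthogonal decomposition. For each $r$ with $0 \le r \le d$, I would establish
$$\rD^b_{\fgen}(\Mod_{A,\le r}) = \langle \rD^b_{\fgen}(\Mod_{A,\le r-1}), \rD^b_{\fgen}(\Mod_{A,r}) \rangle$$
(with the convention $\Mod_{A,\le -1} = 0$), and then unwind recursively starting from $r = d$ to obtain the decomposition in the corollary.

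For the two-step decomposition at stage $r$, the embedding functors are the inclusion $i_r \colon \rD^b_{\fgen}(\Mod_{A,\le r-1}) \hookrightarrow \rD^b_{\fgen}(\Mod_{A,\le r})$ and the derived section functor $\rR S_r \colon \rD^b_{\fgen}(\Mod_{A,r}) \to \rD^b_{\fgen}(\Mod_{A,\le r})$, where $S_r$ is the right adjoint to the Serre quotient $Q_r \colon \Mod_{A,\le r} \to \Mod_{A,r}$. Theorem~\ref{thm:fin} is the essential input: it guarantees that $\rR\Gamma_{\le r-1}$ and $\rR\Sigma_{>r-1}$ are bounded and preserve finite generation, so in particular $\rR S_r$ actually lands in $\rD^b_{\fgen}(\Mod_{A,\le r})$. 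Semi-orthogonality
$$\Hom(i_r X, \rR S_r Y) = \Hom(Q_r i_r X, Y) = 0$$
is immediate from the $(Q_r, \rR S_r)$ adjunction together with $Q_r \circ i_r = 0$, and generation is provided by the functorial exact triangle
$$\rR\Gamma_{\le r-1}(M) \to M \to \rR\Sigma_{>r-1}(M)$$
for $M \in \rD^b_{\fgen}(\Mod_{A,\le r})$, whose first term lies in the essential image of $i_r$ and whose third, after identifying the $\fa_{r-1}$-saturated objects of $\Mod_{A,\le r}$ with $\Mod_{A,r}$, lies in the essential image of $\rR S_r$.

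The main obstacle will be checking that Theorem~\ref{thm:fin} yields the required finiteness at each recursive step. The theorem is stated for $A$-modules and for the functors $\rR\Gamma_{\le r-1}$, $\rR\Sigma_{>r-1}$ computed on $\Mod_A$, whereas the recursion requires these functors to play the role of torsion and saturation for the Serre pair $\Mod_{A,\le r-1} \subset \Mod_{A,\le r}$. The $A$-linearity of the functors keeps their output inside $\Mod_{A,\le r}$ whenever the input is, so the ambient computation restricts sensibly; what remains is to verify that the restricted functors agree with the intrinsic torsion and saturation computed within $\Mod_{A,\le r}$, which amounts to an injective-compatibility statement. I expect this to follow from the structural analysis of injectives developed earlier in the paper, but it is the point at which the recursion demands genuine justification rather than a naive appeal to Theorem~\ref{thm:fin}.
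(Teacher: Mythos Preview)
Your approach is correct and matches the paper's: the corollary is deduced from the general formalism of \S\ref{s:formalism} once Theorem~\ref{thm:satfin} verifies the finiteness hypothesis (Fin), and the paper proves the decomposition in \S\ref{ss:Fin} in exactly the iterative fashion you describe (two-term decompositions unwound from $r=d$ down). The concern you flag---that the intrinsic torsion and saturation for the pair $\Mod_{A,\le r-1} \subset \Mod_{A,\le r}$ must agree with the restrictions of the ambient functors on $\Mod_A$---is precisely the content of the proposition in \S\ref{ss:formalism2} comparing $S'$ and $\rR S'$ with $S_{>i}$ and $\rR S_{>i}$, which in turn rests on the injective-compatibility property \eqref{eqn:prop-H} verified for determinantal ideals via Artin--Rees in Corollary~\ref{cor:B-propH}.
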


Here $\rD^b_{\fgen}(\Mod_{A,r})$ is identified with a subcategory of $\rD^b_{\fgen}(\Mod_A)$ via the functor $\Sigma_{\ge r}$. We note that without finiteness conditions, such a decomposition follows almost formally; to get the decomposition with finiteness conditions imposed requires the theorem. The functor $\rR \Gamma_{\le r}$ is essentially the projection onto the subcategory $\langle \rD(\Mod_{A,0}), \ldots, \rD(\Mod_{A,r}) \rangle$, while the functor $\rR \Sigma_{>r}$ is the projection onto $\langle \rD(\Mod_{A,r+1}), \ldots, \rD(\Mod_{A,d}) \rangle$. (This point of view explains the subscripts on these functors.) We introduce the functor $\rR \Pi_r = \rR \Sigma_{\ge r} \circ \rR \Gamma_{\le r}$, which projects onto $\rD(\Mod_{A,r})$.

\begin{corollary}
We have a canonical isomorphism
\begin{displaymath}
\rK(\Mod_A) = \bigoplus_{r=0}^d \rK(\Mod_{A,r})
\end{displaymath}
The projection onto the $r$th factor is given by $\rR \Pi_r$. In particular, $\rK(\Mod_A)$ is free of rank $2^d$ as a $\Lambda$-module.
\end{corollary}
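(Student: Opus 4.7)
The plan is to deduce this from the preceding semi-orthogonal decomposition together with the already-established description of $\rK(\Mod_{A,r})$. The first step uses the standard fact that a semi-orthogonal decomposition $\cD = \langle \cT_0, \ldots, \cT_d \rangle$ of a triangulated category induces a direct sum decomposition $\rK(\cD) = \bigoplus_i \rK(\cT_i)$: each object admits a canonical filtration whose successive cofibers lie in the $\cT_i$, and the resulting identity $[X] = \sum_i [X_i]$ in the Grothendieck group is promoted to a direct sum by semi-orthogonality (which guarantees the classes $[X_i]$ are well-defined). Applying this to the decomposition $\rD^b_{\fgen}(\Mod_A) = \langle \rD^b_{\fgen}(\Mod_{A,0}), \ldots, \rD^b_{\fgen}(\Mod_{A,d}) \rangle$ of the previous corollary, and using the Euler-characteristic identification of the Grothendieck group of a bounded derived category with that of its heart (restricted to finitely generated objects), yields the claimed decomposition of $\rK(\Mod_A)$.

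To identify the projection onto the $r$th factor with $\rR \Pi_r$, I would use the descriptions recalled just before the corollary: $\rR \Gamma_{\le r}$ is the projection onto $\langle \rD(\Mod_{A,0}), \ldots, \rD(\Mod_{A,r}) \rangle$ and $\rR \Sigma_{\ge r}$ is the projection onto $\langle \rD(\Mod_{A,r}), \ldots, \rD(\Mod_{A,d}) \rangle$. Hence their composition $\rR \Pi_r$ lands in the intersection $\rD(\Mod_{A,r})$ and restricts to the identity there, so it realises the projection onto the $r$th factor. Theorem~\ref{thm:fin} is what lets this take place at the level of $\rD^b_{\fgen}$ in the first place.

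For the rank assertion, the earlier corollary identifies $\rK(\Mod_{A,r})$ with $\Lambda \otimes \rK(\Gr_r(E))$, which is free of rank $\binom{d}{r}$ over $\Lambda$ since $\Gr_r(E)$ admits a Schubert cell decomposition with $\binom{d}{r}$ cells. Summing then gives total rank $\sum_{r=0}^d \binom{d}{r} = 2^d$. No serious obstacle is anticipated: everything substantive is already provided by previous results, and the only point requiring care is ensuring that $\rR \Gamma_{\le r}$, $\rR \Sigma_{\ge r}$, and hence $\rR \Pi_r$, preserve the finitely generated bounded derived category—which is exactly the content of Theorem~\ref{thm:fin}.
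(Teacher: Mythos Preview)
Your proposal is correct and follows essentially the same route as the paper. The paper packages the $\rK$-theory decomposition as Proposition~\ref{prop:Kdecomp} (proved under the abstract hypothesis (Fin), verified for $A$ in Theorem~\ref{thm:satfin}), giving a direct surjectivity/injectivity argument rather than invoking the general fact about semi-orthogonal decompositions; and the rank-$2^d$ statement is obtained exactly as you describe, via $\rK(\Mod_{A,r}) \cong \Lambda \otimes \rK(\Gr_r(E))$ and $\sum_r \binom{d}{r}=2^d$ (Theorem~\ref{thm:groth} and its corollary).
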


Finally, we prove a structure theorem for $\rD^b_{\fgen}(\Mod_A)$ that refines the above corollary. For an integer $0 \le r \le d$, let $P(r)$ denote the set of partitions $\lambda$ contained in the $r \times (d-r)$ rectangle (i.e., $\lambda_1 \le d-r$ and $\ell(\lambda) \le r$). For $\lambda \in P(r)$, put
\begin{displaymath}
K_{r,\lambda} = \rH^0(\Gr_r(E), \bS_{\lambda}(\cQ) \otimes \Sym(\cQ\langle 1 \rangle)),
\end{displaymath}
where $\cQ$ is the tautological bundle on $\Gr_r(E)$. Alternatively, $K_{r,\lambda}$ is the quotient of $\bS_{\lambda}(E) \otimes A$ by the ideal spanned by those copies of $\bS_{\mu}(E)$ where $\mu$ has more than $r$ parts. The classes $[\bS_{\lambda}(\cQ)]$ with $\lambda \in P(r)$ form a $\bZ$-basis for $\rK(\Gr_r(E))$, while the classes $[K_{r,\lambda}]$ form a $\Lambda$-basis for $\rK(\Mod_A)$. Our structure theorem is:

\begin{theorem} \label{thm:struc}
The objects $\bS_{\mu}(\bC^{\infty}) \otimes K_{r,\lambda}$, with $\mu$ arbitrary and $\lambda \in P(r)$, generate $\rD^b_{\fgen}(\Mod_A)$, in the sense of triangulated categories. Thus every object of $\rD^b_{\fgen}(\Mod_A)$ admits a finite filtration where the graded pieces are shifts of modules of this form.
\end{theorem}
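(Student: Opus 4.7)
The plan is to reduce \ref{thm:struc} to Kapranov's description of $\rD^b(\Gr_r(E))$ by working stratum-by-stratum. The semi-orthogonal decomposition
\[
\rD^b_{\fgen}(\Mod_A) = \langle \rD^b_{\fgen}(\Mod_{A,0}), \ldots, \rD^b_{\fgen}(\Mod_{A,d}) \rangle
\]
from the preceding corollary reduces the task to showing, for each $0 \le r \le d$, that the images $\rR\Pi_r(\bS_\mu(\bC^\infty) \otimes K_{r,\lambda})$ with $\mu$ arbitrary and $\lambda \in P(r)$ generate $\rD^b_{\fgen}(\Mod_{A,r})$. Since $K_{r,\lambda}$ is annihilated by $\fa_r$, it lies in $\rD^b_{\fgen}(\Mod_{A, \le r})$ and $\rR\Pi_s K_{r,\lambda}$ vanishes for $s > r$, so each $K_{r,\lambda}$ is automatically allocated to the $r$-th piece of the decomposition.

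Every finitely generated object of $\Mod_{A,r}$ is locally $\fa_r$-torsion, so it admits a finite filtration whose graded pieces lie in $\Mod_{A,r}[\fa_r]$. The equivalence $\Mod_{A,r}[\fa_r] \cong \Mod_{B,0}$ then translates the question into one about $B$-modules on $\Gr_r(E)$ locally annihilated by a power of $\cQ\langle 1\rangle$, where $B = \Sym(\cQ\langle 1\rangle)$. Inside $\Mod_{B,0}$, every finitely generated object admits a further finite filtration whose graded pieces are annihilated by $\cQ\langle 1\rangle$ itself, and such pieces are $\GL_\infty$-equivariant coherent sheaves on $\Gr_r(E)$. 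Tracking $K_{r,\lambda}$ through the equivalence, the formula $K_{r,\lambda} = \rH^0(\Gr_r(E), \bS_\lambda(\cQ) \otimes B)$ identifies its associated graded modulo $\cQ\langle 1\rangle$ with $\bS_\lambda(\cQ)$.

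We are then reduced to showing that the sheaves $\bS_\mu(\bC^\infty) \otimes \bS_\lambda(\cQ)$ with $\mu$ arbitrary and $\lambda \in P(r)$ generate the bounded derived category of $\GL_\infty$-equivariant coherent sheaves on $\Gr_r(E)$. Since any polynomial $\GL_\infty$-representation is a direct sum of Schur functors $\bS_\mu(\bC^\infty)$, the equivariance separates from the sheaf theory, and the final statement is Kapranov's theorem: $\{\bS_\lambda(\cQ) : \lambda \in P(r)\}$ forms a full exceptional collection on $\Gr_r(E)$.

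The main obstacle is the passage across the equivalence $\Mod_{A,r}[\fa_r] \cong \Mod_{B,0}$, together with the further reduction to modules strictly annihilated by $\cQ\langle 1\rangle$: one must verify that $K_{r,\lambda}$ maps as expected, and that a finitely generated module in $\Mod_{B,0}$ really admits a finite-length filtration by coherent sheaves on $\Gr_r(E)$. The latter should follow from a Koszul-style argument using that $B$ has finite polynomial rank over the structure sheaf of $\Gr_r(E)$; the former amounts to unwinding the explicit construction of the equivalence.
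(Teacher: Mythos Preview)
Your proposal is correct and follows essentially the same route as the paper's proof (Remark~\ref{rmk:proof-thm:struc} combined with Proposition~\ref{prop:Dr}): reduce via the semi-orthogonal decomposition to a single stratum, pass to $B$-modules on $\Gr_r(E)$ via Theorem~\ref{thm:ABequiv}, and then invoke Kapranov's generators $\bS_\lambda(\cQ)$ for $\rD^b_{\fgen}(\Gr_r(E))$. The only cosmetic difference is that you pass through $\Mod_{B,0}$ and filter by powers of $\cQ\langle 1\rangle$ (Proposition~\ref{prop:tors}(e)), whereas the paper works on the $\Mod_B^{\gen}$ side and resolves forward by objects $T'(V\otimes B)$ (Corollary~\ref{cor:generic-inj}(b)); these are the same argument under the equivalence $\Mod_B^{\gen}\cong\Mod_B^0$ of \S\ref{s:at0}. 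One small point you leave implicit: to conclude that the $K_{r,\lambda}$ themselves (rather than $\rR\Pi_r(K_{r,\lambda})$) generate, you need either that $K_{r,\lambda}\in\rD^b_{\fgen}(A)_r$ (which the paper gets for free since $K_{r,\lambda}=\rR\pi_*(\bS_\lambda(\cQ)\otimes B)$ is visibly in the image of $\rR S_{\ge r}$), or a short induction on $r$ using the triangle $\rR\Gamma_{<r}\to\id\to\rR\Sigma_{\ge r}$.
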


In fact, our results are more precise than this: for instance, we show that the $K$'s all appear in a certain order, with the $K_{0,\ast}$'s first, then the $K_{1,\ast}$'s, and so on. See Remark~\ref{rmk:proof-thm:struc} for a proof.

When $d=1$, we showed in \cite{symc1} that every object $M$ of $\rD^b_{\fgen}(A)$ fits into an exact triangle
\begin{displaymath}
T \to M \to P \to
\end{displaymath}
where $T$ is a finite length complex of finitely generated torsion modules and $P$ is a finite length complex of finitely generated projective modules. A finitely generated torsion module admits a finite filtration where the graded pieces have the form $\bS_{\mu}(\bC^{\infty}) \otimes K_{0,\emptyset}$, while a finitely generated projective module admits a finite filtration where the graded pieces have the form $\bS_{\mu}(\bC^{\infty}) \otimes K_{1,\emptyset}$. (We note that $K_{0,\emptyset}=\bC$ and $K_{1,\emptyset}=A$.) Thus Theorem~\ref{thm:struc} is essentially a generalization of the structure theorem from \cite{symc1}.

We prove several other results about the structure of $\Mod_A$. We mention a few here:
\begin{itemize}
\item The extremal pieces of the rank stratification $\Mod_{A,0}$ and $\Mod_{A,d}$ are equivalent.
\item Projective $A$-modules are injective.
\item Finitely generated $A$-modules have finite injective dimension.
\end{itemize}
Lest the reader extrapolate too far, we offer two warnings: (a) For $d=1$, every finitely generated $A$-module injects into a finitely generated injective $A$-module. This is no longer true for $d>1$. (b) We believe that $\Mod_{A,r}$ and $\Mod_{A,d-r}$ are inequivalent for $r \ne 0,d$, though we do not have a rigorous proof of this.

\subsection{Duality}

Koszul duality gives an equivalence between the derived category of $A=\Sym(E \otimes \bC^{\infty})$ modules and the derived category of $\lw(E^* \otimes \bC^{\infty})$ modules (assuming some finiteness). The category of polynomial representations of $\GL_{\infty}$ has a transpose functor, which induces an equivalence between $\lw(E^* \otimes \bC^{\infty})$ modules and $A^*=\Sym(E^* \otimes \bC^{\infty})$ modules. We call the resulting equivalence
\begin{displaymath}
\sF \colon \rD_{\dfg}(\Mod_A)^{\op} \to \rD_{\dfg}(\Mod_{A^*})
\end{displaymath}
the {\bf Fourier transform}. (Here the ``dfg'' subscript means the $\GL_{\infty}$-multiplicity space of each cohomology sheaf is coherent.) Our main result on it is:

\begin{theorem} \label{thm:ft}
The Fourier transform induces an equivalence between $\rD^b_{\fgen}(\Mod_A)$ and $\rD^b_{\fgen}(\Mod_{A^*})$.
\end{theorem}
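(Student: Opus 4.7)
The strategy is to reduce to a computation on the explicit generators provided by Theorem~\ref{thm:struc}. Since $\sF$ is already an equivalence at the level of $\rD_{\dfg}$, with inverse of the same form, it suffices by symmetry to show that $\sF$ sends $\rD^b_{\fgen}(\Mod_A)^{\op}$ into $\rD^b_{\fgen}(\Mod_{A^*})$: the reverse inclusion will then follow by running the same argument with $A$ and $A^*$ swapped and applying the inverse equivalence. The class of objects $M \in \rD_{\dfg}(\Mod_A)$ whose Fourier transform lies in $\rD^b_{\fgen}(\Mod_{A^*})$ is closed under shifts and extensions, so by Theorem~\ref{thm:struc}—which asserts that every object of $\rD^b_{\fgen}(\Mod_A)$ admits a finite filtration with graded pieces shifts of $\bS_\mu(\bC^\infty) \otimes K_{r,\lambda}$—it suffices to verify the claim on these generators.

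To handle the variable $\mu$, I would first establish a Schur-twist compatibility of the form
\[
\sF(\bS_\mu(\bC^\infty) \otimes M) \cong \bS_{\mu^\dagger}(\bC^\infty) \otimes \sF(M),
\]
possibly up to a homological shift. This is natural because $\sF$ is defined as Koszul duality followed by the transpose functor on polynomial $\GL_\infty$-representations, and transpose exchanges $\bS_\mu$ with $\bS_{\mu^\dagger}$. Since tensoring by a polynomial $\GL_\infty$-representation preserves finite generation over $A^*$, this reduces the problem to showing that $\sF(K_{r,\lambda}) \in \rD^b_{\fgen}(\Mod_{A^*})$ for each $r$ and each $\lambda \in P(r)$.

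The final step is to identify $\sF(K_{r,\lambda})$ geometrically. Recall that $K_{r,\lambda}$ is the global sections over $\Gr_r(E)$ of $\bS_\lambda(\cQ) \otimes \Sym(\cQ\langle 1\rangle)$. Koszul duality should commute with pushforward from $\Gr_r(E)$ to a point, and combined with transpose and the isomorphism $\Gr_r(E) \cong \Gr_{d-r}(E^*)$ sending a quotient $E \twoheadrightarrow U$ to the dual inclusion $U^* \hookrightarrow E^*$, this should yield an identification roughly of the form $\sF(K_{r,\lambda}) \cong K^*_{d-r,\lambda^\dagger}[c]$, where $K^*_{s,\nu}$ denotes the analogous construction over $A^*$ and $c$ is an appropriate homological shift. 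Up to a shift, this is a single finitely generated $A^*$-module, completing the argument.

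The main obstacle is to unpack the definition of $\sF$ carefully enough to verify both the Schur-twist compatibility and the pushforward identification underlying the Grassmannian description of $K_{r,\lambda}$. Both assertions are morally consequences of the fact that Koszul duality is $\GL$-equivariant and behaves well under geometric pushforward, but spelling this out in the precise framework of tca's requires some work. Once these structural properties of $\sF$ are established, the finiteness statement reduces immediately to Theorem~\ref{thm:struc}.
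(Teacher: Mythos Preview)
Your proposal is correct and takes essentially the same approach as the paper. The paper packages the reduction-to-generators step into an axiomatic framework (Proposition~\ref{prop:axiomatic}) rather than invoking Theorem~\ref{thm:struc} directly, but the substance is identical: reduce to the generators $\rR\pi_*(V \otimes \bA(\cQ))$ of Proposition~\ref{prop:Dr}, establish the Schur-twist and pushforward compatibilities you anticipate (Propositions~\ref{prop:FTprop}(c) and~\ref{koszul-pushfwd}), and carry out the key computation via the isomorphism $\Gr_r(E) \cong \Gr_{d-r}(E^*)$ (Proposition~\ref{prop:FTgrass}).
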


This theorem can be unpackaged into a much more concrete statement. Let $M$ be an $A$-module, and let $P_{\bullet} \to M$ be its minimal projective resolution. Write $P_i=A \otimes V_i$, where $V_i$ is a representation of $\GL_{\infty}$, and let $V_{i,n}$ be the degree $n$ piece of $V_i$. Then $L_n = \bigoplus_{i \ge 0} V_{i,n+i}$ is called the $n$th {\bf linear strand} of the resolution. Up to a duality and transpose, $L_n$ is $\rH^n(\sF(M))$. Thus the above theorem implies that if $M$ is a finitely generated $A$-module then its resolution has only finitely many non-zero linear strands, and each linear strand (after applying duality and transpose) admits the structure of a finitely generated $A^*$-module. Thus Theorem~\ref{thm:ft} is a strong statement about the structure of projective resolutions of $A$-modules. In particular, it implies:

\begin{corollary}
A finitely generated $A$-module has finite regularity.
\end{corollary}

We also prove a duality theorem for local cohomology and saturation with respect to the Fourier transform. We just mention the following version of this result here:

\begin{theorem}
We have $\sF \circ \rR \Pi_r = \rR \Pi_{d-r} \circ \sF$.
\end{theorem}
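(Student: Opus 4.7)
The plan is to derive the statement from a more general fact: the Fourier transform should intertwine the rank stratifications on the two sides, with $r \mapsto d-r$ and with the order of specialization reversed. This reflects the natural bijection $\Gr_r(E) \leftrightarrow \Gr_{d-r}(E^*)$ sending a rank-$r$ quotient $E \twoheadrightarrow U$ to the dual of its kernel, and this bijection interchanges closure relations between the total Grassmannians of $E$ and $E^*$.

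The first step is the support exchange: I would prove
\begin{equation*}
\sF(\rD^b_{\fgen}(\Mod_{A,\le r})) = \rD^b_{\fgen}(\Mod_{A^*,\ge d-r})
\end{equation*}
together with the complementary statement $\sF(\rD^b_{\fgen}(\Mod_{A,\ge r+1})) = \rD^b_{\fgen}(\Mod_{A^*,\le d-r-1})$. By Theorem~\ref{thm:struc}, the source categories are generated by the objects $\bS_{\mu}(\bC^{\infty}) \otimes K_{s,\lambda}$ with $s \le r$ (resp.\ $s > r$), so it is enough to compute $\sF$ on these generators and read off the support of the image. The Grassmannian description $K_{r,\lambda} = \rH^0(\Gr_r(E),\bS_{\lambda}(\cQ) \otimes \Sym(\cQ\langle 1\rangle))$ should make this tractable, with the transpose of partitions between $P(r)$ and $P(d-r)$ encoding the geometric duality at the level of classes.

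The second step is categorical. Local cohomology and saturation are the two projectors associated with the $2$-step semi-orthogonal decomposition $\rD^b_{\fgen}(\Mod_A) = \langle \rD^b_{\fgen}(\Mod_{A,\le r}),\rD^b_{\fgen}(\Mod_{A,\ge r+1}) \rangle$. Since $\sF$ is contravariant it reverses the order of semi-orthogonal decompositions, and combined with Step~1 this forces the intertwining relations $\sF \circ \rR\Gamma_{\le r} \cong \rR\Sigma_{\ge d-r} \circ \sF$ and $\sF \circ \rR\Sigma_{\ge r} \cong \rR\Gamma_{\le d-r} \circ \sF$. Chaining these gives
\begin{equation*}
\sF \circ \rR\Pi_r \;=\; \sF \circ \rR\Sigma_{\ge r} \circ \rR\Gamma_{\le r} \;\cong\; \rR\Gamma_{\le d-r} \circ \rR\Sigma_{\ge d-r} \circ \sF,
\end{equation*}
which coincides with $\rR\Pi_{d-r} \circ \sF$ because either order of composing $\rR\Gamma_{\le d-r}$ and $\rR\Sigma_{\ge d-r}$ realizes the projection onto $\rD(\Mod_{A^*,d-r})$.

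The principal obstacle is Step~1: pinning down $\sF(K_{r,\lambda})$ with enough precision to read off its rank-support. A more conceptual alternative would be to characterize $\sF(\rD^b_{\fgen}(\Mod_{A,\le r}))$ by an intrinsic annihilation condition against a suitable determinantal ideal of $A^*$, and to verify that condition directly from Koszul duality on the exterior-algebra side; but this still requires identifying the correct ideal, which is morally the kernel of the wrong-direction quotient maps in $\Gr(E^*)$. Once Step~1 is in hand, the remaining manipulation is a short formality in the derived category.
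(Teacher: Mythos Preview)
Your approach is essentially the same as the paper's. The paper first shows that $\sF_{\cE}$ carries $\rD^b_{\fgen}(A)_r$ into $\rD^b_{\fgen}(A^*)_{d-r}$ by computing $\sF_{\cE}$ on the generators $\rR\pi_*(V \otimes \bA(\cQ))$ of Proposition~\ref{prop:Dr} (your $K_{r,\lambda}$ are special cases), and then runs exactly your Step~2 argument via the semi-orthogonal decomposition and the distinguished triangle. The ``principal obstacle'' you identify is precisely what the paper handles in Proposition~\ref{prop:FTgrass}: using compatibility of $\sF$ with $\rR\pi_*$ and the base-change formula $\sF_{\cE}(M) \cong \sF_{\cQ}(M) \otimes_{\bA(\cQ^*)} \bA(\cE^*)$, one gets $\sF_{\cE}(\rR\pi_*(\cM \otimes \bA(\cQ))) \cong \rR\pi'_*(\bD(\cM') \otimes \bA(\cQ'))$ under the identification $\Gr_r(E) \cong \Gr_{d-r}(E^*)$, which immediately gives the support exchange.
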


In other words, the Fourier transform reverses the rank stratification of $\Mod_A$.

\subsection{Additional results}

The results of this paper are of a foundational nature. We have additional, more concrete results, that build on this foundation; for reasons of length, we have deferred them to companion papers \cite{tcahilb,tcadepth,tcareg}. We summarize the main results here.

The first group of results concerns Hilbert series. In \cite{delta}, the second author introduced a notion of Hilbert series for twisted commutative algebras and their modules, and proved a rationality result for the tca's considered in this paper. In \cite{symc1}, we introduced an ``enhanced'' Hilbert series that records much more information, and proved a rationality result in the $d=1$ case. Using the tools of this paper, we have greatly extended this theory. We can now prove a rationality result for the enhanced Hilbert series for arbitrary $d$. Moreover, we understand how the pieces of the Hilbert series match up with the structure of the category $\Mod_A$. We have similar results on the far more subtle Poincar\'e series as well.

The second group of results concerns depth and local cohomology. Suppose that $M$ is an $A$-module. We can then consider the local cohomology group $\rR^i \Gamma_{\le r}(M)$ defined in this paper and, treating it as a polynomial functor, evaluate on $\bC^n$. Alternatively, we can take the local cohomology of the $A(\bC^n)$-module $M(\bC^n)$ with respect to the ideal $\fa_r(\bC^n)$. We show that these two constructions are canonically isomorphic for $n \gg 0$ when $M$ is finitely generated. In particular, this shows that the local cohomology of $M(\bC^n)$ with respect to determinantal ideals is finitely generated for $n \gg 0$, and exhibits representation stability in the sense of Church--Farb \cite{churchfarb}. We also study the depth of $M(\bC^n)$ with respect to $\fa_r(\bC^n)$ and show that, for $n \gg 0$, it has the form $an+b$ for integers $a \ge 0$ and $b$. Moreover, we show that if $a > 0$ then $\rR \Gamma_{\le r}(M)=0$, and if $a=0$ then the first non-zero local cohomology $\rR^i \Gamma_{\le r}(M)$ occurs for $i=b$.

The third group of results concerns regularity. In \cite{ce}, Church and Ellenberg show that the regularity of an $\FI$-module can be controlled in terms of its presentation. We generalize this result to arbitrary $d$. Our theorem states that the regularity of a finitely generated $A$-module $M$ can be controlled in terms of $\Tor^A_i(M, \bC)$ for $0 \le i \le \tfrac{1}{4} d^2+1$. As a corollary, we find that the regularity of the $A(\bC^n)$-module $M(\bC^n)$, for any $n$, can be controlled by the regularity of $M(\bC^{n_0})$, where $n_0$ depends only on $d$ and the degrees of generators of $M$.

\subsection{Relation to previous work}

The second author used the tca's appearing in this paper to study $\Delta$-modules, which served as the primary tool in his study of syzygies of the Segre embeddings and related varieties \cite{delta}. In \cite{catgb}, we showed that the category of $A$-modules is equivalent to the category of $\FI_d$-modules, where $\FI_d$ is the category whose objects are finite sets and whose morphisms are injections together with a $d$-coloring on the complement of the image. Ramos further studied $\FI_d$-modules in \cite{ramos}, and recently used them to study configuration spaces of graphs in \cite{ramos2}. $\FI_d$-modules are also used in the first author's study of equations and syzygies of secant varieties of Veronese embeddings \cite{secver, secver-syzygy}, where they play a crucial role. We hope that the results of this paper will lead to additional insight related to the applications mentioned here.

The equivariant structure of the ring $A$ has been intensively studied in the literature from combinatorial and algebraic perspectives, and we refer the reader to \cite{dcep} for some background and additional references. The homological aspects of this ring were shown to be closely related to the representation theory of the general linear Lie superalgebra in \cite{akin-weyman}, and this motivates the study of resolutions of its equivariant ideals. We refer the reader to \cite{raicu-weyman1, raicu-weyman2, raicu} for further information and calculations. Our results imply that one can expect certain patterns and universal bounds to appear as the size of the matrix increases.

\subsection{Outline}

In \S \ref{s:tca}, we recall the requisite background on the representation theory of $\GL_{\infty}$ and tca's, and prove some general results about tca's. In \S \ref{sec:spec}, we introduce the spectrum of a tca and study the spectrum of $A$. In \S \ref{s:formalism}, we develop a formalism of local cohomology and saturation functors with respect to a filtration of an abelian category. These results are mostly well-known; we include this material simply to recall salient facts and set notation. In \S \ref{s:at0}, we study the two extremal pieces of the filtration of $\Mod_A$, namely the category $\Mod_{A,0}$ of modules supported at~0, and what we call the ``generic category'' $\Mod_A^{\gen}$, which is just another name for $\Mod_{A,d}$. These are important special cases since the other pieces of the category will be described using these pieces. In \S \ref{s:rank}, we study the full rank stratification of $\Mod_A$, and prove the primary theorems of the paper. In \S \ref{s:koszul}, we treat Koszul duality and develop the theory of the Fourier transform. We also include two appendices: Appendix~\ref{s:grass} proves some well-known results about Grassmannians for which we could not find a suitable reference, and Appendix~\ref{ss:oldkoszul} gives a different, more direct, proof of the finiteness properties of Koszul duality.

\subsection{Notation and terminology}

\begin{itemize}
\item All schemes in this paper are noetherian, of finite Krull dimension, and separated over $\bC$. For a scheme $X$, we use the term ``$\cO_X$-module'' in place of ``quasi-coherent $\cO_X$-module,'' and we use the term ``finitely generated $\cO_X$-module'' in place of ``coherent $\cO_X$-module.'' We write $\Mod_X$ for the category of $\cO_X$-modules. $|X|$ denotes the underlying topological space of $X$.
\item For a vector bundle $\cE$ over a scheme $X$, we write $\Gr_r(\cE)$ for the relative Grassmannian parametrizing rank $r$ quotients of $\cE$. We often write $Y$ for $\Gr_r(\cE)$. We write $\cQ$ for the tautological quotient bundle on $\Gr_r(\cE)$ and $\cR$ for the subbundle.
\item We let $\bV=\bC^{\infty}$ be the standard representation of $\GL_{\infty}$. We write $\bS_{\lambda}$ for the Schur functor associated to the partition $\lambda$.
\item For a vector bundle $\cE$ on a scheme $X$, we let $\bA(\cE)$ be the tca $\Sym(\cE\langle 1 \rangle)=\Sym(\cE \otimes \bV)$. We let $\fa_r \subset \bA(\cE)$ be the $r$th determinantal ideal.
\item For an abelian category $\cA$ (typically Grothendieck), we write $\cA^{\fgen}$ for the category of finitely generated objects in $\cA$. We write $\rD(\cA)$ for the derived category, $\rD^b(\cA)$ for the bounded derived category, $\rD^+(\cA)$ for the bounded below derived category, and $\rD_{\fgen}(\cA)$ for the subcategory of the derived category on objects with finitely generated cohomologies. We always use cochain complexes and cohomological indexing.
\item If $\ast$ is an object for which $\Mod_{\ast}$ is defined (and locally noetherian), we write $\rK(\ast)$ for the Grothendieck group of the category $\Mod_{\ast}^{\fgen}$. In particular, if $X$ is a noetherian scheme then $\rK(X)$ is the Grothendieck group of coherent sheaves, and if $A=\bA(\cE)$ then $\rK(A)$ is the Grothendieck group of the category of finitely generated $A$-modules.
\end{itemize}

\subsection*{Acknowledgements}

We thank Rohit Nagpal for helpful discussions, and for allowing us to include the joint material appearing in \S \ref{ss:rohit}.

\section{Preliminaries on tca's} \label{s:tca}

\subsection{Polynomial representations}

A representation of $\GL_{\infty}=\bigcup_{n \ge 1} \GL_n(\bC)$ is {\bf polynomial} if it occurs as a subquotient of a direct sum of tensor powers of the standard representation $\bV=\bC^{\infty}=\bigcup_{n \ge 1} \bC^n$. Let $\cV$ be the category of such representations. Equivalently, $\cV$ can be described as the category of polynomial functors, and this will be a perspective we often employ (see \cite{expos} for details). The category $\cV$ is semi-simple abelian, and the simple objects are the representations $\bS_{\lambda}(\bV)$ indexed by partitions $\lambda$. From the perspective of polynomial functors, the simple objects are just the Schur functors $\bS_{\lambda}$. The category $\cV$ is closed under tensor product. The tensor product of simple objects is computed using the Littlewood--Richardson rule.

Every object $V$ of $\cV$ admits a decomposition $V=\bigoplus_{\lambda} V_{\lambda} \otimes \bS_{\lambda}(\bV)$ where $V_{\lambda}$ is a vector space. We refer to $\bS_{\lambda}(\bV) \otimes V_{\lambda}$ as the {\bf $\lambda$-isotypic piece} of $V$, and to $V_{\lambda}$ as the {\bf $\lambda$-multiplicity space} of $V$. We let $V_n=\bigoplus_{\vert \lambda \vert=n} V_{\lambda} \otimes \bS_{\lambda}(\bV)$, and call this the degree $n$ piece of $V$; in this way, every object of $\cV$ is canonically graded. We say that $\lambda$ {\bf occurs} in $V$ if $V_{\lambda} \ne 0$. For a partition $\lambda$, we let $\ell(\lambda)$ be the number of non-zero parts in $\lambda$. We let $\ell(V)$ be the supremum of the $\ell(\lambda)$ over those $\lambda$ that occur in $V$, and we say that $V$ is {\bf bounded} if $\ell(V)<\infty$. We have $\ell(V \otimes W)=\ell(V)+\ell(W)$ by the Littlewood--Richardson rule; in particular, a tensor product of bounded representations is bounded.

Let $\cV_{\le n}$ be the full subcategory of $\cV$ on objects $V$ with $\ell(V) \le n$. The functor $\cV_{\le n} \to \Rep(\GL_n)$ given by $V \mapsto V(\bC^n)$ is fully faithful, and its image consists of all polynomial representations of $\GL_n$. This is an extremely important fact, since it implies that in $\cV_{\le n}$ one can evaluate on $\bC^n$---and thus reduce to a familiar finite dimensional setting---without losing information.

Let $\Rep(S_{\ast})$ be the category whose objects are sequences $(M_n)_{n \ge 0}$, where $M_n$ is a representation of the symmetric group $S_n$. Schur--Weyl duality provides an equivalence between $\Rep(S_{\ast})$ and $\cV$; see \cite{expos} for details. This perspective will appear in a few places in this paper.

Suppose that $X$ is a scheme over $\bC$. We then let $\cV_X$ be the category of polynomial representations of $\GL_{\infty}$ on $\cO_X$-modules. Every object of this category $V$ admits a decomposition $V=\bigoplus_{\lambda} \bS_{\lambda}(\bV) \otimes V_{\lambda}$ where $V_{\lambda}$ is an $\cO_X$-module. If $f \colon Y \to X$ is a map of schemes then there are induced functors $f_* \colon \cV_Y \to \cV_X$ and $f^* \colon \cV_X \to \cV_Y$ computed by applying $f_*$ and $f^*$ to the multiplicity spaces. We also have the derived functors $\rR^i f_* \colon \cV_Y \to \cV_X$, computed by applying $\rR^i f_*$ to the multiplicity spaces.

\subsection{Twisted commutative algebras}

For the purposes of this paper, a {\bf twisted commutative algebra} (tca) is a commutative algebra object in the category $\cV$, or more generally, in $\cV_X$ for some scheme $X$. Explicitly, a tca is a commutative associative unital $\bC$-algebra equipped with an action of $\GL_{\infty}$ by algebra automorphisms, under which it forms a polynomial representation. A {\bf module} over a tca $A$ is a module object in the category $\cV$ (or $\cV_X$), that is, an $A$-module equipped with a compatible $\GL_{\infty}$ action under which it forms a polynomial representation. We write $\Mod_A$ for the category of $A$-modules. This is a Grothendieck abelian category. An {\bf ideal} of $A$ is an $A$-submodule of $A$. If $M$ is an $A$-module then, treating $M$ and $A$ as Schur functors, $M(\bC^n)$ is an $A(\bC^n)$-module with a compatible action of $\GL_n$.

Let $\cE$ be a vector bundle of rank $d$ on $X$. We define $A=\bA(\cE)$ to be the tca $\Sym(\bV \otimes \cE)$ on $X$. As a Schur functor, we have $A(\bC^n)=\Sym(\bC^n \otimes \cE)$. In particular, if $X$ is a point then $A(\bC^n)$ is just a polynomial ring in $nd$ variables over $\bC$. The Cauchy formula gives a decomposition
\begin{displaymath}
A = \bigoplus_{\lambda} \bS_{\lambda}(\cE) \otimes \bS_{\lambda}(\bV).
\end{displaymath}
Since $\bS_{\lambda}(\cE)=0$ if $\lambda$ has more than $d$ rows, we see that $\ell(A)=d$. Thus $A$ is bounded. It follows that any finitely generated $A$-module is bounded, as such a module is a quotient of $A \otimes V$ for some finite length (and thus bounded) object $V$ of $\cV_X$. We recall the following well-known result (first proved in \cite{delta}):

\begin{theorem}
The tca $A$ is noetherian, that is, any submodule of a finitely generated module is finitely generated.
\end{theorem}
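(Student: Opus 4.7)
The plan is to reduce the noetherianity of $A$ to the (sheaf-theoretic) noetherianity of an ordinary finitely generated commutative algebra, by using that $A$ is \emph{bounded} and that the evaluation functor $W \mapsto W(\bC^n)$ is fully faithful on the subcategory of bounded objects.

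First I would verify that every finitely generated $A$-module $M$ is bounded. By definition $M = A \cdot V$ for some finite length $V \in \cV_X$, and the Littlewood--Richardson identity $\ell(A \otimes V) = \ell(A) + \ell(V)$ implies $\ell(M) \le d + \ell(V) < \infty$. Fix an integer $n \ge \ell(M)$. Every $A$-submodule $M' \subseteq M$ inherits the bound $\ell(M') \le n$, so the whole submodule lattice of $M$ lives inside $\cV_{X, \le n}$.

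Next, I would invoke the relative analogue of the fact cited in \S\ref{s:tca}: the functor $\cV_{X, \le n} \to \Rep(\GL_n)_X$, $W \mapsto W(\bC^n)$, is fully faithful, exact, and respects tensor products. Consequently, the submodule lattice of $M$ in $\Mod_A$ embeds, as a poset, into the lattice of $\GL_n$-equivariant $A(\bC^n)$-submodules of $M(\bC^n)$. Now $A(\bC^n) = \Sym(\bC^n \otimes \cE)$ is the symmetric algebra on a rank $dn$ vector bundle on the noetherian scheme $X$, hence a noetherian sheaf of $\cO_X$-algebras in the usual sense. Moreover $M(\bC^n) = A(\bC^n) \cdot V(\bC^n)$ is finitely generated over $A(\bC^n)$ because $V(\bC^n)$ is a coherent $\cO_X$-module. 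Thus any ascending chain of $A$-submodules of $M$ evaluates to an ascending chain of $A(\bC^n)$-submodules of $M(\bC^n)$, which must stabilize by classical commutative algebra; fully faithful evaluation then lifts this stabilization back to $\Mod_A$.

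The main obstacle is not any single hard step but rather the interplay between several structural facts: one must know that finitely generated $A$-modules are bounded (which requires $\ell(A) = d < \infty$, i.e., that $\cE$ has finite rank), and one must know that the evaluation functor is faithful enough to transport the submodule lattice and the $A$-action. Both points are essentially built into the setup of polynomial functors recalled above, so the proof is in the end a clean reduction: the whole content is that boundedness lets us replace the ``infinite-variable'' polynomial ring $A$ by the genuine finite-variable polynomial ring $A(\bC^n)$ without losing submodule information, after which ordinary commutative algebra finishes the job.
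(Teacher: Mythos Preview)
Your proposal is correct and follows essentially the same argument as the paper: reduce to the bounded case, evaluate on $\bC^n$ for $n \ge \ell(M)$, and invoke ordinary noetherianity of the finitely generated $\cO_X$-algebra $A(\bC^n)$. The paper's proof is terser but identical in content, using precisely the faithfulness of evaluation on $\cV_{\le n}$ to lift stabilization of the chain $N_\bullet(\bC^n)$ back to $N_\bullet$.
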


\begin{proof}
Suppose $M$ is a finitely generated $A$-module, and consider an ascending chain $N_{\bullet}$ of $A$-submodules of $M$. Let $n=\ell(M)$, which is finite by the above remarks; of course, $\ell(N_i) \le n$ for all $i$ as well. Since $M(\bC^n)$ is a finitely generated $A(\bC^n)$-module, it is noetherian, as $A(\bC^n)$ is a finitely generated over $\cO_X$. Thus the chain $N_{\bullet}(\bC^n)$ stabilizes, which implies that $N_{\bullet}$ stabilizes.
\end{proof}

We let $\fa_r \subset A$ be the $r$th determinantal ideal of $A$; it is generated by $\lw^{r+1}(\cE) \otimes \lw^{r+1}(\bV) \subset A$. The tca $A$ and its ideals $\fa_r$ are the main focus of this paper.

\subsection{Internal Hom} \label{ss:internalhom}

We let $\sHom$ be the internal Hom in the category of $\cO_X$-modules. For $V,W \in \cV_X$, we let $\sHom(V,W)$ be the sheaf of $\GL$-equivariant homomorphisms. Explicitly,
\begin{displaymath}
\sHom(V,W) = \prod_{\lambda} \sHom(V_{\lambda}, W_{\lambda}).
\end{displaymath}
We define the internal $\Hom$ on $\cV_X$ by
\begin{displaymath}
\usHom(V,W) = \bigoplus_{\lambda} \sHom(V \otimes \bS_{\lambda}(\bV), W) \otimes \bS_{\lambda}(\bV).
\end{displaymath}
This is again an object of $\cV_X$. We have the adjunction
\begin{displaymath}
\sHom(U, \usHom(V, W))=\sHom(U \otimes V, W).
\end{displaymath}
The trivial multiplicity space in $\usHom(V,W)$ is $\sHom(V,W)$. When $X$ is affine, we write $\uHom$ in place of $\usHom$.

\begin{proposition}
Suppose $X$ is a point. Let $V \in \cV$ and let $\wt{V} \in \Rep(S_{\ast})$ be its Schur--Weyl dual. Then
\begin{displaymath}
\uHom(V, \bV^{\otimes n}) = \bigoplus_{i+j=n} \Ind_{S_i \times S_j}^{S_n} (\wt{V}_i^* \otimes \bV^{\otimes j}).
\end{displaymath}
\end{proposition}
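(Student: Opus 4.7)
The plan is to combine the definition of $\uHom$ with Schur--Weyl duality, reducing everything to a short computation in $\Rep(S_{\ast})$. Unwinding the definition gives
\[
\uHom(V, \bV^{\otimes n}) = \bigoplus_{\lambda} \Hom_{\cV}(V \otimes \bS_{\lambda}(\bV), \bV^{\otimes n}) \otimes \bS_{\lambda}(\bV),
\]
so the task reduces to identifying each multiplicity space $\Hom_{\cV}(V \otimes \bS_{\lambda}(\bV), \bV^{\otimes n})$ in a form that makes the final regrouping transparent.

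Under the equivalence $\cV \simeq \Rep(S_{\ast})$, the object $\bV^{\otimes n}$ corresponds to the sequence concentrated in degree $n$ with value the regular representation $\bC[S_n]$, while tensor product in $\cV$ corresponds to the induction tensor product on $\Rep(S_{\ast})$. Writing $M_{\lambda}$ for the Specht module for $S_{|\lambda|}$ (the Schur--Weyl dual of $\bS_{\lambda}(\bV)$), I would deduce that the degree $n$ piece of $V \otimes \bS_{\lambda}(\bV)$ has Schur--Weyl dual $\Ind_{S_i \times S_{|\lambda|}}^{S_n}(\wt{V}_i \otimes M_{\lambda})$ with $i = n-|\lambda|$. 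Applying Frobenius reciprocity (using $\bC[S_n] = \Ind_{1}^{S_n} \bC$, so that $\Hom_{S_n}(-,\bC[S_n])$ is just linear duality) together with the self-duality of Specht modules then yields
\[
\Hom_{\cV}(V \otimes \bS_{\lambda}(\bV), \bV^{\otimes n}) = \Ind_{S_i \times S_{|\lambda|}}^{S_n}(\wt{V}_i^* \otimes M_{\lambda}).
\]

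To finish, I would reorganize the outer sum over $\lambda$ by setting $j = |\lambda|$ and $i = n-j$, pull $\wt{V}_i^*$ and the induction functor outside the inner sum, and apply Schur--Weyl duality $\bV^{\otimes j} = \bigoplus_{|\lambda|=j} M_{\lambda} \otimes \bS_{\lambda}(\bV)$ (as $S_j \times \GL_{\infty}$-representation) to collapse the inner sum to $\bV^{\otimes j}$, arriving at the claimed decomposition. There is no substantive obstacle beyond careful bookkeeping of the $S_n$- and $\GL_{\infty}$-actions on either side: the content is just the compatibility of the tensor--Hom adjunction on $\cV$ with the corresponding convolution structure on $\Rep(S_{\ast})$, together with the self-duality of Specht modules, both of which are standard.
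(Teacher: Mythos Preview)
Your proposal is correct and follows essentially the same approach as the paper's own proof: expand the definition of $\uHom$, pass to $\Rep(S_{\ast})$ via Schur--Weyl duality (where $\bV^{\otimes n}$ becomes $\bC[S_n]$ and the tensor product becomes the induction product), use $\Hom_{S_n}(W,\bC[S_n])=W^*$ together with self-duality of Specht modules, and then regroup the sum over $\lambda$ by $j=|\lambda|$ to recover $\bV^{\otimes j}$.
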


\begin{proof}
The coefficient of $\bS_{\lambda}(\bV)$ in $\uHom(V, \bV^{\otimes n})$ is $\Hom(V \otimes \bS_{\lambda}(\bV), \bV^{\otimes n})$. We can compute this $\Hom$ space after applying Schur--Weyl duality. Schur--Weyl converts $\bV^{\otimes n}$ to $\bC[S_n]$, the regular representation in degree $n$, and converts $V \otimes \bS_{\lambda}(\bV)$ to $\bigoplus_i \Ind_{S_i \times S_j}^{S_{i+j}} (\wt{V}_i \otimes \bM_{\lambda})$, where $j=\vert \lambda \vert$. We thus find
\begin{displaymath}
\uHom(V, \bV^{\otimes n}) = \bigoplus_{i,j} \bigoplus_{\vert \lambda \vert=j} \Hom(\Ind_{S_i \times S_j}^{S_{i+j}}(\wt{V}_i \otimes \bM_{\lambda}), \bC[S_n]) \otimes \bS_{\lambda}(\bV).
\end{displaymath}
Since $\bC[S_n]$ is concentrated in degree $n$, only the terms with $i+j=n$ contribute. For an $S_n$-representation $W$, we have $\Hom_{S_n}(W, \bC[S_n])=W^*$. Thus, via the canonical auto-duality of $\bM_{\lambda}$, the above becomes
\begin{displaymath}
\bigoplus_{i+j=n} \bigoplus_{\vert \lambda \vert=j} \Ind_{S_i \times S_j}^{S_n}(\wt{V}_i^* \otimes \bM_{\lambda}) \otimes \bS_{\lambda}(\bV).
\end{displaymath}
Using the formula $\bV^{\otimes j} = \bigoplus_{\vert \lambda \vert=j}(\bM_{\lambda} \otimes \bS_{\lambda}(\bV))$, the result follows.
\end{proof}

Let $A=\bA(\cE)$. Suppose that $M$ and $N$ are $A$-modules. Then $M \otimes_{\cO_X} N$ is naturally an $A^{\otimes 2}$-module, and thus an $A$-module via the comultiplication map $A \to A^{\otimes 2}$. We denote this $A$-module by $M \odot N$. The operation $\odot$ endows $\Mod_A$ with a new symmetric tensor product. In general, this operation does not preserve finiteness properties of $M$ and $N$. However, if $M$ and $N$ are finitely generated and annihilated by a power of the maximal ideal of $A$ then then $M \odot N$ is again finitely generated and annihilated by a power of the maximal ideal.

\begin{proposition} \label{prop:uhom-ten}
The functor $\cV_X \to \Mod_A$ given by $V \mapsto \usHom(A, V)$ is a tensor functor, using the $\odot$ tensor product on $\Mod_A$.
\end{proposition}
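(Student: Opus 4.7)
The plan is to invoke the formalism of monoidal adjunctions. The forgetful functor $U\colon(\Mod_A,\odot)\to(\cV_X,\otimes)$ is strong symmetric monoidal essentially by the definition of $\odot$: the underlying polynomial representation of $M\odot N$ coincides with $M\otimes_{\cO_X}N$, and only the $A$-action differs (coming from the comultiplication $A\to A\otimes A$). Moreover $\usHom(A,-)\colon\cV_X\to\Mod_A$ is right adjoint to $U$, via the natural isomorphism
\[
\Hom_A(M,\usHom(A,V))=\Hom_{\cV_X}(M,V),
\]
which follows from the general adjunction $\Hom(U\otimes V,W)=\Hom(U,\usHom(V,W))$ once one accounts for the $A$-linearity. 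Formal nonsense then produces a canonical lax symmetric monoidal structure
\[
\phi_{V,W}\colon \usHom(A,V)\odot\usHom(A,W)\longrightarrow\usHom(A,V\otimes W),
\]
obtained as the adjoint of the map $U\usHom(A,V)\otimes U\usHom(A,W)\to V\otimes W$ formed by tensoring the two counits. What remains is to show $\phi_{V,W}$ is an isomorphism.

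I would check this by passing to multiplicity spaces. Using the explicit Schur--Weyl description of $\usHom(A,\bV^{\otimes n})$ from the preceding proposition, together with its obvious extension to arbitrary $V$, each side unfolds into a direct sum of induced $S_n$-representations of the form $\Ind_{S_i\times S_j}^{S_n}(\wt A_i^*\otimes\cdots)$. Under this unfolding $\phi_{V,W}$ corresponds to the map assembled from the multiplications $\wt A_i^*\otimes \wt A_j^*\to \wt A_{i+j}^*$ dual to the comultiplication of the commutative, cocommutative Hopf algebra $A=\Sym(\cE\otimes\bV)$. Because this comultiplication is determined by $x\mapsto x\otimes 1+1\otimes x$ on $\cE\otimes\bV$ and $A$ is free as a commutative algebra, the dual multiplication on $\wt A^*$ realizes $\wt A^*$ again as a polynomial algebra whose structure maps precisely match the combinatorics of the induced representations, yielding the claimed isomorphism.

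An alternative would be to reduce to the generating case $V=\bV^{\otimes m}$, $W=\bV^{\otimes n}$ by naturality and by using that $\usHom(A,-)$ is determined by its values on these objects (both sides of $\phi$ preserve enough limits and extend to all of $\cV_X$ by Schur--Weyl duality). Either way, the main obstacle is the combinatorial bookkeeping needed to identify $\phi_{V,W}$ with this explicit algebraic map; the conceptual point is the familiar principle that coinduction from the base category into modules over a commutative bialgebra is strong symmetric monoidal for the comultiplication-defined tensor product $\odot$.
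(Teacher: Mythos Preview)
Your formal setup is sound: the forgetful functor $U\colon(\Mod_A,\odot)\to(\cV_X,\otimes)$ is strong monoidal, $\usHom(A,-)$ is its right adjoint, and this does produce a canonical lax monoidal structure $\phi_{V,W}$. This is exactly the map the paper writes as the composite
\[
\usHom(A,V)\odot\usHom(A,W)\to\usHom(A\otimes A,V\otimes W)\to\usHom(A,V\otimes W),
\]
so you and the paper agree on what needs to be shown. Your reduction to $V=\bV^{\otimes n}$, $W=\bV^{\otimes m}$ is also the same as theirs.

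The gap is in your verification that $\phi_{V,W}$ is an isomorphism. You assert that, after Schur--Weyl, the map ``corresponds to the multiplications $\wt A_i^*\otimes\wt A_j^*\to\wt A_{i+j}^*$ dual to the comultiplication'' and that this ``realizes $\wt A^*$ again as a polynomial algebra whose structure maps precisely match the combinatorics of the induced representations.'' Neither claim is justified, and the second does not obviously imply what you need. Concretely: after the reduction, both sides are $(E^*\oplus\bV)^{\otimes(n+m)}$, and $\phi$ is some $A$-linear endomorphism of this object built from the comultiplication. You have not identified this endomorphism, and saying that the Hopf algebra $A$ is self-dual in some sense does not by itself pin it down or show it is invertible.

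The paper closes this gap with a short trick you are missing. By the very adjunction you invoked, an $A$-module map $M\to\uHom(A,\bV^{\otimes(n+m)})$ is determined by its degree-$(n+m)$ component $M_{n+m}\to\bV^{\otimes(n+m)}$. Hence the endomorphism of $(E^*\oplus\bV)^{\otimes(n+m)}$ is an isomorphism iff it is so in degree $n+m$. But in degree $n+m$ both sides reduce to $\bV^{\otimes(n+m)}$ (replace $A$ by $\bC$ in the defining composite), and there the map is visibly the identity. This one line replaces the unfinished combinatorial analysis in your sketch.
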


\addtocounter{equation}{-1}
\begin{subequations}
\begin{proof}
Let $V,W \in \cV_X$. We have canonical maps
\begin{equation} \label{eq:uhom-ten}
\usHom(A, V) \odot \usHom(A, W) \to \usHom(A \otimes A, V \otimes W) \to \usHom(A, V \otimes W),
\end{equation}
We show that this map is an isomorphism. It suffices to work Zariski locally on $X$, so we may assume $\cE$ is trivial. Since both sides are bi-additive in $V$ and $W$, it suffices to treat the case where each has the form $\cF \otimes \bS_{\lambda}(\bV)$, where $\cF$ is an $\cO_X$-module. But $\cO_X$-modules pull out of these $\Hom$'s, and so we may as well assume $\cF=\cO_X$. The map in question is then pulled back from a point. It thus suffices to treat the case where $X$ is a point and $V$ and $W$ are irreducible; we write $E$ in place of $\cE$. We make one more reduction: instead of taking $V$ and $W$ irreducible, we can assume each is a tensor power of the standard representation, since every irreducible is a summand of such a tensor power.

Let $\wt{A}$ be the Schur--Weyl dual of $A$. We have $\wt{A}_n=E^{\otimes n}$. Thus
\begin{displaymath}
\uHom(A, \bV^{\otimes n})=\bigoplus_{i+j=n} \Ind_{S_i \times S_j}^{S_n} ((E^*)^{\otimes i} \otimes \bV^{\otimes j}) = (E^* \oplus \bV)^{\otimes n}.
\end{displaymath}
Thus with $V=\bV^{\otimes n}$ and $W=\bV^{\otimes m}$, the map \eqref{eq:uhom-ten} takes the form
\begin{displaymath}
(E^* \oplus \bV)^{\otimes n} \otimes (E^* \oplus \bV)^{\otimes m} \to (E^* \oplus \bV)^{\otimes (n+m)}.
\end{displaymath}
We can thus regard it as an endomorphism of the target. By adjunction, to give a map of $A$-modules $M \to \uHom(A, \bV^{\otimes (n+m)})$ is the same as to give a map $M_{n+m} \to \bV^{\otimes (n+m)}$; in particular, an endomorphism of $\uHom(A, \bV^{\otimes (n+m)})$ is an isomorphism if and only if it is so in degree $n+m$. Thus, to prove that the above map is an isomorphism, it suffices to prove that it is an isomorphism in degree $n+m$. Now, the degree $n+m$ piece of each side is obtained by replacing $A$ with $\bC$ in \eqref{eq:uhom-ten}. This map is clearly an isomorphism, and so the proof is complete.
\end{proof}
\end{subequations}

\subsection{Some remarks on injective objects}

We let $\sExt$ be the sheaf version of $\Ext$ for $\cO_X$-modules. 

\begin{proposition} \label{prop:ext-specialize}
Let $A$ be a tca over $X$ and let $M$ and $N$ be $A$-modules with $n=\ell(M)<\infty$. Then there is a natural isomorphism
\begin{displaymath}
\sExt^i_A(N, M) \to \sExt^i_{|A(\bC^n)|}(N(\bC^n), M(\bC^n))^{\GL_n}.
\end{displaymath}
\end{proposition}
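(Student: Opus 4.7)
The plan is to compute both sides using a projective resolution of $N$ in $\Mod_A$. Since $\Mod_A$ has enough projectives of the form $A \otimes V$ with $V \in \cV_X$, pick such a resolution $P_\bullet \to N$, arranging (working locally on $X$ if necessary) that each $V_k$ has locally free multiplicity spaces over $\cO_X$. The evaluation functor $V \mapsto V(\bC^n)$ on $\cV_X$ is exact: it sends an isotypic piece $V_\lambda \otimes \bS_\lambda(\bV)$ to $V_\lambda \otimes \bS_\lambda(\bC^n)$, killing those with $\ell(\lambda) > n$, and these operations preserve exactness. Thus $P_\bullet(\bC^n) \to N(\bC^n)$ is a resolution of $A(\bC^n)$-modules by free modules $P_k(\bC^n) = A(\bC^n) \otimes V_k(\bC^n)$, adequate for computing $\sExt$ over $A(\bC^n)$.

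It then remains to produce, for any free $A$-module $P = A \otimes V$ of this type, a natural isomorphism
\begin{equation*}
\sHom_A(P, M) \;\cong\; \sHom_{A(\bC^n)}(P(\bC^n), M(\bC^n))^{\GL_n}.
\end{equation*}
By adjunction the left side is $\sHom_{\cV_X}(V, M) = \prod_\lambda \sHom_{\cO_X}(V_\lambda, M_\lambda)$, and only $\ell(\lambda) \le n$ contributes since $\ell(M) \le n$. The right side equals $\sHom_{\cO_X}(V(\bC^n), M(\bC^n))^{\GL_n}$, and decomposing $V(\bC^n)$ and $M(\bC^n)$ into $\GL_n$-isotypic components and applying Schur's lemma yields the same product $\prod_{\ell(\lambda) \le n} \sHom_{\cO_X}(V_\lambda, M_\lambda)$. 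This is the sheaf-level analogue of the fully faithfulness of the evaluation functor $\cV_{\le n} \to \Rep(\GL_n)$ recalled in \S\ref{s:tca}, and the identification is visibly natural in $P$.

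Assembling these isomorphisms at each term of the resolution and passing to $i$-th cohomology of $\sHom_A(P_\bullet, M)$ gives $\sExt^i_A(N, M)$ on one side and the $i$-th cohomology of the $\GL_n$-invariant complex on the other. Because $\GL_n$ is reductive over $\bC$ and the complex carries a polynomial $\GL_n$-action, taking $\GL_n$-invariants is exact and commutes with cohomology, producing $\sExt^i_{A(\bC^n)}(N(\bC^n), M(\bC^n))^{\GL_n}$ as desired. The main technical point to monitor is whether $P_\bullet(\bC^n)$ actually computes $\sExt$ over the $\cO_X$-algebra $A(\bC^n)$; this is handled by working locally on $X$ and choosing the resolving objects $V_k$ to have locally free multiplicity spaces, which does not affect the structure of the argument.
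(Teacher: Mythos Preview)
Your proof is correct and follows essentially the same approach as the paper: reduce to an affine cover, choose a resolution of $N$ by modules $A \otimes V_k$ with locally free multiplicity spaces, identify $\sHom_A(P_k,M)$ with $\sHom_{|A(\bC^n)|}(P_k(\bC^n),M(\bC^n))^{\GL_n}$ using $\ell(M)\le n$, and pass to cohomology using semisimplicity of $\GL_n$ to commute invariants with cohomology. The only difference is presentational: you spell out the key Hom identification via adjunction and Schur's lemma, whereas the paper appeals directly to the fully faithfulness of evaluation on $\cV_{\le n}$.
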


Let us clarify one point here: $|A(\bC^n)|$ is the underlying algebra of $A(\bC^n)$ without any equivariance issues. Hence, $\sExt_A$ deals with $\GL$-equivariant extensions of the algebra $A$, while $\sExt_{|A(\bC^n)|}$ deals with extensions of the underlying algebra $A(\bC^n)$. The latter space carries an action of $\GL_n$.

\begin{proof}
Evaluation gives a map
\[
\sExt^i_A(N,M) \to \sExt^i_{|A(\bC^n)|}(N(\bC^n), M(\bC^n))^{\GL_n}
\]
and it suffices to prove that it is an isomorphism over some affine cover, so we now assume that $X$ is affine. Let $P_{\bullet} \to N$ be a locally free resolution. Then $\sHom_A(P_{\bullet}, M)$ computes $\sExt_A^{\bullet}(N, M)$. Since $\ell(M)=n$, the natural map
\begin{displaymath}
\sHom_A(P_{\bullet}, M) \to \sHom_{|A(\bC^n)|,\GL_n}(P_{\bullet}(\bC^n), M(\bC^n))
\end{displaymath}
is an isomorphism. Note that $\sHom_{|A(\bC^n)|}(P_{\bullet}(\bC^n), M(\bC^n))$ is an algebraic representation of $\GL_n$, and thus is semi-simple as a $\GL_n$-representation, and so formation of $\GL_n$ invariants commutes with formation of cohomology. Thus the target complex computes
\begin{displaymath}
\sExt_{|A(\bC^n)|}^{\bullet}(N(\bC^n), M(\bC^n))^{\GL_n},
\end{displaymath}
and so the result follows.
\end{proof}

We write $\injdim(M)$ for the injective dimension of an object $M$ in an abelian category. For a scheme $X$, we write $\cdim(X)$ for the cohomological dimension of $X$: this is the maximum $i$ for which $\rH^i(X, -)$ is non-zero on quasi-coherent sheaves. We note that $\cdim(X) \le \dim(X)$ (Grothendieck vanishing) and $\cdim(X)=0$ if $X$ is affine (Serre vanishing).

\begin{proposition}
Suppose $X$ is regular. Let $A=\Sym(V)$, where $V \in \cV_X$ has all multiplicity spaces locally free of finite rank. If $M$ is an $A$-module with $n=\ell(M)$ then
\begin{displaymath}
\injdim(M) \le \dim_\bC(V(\bC^n))+\dim(X)+\cdim(X).
\end{displaymath}
In particular, every bounded $A$-module has finite injective dimension. $($Recall our standing assumption that $\dim(X)<\infty$.$)$
\end{proposition}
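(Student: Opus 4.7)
The plan is to bound $\Ext^i_A(N,M)$ for arbitrary $N$ and show vanishing once $i$ exceeds the stated quantity. The argument combines two ingredients: Proposition~\ref{prop:ext-specialize}, which reduces $\GL$-equivariant sheaf-Ext to ordinary sheaf-Ext over a commutative sheaf of rings on $X$, and a local-to-global spectral sequence on $X$ that converts $\sExt$ into $\Ext$.

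By Proposition~\ref{prop:ext-specialize},
\[
\sExt^i_A(N,M) \;\cong\; \sExt^i_{|A(\bC^n)|}\bigl(N(\bC^n), M(\bC^n)\bigr)^{\GL_n}.
\]
The hypothesis on $V$ makes $V(\bC^n)$ locally free of finite rank $r := \dim_\bC V(\bC^n)$ on $X$, so, Zariski locally, the ring $|A(\bC^n)| = \Sym(V(\bC^n))$ is a polynomial ring $\cO_X[x_1,\dots,x_r]$. Since $X$ is smooth of dimension $\dim X$, the stalks of $|A(\bC^n)|$ are regular local rings of dimension at most $r + \dim X$; in particular their global homological dimension is bounded by the same, so $\sExt^i_{|A(\bC^n)|}(-,-) = 0$ for $i > r + \dim X$. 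Reductivity of $\GL_n$ makes invariants exact, and the same vanishing therefore passes to $\sExt^i_A(N,M)$.

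To go from sheaf-Ext to ordinary $\Ext$, I would invoke the Grothendieck spectral sequence
\[
\rH^p\bigl(X, \sExt^q_A(N,M)\bigr) \;\Longrightarrow\; \Ext^{p+q}_A(N,M),
\]
arising from the identification $\Hom_A(N,M) = \rH^0(X, \sHom_A(N,M))$ applied to an injective resolution of $M$ in $\Mod_A$. Each $\sExt^q_A(N,M)$ is a quasi-coherent $\cO_X$-module (it is the $\GL_n$-invariants of a quasi-coherent sheaf-Ext), so $\rH^p$ vanishes on it for $p > \cdim X$. Combining with the preceding paragraph, the abutment vanishes for $p+q > r + \dim X + \cdim X$, and since $N$ was arbitrary this bounds $\injdim(M)$ as claimed. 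For the final assertion, a bounded $M$ has $n = \ell(M) < \infty$, the hypotheses force $\dim_\bC V(\bC^n)$ finite, and the standing assumption $\dim X < \infty$ gives $\cdim X \le \dim X < \infty$ by Grothendieck vanishing.

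The step I expect to require the most care is the local-to-global spectral sequence itself in the equivariant setting (namely confirming that $\rR\Hom_A = \rR\Gamma(X,-) \circ \rR\sHom_A$), together with the quasi-coherence of $\sExt^q_A$ as a sheaf on $X$. These are standard-shape facts, but they merit genuine verification in the tca category $\Mod_A$ rather than being imported from the ordinary sheaf-of-modules setting.
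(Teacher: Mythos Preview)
Your proposal is correct and essentially identical to the paper's proof: both invoke Proposition~\ref{prop:ext-specialize} to bound $\sExt^i_A(N,M)$ via the regularity of $|A(\bC^n)|$ (the paper phrases this as $\Spec(A(\bC^n))$ being smooth, an affine bundle over smooth $X$), and then apply the local-to-global spectral sequence $\rH^p(X,\sExt^q_A(N,M)) \Rightarrow \Ext^{p+q}_A(N,M)$ to finish. The concern you flag about the spectral sequence in the equivariant setting is reasonable but is simply asserted in the paper without further comment.
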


\begin{proof}
Since $\Spec(A(\bC^n))$ is regular, being an affine bundle over the smooth scheme $X$, we have $\sExt^i_{A(\bC^n)}(-,-)=0$ identically for $i> \dim(A(\bC^n))$. From the previous proposition, we thus have $\sExt^i_A(N,M)=0$ for $i>\dim(A(\bC^n))$. Next, we have a local-to-global spectral sequence
\[
\rE^{p,q}_2 = \rH^p(X; \sExt^q_A(N,M)) \Longrightarrow \Ext^{p+q}_A(N,M)
\]
which implies that $\Ext^i_A(N,M) = 0$ whenever $i>\dim(A(\bC^n)) + \cdim(X)$. As $\dim(A(\bC^n))=\dim(X)+\dim(V(\bC^n))$, the result follows.
\end{proof}

\begin{corollary} \label{cor:fin-inj-dim}
Suppose $X$ is regular. Let $A=\bA(\cE)$ for a vector bundle $\cE$ on $X$ of rank $d$, and let $M$ be a bounded $A$-module. Then 
\begin{displaymath}
\injdim(M) \le \ell(M) \cdot \rank(\cE) + \dim(X) + \cdim(X).
\end{displaymath}
In particular, all finitely generated $A$-modules have finite injective dimension.
\end{corollary}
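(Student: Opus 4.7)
The plan is to apply the preceding proposition directly, taking $V = \bV \otimes \cE = \cE\langle 1\rangle$ so that $A = \Sym(V) = \bA(\cE)$. First I would verify the hypothesis of that proposition: the representation $V$ is concentrated in degree one, with only the $(1)$-multiplicity space nonzero and equal to $\cE$, which is locally free of finite rank by assumption. So the proposition applies.

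Next I would compute the relevant dimension. Since $V = \bV \otimes \cE$ as a polynomial functor, we have $V(\bC^n) = \bC^n \otimes \cE$, a locally free $\cO_X$-module of rank $n \cdot \rank(\cE)$. The quantity ``$\dim_{\bC}(V(\bC^n))$'' appearing in the preceding proposition must be interpreted as this rank — an interpretation forced by the proof of the proposition, where it combines with $\dim(X)$ to recover $\dim(A(\bC^n))$. Substituting $n = \ell(M)$ into the proposition's bound yields precisely
\[
\injdim(M) \le \ell(M) \cdot \rank(\cE) + \dim(X) + \cdim(X),
\]
which is the first claim.

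For the ``in particular'' clause, I would invoke the observation made earlier in this section: any finitely generated $A$-module is a quotient of $A \otimes V$ for some finite-length $V \in \cV_X$, and since both $A$ and $V$ are bounded, so is $M$. The bound just established then gives finite injective dimension, using the standing assumption $\dim(X) < \infty$ (so that $\cdim(X) \le \dim(X)$ is also finite by Grothendieck vanishing, as noted in the discussion above the preceding proposition).

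I do not anticipate any real obstacle: this is essentially a direct specialization of the preceding proposition. The only point requiring a moment's care is the reconciliation between ``$\dim_{\bC}$'' in the statement of that proposition and the rank of the locally free $\cO_X$-module $V(\bC^n)$ in the present setting, but this is immediate from inspecting the argument.
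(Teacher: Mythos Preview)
Your proposal is correct and is exactly the intended argument: the corollary has no separate proof in the paper because it is a direct specialization of the preceding proposition with $V = \cE\langle 1\rangle$, and you have carried out that specialization correctly, including the identification $\dim_{\bC}(V(\bC^n)) = n \cdot \rank(\cE)$ and the observation that finitely generated $A$-modules are bounded.
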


Let $A$ be a tca over $X$. Let $\cC$ be the category of $A$-modules and let $\cC_{\le n}$ be the full subcategory on $A$-modules $M$ with $\ell(A) \le M$. We have an inclusion functor $\cC_{\le n} \to \cC$ and a truncation functor $\cC \to \cC_{\le n}$ mapping $M$ to $M_{\le n}$. These functors are both exact, and the inclusion functor is the right adjoint to the truncation functor. It follows that the inclusion functor preserves injectives, that is, if $I$ is an injective object of $\cC_{\le n}$ then it is also an injective object of $\cC$. From this, we deduce the following useful result:

\begin{proposition} \label{prop:bdinj}
Let $A$ be a tca and let $M$ be an $A$-module with $\ell(M) \le n$. Then there exists an injection $M \to I$ where $I$ is an injective $A$-module with $\ell(I) \le n$. 
\end{proposition}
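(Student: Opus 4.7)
The claim is essentially immediate from the preceding paragraph, once one knows that $\cC_{\le n}$ has enough injectives. The plan is therefore to verify that $\cC_{\le n}$ is itself a Grothendieck abelian category, conclude that it has enough injectives, and then invoke the remark that the inclusion $\iota \colon \cC_{\le n} \to \cC$ preserves injective objects.

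First I would check that $\cC_{\le n}$ is closed under arbitrary colimits in $\cC$. Since truncation $T = (-)_{\le n}$ is exact and is the left adjoint of $\iota$, it commutes with colimits, so a colimit in $\cC$ of objects of $\cC_{\le n}$ coincides with its own truncation and thus lies in $\cC_{\le n}$. For a generating set, recall that the free modules $A \otimes \bS_\lambda(\bV)$ generate $\cC$. By the adjunction $\Hom_{\cC}(A \otimes \bS_\lambda(\bV), \iota N) = \Hom_{\cC_{\le n}}((A \otimes \bS_\lambda(\bV))_{\le n}, N)$ for $N \in \cC_{\le n}$, the truncated family $\{(A \otimes \bS_\lambda(\bV))_{\le n}\}_\lambda$ is a generating set for $\cC_{\le n}$. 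Hence $\cC_{\le n}$ is Grothendieck abelian, so by the standard existence theorem (Grothendieck--Tohoku) it has enough injectives.

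Now choose a monomorphism $M \hookrightarrow I$ in $\cC_{\le n}$ with $I$ injective in $\cC_{\le n}$. By the remark immediately preceding the statement, $\iota$ preserves injectives (as it is right adjoint to the exact functor $T$), so $I$ is injective in $\cC$ as well. Since $I \in \cC_{\le n}$ we have $\ell(I) \le n$, yielding the desired embedding.

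There is no real obstacle here: the only nontrivial ingredient is recognising $\cC_{\le n}$ as a Grothendieck category, and even this is essentially routine given the exactness of $T$ and the availability of generators in $\cC$.
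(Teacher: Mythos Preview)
Your proof is correct and follows the same approach as the paper: verify that $\cC_{\le n}$ is Grothendieck (hence has enough injectives), embed $M$ into an injective $I$ there, and then use that the inclusion $\iota$ is right adjoint to the exact truncation functor to conclude $I$ is injective in $\cC$. One small quibble: the fact that $T$ preserves colimits does not by itself show that a colimit in $\cC$ of objects of $\cC_{\le n}$ lies in $\cC_{\le n}$; this is more directly seen by noting that $\ell(\bigoplus M_i) = \sup_i \ell(M_i)$ and $\ell(\mathrm{coker}\,f) \le \ell(N)$ for $f\colon M \to N$, but it does not affect the overall argument.
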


\begin{proof}
The category $\cC_{\le n}$ is Grothendieck and therefore has enough injectives. We can thus find an injection $M \to I$ where $I$ is an injective object of $\cC_{\le n}$. By the above observation, $I$ is injective in the category of all $A$-modules.
\end{proof}

\begin{remark}
In particular, we get an injective resolution $I^{\bullet}$ of $M$ with $\ell(I^i) \le \ell(M)$ for all $i$, so we also get bounds on $\ell$ of some derived functors, like local cohomology (see \S \ref{s:rank}).
\end{remark}

\begin{proposition} \label{prop:injOX}
Let $A=\bA(\cE)$ and let $I$ be an injective $A$-module. Then the $\bS_\lambda$-isotypic component $I_{\lambda}$ of $I$ is an injective $\cO_X$-module for all $\lambda$. 
\end{proposition}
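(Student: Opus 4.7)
The plan is to exhibit the $\lambda$-isotypic functor $M \mapsto M_\lambda$ as the right adjoint of an exact functor, from which preservation of injectives follows formally. For an $\cO_X$-module $\cF$, define
\[
T_\lambda(\cF) \;=\; A \otimes_{\cO_X} (\cF \otimes_{\bC} \bS_\lambda(\bV)),
\]
regarded as an $A$-module via the first factor. Composing the free/forget adjunction between $\Mod_A$ and $\cV_X$ with the obvious adjunction in $\cV_X$ that isolates the $\lambda$-multiplicity space (which uses semisimplicity of $\cV$) yields a natural isomorphism
\[
\Hom_A(T_\lambda(\cF),\, M) \;\cong\; \Hom_{\cV_X}(\cF \otimes \bS_\lambda(\bV),\, M) \;\cong\; \Hom_{\cO_X}(\cF,\, M_\lambda),
\]
identifying $T_\lambda$ as a left adjoint to $M \mapsto M_\lambda$.

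The remaining step is to check that $T_\lambda$ is exact, which reduces to the assertion that $A$ is flat over $\cO_X$. By the Cauchy decomposition $A = \bigoplus_\mu \bS_\mu(\cE) \otimes \bS_\mu(\bV)$, and each $\bS_\mu(\cE)$ is locally free of finite rank since $\cE$ is a vector bundle; hence, viewed as an $\cO_X$-module, $A$ is a direct sum of flat sheaves. Tensoring further by the $\bC$-vector space $\bS_\lambda(\bV)$ is harmless, so $T_\lambda$ is exact, and its right adjoint therefore sends injective $A$-modules to injective $\cO_X$-modules, which is the claim.

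There is no substantial obstacle. The only point worth flagging is the infinite direct sum appearing in $A$: arbitrary direct sums of flat modules are flat because tensor product commutes with colimits, so the flatness of each $\bS_\mu(\cE)$ passes to the whole of $A$. Everything else is routine adjunction formalism.
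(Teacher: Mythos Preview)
Your proof is correct and is essentially the same argument as the paper's: both show that $M \mapsto M_\lambda$ is right adjoint to an exact functor built from tensoring with the $\cO_X$-flat object $A$, hence preserves injectives. The paper factors the adjunction through the forgetful functor $\Mod_A \to \cV_X$ (right adjoint to the exact $-\otimes A$) and then observes that $\cV_X$ is a product of copies of $\Mod_X$, whereas you compose the two adjunctions into a single left adjoint $T_\lambda$; this is only a cosmetic difference in packaging.
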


\begin{proof}
The forgetful functor $\Mod_A \to \cV_X$ is right adjoint to the exact functor $- \otimes A \colon \cV_X \to \Mod_A$, and therefore takes injectives to injectives. Thus $I$ is injective as an object of $\cV_X$. As an abelian category, $\cV_X$ is simply the product of the categories $\Mod_X$ indexed by partitions, and so the injectivity of $I$ in $\cV_X$ implies the injectivity of $I_{\lambda}$ in $\Mod_X$. 
\end{proof}

\subsection{Pushforwards}

Let $f \colon Y \to X$ be a proper map of schemes, let $\cE_X$ be a finite rank vector bundle on $X$, and let $\cE_Y=f^*(\cE_X)$ be its pullback to $Y$. Let $A_X=\bA(\cE_X)$ and $A_Y=\bA(\cE_Y)$. If $M$ is an $A_Y$-module then $\rR^i f_*(M)$ is naturally an $A_X$-module.

\begin{proposition} \label{prop:finpushfwd}
Suppose $M$ is a finitely generated $A_Y$-module. Then $\rR^i f_*(M)$ is a finitely generated $A_X$-module, for all $i \ge 0$.
\end{proposition}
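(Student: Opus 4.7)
The plan is to reduce to the classical coherence theorem for proper pushforward. Since $M$ is a finitely generated $A_Y$-module, the earlier remark shows $n := \ell(M) < \infty$; in particular, $\rR^i f_*(M)$ also has $\ell \le n$, since it is computed componentwise on multiplicity spaces by $\rR^i f_*$. So the problem takes place entirely in the subcategory of modules bounded by $n$, where evaluation at $\bC^n$ is fully faithful and identifies $A_Y$-modules (resp.\ $A_X$-modules) with polynomial $\ell \le n$ component with $A_Y(\bC^n)$-modules (resp.\ $A_X(\bC^n)$-modules) carrying a compatible polynomial $\GL_n$-action.

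First I would set up the geometry: let $Z_X = \Spec_{\cO_X}(A_X(\bC^n))$, the geometric vector bundle of rank $n \cdot \rank(\cE_X)$ over $X$, and similarly $Z_Y$. The identity $A_Y(\bC^n) = \Sym(\bC^n \otimes f^*\cE_X) = f^* A_X(\bC^n)$ gives $Z_Y = Z_X \times_X Y$, and the base change of $f$ is a proper map $\tilde f \colon Z_Y \to Z_X$. Under the equivalence above, $M$ corresponds to a coherent sheaf $\cM$ on $Z_Y$ (finite generation as an $A_Y$-module translates to coherence on $Z_Y$: a finite set of $\GL_\infty$-generators for $M$ evaluates to a $\GL_n$-stable, $\cO_Y$-coherent generating set of $M(\bC^n)$). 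By Grothendieck's coherence theorem, $\rR^i \tilde f_*(\cM)$ is then coherent on $Z_X$, and it inherits a polynomial $\GL_n$-action from $\cM$.

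Next I would verify the compatibility $\rR^i \tilde f_*(M(\bC^n)) \cong (\rR^i f_*(M))(\bC^n)$ as $A_X(\bC^n)$-modules with $\GL_n$-action. Since $Z_Y \to Y$ and $Z_X \to X$ are affine, the underlying $\cO_X$-module of $\rR^i \tilde f_*(M(\bC^n))$ equals $\rR^i f_*$ applied to $M(\bC^n)$ viewed as an $\cO_Y$-module, i.e.\ to $\bigoplus_{\lambda} \bS_{\lambda}(\bC^n) \otimes M_{\lambda}$. Because $X$ is noetherian, $\rR^i f_*$ commutes with filtered direct sums, so this equals $\bigoplus_{\lambda} \bS_{\lambda}(\bC^n) \otimes \rR^i f_*(M_{\lambda})$, which is exactly $(\rR^i f_*(M))(\bC^n)$ by the definition of $\rR^i f_*$ on $\cV_Y$. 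Translating back through the equivalence, $\rR^i f_*(M)$ corresponds to a coherent, $\GL_n$-equivariant sheaf on $Z_X$, hence is finitely generated over $A_X$.

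The main subtlety is ensuring that ``coherent as a $\GL_n$-equivariant sheaf on $Z_X$'' really gives a finitely generated $A_X$-module in the $\GL_\infty$ sense (and conversely on the $Y$ side, to conclude that $\cM$ is coherent to begin with). The point is that a $\GL_n$-stable, $\cO_X$-coherent generating set decomposes as $\bigoplus_{\lambda \in S} \bS_\lambda(\bC^n) \otimes N'_\lambda$ over a finite set $S$ of partitions with each $N'_\lambda$ coherent, because the $\GL_n$-orbit of a coherent generating set $W$ sits inside the finite sum of isotypic pieces hit by $W$, and within each piece the orbit has the form $\bS_\lambda(\bC^n) \otimes N'_\lambda$ with $N'_\lambda$ the $\cO_X$-coherent image of $\bS_\lambda(\bC^n)^* \otimes W \to N_\lambda$. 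Once this dictionary is in hand, the argument is essentially formal.
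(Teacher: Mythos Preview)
Your argument is correct and complete (modulo the minor slip that it is noetherianity of $Y$, not $X$, that makes $\rR^i f_*$ commute with direct sums), but it is a genuinely different proof from the paper's.

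The paper argues by descending induction on $i$: the functor $\rR^i f_*$ vanishes for $i>\dim(Y)$, and for the inductive step one chooses a surjection $A_Y \otimes V \to M$ with $V \in \cV_Y^{\fgen}$, obtaining
\[
\rR^i f_*(A_Y \otimes V) \to \rR^i f_*(M) \to \rR^{i+1} f_*(N).
\]
The right term is finitely generated by induction, and the left term is $A_X \otimes \rR^i f_*(V)$ by the projection formula, finitely generated because classical coherence applies to $\rR^i f_*(V)$. Noetherianity of $A_X$ then finishes the argument. So the paper never leaves the infinite-dimensional setting: it reduces to classical coherence only on the multiplicity space $V$, and uses the projection formula plus noetherianity of the tca to propagate finiteness. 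Your approach instead specializes the whole module to $\bC^n$, replaces $f$ by the proper base change $\tilde f \colon Z_Y \to Z_X$ between finite-type schemes, and invokes Grothendieck coherence once. This is more geometric and arguably more direct, but it requires checking the dictionary carefully: that $\rR^i f_*$ on $\Mod_{A_Y}$ agrees with the componentwise definition on $\cV_Y$ (which uses that injective $A_Y$-modules have injective multiplicity spaces, Proposition~\ref{prop:injOX}), that evaluation intertwines with $\rR^i \tilde f_*$, and that coherence on $Z_X$ pulls back to finite generation over $A_X$ (your last paragraph). The paper's route sidesteps all of this bookkeeping at the cost of an induction; yours front-loads it but then appeals to a single classical theorem.
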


\begin{proof}
We prove the result by descending induction on $i$. To begin, note that $\rR^i f_*=0$ for $i>\dim(Y)$. (Recall our assumption that $\dim(Y)$ is finite.) Now suppose the result has been proved for $i+1$ and let us prove it for $i$. Since $M$ is finitely generated, we can pick a short exact sequence
\begin{displaymath}
0 \to N \to A_Y \otimes V \to M \to 0
\end{displaymath}
where $V$ is a finitely generated object of $\cV_Y$. Note that $N$ is a finitely generated $A_Y$-module by noetherianity. From the above, we obtain an exact sequence
\begin{displaymath}
\rR^i f_*(A_Y \otimes V) \to \rR^i f_*(M) \to \rR^{i+1} f_*(N).
\end{displaymath}
Now, $\rR^{i+1} f_*(N)$ is finitely generated by induction, and so the image of $\rR^i f_*(M)$ in it is finitely generated by noetherianity. Since $A_Y=f^*(A_X)$, the projection formula gives $\rR^i f_*(A_Y \otimes V)=A_X \otimes \rR^i f_*(V)$. Since $f$ is proper, $\rR^i f_*(V)$ is a finitely generated object of $\cV_X$, and so the result follows.
\end{proof}

\begin{corollary} \label{cor:finpushfwd}
The functor $\rR f_*$ carries $\rD^b_{\fgen}(A_Y)$ into $\rD^b_{\fgen}(A_X)$.
\end{corollary}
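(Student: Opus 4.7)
The plan is to deduce the corollary from Proposition \ref{prop:finpushfwd} by a standard spectral sequence argument (or, equivalently, by induction on the amplitude of the complex). The only new ingredients are: (i) every finitely generated $A_Y$-module has finitely generated higher pushforwards, and (ii) $\rR^p f_*$ vanishes for $p > \dim(Y)$, which is finite by our standing assumption. Both are already in hand: (i) is exactly Proposition \ref{prop:finpushfwd}, and (ii) was noted at the start of its proof.

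Concretely, I would take $M^{\bullet} \in \rD^b_{\fgen}(A_Y)$ and consider the hypercohomology spectral sequence
\begin{displaymath}
E_2^{p,q} = \rR^p f_*(\rH^q(M^{\bullet})) \Longrightarrow \rH^{p+q}(\rR f_*(M^{\bullet})).
\end{displaymath}
Since $M^{\bullet}$ is bounded, only finitely many $q$ contribute, and each $\rH^q(M^{\bullet})$ is a finitely generated $A_Y$-module; by Proposition \ref{prop:finpushfwd}, every $E_2^{p,q}$ is therefore a finitely generated $A_X$-module. By (ii) the spectral sequence is also supported in the bounded range $0 \le p \le \dim(Y)$, so its abutment is bounded and each abutment term is a subquotient of a finite direct sum of finitely generated $A_X$-modules, hence finitely generated by noetherianity of $A_X$.

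If one prefers to avoid spectral sequences, the same conclusion can be reached by induction on the amplitude $b-a$ of a representative complex concentrated in degrees $[a,b]$: truncate to obtain a distinguished triangle
\begin{displaymath}
\tau_{\le a}(M^{\bullet}) \to M^{\bullet} \to \tau_{>a}(M^{\bullet}) \to,
\end{displaymath}
apply $\rR f_*$, and use that $\tau_{\le a}(M^{\bullet}) = \rH^a(M^{\bullet})[-a]$ has finitely generated bounded pushforward by Proposition \ref{prop:finpushfwd} together with the vanishing of $\rR^p f_*$ for $p > \dim(Y)$. The long exact sequence in cohomology, combined with the inductive hypothesis for $\tau_{>a}$, yields the result.

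There is no real obstacle; the only subtlety is making sure the output stays in $\rD^b$ (not just $\rD^+$), which is precisely why we need the finite cohomological dimension of $\rR f_*$ guaranteed by $\dim(Y) < \infty$.
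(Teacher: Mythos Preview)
Your proof is correct and is precisely the standard deduction the paper has in mind; the paper does not even spell out a proof for this corollary, treating it as immediate from Proposition~\ref{prop:finpushfwd} together with the fact that $\rR^p f_*$ vanishes for $p>\dim(Y)$. Your spectral sequence (or truncation-triangle) argument is exactly how one unpacks that.
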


\section{The prime spectrum} \label{sec:spec}

\subsection{The spectrum}

Let $A$ be a tca. An ideal $I \subset A$ is {\bf prime} if for any ideals $\fa,\fb$, we have that $\fa \fb \subset I$ implies $\fa \subset I$ or $\fb \subset I$. We define $\Spec(A)$ to be the set of prime ideals of $A$, and equip it with the Zariski topology. If $\ell(A) \le n$, then $I$ is prime if and only if $\vert I(\bC^n) \vert$ is a prime ideal in $\vert A(\bC^n) \vert$ (see \cite[\S\S 8.5, 8.6]{expos}, and note that ``domain'' and ``weak domain'' coincide in the bounded case). In particular, $\Spec(A)$ coincides with the set of $\GL_n$ fixed points in $\Spec(\vert A(\bC^n) \vert)$, given the subspace topology.

The spectrum of $A$ is a useful tool for obtaining a coarse picture of the module theory of $A$. In this section, we will determine the spectrum of $\bA(\cE)$, and deduce some consequences for modules.

\subsection{The total Grassmannian} \label{ss:total-grass}

Let $X$ be a scheme and let $\cE$ be a vector bundle of rank $d$ over $X$. For each $0 \le r \le d$ we have the Grassmannian $\Gr_r(\cE)$ of $r$-dimensional quotients of $\cE$, which is a scheme over $X$. To be precise, given an $X$-scheme $T \to X$, a morphism $f \colon T \to \Gr_r(\cE)$ is given by the datum of a short exact sequence $0 \to \cE_1 \to f^*(\cE) \to \cE_2 \to 0$ of locally free sheaves on $T$ such that the rank of $\cE_2$ is $r$. 

Given $0 \le r \le s \le d$, let $\Fl_{r,s}(\cE)$ be the partial flag variety parametrizing surjections $\cE \to \cQ_s \to \cQ_r$ where $\cQ_r$ has rank $r$ and $\cQ_s$ has rank $s$ (we mean this in the functor of points language as above). There are projection maps $\pi_s \colon \Fl_{r,s}(\cE) \to \Gr_s(\cE)$ and $\pi_r \colon \Fl_{r,s}(\cE) \to \Gr_r(\cE)$. Given a subset $Z$ of $\Gr_s(\cE)$, we let $Z^{(r)} \subset \Gr_r(\cE)$ be $\pi_r(\pi_s^{-1}(Z))$. If $Z$ is closed then so is $Z^{(r)}$ (since $\pi_r$ is proper), and if $Z$ is irreducible then so is $Z^{(r)}$ (since $\pi^{-1}_s(Z)$ is irreducible, as $\pi_s$ is a fiber bundle with irreducible fibers). Explicitly, $Z^{(r)}$ consists of all $r$-dimensional quotients of a space in $Z$.

We now define a topological space $\Gr(\cE)$ called the {\bf total Grassmannian} of $\cE$. As a set, $\Gr(\cE)$ is the disjoint union of the $\vert \Gr_r(\cE) \vert$ for $0 \le r \le d$, including the non-closed points. A subset $Z$ of $\Gr(\cE)$ is closed if each set $Z_r=Z \cap \Gr_r(\cE)$ is Zariski closed in $\Gr_r(\cE)$ and $Z$ is downwards closed in the sense that $Z_s^{(r)} \subset Z_r$ for all $r \le s$. There is a natural continuous map $\Gr(\cE) \to \vert X \vert$.

The discussion above gives:

\begin{lemma} \label{lem:closure}
Let $Z \subset \Gr_r(\cE)$ be closed. Then the closure of $Z$ in $\Gr(\cE)$ is the set of all quotients of a space in $Z$.
\end{lemma}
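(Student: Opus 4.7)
The plan is to identify the closure of $Z$ as the explicit set
\[
\widetilde{Z} = \bigsqcup_{s \le r} Z^{(s)} \subset \Gr(\cE),
\]
where $Z^{(r)} = Z$ and $Z^{(s)}$ for $s \le r$ is the image in $\Gr_s(\cE)$ of $\pi_s^{-1}(Z) \subset \Fl_{s,r}(\cE)$ under $\pi_s$. By the description of $Z^{(s)}$ as the set of $s$-dimensional quotients of spaces in $Z$ (recorded just before the lemma), this is precisely the set of all quotients of a space in $Z$.

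First I would verify that $\widetilde{Z}$ is closed in $\Gr(\cE)$. Each stratum $Z^{(s)}$ is closed in $\Gr_s(\cE)$ by the properness of $\pi_s$, as already noted in the paragraph preceding the lemma. For downward closure I must check $(\widetilde{Z}_t)^{(s)} \subset \widetilde{Z}_s$ whenever $s \le t$. The case $t > r$ is vacuous since $\widetilde{Z}_t = \emptyset$; if $t \le r$, the inclusion amounts to the transitivity identity $(Z^{(t)})^{(s)} = Z^{(s)}$ for $s \le t \le r$. This identity is clear functorially: any $s$-dimensional quotient of a $t$-dimensional quotient of $V \in Z$ is an $s$-dimensional quotient of $V$, and conversely any $s$-dimensional quotient $V \twoheadrightarrow W$ factors through some $t$-dimensional intermediate quotient. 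I would spell this out on $T$-points using the Fl functors $\Fl_{s,t,r}$ and the projections to the three Grassmannians.

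For minimality, suppose $Y \subset \Gr(\cE)$ is closed and contains $Z$. Then $Z \subset Y_r$, so $\pi_s^{-1}(Z) \subset \pi_s^{-1}(Y_r)$ inside $\Fl_{s,r}(\cE)$ for any $s \le r$, whence $Z^{(s)} = \pi_s(\pi_r^{-1}(Z)) \subset Y_r^{(s)}$; but downward closure of $Y$ gives $Y_r^{(s)} \subset Y_s$, so $Z^{(s)} \subset Y_s$. Summing over $s \le r$ yields $\widetilde{Z} \subset Y$, which completes the proof.

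The routine verifications are all small; the only place requiring some care is the transitivity $(Z^{(t)})^{(s)} = Z^{(s)}$, which I would deduce cleanly by comparing the two maps $\Fl_{s,t,r}(\cE) \to \Fl_{s,r}(\cE)$ and $\Fl_{s,t,r}(\cE) \to \Fl_{s,t}(\cE) \to \Gr_s(\cE)$, using that the forgetful map $\Fl_{s,t,r}(\cE) \to \Fl_{s,r}(\cE)$ is surjective (its fibers are the Grassmannians of $t$-dimensional subspaces of an $r$-dimensional vector space containing a fixed $s$-dimensional subspace).
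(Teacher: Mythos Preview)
Your proof is correct and is exactly the unpacking the paper has in mind when it writes ``the discussion above gives''; there is no alternative argument to compare against. One small typo: in your definition of $Z^{(s)}$ you wrote $\pi_s^{-1}(Z)$ where you meant $\pi_r^{-1}(Z)$ (as you correctly have it later in the minimality step).
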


\begin{proposition} \label{prop:grirred}
Suppose $Z \subset \Gr_r(\cE)$ is Zariski closed and irreducible. Then the closure $\ol{Z}$ of $Z$ in $\Gr(\cE)$ is irreducible, and all irreducible closed subsets of $\Gr(\cE)$ are obtained in this way.
\end{proposition}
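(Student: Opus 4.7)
The plan is to handle the two assertions separately, using Lemma~\ref{lem:closure} throughout to describe closures.

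For the first assertion, I would start with a closed irreducible $Z \subset \Gr_r(\cE)$ and invoke Lemma~\ref{lem:closure} to identify $\ol{Z}$ stratum by stratum: $\ol{Z}_s = Z^{(s)}$ for $s \le r$ and $\ol{Z}_s = \emptyset$ for $s > r$. Suppose $\ol{Z} = W_1 \cup W_2$ is a union of two closed subsets of $\Gr(\cE)$. Intersecting with $\Gr_r(\cE)$ yields $Z = (W_1)_r \cup (W_2)_r$, a union of two Zariski closed subsets of $\Gr_r(\cE)$, so by irreducibility $(W_i)_r = Z$ for some $i$, say $i=1$. The downward closure condition on the closed set $W_1$ then gives $(W_1)_s \supset ((W_1)_r)^{(s)} = Z^{(s)} = \ol{Z}_s$ for every $s \le r$, and hence $W_1 = \ol{Z}$.

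For the converse, let $W \subset \Gr(\cE)$ be closed and irreducible and let $r$ be the largest index with $W_r \neq \emptyset$; note $W \subset \bigcup_{s \le r} \Gr_s(\cE)$. The point I want to make is that $\Gr_r(\cE)$ is open in this union: its complement $\bigcup_{s<r} \Gr_s(\cE)$ is downward closed and its intersection with each $\Gr_s(\cE)$ is closed (either all of $\Gr_s(\cE)$ or empty), hence it is closed in $\Gr(\cE)$. Therefore $W_r = W \cap \Gr_r(\cE)$ is a nonempty open subset of the irreducible space $W$, so it is dense and irreducible in the induced topology. Since the subspace topology on $\Gr_r(\cE) \subset \Gr(\cE)$ agrees with its Zariski topology (one direction is definition, the other follows because $\ol{C} \cap \Gr_r(\cE) = C$ for $C$ Zariski closed in $\Gr_r(\cE)$, again by Lemma~\ref{lem:closure}), the set $W_r$ is irreducible in the Zariski sense, and $W = \ol{W_r}$ by density and closedness.

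The argument is essentially topological and I do not expect a serious obstacle. The only point that needs to be handled with care is the observation that $\Gr_r(\cE)$ sits as an open subset of the closed set $\bigcup_{s \le r} \Gr_s(\cE) \subset \Gr(\cE)$; once this is in place, irreducibility propagates by the standard fact that a nonempty open subset of an irreducible space is dense and irreducible, and Lemma~\ref{lem:closure} does all the rest.
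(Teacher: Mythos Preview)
Your proof is correct and follows essentially the same route as the paper: for the first assertion the paper simply invokes the general fact that the closure of an irreducible set is irreducible (which you unpack by hand using Lemma~\ref{lem:closure}), and for the second it takes the top nonempty stratum $W_r$ and observes it is open, hence dense and irreducible, in $W$. Your choice of the \emph{largest} index $r$ is the correct one---the paper's ``minimal'' is a slip, since any nonempty closed set meets $\Gr_0(\cE)$ by downward closure.
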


\begin{proof}
The closure of an irreducible set is irreducible, so $\ol{Z}$ is irreducible. Now suppose that $Y \subset \Gr(\cE)$ is a given irreducible set. Let $r$ be maximal so that $Y$ meets $\Gr_r(\cE)$. Then $Z=Y \cap \Gr_r(\cE)$ is a non-empty open subset of $Y$. Thus $Z$ is irreducible, and $Y$ is the closure of $Z$. Of course, $Z$ is also a closed subset of $\Gr_r(\cE)$, which completes the proof.
\end{proof}

\subsection{The spectrum of $\bA(\cE)$}

Fix a scheme $X$ and a vector bundle $\cE$ of rank $d$ on $X$. Our goal is to prove the following theorem:

\begin{theorem}
We have a canonical identification $\Spec(\bA(\cE))=\Gr(\cE)$.
\end{theorem}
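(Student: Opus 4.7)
The plan is to reduce to an orbit-space analysis for $\GL_n$ acting on a space of linear maps, and then match orbit closures with the downward-closure operation defining the topology on $\Gr(\cE)$. By \cite[\S\S 8.5--8.6]{expos}, since $A=\bA(\cE)$ is bounded with $\ell(A)=d$, for any $n \ge d$ one has $\Spec(A) = \Spec(|A(\bC^n)|)^{\GL_n}$ as topological spaces. The underlying algebra $|A(\bC^n)| = \Sym(\cE \otimes \bC^n)$ is a sheaf of commutative algebras on $X$ whose relative spectrum, call it $H_n$, represents the functor of linear maps $\cE \to \cO^n$, with $\GL_n$ acting on $\bC^n$. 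Thus a point of $\Spec(A)$ corresponds to a $\GL_n$-invariant irreducible closed subset $Z \subset H_n$, and the goal becomes matching these with points of $\Gr(\cE)$.

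To construct the map $\Spec(A) \to \Gr(\cE)$, I will use the rank stratification of $H_n$. The rank of a linear map is upper-semicontinuous, so $Z$ has a well-defined generic rank $r \in \{0,1,\ldots,d\}$, and the open rank-$r$ locus $Z^\circ \subset Z$ is dense. On $Z^\circ$ the assignment $\phi \mapsto (\cE \twoheadrightarrow \cE/\ker\phi)$ defines a $\GL_n$-invariant morphism into $\Gr_r(\cE)$; its image is irreducible and constructible, and its closure $W \subset \Gr_r(\cE)$ determines the desired point of $\Gr(\cE) \supset |\Gr_r(\cE)|$. For the inverse, given an irreducible closed $W \subset \Gr_r(\cE)$, pull it back to the rank-$r$ stratum $V_r \subset H_n$ along the same formula (the fibers of $V_r \to \Gr_r(\cE)$ over $\cE_x \twoheadrightarrow Q$ being open subschemes of $\Hom(Q,\bC^n)$, hence irreducible), and take the closure of the preimage in $H_n$. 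A direct check on the rank-$r$ open stratum, together with the observation that an irreducible closed set is the closure of any open dense subset, shows the two constructions are mutually inverse.

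To match topologies, note that closure in $\Spec(A)$ is $\GL_n$-invariant closure in $H_n$, while closure in $\Gr(\cE)$ is the downward closure of Lemma~\ref{lem:closure}. The essential input is the classical fact that the $\GL_n$-orbit closure in $\Hom(\cE_x, \bC^n)$ of a rank-$r$ map $\phi$ equals the linear subspace $\{\psi : \ker\psi \supseteq \ker\phi\}$ of all maps factoring through $\cE_x/\ker\phi$; intersecting with the rank-$s$ stratum for $s \le r$ parametrizes the rank-$s$ quotients of $\cE_x/\ker\phi$. Spreading this over $W$, the rank-$s$ stratum of the invariant closure attached to $W$ surjects onto $W^{(s)} \subset \Gr_s(\cE)$ in the notation of \S\ref{ss:total-grass}, which is precisely the defining condition for the topology on $\Gr(\cE)$. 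Hence the bijection is a homeomorphism.

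The main technical obstacle will be the orbit-closure computation in families. Pointwise one has the classical one-parameter-subgroup argument, realizing any rank-$s$ map factoring through $\cE_x/\ker\phi$ as $\lim_{t\to 0}g_t\phi$ for an appropriate one-parameter subgroup $g_t$ of $\GL_n$, but extending this to the relative setting over $X$ and verifying that arbitrary $\GL_n$-invariant irreducible closed subsets of $H_n$ (not only single orbit closures) decompose cleanly by rank requires care. A natural route is to reduce to closed points of $X$ via functoriality, or to exploit the fact that the rank-$s$ locus of $H_n$ fibers over $\Gr_s(\cE)$ to reduce the relative statement to its fiberwise counterpart.
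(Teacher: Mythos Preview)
Your proposal is correct and follows essentially the same strategy as the paper: reduce to $\GL$-fixed points of $\Spec(|A(\bC^n)|)$, stratify by rank, identify each stratum with $|\Gr_r(\cE)|$, and verify the closure relations via orbit degeneration (reducing to fibers over points of $X$, exactly as you suggest). The one organizational difference is that the paper, for the stratum identification, passes to $A/\fa_r$ (which has $\ell \le r$) and works with $\GL_r$ rather than a fixed $\GL_n$: on the rank-exactly-$r$ locus of $\Spec((A/\fa_r)(\bC^r))$ the $\GL_r$-action is \emph{free} with quotient $\Gr_r(\cE)$, so a short general lemma identifying $|X|^G$ with $|X/G|$ for free actions of connected groups yields the homeomorphism $Y_r \cong |\Gr_r(\cE)|$ directly, in place of your fiber-transitivity argument for $\GL_n$ acting on injections $Q \hookrightarrow (\bC^n)^*$.
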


In what follows, let $A=\bA(\cE)$ and $Y=\Spec(A)$. Recall that we have determinantal ideals $\fa_r \subset A$. We let $Y_{\le r}$ be the closed subset $V(\fa_r)$ of $Y$, and we let $Y_r=Y_{\le r} \setminus Y_{\le r-1}$.

\begin{lemma}
Suppose that a connected algebraic group $G$ acts freely on a scheme $X$ and that the quotient scheme $X/G$ exists. Then the natural map $\pi_0 \colon \vert X \vert^G \to \vert X/G \vert$ is a homeomorphism.
\end{lemma}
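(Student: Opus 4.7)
The plan is to exploit the fact that the hypotheses make $\pi : X \to X/G$ into a $G$-torsor, and then to analyze its topological fibers one at a time. Since $G$ acts freely on $X$ with quotient scheme $X/G$, the action map $G \times X \to X \times_{X/G} X$, $(g,x) \mapsto (gx, x)$, is an isomorphism; in particular $\pi$ is faithfully flat and quasi-compact, and every scheme-theoretic fiber $X_y := X \times_{X/G} \Spec k(y)$ is a $G_{k(y)}$-torsor over $\Spec k(y)$.

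The key input is that $G$ is connected, hence geometrically integral over $\bC$; this integrality passes to $G_{k(y)}$ and to any torsor under it, so $X_y$ is irreducible with a unique generic point $\eta_y$, and any $G$-stable closed subscheme of $X_y$ descends to $\Spec k(y)$ and is therefore either empty or all of $X_y$. From this I would deduce two things. First, $\eta_y$ is a $G$-fixed point of $|X|$: the $G$-action must permute the unique generic point of $X_y$ to itself, and $\pi_0(\eta_y) = y$, which gives surjectivity. Second, if $x \in |X_y|$ is $G$-fixed, then $\overline{\{x\}} \cap |X_y|$ is a nonempty $G$-stable closed subset of $|X_y|$, hence all of $|X_y|$, forcing $x = \eta_y$; this gives injectivity.

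To promote the bijection $\pi_0$ to a homeomorphism I would check that it is closed. Given $Z \subset |X|^G$ closed, let $\widetilde{Z}$ be its closure in $|X|$; since $Z$ consists of $G$-fixed points it is $G$-stable as a subset of $|X|$, so is its closure $\widetilde{Z}$. Because $G$ is connected, it stabilizes each irreducible component of $\widetilde{Z}$, and each such component is a $G$-stable irreducible closed subset whose generic point is the unique $G$-fixed point it contains, necessarily lying in $Z$. Thus $Z$ is exactly the set of generic points of the irreducible components of $\widetilde{Z}$, and $\pi_0(Z) = \pi(\widetilde{Z})$. Since $\pi$ is faithfully flat and quasi-compact it is a submersion, so $\pi(\widetilde{Z})$ is closed in $|X/G|$. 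Together with continuity (which is automatic, as $\pi_0$ is a restriction of the continuous map $\pi$) and bijectivity, this gives the desired homeomorphism.

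The step I expect to be most delicate is the topological bookkeeping of the $G$-action on $|X|$: to apply the torsor picture one must relate the notion of a $G$-fixed topological point of $X$ to the scheme-theoretic statement that generic points of $G$-stable irreducible closed subsets are fixed, and that connected group schemes cannot permute the finite set of irreducible components of a $G$-stable closed subscheme. Once these compatibilities are granted, the rest of the argument is essentially formal, resting on the torsor identity $\widetilde{Z} = \pi^{-1}(\pi(\widetilde{Z}))$ for $G$-stable closed subsets and on faithfully flat descent of closed subsets.
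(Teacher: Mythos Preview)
Your overall strategy matches the paper's: construct the inverse by sending $y \in |X/G|$ to the generic point of the (irreducible) fiber/preimage, then show $\pi_0$ is closed by proving $\pi_0(Z) = \pi(\widetilde{Z})$ for $\widetilde{Z}$ the closure of $Z$ in $|X|$. Your fiberwise phrasing via the torsor structure is a pleasant variant of the paper's ``generic point of $\pi^{-1}(\overline{\{z\}})$'' construction, and your observation that continuity is automatic (as a restriction of $\pi$) is cleaner than the paper's appeal to \cite[Tag 09XU]{stacks}.

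There is, however, a genuine slip in your closedness argument. You assert that each irreducible component of $\widetilde{Z}$ contains a \emph{unique} $G$-fixed point, and hence that $Z$ is exactly the set of generic points of those components. Both claims are false: a $G$-stable irreducible closed subset typically contains many $G$-fixed points (the generic point of every $G$-stable irreducible closed subvariety it contains), and $Z$ can certainly be larger than the finite set of component-generic points. What you actually need, and what the paper uses, is the simpler identity $Z = \widetilde{Z} \cap |X|^G$, which holds just because $Z$ is closed in the subspace $|X|^G$. Granting this, take $z \in \pi(\widetilde{Z})$; since $\widetilde{Z}$ is $G$-stable one has $\widetilde{Z} = \pi^{-1}(\pi(\widetilde{Z}))$, so the whole fiber $X_z$ lies in $\widetilde{Z}$, and in particular its generic point $\eta_z$ lies in $\widetilde{Z} \cap |X|^G = Z$, giving $z = \pi_0(\eta_z) \in \pi_0(Z)$. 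That yields $\pi_0(Z) = \pi(\widetilde{Z})$, and then your submersion argument finishes. So the fix is local and easy, but as written the deduction of $\pi_0(Z) = \pi(\widetilde{Z})$ does not go through.
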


\begin{proof}
Let $\pi \colon X \to X/G$ be the quotient map. Given a point $z \in X/G$, let $Z$ be its closure, an irreducible closed subscheme of $X/G$. Then $\pi^{-1}(Z)$ is an irreducible closed subscheme of $X$ that is $G$-stable (it is irreducible because all fibers are irreducible). We define a map $\rho \colon \vert X/G \vert \to \vert X \vert^G$ by sending $z$ to the generic point of $\pi^{-1}(Z)$.

Suppose $y \in \vert X \vert^G$. Let $Y \subset X$ be the closure of $y$, an irreducible closed $G$-stable subset. Then $\pi(Y)$ is an irreducible closed subset of $X/G$ and $Y=\pi^{-1}(\pi(Y))$. Thus if $z=\pi_0(y)$ is the generic point of $\pi(Y)$ then $\rho(z)=y$. Thus $\pi_0 \circ \rho = \id$. Similarly, if $z \in X/G$ with closure $Z$ and $y=\rho(z)$ is the generic point of $Y=\pi^{-1}(Z)$ then $Z=\pi(Y)$, and so $z=\pi_0(y)$. Thus $\rho \circ \pi_0 = \id$. We therefore see that $\rho$ and $\pi_0$ are mutually inverse bijections.

Suppose now that $Y \subset \vert X \vert^G$ is a closed subset. Then $Y=\ol{Y} \cap \vert X \vert^G$, where $\ol{Y}$ is the closure of $Y$ in $\vert X \vert$. The set $Y$ is $G$-stable, and so $\pi_0(Y)=\pi(\ol{Y})$ is closed in $\vert X/G \vert$. We thus see that $\pi_0$ is a closed mapping. As $\pi_0$ is also a bijection, it is thus a homeomorphism.
\end{proof}

\begin{lemma}
We have a canonical homeomorphism $Y_r = \vert \Gr_r(\cE) \vert$.
\end{lemma}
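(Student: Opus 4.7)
The plan is to mimic the proof of the preceding lemma, using the natural moduli-theoretic projection from the rank-$r$ locus to the Grassmannian in place of an (absent) free $\GL_n$-torsor. Fix $n\ge d=\rank(\cE)$ and set $B=A(\bC^n)=\Sym(\bC^n\otimes\cE)$, viewed as an affine $X$-scheme. The remark at the start of the section identifies $Y$ with $|\Spec B|^{\GL_n}$ as topological spaces; the $\GL_n$-invariant locally closed subscheme $Y_r(\bC^n):=V(\fa_r(\bC^n))\setminus V(\fa_{r-1}(\bC^n))\subset\Spec B$ then satisfies $Y_r=Y\cap|Y_r(\bC^n)|$.

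On $Y_r(\bC^n)$ the universal map $\cE\to\cO^n$ has locally free image of rank $r$, so it factors through a rank-$r$ quotient of $\cE$ and thereby defines a $\GL_n$-equivariant (trivial action on the target) morphism $\pi\colon Y_r(\bC^n)\to\Gr_r(\cE)$ over $X$. The morphism $\pi$ realizes its source as the open subscheme of the vector bundle $\cQ^{\vee}\otimes\bC^n$ on $\Gr_r(\cE)$ cut out by fiberwise injectivity of $\cQ\to\cO^n$; in particular $\pi$ is smooth and surjective with irreducible fibers. Since $\GL_n$ acts transitively on $\Hom_{\mathrm{inj}}(Q,\bC^n)$ for each rank-$r$ quotient $Q$, the fibers of $\pi$ are exactly the $\GL_n$-orbits.

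I would then verify three topological claims that directly mirror the preceding lemma: (a) every $\GL_n$-invariant $W\subseteq Y_r(\bC^n)$ satisfies $W=\pi^{-1}(\pi(W))$, as it is a union of orbits; (b) if $W$ is moreover closed then $\pi(W)$ is closed, since its complement is the image under the open map $\pi$ of the open $\GL_n$-invariant set $Y_r(\bC^n)\setminus W$; (c) $\pi^{-1}(Z)$ is irreducible for any irreducible closed $Z\subseteq\Gr_r(\cE)$, being a nonempty open subset of the vector bundle $\cQ^{\vee}|_Z\otimes\bC^n$ over an irreducible base. Together, $W\mapsto\pi(W)$ and $Z\mapsto\pi^{-1}(Z)$ give mutually inverse, order-preserving bijections between irreducible closed $\GL_n$-invariant subsets of $Y_r(\bC^n)$ and irreducible closed subsets of $\Gr_r(\cE)$.

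To conclude, sobriety of $\Spec B$ identifies $Y_r$ with the set of irreducible closed $\GL_n$-invariant subsets of $Y_r(\bC^n)$ via $\mathfrak{p}\mapsto V(\mathfrak{p})\cap Y_r(\bC^n)$, and composing with the bijection from (a)--(c) yields the desired map $Y_r\to|\Gr_r(\cE)|$ induced on points by $\pi$. A basic closed set $V(I)\cap Y_r$ in $Y_r$ corresponds via this bijection to the closed set $\pi(V(I(\bC^n))\cap Y_r(\bC^n))\subset|\Gr_r(\cE)|$, and the inverse assignment preserves closed sets symmetrically, so the bijection is a homeomorphism. The main obstacle I anticipate is (b): because $\GL_n$ does not act freely on $Y_r(\bC^n)$, closedness of $\pi(W)$ cannot be obtained from a torsor structure as in the preceding lemma and must instead be extracted from openness of $\pi$ combined with the fibers-are-orbits property established above.
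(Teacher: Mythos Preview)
Your argument is correct, but it takes a genuinely different route from the paper. You work with $\bC^n$ for $n\ge d$, where the $\GL_n$-action on the rank-$r$ stratum is not free, and then establish the homeomorphism by hand via the smooth surjection $\pi\colon Y_r(\bC^n)\to\Gr_r(\cE)$ whose fibers are exactly the $\GL_n$-orbits. The paper instead observes that $A/\fa_r$ has $\ell(A/\fa_r)\le r$, so one may evaluate on $\bC^r$ rather than on $\bC^n$. On $\Spec((A/\fa_r)(\bC^r))$, the open complement of $V(\fa_{r-1})$ is the locus of \emph{surjective} maps $\cE\to(\bC^r)^*$, on which $\GL_r$ acts \emph{freely} with quotient $\Gr_r(\cE)$; the preceding lemma about free actions then applies directly.

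The paper's approach is shorter and avoids the ad hoc topological verifications you perform in (a)--(c); the key insight is that passing to the ``minimal'' evaluation $\bC^r$ turns a non-free action into a free one. Your approach, on the other hand, never invokes the free-action lemma and would work in any situation where one has a smooth equivariant surjection whose fibers coincide with the orbits, so it is in some sense more robust. One minor point: your universal map should be written $\cE\to(\bC^n)^*$ rather than $\cE\to\cO^n$, but this does not affect the argument.
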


\begin{proof}
The space $Y_{\le r}$ is the spectrum of the tca $A/\fa_r$, which has $\le r$ rows. Thus $Y_{\le r}$ is identified with the $\GL_r$ fixed space of $\Spec(Y_{\le r}(\bC^r))$, whose closed points are identified with the space of maps $\cE \to (\bC^r)^*$. The complement of $V(\fa_{r-1})$ is the locus where the map is surjective. The group $\GL_r$ acts freely on this locus, and the quotient is the scheme $\Gr_r(\cE)$. The lemma thus follows from the previous lemma.
\end{proof}

\begin{lemma} \label{lem:irred-Y}
Suppose $Z \subset Y_r$ is Zariski closed and irreducible. Then the closure $\ol{Z}$ of $Z$ in $Y$ is irreducible, and all irreducible closed subsets $Z'$ of $Y$ are obtained in this way, and $r$ can be recovered as the largest index such that $Z' \cap \Gr_r(\cE) \ne \emptyset$.
\end{lemma}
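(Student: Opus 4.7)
I plan to follow the pattern of Proposition~\ref{prop:grirred}, using the fact (established in the preceding lemma) that the rank stratification of $Y = \Spec(A)$ identifies $Y_r := Y_{\le r} \setminus Y_{\le r-1}$ with $|\Gr_r(\cE)|$, together with the closedness of each $Y_{\le s}$ in $Y$.

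That $\overline{Z}$ is irreducible is automatic, since the topological closure of any irreducible set is irreducible. For the converse, given an irreducible closed $Z' \subset Y$, I would take $r$ to be the smallest index with $Z' \subset Y_{\le r}$; equivalently, the largest index $s$ with $Z' \cap Y_s \neq \emptyset$, i.e.\ the generic rank of $Z'$. Because $Y_{\le r-1}$ is closed in $Y$, the set $Z := Z' \cap Y_r = Z' \setminus Y_{\le r-1}$ is a nonempty open subset of $Z'$, hence dense and irreducible; moreover $Z$ is closed in $Y_r$ as the intersection of the closed set $Z'$ with $Y_r$. Transferring through $Y_r \cong |\Gr_r(\cE)|$ produces a Zariski closed irreducible subset $Z \subset \Gr_r(\cE)$ whose closure in $Y$ recovers $Z'$.

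The last task is to verify that these constructions are mutually inverse and that $r$ is intrinsic to $Z'$. Starting from an irreducible Zariski closed $Z \subset \Gr_r(\cE) = Y_r$, the closure $\overline{Z}$ lies inside $Y_{\le r}$ (since $Y_{\le r}$ is closed and contains $Z$), while $\overline{Z} \not\subset Y_{\le r-1}$ because $Z$ is already nonempty in $Y_r$; hence the stratum index recovered from $\overline{Z}$ equals $r$, and $\overline{Z} \cap Y_r = Z$ since $Z$ is closed in $Y_r$ and dense in $\overline{Z}$.

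I do not anticipate any substantial obstacle: once the previous lemma supplies the identification $Y_r \cong |\Gr_r(\cE)|$, the whole argument is formal topology driven by the closedness of the filtration $Y_{\le 0} \subset Y_{\le 1} \subset \cdots \subset Y_{\le d}$.
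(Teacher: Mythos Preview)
Your proposal is correct and is exactly the paper's approach: the paper's entire proof of this lemma is the sentence ``Same as Proposition~\ref{prop:grirred},'' and you have faithfully reproduced that argument using the closedness of the filtration $Y_{\le 0}\subset\cdots\subset Y_{\le d}$ together with the identification $Y_r\cong|\Gr_r(\cE)|$. Your characterization of $r$ as the smallest index with $Z'\subset Y_{\le r}$ (equivalently, the largest $s$ with $Z'\cap Y_s\neq\emptyset$) is the right one and matches what the argument in Proposition~\ref{prop:grirred} actually establishes.
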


\begin{proof}
Same as Proposition~\ref{prop:grirred}.
\end{proof}

The homeomorphisms $Y_r \to \Gr_r(\cE)$ yield a bijective function $f \colon Y \to \Gr(\cE)$.

\begin{lemma}
The map $f$ is a homeomorphism.
\end{lemma}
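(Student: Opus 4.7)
The plan is to show $f$ is a homeomorphism by verifying that $f$ respects closure in both directions. Both $Y$ and $\Gr(\cE)$ are noetherian topological spaces, and by Lemma~\ref{lem:irred-Y} and Proposition~\ref{prop:grirred} their irreducible closed subsets are canonically indexed by the same data, namely pairs $(r,W)$ with $W\subseteq\Gr_r(\cE)$ irreducible closed; the preceding lemma ensures $f$ respects this indexing. Hence it suffices to prove that the closure in $Y$ of the irreducible corresponding to $(r,W)$ is sent by $f$ onto its closure in $\Gr(\cE)$. By Lemma~\ref{lem:closure}, this amounts to showing that for each $s\le r$ the intersection of this closure with the stratum $Y_s$ corresponds, under the homeomorphism $Y_s\cong\Gr_s(\cE)$, to the subset $W^{(s)}\subseteq\Gr_s(\cE)$.

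To make the comparison, I would work inside the ambient scheme $\Spec(A(\bC^d))$ with $d=\rank(\cE)$, into which $Y$ embeds as the $\GL_d$-fixed locus with the subspace topology. Under this embedding the irreducible of $Y$ corresponding to $(r,W)$ becomes the generic point of the $\GL_d$-invariant irreducible closed subset $\widetilde W\subseteq\Spec(A(\bC^d))$ obtained as the preimage of $W$ under the map $T_r\to\Gr_r(\cE)$ sending a rank-$r$ matrix to its abstract quotient, where $T_r$ denotes the rank-$r$ stratum. The closure of $\widetilde W$ in $\Spec(A(\bC^d))$ is again $\GL_d$-invariant, so its intersection with each lower stratum $T_s$ is of the form $\pi_s^{-1}(V_s)$ for some closed subset $V_s\subseteq\Gr_s(\cE)$, and it remains to show $V_s=W^{(s)}$ for every $s\le r$.

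For $W^{(s)}\subseteq V_s$: given $u\in W^{(s)}$, by definition there exist $u'\in W$ and a surjection $\rho\colon U_{u'}\twoheadrightarrow U_u$ with $u$ equal to the composite $\cE\to U_{u'}\xrightarrow{\rho}U_u$. Fixing an embedding $\iota\colon U_u\hookrightarrow(\bC^d)^*$, I would form an algebraic family $j_t\colon U_{u'}\to(\bC^d)^*$ with $j_0=\iota\circ\rho$ and $j_t$ injective for generic $t$. The induced matrices $j_t\circ(\cE\to U_{u'})$ lie in $\widetilde W$ away from $t=0$ and specialize at $t=0$ to a rank-$s$ matrix with abstract quotient $u$, placing $u\in V_s$.

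The main obstacle is the reverse inclusion $V_s\subseteq W^{(s)}$, which I would handle by curve-lifting. Let $q_0$ be a rank-$s$ matrix in the closure of $\widetilde W$ with abstract quotient $u\in\Gr_s(\cE)$. Noetherianness of $\Spec(A(\bC^d))$ supplies a discrete valuation ring $R$ with generic point $\eta$ and a morphism $\Spec(R)\to\Spec(A(\bC^d))$ whose generic fibre lies in $\widetilde W$ and whose special fibre is $q_0$; denote the corresponding family of matrices by $q\colon\cE_R\to(\bC^d)^*_R$. Let $K\subseteq\cE_R$ be the saturation of $\ker(q_\eta)$, so $K$ is locally free of rank $d-r$, the map $q$ annihilates $K$, and $\cE_R/K$ is a family of rank-$r$ quotients of $\cE$ classifying a morphism $\Spec(R)\to\Gr_r(\cE)$. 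The generic fibre of this morphism lies in $W$ by construction, and closedness of $W$ forces $(\cE_R/K)|_0\in W$; since $K|_0\subseteq\ker(q_0)$, the quotient $u$ is a further quotient of $(\cE_R/K)|_0\in W$, yielding $u\in W^{(s)}$. The rest of the argument is routine bookkeeping from the preceding lemmas.
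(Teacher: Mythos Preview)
Your overall strategy matches the paper's: both reduce to showing that for an irreducible closed $W\subset\Gr_r(\cE)$, the closure in $Y$ of the corresponding subset of $Y_r$ meets each lower stratum $Y_s$ in exactly $W^{(s)}$, and both work inside an ambient matrix space $\Spec(A(\bC^n))$ (you take $n=d$, the paper takes $n=r$ after reducing to $X$ a point). The degeneration argument you give for $W^{(s)}\subseteq V_s$ is essentially the paper's as well.

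The genuine difference is in the reverse inclusion $V_s\subseteq W^{(s)}$. You lift the specialization to a DVR, saturate the generic kernel to a subbundle of rank $d-r$, and use closedness of $W$ in $\Gr_r(\cE)$ to conclude. This is correct (over a DVR the quotient $\cE_R/K$ is torsion-free hence locally free, and the short exact sequence stays exact on the special fibre). The paper instead observes that the locus of maps $f\colon E\to(\bC^r)^*$ whose coimage is a quotient of some member of $W$ is closed---it is the image of the proper incidence variety $\{(f,U):U\in W,\ \ker(U)\subset\ker(f)\}$---and hence contains the closure of the rank-$r$ preimage of $W$. This one-line properness argument replaces your curve-lifting entirely; in fact what you call ``the main obstacle'' is the direction the paper handles most briefly. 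Your DVR approach is a perfectly valid alternative and has the mild advantage of not requiring the reduction to $X$ a point, but you should make explicit that the generic point of $\Spec(R)$ can be chosen to land in the rank-$r$ locus (e.g., by mapping it to the generic point of $\widetilde W$), since your saturation step needs $q_\eta$ to have rank exactly $r$.
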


\begin{proof}
Let $Z \subset \Gr_r(\cE)$ be a closed set, and let $\ol{Z} \subset \Gr(\cE)$ be its closure. Let $Z'=f^{-1}(Z)$, a closed subset of $Y_r$, and let $\ol{Z}'$ be its closure. We claim that $\ol{Z}'=f^{-1}(\ol{Z})$. It suffices to check this on each fiber of $\vert X \vert$, so we may as well assume $X$ is a single point and $\cE=E$ is a vector space. It then suffices to check on closed points after intersecting with each $Y_s$. A closed point of $\ol{Z} \cap \Gr_s(E)$ is a rank $s$ quotient $V$ of a rank $r$ quotient $U$ belonging to $Z$. By definition, there is a point in $\Spec(A(\bC^r))$, thought of as a map $f \colon E \to (\bC^r)^*$, with coimage $U$. (Recall that the coimage of $f$ is $E/\ker(f)$.) It is easy to construct a map in the orbit closure of $f$ with coimage $V$. It follows that $\ol{Z}'$ contains $f^{-1}(\ol{Z})$. For the reverse inclusion, the locus $\Spec(A(\bC^r))$ where the image is contained in $Z$ is a closed set, and so the closure of $Z'$ is contained in $f^{-1}(\ol{Z})$.

It follows from the previous paragraph that $f$ and $f^{-1}$ take closed sets to closed sets. Indeed, every closed set is a finite union of irreducible closed sets, and each irreducible closed set is the closure of an irreducible closed set in $Y_r$ or $\Gr_r(\cE)$ (Lemma~\ref{lem:irred-Y}). This completes the proof.
\end{proof}

\subsection{Krull dimension}

The next result compares the Krull dimension of $\Spec(A)$, which is typically easy to calculate, to the Krull--Gabriel dimension of the category $\Mod_A$, which is harder to compute directly. (See \cite[\S IV.1]{gabriel} for the definition of Krull--Gabriel dimension, though the following proof effectively contains a definition as well.)

\begin{proposition} \label{prop:krulldim}
Let $A$ be a noetherian tca. Suppose that the following condition holds:
\[
\label{eqn:prop-P}
\makebox{\parbox{\dimexpr\linewidth-20\fboxsep-2\fboxrule}{For any prime ideal $\fp$ of $A$, let $M$ be a finitely generated $A/\fp$-module, and let $M \supset N_0 \supset N_1 \supset \cdots$ be a descending chain. Then $N_i/N_{i-1}$ has non-zero annihilator in $A/\fp$ for all $i \gg 0$.}} \tag{P}
\]
Then the Krull--Gabriel dimension of the category $\Mod_A$ agrees with the Krull dimension of the topological space $\Spec(A)$.
\end{proposition}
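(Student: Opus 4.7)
The plan is to match the Krull--Gabriel filtration of $\Mod_A$ with the filtration of $\Mod_A$ by dimension of support. For each integer $d \ge 0$, let $\cC_d \subset \Mod_A$ be the Serre subcategory of modules whose support in $\Spec(A)$ has Krull dimension $\le d$, and let $\cG_d$ be the $d$-th term of the Krull--Gabriel filtration (so $\cG_{-1}=0$ and $\cG_d$ consists of those objects that become of finite length in $\Mod_A/\cG_{d-1}$). The filtration $\cC_\bullet$ is exhaustive, and its largest nontrivial step equals the Krull dimension of $\Spec(A)$, so it suffices to prove $\cC_d=\cG_d$ for all $d$, which I would do by induction on $d$.

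For the base case $d=0$, a finitely generated object of $\cC_0$ is supported on finitely many closed points, and via a prime filtration it suffices to show $A/\fp$ has finite length for $\fp$ maximal. Noetherianity gives ACC. For DCC, a descending chain $M \supset N_0 \supset N_1 \supset \cdots$ has, by condition (P), successive quotients with nonzero annihilator in $A/\fp$ for $i \gg 0$; since $\fp$ is maximal, $A/\fp$ has no nonzero proper ideals, so this annihilator is the unit ideal and the quotients vanish, forcing the chain to stabilize.

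For the inductive step, assume $\cC_{d-1}=\cG_{d-1}$ and pass to $\cB := \Mod_A/\cC_{d-1}$. I would show the image of $\cC_d$ in $\cB$ coincides with the finite length subcategory. Reducing via a prime filtration to the case $M=A/\fp$ with $\dim V(\fp) = d$, any descending chain in $M$ has, by (P), successive quotients annihilated for $i \gg 0$ by a nonzero proper ideal $\fa \subset A/\fp$; lifting $\fa$ to an ideal $\fa'$ of $A$ properly containing $\fp$, the corresponding quotient is supported on $V(\fa') \subsetneq V(\fp)$, a closed subset of Krull dimension $<d$, so it lies in $\cC_{d-1}$. Thus the chain stabilizes in $\cB$, and the image of $M$ has finite length there. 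The reverse inclusion $\cG_d \subset \cC_d$ follows from the same analysis: any proper quotient of such $A/\fp$ already lies in $\cC_{d-1}$, so $A/\fp$ becomes simple in $\cB$, and every finite length object of $\cB$ admits a composition series with composition factors of this form, hence lifts to an object of $\cC_d$.

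The main obstacle is the prime filtration argument in the tca setting: one must verify that every finitely generated $A$-module admits a finite filtration with quotients of the form $A/\fp$ for tca primes $\fp$. This should reduce, via boundedness and the identification of tca primes with $\GL_n$-stable primes of $A(\bC^n)$ recalled at the start of \S\ref{sec:spec}, to the classical statement in commutative algebra. A secondary point is the topological fact that a proper closed subset of an irreducible $d$-dimensional closed subset of $\Spec(A)$ has Krull dimension $\le d-1$, which should follow from the explicit description $\Spec(A) \cong \Gr(\cE)$ as a downward-closed stratification by Grassmannians.
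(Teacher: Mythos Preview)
Your overall strategy matches the paper's: compare the Krull--Gabriel filtration $\cG_\bullet$ with the support-dimension filtration $\cC_\bullet$ by induction, and the direction $\cC_d \subset \cG_d$ via (P) plus d\'evissage is essentially correct. However, there is a genuine gap in your argument for $\cG_d \subset \cC_d$. You assert that ``every finite length object of $\cB$ admits a composition series with composition factors of this form'' (i.e., of the form $A/\fp$ with $\dim V(\fp)=d$), but knowing that each such $A/\fp$ becomes simple in $\cB$ does not show that \emph{all} simples in $\cB$ arise this way---and that is exactly what you need to conclude that an arbitrary $M\in\cG_d$ lies in $\cC_d$.

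The paper closes this gap by a direct argument avoiding any classification of simples. Take $M\in\cG_d$ simple modulo $\cC_{d-1}$ and containing no nonzero subobject in $\cC_{d-1}$, and let $\fp$ be a prime with $V(\fp)\subset\supp(M)$ and $\dim V(\fp)=d$ (if none exists, then $M\in\cC_{d-1}$ already). If $\fp M\ne 0$ then $\fp M\notin\cC_{d-1}$ by the no-subobject hypothesis, so by simplicity $M/\fp M\in\cC_{d-1}$; but $\supp(M/\fp M)=V(\fp)$ has dimension $d$, a contradiction. Hence $\fp M=0$, so $\supp(M)\subset V(\fp)$ and $M\in\cC_d$. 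Two smaller remarks: for your d\'evissage in the forward direction you only need filtration quotients that are $A/\fp$-\emph{modules} (so that (P) applies), not quotients isomorphic to $A/\fp$, and this weaker statement is what is actually available in the tca setting; and your ``secondary obstacle'' is automatic from the definition of Krull dimension (a proper closed subset of an irreducible space of dimension $d$ has dimension $\le d-1$) and requires nothing special about $\Gr(\cE)$---indeed the proposition is stated for arbitrary noetherian tca's satisfying (P), so invoking the identification $\Spec(A)\cong\Gr(\cE)$ is out of place.
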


\begin{proof}
Let $\cA$ be the category of finitely generated $A$-modules. Let $\cB_0$ be the category of finite length objects in $\cA$, and having defined $\cB_i$, let $\cB_{i+1}$ be the category consisting of objects in $\cA$ that become finite length in $\cA/\cB_i$. Let $\cC_i$ be the subcategory of $\cA$ on objects whose support locus in $\Spec(A)$ has Krull dimension at most $i$. We claim $\cB_i=\cC_i$. This is clear for $i=0$: a finitely generated $A$-module has finite length if and only if it is supported at the maximal ideal. Suppose now we have shown $\cB_{i-1}=\cC_{i-1}$, and let us prove $\cB_i=\cC_i$. If $M$ is in $\cC_i$ then (P) shows that $M$ has finite length in $\cA/\cC_{i-1}=\cA/\cB_{i-1}$, and so $M$ belongs to $\cB_i$.

Conversely, suppose that $M$ belongs to $\cB_i$, and let us show that $M$ belongs to $\cC_i$. We may as well suppose $M$ is simple in $\cA/\cB_{i-1}$ and contains no non-zero subobject in $\cB_{i-1}=\cC_{i-1}$. Suppose that $\fp$ is a prime ideal such that $V(\fp)$ has dimension $i$ and is contained in the support of $M$. (If no such $\fp$ exists then $M \in \cC_{i-1}$.) We claim $\fp$ annihilates $M$. Suppose not. Then $\fp M$ is a non-zero subobject of $M$ and so does not belong to $\cB_{i-1}$. Since $M$ is simple modulo $\cB_{i-1}$, it follows that $M/\fp M$ belongs to $\cB_{i-1}=\cC_{i-1}$. But this is a contradiction, since the support of $M/\fp M$ is $V(\fp)$, but $M/\fp M$ belongs to $\cC_{i-1}$ and therefore has support of dimension $<i$. We conclude that $\fp M=0$, and so $M$ has support of dimension $\le i$, and thus belongs to $\cC_i$.

To finish, let $d$ be the Krull dimension of $\Spec(A)$. Then $\cC_d=\cA$ but $\cC_{d-1} \ne \cA$. It follows that $\cB_d=\cA$ but $\cB_{d-1} \ne \cA$, and so $d$ is also the Krull--Gabriel dimension of $\Mod_A$.
\end{proof}

\begin{proposition}
Let $A$ be a finitely generated bounded tca. Then the condition \eqref{eqn:prop-P} holds.
\end{proposition}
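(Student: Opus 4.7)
The plan is to reduce to the case $\fp = 0$ and then transport the question to evaluation at $\bC^n$, where it becomes the classical fact that a descending chain in a finitely generated module over a Noetherian domain has torsion consecutive quotients once their generic rank has stabilized. First I would replace $A$ by $A/\fp$, so that one may assume $\fp = 0$; the claim is then that for a descending chain $M \supset N_0 \supset N_1 \supset \cdots$ of submodules of a finitely generated $A$-module $M$, the quotient $N_{i-1}/N_i$ has non-zero annihilator in $A$ for all $i \gg 0$. Since $0$ is prime in $A$ and $A$ is bounded, the criterion recalled after the definition of $\Spec$ makes $R := |A(\bC^n)|$ an integral domain for every $n \ge \ell(A)$. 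I would fix $n \ge \max(\ell(A), \ell(M))$; because $A$ is finitely generated, $R$ is a finitely generated, hence Noetherian, commutative $\bC$-algebra, and $M(\bC^n)$ is a finitely generated $R$-module carrying a compatible algebraic $\GL_n$-action. The given chain evaluates to a descending chain of $\GL_n$-stable $R$-submodules of $M(\bC^n)$.

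Next, I would let $K = \mathrm{Frac}(R)$ and consider the weakly decreasing sequence of non-negative integers $\dim_K (N_i(\bC^n) \otimes_R K)$. Being decreasing and bounded below by $0$, this sequence stabilizes at some index $i_0$. For $i > i_0$ the quotient $Q_i := N_{i-1}(\bC^n)/N_i(\bC^n)$ becomes zero after tensoring with $K$, so it is torsion over the domain $R$ and its annihilator $J_i \subset R$ is non-zero. Since $N_{i-1}(\bC^n)$ and $N_i(\bC^n)$ are $\GL_n$-stable, so is $J_i$. Because $\ell(A) \le n$, the full faithfulness of evaluation $\cV_{\le n} \to \Rep(\GL_n)$ lifts the $\GL_n$-stable ideal $J_i$ to a (necessarily non-zero) tca ideal $I_i \subset A$ with $I_i(\bC^n) = J_i$. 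The identity $J_i Q_i = 0$ says that the $A$-submodule $I_i \cdot (N_{i-1}/N_i)$ of $N_{i-1}/N_i$ vanishes after evaluation at $\bC^n$; since its $\ell$ is at most $\ell(N_{i-1}/N_i) \le n$, this forces $I_i \cdot (N_{i-1}/N_i) = 0$ in $\Mod_A$, giving the required non-zero annihilator.

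The main obstacle I anticipate is not in the analysis on $\bC^n$, which is routine, but in maintaining the dictionary between tca data on $A$ and $\GL_n$-equivariant data on $R = |A(\bC^n)|$. Both directions of this dictionary rely crucially on the twin hypotheses of the proposition: without finite generation, $R$ may fail to be Noetherian and both ``generic rank'' and ``finitely generated submodule'' lose their meaning; without boundedness, $\GL_n$-stable ideals of $R$ need not arise from tca ideals of $A$, and annihilation at $\bC^n$ cannot be lifted back to annihilation of tca modules. Once this bookkeeping is handled, the remainder is the standard generic-rank argument.
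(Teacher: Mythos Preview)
Your proposal is correct and follows essentially the same route as the paper: replace $A$ by $A/\fp$, evaluate at $\bC^n$ for $n$ large enough that both $A$ and $M$ are faithfully captured, observe that the generic ranks $\dim_K(N_i(\bC^n)\otimes_R K)$ stabilize over the domain $R=|A(\bC^n)|$, and then lift the resulting non-zero annihilator back to a tca ideal using boundedness. The paper's proof is terser---it simply asserts that non-zero annihilator in $B(\bC^n)$ implies non-zero annihilator in $B$---whereas you spell out the $\GL_n$-stability of $J_i$ and the faithfulness of evaluation on $\cV_{\le n}$ to justify this lift; but the underlying argument is identical.
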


\begin{proof}
Let $B=A/\fp$ for a prime ideal $\fp$, let $M$ be a finitely generated $B$-module, and let $N_{\bullet}$ be a descending chain. Let $n>\ell(M)+\ell(B)$. So $N_i \ne N_{i+1}$ implies that $N_i(\bC^n) \ne N_{i+1}(\bC^n)$. Then $N_i(\bC^n) \otimes_{B(\bC^n)} \Frac(B(\bC^n))$ is a descending chain of finite dimensional vector spaces, and therefore stabilizes; suppose it is stable for $i>N$. Then $N_i(\bC^n)/N_{i-1}(\bC^n)$ has non-zero annihilator in $B(\bC^n)$ for all $i>N$. It follows that $N_i/N_{i-1}$ has non-zero annihilator in $B$ for $i>N$.
\end{proof}

\begin{remark}
The condition \eqref{eqn:prop-P} can be rephrased as: for every prime ideal $\fp$, the category $\Mod_{\kappa(\fp)}$ has Krull--Gabriel dimension~0, where $\Mod_{\kappa(\fp)}$ is the quotient of $\Mod_{A/\fp}$ by the Serre subcategory of modules with non-zero annihilator. (One thinks of $\Mod_{\kappa(\fp)}$ as modules over a hypothetical residue field $\kappa(\fp)$.) Proposition~\ref{prop:krulldim} is not specific to tca's, and holds for any tensor category satisfying similar conditions.
\end{remark}

\subsection{Krull dimension of $\Sym(\cE\langle 1\rangle)$ (joint with Rohit Nagpal)} \label{ss:rohit}

Fix a vector bundle $\cE$ on $X$ of rank $d$. The goal of this section is to prove the following theorem:

\begin{theorem} \label{thm:gru-dim}
The space $\Gr(\cE)$ has Krull dimension $\dim(X) + \binom{d+1}{2}$. If $X$ is universally catenary, then $\Gr(\cE)$ is catenary.
\end{theorem}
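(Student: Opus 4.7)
The plan is to compute $\dim \ol{W}$ for each irreducible closed $W \subset \Gr_r(\cE)$, establishing the formula
\begin{displaymath}
\dim \ol{W} = \dim W + \binom{r+1}{2}.
\end{displaymath}
By Proposition~\ref{prop:grirred} this accounts for all irreducible closed subsets of $\Gr(\cE)$, so the Krull dimension of $\Gr(\cE)$ equals $\sup_{r,W} \dim \ol{W} = \max_{0 \le r \le d}(\dim \Gr_r(\cE) + \binom{r+1}{2})$. A short calculation rewrites this as $\max_r(\dim X + rd - \binom{r}{2})$, a function strictly increasing in $r \in \{0,\dots,d\}$; its maximum at $r=d$ gives $\dim X + \binom{d+1}{2}$.

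For the lower bound I would build an explicit chain in two parts. First take a maximal chain $\{[V_0]\} = W_0 \subsetneq W_1 \subsetneq \cdots \subsetneq W_{\dim W} = W$ inside $W$, where $[V_0]$ is a closed point lying over some closed $x_0 \in X$; taking closures in $\Gr(\cE)$ yields a chain $\ol{W_0} \subsetneq \cdots \subsetneq \ol{W_{\dim W}} = \ol{W}$ of length $\dim W$, whose bottom $\ol{\{[V_0]\}}$ is, by Lemma~\ref{lem:closure}, the total Grassmannian $\Gr(V_0)$ of the fiber $V_0 \cong \bC^r$ sitting over $x_0$. Second, choose a complete flag $V_0 = U_r \twoheadrightarrow U_{r-1} \twoheadrightarrow \cdots \twoheadrightarrow U_0 = 0$: each inclusion $\ol{\{[U_{s-1}]\}} \subsetneq \ol{\{[U_s]\}}$ can be refined by a maximal chain in $\Gr_{s-1}(U_s) \cong \bP^{s-1}$ running from $\{[U_{s-1}]\}$ to $\Gr_{s-1}(U_s)$ (of length $s-1$), followed by the one-step jump up to $[U_s]$, for a total of $s$ inclusions; summing over $s = 1, \dots, r$ yields $\binom{r+1}{2}$ extra steps below $\ol{\{[V_0]\}}$. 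Concatenating produces a chain of length $\dim W + \binom{r+1}{2}$.

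For the upper bound I would proceed by strong induction on $r$ (with a secondary induction on $\dim W$). Given any chain $Z_0 \subsetneq \cdots \subsetneq Z_n = \ol{W}$, write $Z_{n-1} = \ol{W'}$ with $W' \subset \Gr_{r'}(\cE)$ irreducible closed; the containment forces $r' \le r$. If $r' = r$, then $W' \subsetneq W$ in $\Gr_r(\cE)$, so $\dim W' \le \dim W - 1$ and induction on $\dim W$ gives $n \le \dim W + \binom{r+1}{2}$. If $r' < r$, then the containment $Z_{n-1} \subset Z_n$ translates via Lemma~\ref{lem:closure} to $W' \subset (W)^{(r')}$; the partial flag variety $\Fl_{r',r}(\cE) \to \Gr_r(\cE)$ is a Grassmannian bundle with fibers of dimension $r'(r-r')$, so $\dim W' \le \dim W + r'(r-r')$, and induction on $r$ gives $n - 1 \le \dim W + r'(r-r') + \binom{r'+1}{2}$. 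The elementary inequality
\begin{displaymath}
r'(r-r') + \binom{r'+1}{2} \le \binom{r+1}{2} - 1 \quad \text{for } 0 \le r' < r,
\end{displaymath}
with equality only at $r' = r-1$, then closes the induction.

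For the catenarity assertion, universal catenarity of $X$ implies each $\Gr_r(\cE)$ is catenary (as a projective bundle over $X$, hence of finite type), so intra-stratum steps of a saturated chain behave well. The content is then showing that every saturated step $\ol{W} \subsetneq \ol{W'}$ increases dimension by exactly one. The same-stratum case follows from catenarity of $\Gr_r(\cE)$. For the cross-stratum case, the equality $r' = r-1$ in the inequality above forces a saturated transition step to move up exactly one rank, and the resulting dimension accounting gives a jump of one. The main obstacle is verifying that these tight intermediates always exist in the required generality — equivalently, that the containment $W' \subset (W)^{(r-1)}$ can always be resolved by a saturated chain achieving the dimension equality; this requires a careful analysis of the flag correspondence $\Fl_{r-1,r}(\cE)$ and in particular of the generic finiteness of its projection to $\Gr_{r-1}(\cE)$.
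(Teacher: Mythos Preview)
Your dimension computation is correct and in fact establishes the sharper statement $\dim \ol{W} = \dim W + \binom{r+1}{2}$ for every irreducible closed $W \subset \Gr_r(\cE)$, from which the theorem follows immediately. This is a genuinely different organization from the paper's. For the upper bound, the paper (Lemma~\ref{lem:krull-dim}) takes a maximal chain, partitions it according to the top stratum each term meets, and uses only the adjacent-rank estimate $\dim Y_{i+1} + r \ge \dim Y_i$ of Lemma~\ref{lem:incidence}, relying on maximality to ensure every $S_r$ is nonempty so that the sums telescope. Your induction on $r$ handles arbitrary chains and arbitrary rank jumps $r' < r$ in one stroke via the inequality $r'(r-r') + \binom{r'+1}{2} \le \binom{r+1}{2} - 1$; this is cleaner and yields the per-$W$ formula, which the paper does not state. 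For the lower bound, your flag construction inside $\Gr(V_0)$ is essentially Example~\ref{eg:krull-dim} rephrased.

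The catenarity argument, however, has a real gap, and you have correctly located it. Once you know a saturated cross-stratum step must go from rank $r$ to rank $r-1$ and that $W' = W^{(r-1)}$, you still need $\dim W^{(r-1)} = \dim W + (r-1)$, i.e., that $\pi_{r-1}$ restricted to $\pi_r^{-1}(W) \subset \Fl_{r-1,r}(\cE)$ is generically finite onto its image. This can genuinely fail for particular $W$, so one must instead argue by contradiction: if all fibers are positive-dimensional, the paper intersects $W$ (after trivializing $\cE$ on an affine open) with a generic Schubert hyperplane $H$ in the Pl\"ucker embedding, producing an irreducible component $Z' \subset H \cap W$ of strictly smaller dimension whose flag-image still contains $W'$ (because each positive-dimensional fiber $F_{W'}$ meets $H$). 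Then $\ol{Z'}$ sits strictly between $\ol{W'}$ and $\ol{W}$, contradicting saturatedness. This hyperplane-section trick is the missing ingredient in your outline.
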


Combined with the other results of this section, we obtain:

\begin{corollary}
Let $A=\bA(\cE)$. The category $\Mod_A$ has Krull--Gabriel dimension $\dim(X) + \binom{d+1}{2}$. 
\end{corollary}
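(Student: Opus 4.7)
The corollary should fall out immediately by assembling the pieces developed earlier in the section, so the plan is essentially bookkeeping. Let $A = \bA(\cE) = \Sym(\cE \otimes \bV)$. The first step is to observe that $A$ is both finitely generated (as a tca it is generated in degree~1 by the finite-rank bundle $\cE$) and bounded (by the Cauchy decomposition we have $\ell(A) = \operatorname{rank}(\cE) = d$). Consequently the proposition asserting that condition \eqref{eqn:prop-P} holds for every finitely generated bounded tca applies verbatim to $A$.

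Next I would invoke Proposition~\ref{prop:krulldim}: once \eqref{eqn:prop-P} is available, the Krull--Gabriel dimension of the category $\Mod_A$ agrees with the Krull dimension of the topological space $\Spec(A)$. Combining this with the main theorem of the section, which gives a canonical homeomorphism $\Spec(A) = \Gr(\cE)$, reduces the computation of the Krull--Gabriel dimension of $\Mod_A$ to the computation of the Krull dimension of the total Grassmannian.

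Finally I would apply Theorem~\ref{thm:gru-dim}, which supplies the value $\dim(X) + \binom{d+1}{2}$ for $\dim \Gr(\cE)$. Chaining the three equalities yields the claimed formula for the Krull--Gabriel dimension of $\Mod_A$.

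There is essentially no obstacle: the genuine content sits in Proposition~\ref{prop:krulldim}, the identification $\Spec(A) \cong \Gr(\cE)$, and Theorem~\ref{thm:gru-dim}, all of which are established earlier. The only thing worth being careful about is verifying that $A$ really satisfies the hypotheses needed to invoke the proposition giving \eqref{eqn:prop-P}; this is immediate but should be stated explicitly so the reader sees that ``finitely generated'' is meant in the tca sense (generated over $\cO_X$ by a finite-rank polynomial representation) rather than as an $\cO_X$-algebra.
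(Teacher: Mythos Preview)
Your proposal is correct and matches the paper's approach exactly: the paper simply states that the corollary follows by ``combining the other results of this section,'' and those results are precisely the ones you list (boundedness and finite generation of $A$ to obtain \eqref{eqn:prop-P}, Proposition~\ref{prop:krulldim}, the identification $\Spec(A)\cong\Gr(\cE)$, and Theorem~\ref{thm:gru-dim}). The only hypothesis you leave implicit is that $A$ is noetherian, which Proposition~\ref{prop:krulldim} also requires, but this is established earlier in the paper.
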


\begin{lemma} \label{lem:incidence}
Let $Y \subset \Gr_{r+1}(\cE)$ and $Z \subset \Gr_r(\cE)$ be irreducible closed sets such that $Z \subset \ol{Y}$ in $\Gr(E)$. Then $\dim(Y)+r \ge \dim(Z)$.
\end{lemma}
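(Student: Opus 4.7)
The plan is to use the partial flag variety $\Fl_{r,r+1}(\cE)$ from \S\ref{ss:total-grass}, with its two projections $\pi_{r+1} \colon \Fl_{r,r+1}(\cE) \to \Gr_{r+1}(\cE)$ and $\pi_r \colon \Fl_{r,r+1}(\cE) \to \Gr_r(\cE)$, and do a straightforward dimension count. The key observation is that $\pi_{r+1}$ is a $\bP^r$-bundle: the fiber over a quotient $\cE \to \cQ_{r+1}$ parametrizes rank $r$ quotients of $\cQ_{r+1}$, which is $\Gr_r(\cQ_{r+1}) \cong \bP^r$ and hence irreducible of dimension $r$.

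First I would set $W = \pi_{r+1}^{-1}(Y)$. Since $\pi_{r+1}$ is a fiber bundle with irreducible $r$-dimensional fibers and $Y$ is irreducible and closed, $W$ is irreducible and closed of dimension $\dim(Y) + r$. Next I would identify $\pi_r(W)$ with $Y^{(r)}$ (this is literally the definition). Note $\pi_r(W)$ is closed in $\Gr_r(\cE)$ since $\pi_r$ is proper.

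The second ingredient is the description of $\ol{Y} \cap \Gr_r(\cE)$ provided by Lemma~\ref{lem:closure}: a point $[\cE \to U]$ with $U$ of rank $r$ lies in $\ol{Y}$ if and only if $U$ is a quotient of some rank $r+1$ space corresponding to a point of $Y$, i.e., if and only if $[\cE \to U] \in Y^{(r)}$. Hence $\ol{Y} \cap \Gr_r(\cE) = Y^{(r)} = \pi_r(W)$. By hypothesis $Z \subset \ol{Y}$, and $Z \subset \Gr_r(\cE)$, so $Z \subset \pi_r(W)$.

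Finally, dimensions of closed subsets do not increase under containment or surjective images, so
\begin{displaymath}
\dim(Z) \le \dim(\pi_r(W)) \le \dim(W) = \dim(Y) + r,
\end{displaymath}
which is the desired inequality. There is no real obstacle here: the only thing to double-check is that $\pi_{r+1}$ is indeed a Zariski-locally trivial $\bP^r$-bundle (standard, but worth citing Appendix~\ref{s:grass} if needed) so that $\pi_{r+1}^{-1}(Y)$ has the asserted dimension and irreducibility. Everything else is formal.
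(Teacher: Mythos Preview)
Your proposal is correct and follows essentially the same approach as the paper: use the $\bP^r$-bundle $\pi_{r+1}^{-1}(Y)$ in $\Fl_{r,r+1}(\cE)$, identify $\ol{Y}\cap\Gr_r(\cE)$ with $\pi_r(\pi_{r+1}^{-1}(Y))$ via Lemma~\ref{lem:closure}, and conclude by the dimension bound. The paper's proof is just a terser version of what you wrote.
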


\begin{proof}
Recall the definition of $\Fl_{r,r+1}(\cE)$ and $\pi_r$ and $\pi_{r+1}$ from \S\ref{ss:total-grass}. By Lemma~\ref{lem:closure}, $\ol{Y} \cap \Gr_r(\cE)$ is $\pi_r(\pi_{r+1}^{-1}(Y))$. The space $\pi_{r+1}^{-1}(Y)$ is a $\bP^r$-bundle over $Y$, and therefore has dimension $\dim(Y)+r$. Since this space surjects onto a closed set containing $Z$, we obtain the stated inequality.
\end{proof}

\begin{lemma} \label{lem:krull-dim}
The Krull dimension of $\Gr(\cE)$ is at most $\dim(X) + \binom{d+1}{2}$.
\end{lemma}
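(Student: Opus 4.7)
The plan is to bound the length of any strict chain of irreducible closed subsets of $\Gr(\cE)$ by exhibiting an integer-valued height function $f$ that strictly increases along strict inclusions and takes values in $[0, \dim(X) + \binom{d+1}{2}]$. By Proposition~\ref{prop:grirred}, every irreducible closed subset $Z \subset \Gr(\cE)$ has the form $\ol{W}$ for a unique pair $(r, W)$ where $W \subset \Gr_r(\cE)$ is irreducible closed and $r$ is the largest rank stratum that $Z$ meets. I would set
\[
f(\ol{W}) := \dim(W) + \binom{r+1}{2}.
\]

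The key step will be strict monotonicity. Suppose $\ol{W} \subsetneq \ol{W'}$ with $W \subset \Gr_r(\cE)$ and $W' \subset \Gr_{r'}(\cE)$; then necessarily $r \le r'$. If $r = r'$, strict inclusion of irreducible closed subsets in the noetherian scheme $\Gr_r(\cE)$ forces $\dim(W) < \dim(W')$, and hence $f(\ol{W}) < f(\ol{W'})$. If $r < r'$, let $V_k := \pi_k(\pi_{r'}^{-1}(W')) \subset \Gr_k(\cE)$ be the rank-$k$ stratum of $\ol{W'}$ for $r \le k \le r'$; this is irreducible closed by the same fiber-bundle argument recorded in \S\ref{ss:total-grass}. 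Then $W \subset V_r$, and applying Lemma~\ref{lem:incidence} to each adjacent pair $(V_{k+1}, V_k)$ for $k = r, r+1, \ldots, r'-1$ and iterating gives
\[
\dim(W) \;\le\; \dim V_r \;\le\; \dim(W') + \sum_{k=r}^{r'-1} k \;=\; \dim(W') + \binom{r'}{2} - \binom{r}{2}.
\]
Using the identity $\binom{r'+1}{2} - \binom{r+1}{2} - \bigl(\binom{r'}{2} - \binom{r}{2}\bigr) = r'-r$, I conclude $f(\ol{W'}) - f(\ol{W}) \ge r' - r \ge 1$.

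For the upper bound on $f$, note $f(\ol{W}) \le \dim \Gr_r(\cE) + \binom{r+1}{2} = \dim(X) + rd - \binom{r}{2}$, since $r(d-r) + \binom{r+1}{2} = rd - \binom{r}{2}$. The first difference of $rd - \binom{r}{2}$ in $r$ is $d - r \ge 0$ on $\{0, 1, \ldots, d\}$, so the maximum is attained at $r = d$ and equals $\binom{d+1}{2}$. Since $f \ge 0$, any strict chain of length $n$ yields $n+1$ distinct integer values of $f$ lying in $[0, \dim(X) + \binom{d+1}{2}]$, which forces $n \le \dim(X) + \binom{d+1}{2}$.

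The main difficulty is calibrating the offset. The arithmetic identity $\binom{r'+1}{2} - \binom{r+1}{2} - \sum_{k=r}^{r'-1} k = r'-r$ is exactly what makes the telescoping close with a positive margin, so the choice $\binom{r+1}{2}$ is forced; any slower-growing correction in $r$ could fail to absorb the dimension gain produced by iterating Lemma~\ref{lem:incidence} across several strata at once.
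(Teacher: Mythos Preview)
Your proof is correct and uses the same ingredients as the paper's: the classification of irreducible closed sets from Proposition~\ref{prop:grirred} and the key bound of Lemma~\ref{lem:incidence}, together with the same telescoping arithmetic $\sum_{k=r}^{r'-1} k = \binom{r'}{2}-\binom{r}{2}$. The paper organizes the count by grouping the indices of a maximal chain by stratum and bounding $\#S_r$ via $\delta'_{r+1}+r \ge \delta_r$, then telescoping; you instead package the same inequality into a height function $f(\ol{W})=\dim(W)+\binom{r+1}{2}$ and show it is strictly monotone. Your version is a bit cleaner in that it does not require the chain to be maximal and handles jumps across several strata uniformly by iterating Lemma~\ref{lem:incidence}, but the two arguments are essentially equivalent reformulations of each other.
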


\begin{proof}
Let $Z_0 \subset \cdots \subset Z_k$ be a maximal strict chain of irreducible closed subsets in $\Gr(\cE)$. Let $S_r$ be the set of indices $i$ for which $Z_i$ meets $\Gr_r(\cE)$ but not $\Gr_{r+1}(\cE)$, so that for $i \in S_r$ we have $Z_i=\ol{Y}_i$ for some irreducible closed set $Y_i \subset \Gr_r(\cE)$ by Lemma~\ref{lem:irred-Y}. We note that each $S_i$ is non-empty by maximality of the chain. Let $\delta_r$ (resp.\ $\delta'_r$) be the maximum (resp.\ minimum) dimension of $Y_i$ with $i \in S_r$. Since $Y_j \subset \ol{Y}_i$ for all $j \in S_r$ and $i \in S_{r+1}$, we have $\delta'_{r+1}+r \ge \delta_r$ by the previous lemma (for $0 \le r <d$). We thus find
\begin{displaymath}
\# S_r \le \delta_{r}-\delta'_{r} +1 \le r + 1 + \delta'_{r+1} - \delta'_r.
\end{displaymath}
(In fact, the first inequality is an equality by the maximality of the chain.) Therefore,
\begin{align*}
k+1 &= \# S_0 + \# S_1 + \cdots + \# S_{d}\\
& \le \left( \sum_{r=0}^{d-1} (r+1) \right) + \delta_d'-\delta_0' + \# S_d
\le \binom{d+1}{2}+\delta'_d+\# S_d.
\end{align*}
Now, we can regard $\{Z_i\}_{i \in S_d}$ as a descending chain of irreducible closed sets in $X$, as $\Gr_d(\cE)=X$. The smallest member of this chain has dimension $\delta'_d$. Thus we have $\delta_d'+\# S_d \le \dim(X)+1$, and the theorem follows.
\end{proof}

\begin{example} \label{eg:krull-dim}
Here is a chain of irreducible closed sets of length $\binom{d+1}{2}$ in $\Gr(E)$ for a vector space $E$. Let
\begin{displaymath}
0 = V_0 \subset V_1 \subset \cdots \subset V_d = E
\end{displaymath}
be a complete flag in $E$. For $0 \le r < d$ and $0 \le i \le r$, let $Y_{r,i}$ be the set of all $r$-dimensional subspaces of $V_{r+1}$ containing $V_{r-i}$. By replacing a subspace by its quotient, these give irreducible closed subsets of $\Gr_{d-r}(E)$, and form a chain
\begin{displaymath}
Y_{r,0} \subset Y_{r,1} \subset \cdots \subset Y_{r,r}.
\end{displaymath}
Let $Z_{r,i}$ be the closure of $Y_{r,i}$ in $\Gr(E)$. This is irreducible by general principles. Furthermore, by Lemma~\ref{lem:closure}, we have a strict chain
\begin{displaymath}
Z_{0,0} \subset Z_{1,0} \subset Z_{1,1} \subset Z_{2,0} \subset Z_{2,1} \subset Z_{2,2} \subset Z_{3,0} \subset \cdots \subset Z_{d-1,d-1} \subset \Gr(E).
\end{displaymath}
There are $\sum_{i=0}^{d-1} (i+1) + 1= \binom{d+1}{2}+1$ sets in this chain.

For a general base $X$, let $Y$ be the reduced subscheme of an irreducible component of largest possible dimension. Over the generic point of $Y$, $\cE$ becomes a vector space and we can build the chain as above. Now take closures of these subvarieties to get a chain starting at $Y$ of length $\binom{d+1}{2}$. Now concatenate this with a maximal chain of irreducible subspaces ending at $Y$ to get a chain of length $\dim(X) + \binom{d+1}{2}$.
\end{example}

\begin{proof}[Proof of Theorem~\ref{thm:gru-dim}]
Combining Lemma~\ref{lem:krull-dim} and Example~\ref{eg:krull-dim} shows that $\dim \Gr_r(\cE) = \dim(X) + \binom{d+1}{2}$. 

So it remains to show $\Gr_r(\cE)$ is catenary when $X$ is universally catenary. Without loss of generality, we may replace $X$ with one of its irreducible components. We need to show that for any irreducible subspaces $Y \subset Y'$, every maximal chain of irreducible closed subsets between $Y$ and $Y'$ has the same length. By extending these to maximal chains in the whole space (which is irreducible), it suffices to consider the case $Y = \emptyset$ and $Y' = \Gr(\cE)$.

Use the notation from the previous proof. Consider a chain $Z_0 \subset \cdots \subset Z_k$ which is maximal. Suppose $Z_i$ meets $\Gr_{r}(\cE)$ but not $\Gr_{r+1}(\cE)$ and that $Z_{i+1}$ meets $\Gr_{r+1}(\cE)$. Write $Z_i = \ol{Y}_i$ and $Z_{i+1} = \ol{Y}_{i+1}$ for irreducible closed subsets $Y_i \subset \Gr_r(\cE)$ and $Y_{i+1} \subset \Gr_{r+1}(\cE)$. By Lemma~\ref{lem:incidence}, $\dim Y_{i+1} +r \ge \dim Y_i$. It suffices to check that this must be an equality, since $\Gr_r(\cE)$ is catenary. Now use the notation from Lemma~\ref{lem:incidence}. So we have a map $\pi_r \colon \pi_{r+1}^{-1}(Y_{i+1}) \to \Gr_r(\cE)$ whose image is irreducible and contains $Y_i$. If the image strictly contains $Y_i$, we can insert its closure in between $Z_i$ and $Z_{i+1}$ and get a longer chain, which is a contradiction. So we conclude that the image is equal to $Y_i$. 

If $\dim Y_{i+1} + r > \dim Y_i$, then the fibers of $\pi_r$ all have positive dimension. In that case, let $F_W$ be the fiber over $W \in Y_i$, i.e., the set of quotients in $Y_{i+1}$ which further quotient to $W$. It is easy to check that $F_W$ is closed and from what we have assumed, $\dim F_W \ge 1$. Now pick an open affine subset of $X$ that trivializes $\cE$ and that has nonempty intersection with the image of $Y_i$ and $Y_{i+1}$ in $X$; let $E$ denote the restriction of $\cE$ to this open affine. Let $H$ be a Schubert divisor in $\Gr_{r+1}(E)$, i.e., the intersection of $\Gr_{r+1}(E)$ with a hyperplane in the Pl\"ucker embedding. If we choose $H$ generically, then it does not contain $Y_{i+1}$ and $H \cap Y_{i+1}$ has codimension $1$ in $Y_{i+1}$. Also, the intersection $H \cap F_W$ is always nonempty (this is easy to see by considering the Pl\"ucker embedding). So $Y_i \subset \pi_r (\pi^{-1}_{r+1}(\ol{H \cap Y_{i+1}}))$. Since $Y_i$ is irreducible and $\pi^{-1}_{r+1}$ is a projective bundle, we can choose an irreducible component $Z' \subset H \cap Y_{i+1}$ such that $Y_i \subset \pi_r (\pi^{-1}_{r+1}(\ol{Z'}))$. So $Z'$ is an irreducible subvariety of one smaller dimension whose closure can be inserted in between $Z_i$ and $Z_{i+1}$, which gives a contradiction.
\end{proof}

\section{The formalism of saturation and local cohomology} \label{s:formalism}

\subsection{Decomposing into two pieces}

Let $\cA$ be a Grothendieck abelian category and let $\cB$ be a localizing subcategory. We assume the following hypothesis holds: 
\[
\label{eqn:prop-H}
\makebox{\parbox{\dimexpr\linewidth-20\fboxsep-2\fboxrule}{Injective objects of $\cB$ remain injective in $\cA$.}} \tag{Inj}
\]
This is not automatic (see Examples~\ref{ex:nonH} and~\ref{ex:nonH2}). Let $T \colon \cA \to \cA/\cB$ be the localization functor and let $S \colon \cA/\cB \to \cA$ be its right adjoint (the section functor). We define the {\bf saturation} of $M \in \cA$ (with respect to $\cB$) to be $\Sigma(M)=S(T(M))$.  We also define the {\bf torsion} of $M$ (with respect to $\cB$), denoted $\Gamma(M)$, to be the maximal subobject of $M$ that belongs to $\cB$. (This exists since $\cB$ is localizing.) We say that $M \in \cA$ is {\bf saturated} if the natural map $M \to \Sigma(M)$ is an isomorphism. We say that $M \in \rD^+(\cA)$ is {\bf derived saturated} if the natural map $M \to \rR \Sigma(M)$ is an isomorphism. We note that $\Sigma(M)$ is always saturated and $\rR \Sigma(M)$ is always derived saturated. We refer to $\rR \Gamma$ as {\bf local cohomology}.

\begin{proposition} \label{prop:satcrit}
Let $M \in \cA$. The following are equivalent:
\begin{enumerate}[\indent \rm (a)]
\item $M$ is saturated.
\item $\Ext^i_{\cA}(N, M)=0$ for $i=0,1$ and all $N \in \cB$.
\item $\Hom_{\cA}(N,M)=\Hom_{\cA/\cB}(T(N), T(M))$ for all $N \in \cA$.
\end{enumerate}
\end{proposition}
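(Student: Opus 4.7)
The plan is to handle the three equivalences through adjoint-functor bookkeeping. First I would prove $(a) \Leftrightarrow (c)$, which reduces to a Yoneda argument via the adjunction $T \dashv S$. Then I would prove $(a) \Rightarrow (b)$ by a direct splitting argument using the unit, and $(b) \Rightarrow (a)$ by analyzing the kernel and cokernel of the unit $\eta_M \colon M \to \Sigma(M)$. The hypothesis \eqref{eqn:prop-H} does not enter at this stage; it becomes relevant only for the derived analogues of this proposition.

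For $(a) \Leftrightarrow (c)$: the adjunction gives a natural identification $\Hom_{\cA/\cB}(T(N), T(M)) = \Hom_\cA(N, \Sigma(M))$, under which the comparison map in (c) becomes $\Hom_\cA(N, \eta_M)$. Thus (c) asserts that $\eta_M$ induces an isomorphism of the representable functors $\Hom_\cA(-, M)$ and $\Hom_\cA(-, \Sigma(M))$, which by Yoneda is equivalent to $\eta_M$ being an isomorphism, i.e., to (a).

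For $(a) \Rightarrow (b)$: the case $i = 0$ is immediate from (c), since $T(N) = 0$ for $N \in \cB$. For $i = 1$, given an extension $0 \to M \xrightarrow{\iota} E \to N \to 0$ with $N \in \cB$, applying the exact functor $T$ gives $T(\iota) \colon T(M) \xrightarrow{\sim} T(E)$; applying $S$ (and using $\eta_M = \id_M$) yields an isomorphism $S(T(\iota)) \colon M \xrightarrow{\sim} \Sigma(E)$. Naturality of the unit gives $\eta_E \circ \iota = S(T(\iota)) \circ \eta_M = S(T(\iota))$, so $r := S(T(\iota))^{-1} \circ \eta_E \colon E \to M$ is a retraction of $\iota$, splitting the extension.

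For $(b) \Rightarrow (a)$, which is the main content: applying $T$ to $\eta_M$ and using the triangle identity together with $TS \simeq \id$ shows that $T(\eta_M)$ is an isomorphism, hence $K := \ker(\eta_M)$ and $C := \coker(\eta_M)$ both lie in $\cB$. The hypothesis $\Hom_\cA(K, M) = 0$ together with the inclusion $K \hookrightarrow M$ forces $K = 0$, leaving a short exact sequence $0 \to M \to \Sigma(M) \to C \to 0$, which splits because $\Ext^1_\cA(C, M) = 0$. The delicate step is then to conclude $C = 0$: one notes that $\Hom_\cA(C, \Sigma(M)) = \Hom_{\cA/\cB}(T(C), T(M)) = 0$ by the adjunction (since $T(C) = 0$), so the inclusion of $C$ as a direct summand of $\Sigma(M)$ must vanish. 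This last observation, that $\Sigma(M)$ admits no non-zero map from any object of $\cB$, is where the proof could feel subtle, but it drops out purely formally from $T \dashv S$.
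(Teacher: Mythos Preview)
Your proof is correct and self-contained. The paper, by contrast, does not give its own argument: it simply cites Gabriel's thesis, pointing to \cite[p.~371, Corollaire]{gabriel} for $(a)\Leftrightarrow(b)$ and \cite[p.~370, Lemma]{gabriel} for $(b)\Leftrightarrow(c)$. So your approach differs from the paper's only in that you actually carry out the adjunction bookkeeping rather than defer to the reference; the underlying ideas (unit of the adjunction, $TS\simeq\id$, analyzing $\ker\eta_M$ and $\coker\eta_M$) are the standard ones and presumably match what Gabriel does. Your observation that \eqref{eqn:prop-H} is not needed here is also correct and worth stating, since the paper does not make this explicit.
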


\begin{proof}
The equivalence of (a) and (b) is \cite[p.371, Corollaire]{gabriel} (objects satisfying (b) are called $\cB$-ferm\'e). The equivalence of (b) and (c) is \cite[p.370, Lemma]{gabriel}.
\end{proof}

\begin{proposition}
If $M \in \cB$ then $\rR^i \Gamma(M)=0$ for $i>0$.
\end{proposition}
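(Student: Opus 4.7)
The plan is to exploit hypothesis \eqref{eqn:prop-H} directly: it was introduced precisely so that injective resolutions in $\cB$ can be reused as injective resolutions in $\cA$. Once this is done, computing $\rR \Gamma(M)$ becomes trivial because $\Gamma$ acts as the identity on objects of $\cB$.

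First, I would observe that $\cB$, being a localizing subcategory of a Grothendieck abelian category, is itself Grothendieck and therefore has enough injectives. So for $M \in \cB$ we may choose an injective resolution $M \to I^{\bullet}$ inside $\cB$. By \eqref{eqn:prop-H}, each $I^{j}$ is also injective when viewed as an object of $\cA$, so $I^{\bullet}$ is an injective resolution of $M$ in $\cA$ as well. Hence $\rR^i \Gamma(M)$ may be computed as $\rH^i(\Gamma(I^{\bullet}))$.

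Next, I would note the obvious fact that $\Gamma(N) = N$ whenever $N \in \cB$, since $N$ is then its own maximal subobject lying in $\cB$. Applied termwise to the resolution, this yields $\Gamma(I^{\bullet}) = I^{\bullet}$, whose cohomology is $M$ in degree $0$ and zero otherwise. Therefore $\rR^i \Gamma(M) = 0$ for $i > 0$, as desired.

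There is no real obstacle here; the entire substance of the proposition is contained in the assumption \eqref{eqn:prop-H}. The only point worth flagging is that without this hypothesis the argument breaks down: a $\cB$-injective resolution need not consist of $\cA$-injectives, and computing $\rR \Gamma$ via a genuinely $\cA$-injective resolution of $M$ can produce higher cohomology (cf.\ the examples referenced in the text). Thus the proof amounts to unpacking a definition once \eqref{eqn:prop-H} is available.
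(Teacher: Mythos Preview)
Your proposal is correct and follows essentially the same argument as the paper: take an injective resolution of $M$ in $\cB$, use \eqref{eqn:prop-H} to regard it as an injective resolution in $\cA$, and observe that $\Gamma$ is the identity on $\cB$. You even supply a bit more justification than the paper (noting that $\cB$ is Grothendieck and hence has enough injectives), but the substance is identical.
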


\begin{proof}
Let $M \to I^{\bullet}$ be an injective resolution in $\cB$. By \eqref{eqn:prop-H}, this remains an injective resolution in $\cA$. Thus $\rR \Gamma(M) \to \Gamma(I^{\bullet})$ is an isomorphism. But $\Gamma(I^{\bullet})=I^{\bullet}$ since each $I^n$ belongs to $\cB$, and so the result follows.
\end{proof}

\begin{proposition} \label{prop:injses}
Let $I \in \cA$ be injective. Then we have a short exact sequence
\begin{displaymath}
0 \to \Gamma(I) \to I \to \Sigma(I) \to 0
\end{displaymath}
with $\Gamma(I)$ and $\Sigma(I)$ both injective. Moreover, $T(I) \in \cA/\cB$ is injective.
\end{proposition}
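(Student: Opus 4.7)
The plan is to handle the torsion piece first and then bootstrap, using the hypothesis (Inj) and Proposition~\ref{prop:satcrit}.

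\textbf{Step 1 (Injectivity of $\Gamma(I)$).} The functor $\Gamma \colon \cA \to \cB$ is right adjoint to the inclusion $\iota \colon \cB \hookrightarrow \cA$, and $\iota$ is exact because $\cB$ is a Serre (in fact, localizing) subcategory. A right adjoint to an exact functor preserves injectives, so $\Gamma(I)$ is injective in $\cB$. By hypothesis \eqref{eqn:prop-H}, it remains injective in $\cA$.

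\textbf{Step 2 (Splitting and injectivity of the quotient).} By definition, the short exact sequence $0 \to \Gamma(I) \to I \to I/\Gamma(I) \to 0$ is exact in $\cA$. Since $\Gamma(I)$ is injective (Step 1), the inclusion $\Gamma(I) \hookrightarrow I$ admits a retraction, so the sequence splits and $I/\Gamma(I)$ is a direct summand of $I$; in particular $I/\Gamma(I)$ is injective in $\cA$.

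\textbf{Step 3 (Identifying the quotient with $\Sigma(I)$).} Because $\Gamma(I) \in \cB$ we have $T(\Gamma(I))=0$, and applying $T$ to the sequence of Step~2 yields $T(I) \xrightarrow{\sim} T(I/\Gamma(I))$. Hence $\Sigma(I) = ST(I) \cong ST(I/\Gamma(I)) = \Sigma(I/\Gamma(I))$, and it suffices to show $I/\Gamma(I)$ is saturated. Using Proposition~\ref{prop:satcrit}(b), I must check $\Ext^i_\cA(N, I/\Gamma(I))=0$ for $i=0,1$ and $N \in \cB$. For $i=1$ this is automatic since $I/\Gamma(I)$ is injective. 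For $i=0$: the long exact sequence associated to Step 2 gives a surjection $\Hom(N,I) \twoheadrightarrow \Hom(N,I/\Gamma(I))$ (the next term $\Ext^1(N,\Gamma(I))$ vanishes by Step 1), and any map $N \to I$ with $N \in \cB$ has image a subobject of $I$ lying in $\cB$, hence contained in the maximal such subobject $\Gamma(I)$. Therefore $\Hom(N,I)=\Hom(N,\Gamma(I))$ and the quotient $\Hom(N, I/\Gamma(I))$ vanishes. This is the subtle step of the proof: the rest is formal once (Inj) is in hand, but the saturation criterion is what makes the identification $I/\Gamma(I) = \Sigma(I)$ go through.

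\textbf{Step 4 (Injectivity of $T(I)$).} Since $T(I)=T(\Sigma(I))$ by Step 3, I may replace $I$ by $\Sigma(I)$ and use that $S$ is fully faithful together with the adjunction. Given a monomorphism $X \hookrightarrow Y$ in $\cA/\cB$ and a map $f \colon X \to T(\Sigma(I))$, full faithfulness of $S$ gives
\[
\Hom_{\cA/\cB}(X, T\Sigma(I)) = \Hom_\cA(SX, ST\Sigma(I)) = \Hom_\cA(SX, \Sigma(I)),
\]
so $f$ corresponds to some $g \colon SX \to \Sigma(I)$. The section functor $S$ is left exact (being a right adjoint), so $SX \hookrightarrow SY$ is a monomorphism in $\cA$; since $\Sigma(I)$ is injective in $\cA$ (Step 2), $g$ extends to $\tilde g \colon SY \to \Sigma(I)$, which corresponds under the same adjunction to the desired extension $\tilde f \colon Y \to T(I)$. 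Hence $T(I)$ is injective in $\cA/\cB$.
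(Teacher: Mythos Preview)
Your proof is correct and follows essentially the same approach as the paper's: both use (Inj) to get $\Gamma(I)$ injective in $\cA$, deduce that $J=I/\Gamma(I)$ is injective, verify $J$ is saturated via Proposition~\ref{prop:satcrit}(b), and conclude $J=\Sigma(I)$. The only cosmetic difference is in Step~4: the paper observes directly that $\Hom_{\cA/\cB}(T(-),T(I))\cong\Hom_\cA(-,\Sigma(I))$ is exact (using the $(T,S)$ adjunction and injectivity of $\Sigma(I)$), whereas you pass through the fully faithful $S$ and do an explicit extension argument---both are standard and equivalent.
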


\begin{proof}
Since $\Gamma$ is right adjoint to the inclusion $\cB \to \cA$, it takes injectives to injectives. Thus $\Gamma(I)$ is injective in $\cB$, and so injective in $\cA$ by \eqref{eqn:prop-H}. Let $J=I/\Gamma(I)$. Then $J$ is injective and $\Gamma(J)=0$, and so $J$ is saturated by Proposition~\ref{prop:satcrit}(b). Since the map $I \to J$ has kernel and cokernel in $\cB$, it follows that $J$ is the saturation of $I$, that is, the natural map $\Sigma(I) \to \Sigma(J)=J$ is an isomorphism. Finally, note that $\Hom_{\cA/\cB}(T(-), T(I)) \cong \Hom_{\cA}(-, \Sigma(I))$ is exact, which implies that $\Hom_{\cA/\cB}(-, T(I))$ is exact, and so $T(I)$ is injective.
\end{proof}

\begin{proposition}
The functors $T$ and $S$ give mutually quasi-inverse equivalences between the category of torsion-free injectives in $\cA$ and the category of injectives in $\cA/\cB$.
\end{proposition}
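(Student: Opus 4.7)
The plan is to verify three things: (i) $T$ sends torsion-free injectives to injectives; (ii) $S$ sends injectives to torsion-free injectives; (iii) the unit and counit of the adjunction are isomorphisms on these subcategories. By general Gabriel localization theory, $T$ is exact, and $S$ is fully faithful, so the counit $TS \to \id$ is already an isomorphism on all of $\cA/\cB$. Thus only the unit requires attention on the $\cA$-side.

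For (i), suppose $I$ is torsion-free and injective in $\cA$. I would invoke Proposition~\ref{prop:injses}: the short exact sequence $0 \to \Gamma(I) \to I \to \Sigma(I) \to 0$ degenerates (since $\Gamma(I)=0$) into an isomorphism $I \xrightarrow{\sim} \Sigma(I) = ST(I)$; this simultaneously shows that $I$ is saturated (handling the unit on this side) and, by the last assertion of Proposition~\ref{prop:injses}, that $T(I)$ is injective in $\cA/\cB$.

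For (ii), let $J$ be injective in $\cA/\cB$. Since $S$ is right adjoint to the exact functor $T$, it preserves injectives, so $S(J)$ is injective in $\cA$. To check torsion-freeness, for any $N \in \cB$ I would use adjunction to compute
\begin{displaymath}
\Hom_{\cA}(N, S(J)) = \Hom_{\cA/\cB}(T(N), J) = \Hom_{\cA/\cB}(0, J) = 0,
\end{displaymath}
which shows $\Gamma(S(J)) = 0$. (Alternatively, $S(J)$ is saturated by construction, hence torsion-free via Proposition~\ref{prop:satcrit}(b) applied to $N \in \cB$.)

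Finally, for (iii) on the $\cA$-side, the unit map $I \to ST(I)$ for a torsion-free injective $I$ is the map $I \to \Sigma(I)$, which was shown to be an isomorphism in step (i). I do not anticipate any serious obstacle here: everything reduces quickly to the previous two propositions together with the standard fact that the section functor of a Gabriel localization is fully faithful. The only mildly delicate point is recognizing that torsion-freeness combined with injectivity forces saturation, and this is exactly the content of Proposition~\ref{prop:injses}.
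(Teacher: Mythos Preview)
Your proposal is correct and follows essentially the same approach as the paper: both invoke Proposition~\ref{prop:injses} to see that $T$ preserves injectivity and that the unit $I \to \Sigma(I)$ is an isomorphism on torsion-free injectives, and both use the general fact that $S$ (as right adjoint to an exact functor) preserves injectives together with $TS \cong \id$. You are slightly more explicit than the paper in verifying that $S(J)$ is torsion-free, which the paper leaves implicit.
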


\begin{proof}
Proposition~\ref{prop:injses} shows that $T$ carries injectives to injectives, while this is true for $S$ for general reasons. If $I$ is a torsion-free injective in $\cA$ then Proposition~\ref{prop:injses} shows that the map $I \to S(T(I))$ is an isomorphism. On the other hand, for any object $I$ of $\cA/\cB$ the map $T(S(I)) \to I$ is an isomorphism. Thus $T$ and $S$ are quasi-inverse.
\end{proof}

Let $\II(\cA)$ be the set of isomorphism classes of indecomposable injectives in $\cA$. From the previous proposition, we find:

\begin{proposition}
We have $\II(\cA) = \II(\cB) \amalg \II(\cA/\cB)$.
\end{proposition}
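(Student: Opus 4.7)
The plan is to build a bijection $\II(\cA) \to \II(\cB) \amalg \II(\cA/\cB)$ by using the decomposition established in Proposition~\ref{prop:injses}.

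First I would observe that for any indecomposable injective $I \in \cA$, the short exact sequence
\[
0 \to \Gamma(I) \to I \to \Sigma(I) \to 0
\]
from Proposition~\ref{prop:injses} has $\Sigma(I)$ injective, so it splits. Thus $I \cong \Gamma(I) \oplus \Sigma(I)$, and indecomposability forces exactly one of these summands to be zero.

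Next I would analyze the two cases. If $\Sigma(I)=0$, then $I=\Gamma(I)$ lies in $\cB$. Since $\cB$ is a full subcategory of $\cA$, any injection in $\cB$ is an injection in $\cA$, so $I$ is injective (and indecomposable) in $\cB$. If instead $\Gamma(I)=0$, then $I$ is a torsion-free injective; by the previous proposition, $T(I)$ is an indecomposable injective in $\cA/\cB$.

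To verify that the resulting map is surjective onto both factors, given $J \in \II(\cB)$ the hypothesis \eqref{eqn:prop-H} implies $J$ is still injective in $\cA$, and indecomposability is unaffected by passage to a full subcategory. Given $J \in \II(\cA/\cB)$, the previous proposition produces a torsion-free indecomposable injective $S(J) \in \cA$ with $T(S(J)) \cong J$. Finally, for disjointness, if some $I \in \II(\cA)$ gave a class in both $\II(\cB)$ and $\II(\cA/\cB)$, then $I$ would be simultaneously in $\cB$ (so $\Gamma(I)=I$) and torsion-free (so $\Gamma(I)=0$), forcing $I=0$, contradicting that $I$ is an (indecomposable) injective. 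No step is really an obstacle here — everything is a direct consequence of the splitting in Proposition~\ref{prop:injses} together with the equivalence of categories in the preceding proposition; the only subtlety is remembering to invoke \eqref{eqn:prop-H} to ensure injectives of $\cB$ stay injective in $\cA$ when checking surjectivity.
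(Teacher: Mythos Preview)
Your proof is correct and follows exactly the approach the paper has in mind: split $I$ via Proposition~\ref{prop:injses}, use indecomposability to force $I$ into $\cB$ or into the torsion-free injectives, and then invoke the equivalence of the preceding proposition together with \eqref{eqn:prop-H} to match these with $\II(\cB)$ and $\II(\cA/\cB)$ respectively. The only phrase worth tightening is ``indecomposability is unaffected by passage to a full subcategory'': for the direction $\II(\cB)\to\II(\cA)$ you are implicitly using that $\cB$ is closed under subobjects (it is Serre), so that any direct-sum decomposition of $J$ in $\cA$ already lives in $\cB$.
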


\begin{proposition} \label{prop:triangle}
Let $M \in \rD^+(\cA)$. Then we have an exact triangle
\begin{displaymath}
\rR \Gamma(M) \to M \to \rR \Sigma(M) \to
\end{displaymath}
where the first two maps are the canonical ones.
\end{proposition}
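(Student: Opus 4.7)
The plan is to apply the term-by-term short exact sequence from Proposition~\ref{prop:injses} to an injective resolution of $M$, obtain a short exact sequence of complexes, and then verify that the outer complexes represent $\rR\Gamma(M)$ and $\rR\Sigma(M)$.

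First I would choose a bounded-below complex of injectives $I^\bullet$ in $\cA$ with a quasi-isomorphism $M \to I^\bullet$. Applying Proposition~\ref{prop:injses} in each degree gives a short exact sequence of complexes
\[
0 \to \Gamma(I^\bullet) \to I^\bullet \to \Sigma(I^\bullet) \to 0,
\]
every term of which is injective in $\cA$. This sequence of complexes, being degreewise split as a sequence of $\cO_X$-modules (or more generally because the right-hand map of complexes has a termwise section in, say, the category of sets, since $\Sigma(I^n)$ is injective), yields an exact triangle in $\rD^+(\cA)$. The middle term is $M$ via the chosen quasi-isomorphism, so the only remaining task is to identify the outer terms with $\rR\Gamma(M)$ and $\rR\Sigma(M)$.

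For the left-hand complex: each $\Gamma(I^n)$ is injective in $\cA$ by Proposition~\ref{prop:injses}, so it is in particular injective in $\cB$ and is $\Gamma$-acyclic; thus $\Gamma(I^\bullet)$ computes $\rR\Gamma(M)$. For the right-hand complex, write $\Sigma = S \circ T$. Since $T$ is exact, $T(I^\bullet)$ is quasi-isomorphic to $T(M)$, and Proposition~\ref{prop:injses} tells us each $T(I^n)$ is injective in $\cA/\cB$. Hence $T(I^\bullet)$ is an injective resolution of $T(M)$, and applying $S$ gives
\[
\Sigma(I^\bullet) = S(T(I^\bullet)) \simeq \rR S(T(M)) = \rR\Sigma(M).
\]

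Tracing through the construction confirms that the connecting maps are canonical: the arrow $\rR\Gamma(M) \to M$ comes from the inclusion $\Gamma(I^\bullet) \hookrightarrow I^\bullet$, and the arrow $M \to \rR\Sigma(M)$ comes from the unit maps $I^n \to S(T(I^n))$ of the $T \dashv S$ adjunction. The only genuinely nontrivial step in all of this is recognizing that $T(I^\bullet)$ is an injective resolution of $T(M)$, and that step is precisely what the exactness of $T$ together with the second assertion of Proposition~\ref{prop:injses} provides; hypothesis \eqref{eqn:prop-H} is what makes Proposition~\ref{prop:injses} available in the first place, so it underlies the whole argument.
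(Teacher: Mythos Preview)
Your proof is correct and follows essentially the same approach as the paper: replace $M$ by a bounded-below injective complex, apply Proposition~\ref{prop:injses} termwise to get a short exact sequence of complexes, and identify the outer terms as $\rR\Gamma(M)$ and $\rR\Sigma(M)$. Two minor remarks: the aside about $\cO_X$-modules is out of place in this abstract Grothendieck-category setting, and no splitting argument is needed at all---any short exact sequence of complexes yields a distinguished triangle in the derived category (though the termwise splitting in $\cA$, coming from injectivity of $\Sigma(I^n)$, is indeed available if you prefer to work in the homotopy category of injectives as the paper does).
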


\begin{proof}
Work in the homotopy category of injective complexes. Let $M$ be an object in this category. Then from Proposition~\ref{prop:injses}, we have a short exact sequence of complexes
\begin{displaymath}
0 \to \Gamma(M) \to M \to \Sigma(M) \to 0,
\end{displaymath}
and this gives the requisite triangle.
\end{proof}

\begin{proposition} \label{prop:derived-sat}
An object $M \in \rD^+(\cA)$ is derived saturated if and only if $\rR \Hom_{\cA}(N, M)=0$ for all $N \in \rD^+(\cB)$.
\end{proposition}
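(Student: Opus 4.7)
The plan is to reduce to a statement about $\rR\Gamma$ via Proposition~\ref{prop:triangle}. The triangle $\rR\Gamma(M) \to M \to \rR\Sigma(M) \to$ shows that $M \in \rD^+(\cA)$ is derived saturated if and only if $\rR\Gamma(M) = 0$, so it suffices to prove: $\rR\Gamma(M) = 0$ if and only if $\rR\Hom_\cA(N, M) = 0$ for every $N \in \rD^+(\cB)$.

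For the only-if direction, I would use that $M \cong \rR\Sigma(M) = \rR S(T(M))$ and invoke a derived adjunction. Since $T$ is exact, the functor $\Hom_\cA(-, S(J)) = \Hom_{\cA/\cB}(T(-), J)$ is exact whenever $J$ is injective in $\cA/\cB$, so $S$ preserves injectives and $\rR S$ is right adjoint to $T$ as functors of bounded-below derived categories. For $N \in \rD^+(\cB)$ we then get
\[
\rR\Hom_\cA(N, M) \cong \rR\Hom_{\cA/\cB}(T(N), T(M)) = 0,
\]
since $T(N) = 0$.

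For the if direction, the strategy is to test the hypothesis on $N = \rR\Gamma(M)$ itself. Two preliminary facts are needed. First, $\rR\Gamma$ takes values in $\rD^+(\cB)$: any injective resolution $M \to I^{\bullet}$ in $\cA$ computes $\rR\Gamma(M) = \Gamma(I^{\bullet})$, whose terms all lie in $\cB$. Second, hypothesis \eqref{eqn:prop-H} combined with exactness of the inclusion $i \colon \cB \hookrightarrow \cA$ lifts the underived adjunction $(i, \Gamma)$ to a derived adjunction $(i, \rR\Gamma)$, under which $\rR\Gamma \circ i$ is naturally isomorphic to the identity on $\rD^+(\cB)$. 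The triangle identity for this derived adjunction tells us that applying $\rR\Gamma$ to the counit $\eta_M \colon \rR\Gamma(M) \to M$ yields the identity on $\rR\Gamma(M)$ after the canonical identification $\rR\Gamma(\rR\Gamma(M)) \cong \rR\Gamma(M)$. Taking $N = \rR\Gamma(M)$ in the hypothesis forces $\eta_M = 0$, so its image under $\rR\Gamma$ is also zero, whence $\id_{\rR\Gamma(M)} = 0$ and $\rR\Gamma(M) = 0$.

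The main conceptual hurdle is verifying that the underived adjunctions $(i, \Gamma)$ and $(T, S)$ lift to the stated derived adjunctions; once this is in hand, the argument is essentially formal. The role of \eqref{eqn:prop-H} is precisely to make $\rR\Gamma$ land in, and restrict to the identity on, $\rD^+(\cB)$, which is what both preliminary facts require.
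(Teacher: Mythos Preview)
Your proof is correct and follows essentially the same approach as the paper: both directions hinge on the triangle of Proposition~\ref{prop:triangle}, and for the ``if'' direction you and the paper alike test against $N = \rR\Gamma(M)$ and use that the counit $\rR\Gamma(M) \to M$ is nonzero whenever $\rR\Gamma(M)$ is. Your version is in fact more explicit than the paper's on this last point---the paper simply asserts $\rR\Hom(\rR\Gamma(M),M)\neq 0$ without spelling out the triangle-identity argument---and your ``only if'' direction phrases abstractly via derived adjunction what the paper does concretely by picking an injective resolution of $T(M)$ in $\cA/\cB$ and invoking Proposition~\ref{prop:satcrit} termwise.
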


\begin{proof}
Suppose $M$ is derived saturated. Choose an injective resolution $T(M) \to I^\bullet$ in $\cA/\cB$. Applying $S$, we get a quasi-isomorphism $M \to S(I^\bullet)$. Given $N \in \rD^+(\cB)$, we have $\rR \Hom_\cA(N,M) = \Hom(N, S(I^\bullet))$, and the latter is $0$ by Proposition~\ref{prop:satcrit} since $S(I^\bullet)$ consists of saturated objects.

Conversely, if $M$ is not derived saturated, Proposition~\ref{prop:triangle} implies that $\rR \Gamma(M) \ne 0$, and so $\rR \Hom(\rR\Gamma(M), M) \ne 0$. Since $\rR \Gamma(M) \in \rD^+(\cB)$, we are done.
\end{proof}

\begin{definition}
Given a triangulated category $\cT$, a collection of full triangulated subcategories $\cT_1, \dots, \cT_n$ is a {\bf semi-orthogonal decomposition} of $\cT$ if 
\begin{enumerate}[\indent (1)]
\item $\hom_\cT(X_i, X_j) = 0$ whenever $X_i \in \cT_i$ and $X_j \in \cT_j$ and $i<j$, and 
\item the smallest triangulated subcategory of $\cT$ containing $\cT_1, \dots, \cT_n$ is $\cT$. 
\end{enumerate}
In that case, we write $\cT = \langle \cT_1, \dots, \cT_n \rangle$.
\end{definition}

Let $\cD_0^+$ (resp.\ $\cD_1^+$) be the full subcategory of $\rD^+(\cA)$ on objects $M$ such that $\rR \Sigma(M)=0$ (resp.\ $\rR \Gamma(M)=0$).

\begin{proposition} 
We have the following:
\begin{enumerate}[\indent \rm (a)]
\item The inclusion $\rD^+(\cB) \to \cD_0^+$ is an equivalence, with quasi-inverse $\rR \Gamma$.
\item The functor $T \colon \cD_1^+ \to \rD^+(\cA/\cB)$ is an equivalence, with quasi-inverse $\rR S$.
\item We have a semi-orthogonal decomposition $\rD^+(\cA)=\langle \cD_0^+, \cD_1^+ \rangle$.
\end{enumerate}
\end{proposition}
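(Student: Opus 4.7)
The plan is to leverage Proposition~\ref{prop:triangle}, Proposition~\ref{prop:derived-sat}, the hypothesis \eqref{eqn:prop-H}, and Proposition~\ref{prop:injses} repeatedly; essentially all three parts reduce to bookkeeping with these results.

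For (a), I would first verify that the inclusion $\rD^+(\cB) \hookrightarrow \rD^+(\cA)$ lands in $\cD_0^+$: if $M \in \rD^+(\cB)$, then $T(M)=0$ in $\rD^+(\cA/\cB)$ because $T$ is exact and annihilates $\cB$; since $\Sigma = S \circ T$ with $T$ exact, $\rR\Sigma(M) = \rR S(T(M)) = 0$. Next, by \eqref{eqn:prop-H} any $M \in \rD^+(\cB)$ admits an injective resolution in $\cB$ that remains injective in $\cA$ and hence computes $\rR\Gamma$; since $\Gamma$ acts as the identity on objects of $\cB$, the natural map $M \to \rR\Gamma(M)$ is an isomorphism. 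Conversely, for $M \in \cD_0^+$, Proposition~\ref{prop:triangle} gives $\rR\Gamma(M) \xrightarrow{\sim} M$. Finally, $\rR\Gamma$ lands in $\rD^+(\cB)$ because the components of $\Gamma(I^\bullet)$ all lie in $\cB$. This proves (a).

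For (b), the analogous verification is that $\rR S$ lands in $\cD_1^+$: given $N \in \rD^+(\cA/\cB)$ with injective resolution $N \to I^\bullet$, Proposition~\ref{prop:injses} (and the corollary immediately following it) shows that each $S(I^i)$ is a torsion-free injective in $\cA$, so $\Gamma(S(I^\bullet)) = 0$, giving $\rR\Gamma(\rR S(N)) = 0$. Because $T$ is exact and $T \circ S = \id_{\cA/\cB}$, applying $T$ to $S(I^\bullet) = \rR S(N)$ recovers $I^\bullet \simeq N$, so $T \circ \rR S \simeq \id$. For the other composition, if $M \in \cD_1^+$ then Proposition~\ref{prop:triangle} together with $\rR\Gamma(M)=0$ yields $M \xrightarrow{\sim} \rR\Sigma(M) = \rR S(T(M))$, so $\rR S \circ T \simeq \id$ on $\cD_1^+$.

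For (c), I would check the two defining conditions of a semi-orthogonal decomposition. Given $X_0 \in \cD_0^+$ and $X_1 \in \cD_1^+$, part~(a) identifies $X_0$ with an object of $\rD^+(\cB)$; the triangle of Proposition~\ref{prop:triangle} applied to $X_1$ gives $X_1 \xrightarrow{\sim} \rR\Sigma(X_1)$, so $X_1$ is derived saturated, and Proposition~\ref{prop:derived-sat} then yields $\rR\Hom_{\cA}(X_0, X_1) = 0$; in particular $\Hom(X_0, X_1) = 0$. For the generation condition, Proposition~\ref{prop:triangle} provides, for any $M \in \rD^+(\cA)$, an exact triangle $\rR\Gamma(M) \to M \to \rR\Sigma(M) \to$; it remains to show the outer terms lie in $\cD_0^+$ and $\cD_1^+$ respectively. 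The first inclusion follows from $\rR\Gamma(M) \in \rD^+(\cB) \subset \cD_0^+$ by part~(a), and the second from the calculation in (b) that $\rR\Sigma(M) = \rR S(T(M))$ is derived saturated, hence lies in $\cD_1^+$.

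The only substantive point is the verification that $\rR S$ carries injective resolutions in $\cA/\cB$ to complexes of torsion-free injectives; this is where Proposition~\ref{prop:injses} and hypothesis~\eqref{eqn:prop-H} do the real work, and everything else is a formal consequence of the triangle in Proposition~\ref{prop:triangle}.
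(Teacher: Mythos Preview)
Your proposal is correct and follows essentially the same approach as the paper: both use the triangle of Proposition~\ref{prop:triangle} to get the quasi-inverse identifications and the generation statement, Proposition~\ref{prop:derived-sat} for the orthogonality in (c), and hypothesis~\eqref{eqn:prop-H} together with Proposition~\ref{prop:injses} to control injectives. If anything, you are more explicit than the paper in verifying that $\rR S$ lands in $\cD_1^+$ and that the outer terms of the triangle lie in the correct pieces; the paper leaves these checks implicit.
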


\begin{proof}
(a) Pick $M \in \cD_0^+$. By Proposition~\ref{prop:triangle}, we have a naturally given quasi-isomorphism $\rR \Gamma(M) \cong M$. So the composition $\cD_0^+ \to \rD^+(\cB) \to \cD_0^+$ is isomorphic to the identity. For $M \in \rD^+(\cB)$, $\rR \Gamma(M)$ can be computed by applying $\Gamma$ termwise, so it is immediate that the composition $\rD^+(\cB) \to \cD_0^+ \to \rD^+(\cB)$ is also isomorphic to the identity.

(b) For $M \in \cD_1^+$, Proposition~\ref{prop:triangle} gives a natural quasi-isomorphism $M \to \rR \Sigma(M)$, so $\rR S \circ T \cong {\rm id}_{\cD_1^+}$. For $M \in \rD^+(\cA/\cB)$, the homology of $\rR S (M)$ lives in $\cB$, so $T(\rR S(M))$ can be computed by applying $TS$ termwise, and we get $T \circ \rR S \cong {\rm id}_{\rD^+(\cA/\cB)}$.

(c) Since $\cD_1^+$ is the subcategory of derived saturated objects, we have $\hom_{\rD^+(\cA)}(X,Y) = 0$ whenever $X \in \cD_0^+$ and $Y \in \cD_1^+$ by Proposition~\ref{prop:derived-sat}. By Proposition~\ref{prop:triangle}, $\rD^+(\cA)$ is generated by $\cD_0^+$ and $\cD_1^+$.
\end{proof}

\begin{remark}
In many familiar situations, the above functors and decompositions do not preserve finiteness. For example, suppose $\cA$ is the category of $\bC[t]$-modules and $\cB$ is the category of torsion modules. Let $M=\bC[t]$. Then $\rR \Gamma(M)=(\bC(t)/\bC[t])[1]$ and $\rR \Sigma(M)=\bC(t)$. Thus the projection of $M$ to the two pieces of the semi-orthogonal decomposition are not finitely generated objects of $\cA$. Nonetheless, in our eventual application of this formalism to tca's, finiteness \emph{will} be preserved!
\end{remark}

\subsection{Decomposing into many pieces} \label{ss:formalism2}

Let $\cA$ be a Grothendieck abelian category and let
\begin{displaymath}
\cA_{\le 0} \subset \cdots \subset \cA_{\le d}=\cA
\end{displaymath}
be a chain of localizing subcategories (we do not assume $\cA_{\le 0}=0$). We assume that each $\cA_{\le r} \subset \cA$ satisfies \eqref{eqn:prop-H}. We put
\begin{displaymath}
\cA_{>r}=\cA/\cA_{\le r} \qquad \textrm{and} \qquad \cA_r = \cA_{\le r}/\cA_{\le r-1}.
\end{displaymath}
We let $T_{>r} \colon \cA \to \cA_{>r}$ be the localization functor, $S_{>r}$ its right adjoint, and $\Sigma_{>r}=S_{>r} \circ T_{>r}$, and we let $\Gamma_{\le r}$ be the right adjoint to the inclusion $\cA_{\le r} \to \cA$. The subscripts in these functors are meant to indicate that they are truncating in certain ways: intuitively $\Gamma_{\le r}(M)$ keeps the part of $M$ in $\cA_{\le r}$ and discards the rest, while $T_{>r}(M)$ discards the part of $M$ in $\cA_{\le r}$ and keeps the rest. By convention, we put $\Gamma_{\le r}=0$ for $r<0$ and $\Gamma_{\le r}=\id$ for $r>d$, and put $\Sigma_{>r}=\id$ for $r<0$ and $\Sigma_{>r}=0$ for $r>d$. We also put $\cA_{\le r}=0$ for $r<0$ and $\cA_{\le r}=\cA$ for $r>d$. We have the following connections between these functors:

\begin{proposition}
We have the following:
\begin{enumerate}[\indent \rm (a)]
\item Any pair of functors in the set $\{\Gamma_{\le i},\Sigma_{>j}\}_{i,j \in \bZ}$ commutes.
\item We have $\Gamma_{\le i} \Gamma_{\le j} = \Gamma_{\le \min(i,j)}$.
\item We have $\Sigma_{>i} \Sigma_{>j} = \Sigma_{>\max(i,j)}$.
\item We have $\Gamma_{\le i} \Sigma_{>j}=0$ if $i \le j$.
\end{enumerate}
These results hold for the derived versions of the functors as well.
\end{proposition}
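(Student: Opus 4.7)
The plan is to reduce every identity to a computation on a single injective object, which handles the underived and derived cases together. The key input is Proposition~\ref{prop:injses}: for each $k$, any injective $I \in \cA$ sits in a canonical short exact sequence $0 \to \Gamma_{\le k}(I) \to I \to \Sigma_{>k}(I) \to 0$ with both flanking terms injective. This sequence splits automatically, since the injectivity of $\Gamma_{\le k}(I)$ forces $\Ext^1_\cA(\Sigma_{>k}(I), \Gamma_{\le k}(I)) = 0$. Given any injective resolution $M \to I^\bullet$ in $\cA$, the termwise complexes $\Gamma_{\le k}(I^\bullet)$ and $\Sigma_{>k}(I^\bullet)$ are therefore again complexes of injectives representing $\rR \Gamma_{\le k}(M)$ and $\rR \Sigma_{>k}(M)$ respectively, and any iterated composition of these derived functors may be computed termwise on $I^\bullet$. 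It suffices to prove each identity as a natural isomorphism of functors on a single injective $I$; the underived identities for arbitrary $M$ then follow by left exactness, writing $M$ as the kernel of a map of injectives.

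On an injective $I$, parts (b), (c), (d) are easy manipulations of the two splittings. For (b), $\Gamma_{\le i}\Gamma_{\le j}(I)$ is the largest subobject of $I$ lying in both $\cA_{\le i}$ and $\cA_{\le j}$, hence in $\cA_{\le \min(i,j)}$. For (d) with $i \le j$, every subobject of $I$ in $\cA_{\le i} \subseteq \cA_{\le j}$ is contained in $\Gamma_{\le j}(I)$, so meets the complementary summand $\Sigma_{>j}(I)$ trivially, giving $\Gamma_{\le i}(\Sigma_{>j}(I)) = 0$. For (c), one can argue purely from the universal property of saturation: in either order of $i,j$, the object $\Sigma_{>i}(\Sigma_{>j}(M))$ is saturated with respect to $\cA_{\le \max(i,j)}$ and receives a map from $M$ with kernel and cokernel in $\cA_{\le \max(i,j)}$, so by the universal property it is canonically isomorphic to $\Sigma_{>\max(i,j)}(M)$.

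For (a), the $\Gamma\Gamma$ and $\Sigma\Sigma$ commutativity is immediate from (b) and (c). The mixed case with $i \le j$ vanishes on both sides, by (d) on one side and by the observation that $\Gamma_{\le i}(M) \in \cA_{\le j}$ is killed by $T_{>j}$ on the other. For $i > j$ on an injective $I$, the central identity is
\[
\Gamma_{\le i}(I) = \Gamma_{\le j}(I) \oplus \Gamma_{\le i}(\Sigma_{>j}(I)),
\]
obtained by applying $\Gamma_{\le i}$ to the splitting $I = \Gamma_{\le j}(I) \oplus \Sigma_{>j}(I)$ and invoking (b). Applying $\Sigma_{>j}$ annihilates the first summand (it equals its own $\Gamma_{\le j}$-torsion) and fixes the second (a direct summand of the saturated object $\Sigma_{>j}(I)$), yielding $\Sigma_{>j}(\Gamma_{\le i}(I)) \cong \Gamma_{\le i}(\Sigma_{>j}(I))$. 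To promote this to an isomorphism of functors I would construct the natural transformation $\Sigma_{>j}\Gamma_{\le i} \to \Gamma_{\le i}\Sigma_{>j}$ from the inclusion $\Gamma_{\le i}(M) \hookrightarrow M$, using that $\Sigma_{>j}$ preserves the Serre subcategory $\cA_{\le i}$ whenever $i \ge j$ (which follows from the long exact sequence of Proposition~\ref{prop:triangle}, since the local cohomologies appearing there lie in $\cA_{\le j} \subseteq \cA_{\le i}$), and then verify this transformation is an isomorphism on injectives via the above decomposition.

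The main subtlety I anticipate is that the splittings provided by Proposition~\ref{prop:injses} are not canonical---only the subobject $\Gamma_{\le k}(I) \hookrightarrow I$ and the quotient $I \twoheadrightarrow \Sigma_{>k}(I)$ are. I will handle this by phrasing all natural transformations solely in terms of these canonical inclusions and quotients, so that the resulting isomorphism of functors is independent of any choice of splitting and extends cleanly to complexes, giving the derived statement.
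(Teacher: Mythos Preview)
The paper does not actually prove this proposition; it is stated without proof as part of the well-known background material in \S\ref{s:formalism} (the introduction to that section says the results ``are mostly well-known; we include this material simply to recall salient facts and set notation''). Your proposal is therefore not being compared against any argument in the paper, but is supplying the details the authors chose to omit.

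Your approach is correct. Reducing to injectives via Proposition~\ref{prop:injses} and exploiting that both $\Gamma_{\le k}(I)$ and $\Sigma_{>k}(I)$ are again injective is exactly the right mechanism: it guarantees that iterated applications of these functors to an injective resolution compute the iterated derived functors, so the underived identities on injectives immediately yield the derived identities. The verification of (b), (c), (d) is clean, and the construction of the natural transformation $\Sigma_{>j}\Gamma_{\le i} \to \Gamma_{\le i}\Sigma_{>j}$ for $i>j$ followed by checking it is an isomorphism on injectives (and hence on all objects by left exactness) is the right way to handle naturality.

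One small point: your justification that $\Sigma_{>j}$ preserves $\cA_{\le i}$ for $i \ge j$ via ``the long exact sequence of Proposition~\ref{prop:triangle}'' is slightly roundabout, since that proposition is a derived-category statement. The cleaner underived argument is simply that the unit map $N \to \Sigma_{>j}(N)$ has kernel and cokernel in $\cA_{\le j} \subseteq \cA_{\le i}$, and $\cA_{\le i}$ is a Serre subcategory. This is exactly the argument the paper uses in the proof of part~(a) of the subsequent proposition.
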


\begin{proposition}
Let $i<j$, let $T' \colon \cA_{\le j} \to \cA_{\le j}/\cA_{\le i}$ be the localization functor, and let $S'$ be its right adjoint.
\begin{enumerate}[\indent \rm (a)]
\item $S'$ coincides with the restriction of $S_{>i}$ to $\cA_{\le j}/\cA_{\le i}$.
\item Injectives in $\cA_j/\cA_i$ remain injective in $\cA/\cA_{\le i}$.
\item $\rR S'$ coincides with the restriction of $\rR S_{>i}$ to $\rD^+(\cA_{\le j}/\cA_{\le i})$.
\end{enumerate}
\end{proposition}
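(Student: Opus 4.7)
The plan is to handle (a) first, then bootstrap to (b), and obtain (c) essentially for free.

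For (a), I would start by observing that every object of $\cA_{\le j}/\cA_{\le i}$ has the form $T'(N)=T_{>i}(N)$ for some $N \in \cA_{\le j}$, where I am using that $\cA_{\le j}/\cA_{\le i}$ sits as a full subcategory of $\cA/\cA_{\le i}$ (a standard fact for nested Serre subcategories). The key step is to show that $S_{>i}(T_{>i}(N)) = \Sigma_{>i}(N)$ actually lies in $\cA_{\le j}$. For this, I apply the exact triangle of Proposition~\ref{prop:triangle} (for $\cA_{\le i} \subset \cA$), which yields the four-term exact sequence
\[
0 \to \Gamma_{\le i}(N) \to N \to \Sigma_{>i}(N) \to \rR^1 \Gamma_{\le i}(N) \to 0.
\]
Since $N \in \cA_{\le j}$ and $\Gamma_{\le i}(N), \rR^1\Gamma_{\le i}(N) \in \cA_{\le i} \subset \cA_{\le j}$, and since $\cA_{\le j}$ is a Serre subcategory of $\cA$, we conclude $\Sigma_{>i}(N) \in \cA_{\le j}$. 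Then the universal property of $S'$ falls out by adjunction: for any $X \in \cA_{\le j}$ and $M \in \cA_{\le j}/\cA_{\le i}$,
\[
\Hom_{\cA_{\le j}/\cA_{\le i}}(T'(X), M) = \Hom_{\cA/\cA_{\le i}}(T_{>i}(X), M) = \Hom_{\cA}(X, S_{>i}(M)) = \Hom_{\cA_{\le j}}(X, S_{>i}(M)),
\]
where the first equality uses fullness of the embedding and the last uses $S_{>i}(M) \in \cA_{\le j}$.

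For (b), I would use the earlier proposition identifying injectives of a Serre quotient with torsion-free injectives of the ambient category: any injective of $\cA_{\le j}/\cA_{\le i}$ has the form $T'(I)$ for $I$ a torsion-free injective in $\cA_{\le j}$ with respect to $\cA_{\le i}$. Now $I$ is injective in $\cA$ by the hypothesis (Inj) for the pair $\cA_{\le j} \subset \cA$, and being torsion-free with respect to $\cA_{\le i}$ is the same condition whether measured inside $\cA_{\le j}$ or $\cA$ (the $\cA_{\le i}$-torsion subobject is intrinsic). Thus $I$ is a torsion-free injective of $\cA$ relative to $\cA_{\le i}$, so the equivalence between torsion-free injectives in $\cA$ and injectives in $\cA/\cA_{\le i}$ (proved earlier) says $T_{>i}(I)$ is injective in $\cA/\cA_{\le i}$. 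Under the fully faithful inclusion, $T'(I)$ corresponds to $T_{>i}(I)$, giving the claim.

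For (c), given $M \in \rD^+(\cA_{\le j}/\cA_{\le i})$, choose an injective resolution $M \to I^{\bullet}$ in $\cA_{\le j}/\cA_{\le i}$. By (b), this is simultaneously an injective resolution in $\cA/\cA_{\le i}$. Then $\rR S'(M)$ is represented by $S'(I^{\bullet})$, and by (a) termwise we have $S'(I^n) = S_{>i}(I^n)$; hence $\rR S'(M) = S_{>i}(I^{\bullet}) = \rR S_{>i}(M)$.

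The main obstacle is (a), specifically verifying that $\Sigma_{>i}$ preserves the subcategory $\cA_{\le j}$; once that is in hand, the adjunction identification and the injectivity arguments for (b) and (c) are formal, using only standard Serre quotient facts and the proposition on torsion-free injectives already established.
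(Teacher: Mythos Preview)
Your proposal is correct and, for parts (a) and (c), follows essentially the same route as the paper: in (a) the paper simply notes that the natural map $N \to S_{>i}(T_{>i}(N))$ has kernel and cokernel in $\cA_{\le i}$ and invokes the Serre property of $\cA_{\le j}$, which is exactly the content of your four-term sequence; in (c) both arguments compute with an injective resolution using (a) and (b).

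For (b) there is a minor difference worth noting. You characterize an injective $J$ of $\cA_{\le j}/\cA_{\le i}$ as $T'(I)$ with $I$ a torsion-free injective of $\cA_{\le j}$, then push $I$ up to $\cA$ via \eqref{eqn:prop-H} for $\cA_{\le j}\subset\cA$, and finally apply the equivalence between torsion-free injectives of $\cA$ and injectives of $\cA/\cA_{\le i}$. This is valid (and implicitly uses that \eqref{eqn:prop-H} for $\cA_{\le i}\subset\cA_{\le j}$ follows from \eqref{eqn:prop-H} for $\cA_{\le i}\subset\cA$, which is immediate). The paper instead argues more directly: $S'(J)$ is injective in $\cA_{\le j}$ because section functors preserve injectives, (a) gives $S'(J)=S_{>i}(J)$, \eqref{eqn:prop-H} for $\cA_{\le j}\subset\cA$ makes it injective in $\cA$, and then $T_{>i}$ takes injectives to injectives (Proposition~\ref{prop:injses}), so $J=T_{>i}(S_{>i}(J))$ is injective in $\cA/\cA_{\le i}$. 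The paper's version avoids invoking the torsion-free-injective equivalence twice and makes the dependence on part (a) explicit; your version is equally valid and perhaps conceptually cleaner if one already has that equivalence in mind.
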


\begin{proof}
(a) Suppose $M \in \cA_{\le j}/\cA_{\le i}$ and write $M=T'(N)$ with $N \in \cA_{\le j}$. The natural map $N \to S_{>i}(M)$ has kernel and cokernel in $\cA_{\le i}$. Since $\cA_{\le j}$ is a Serre subcategory, it follows that $S_{>i}(M)$ belongs to $\cA_{\le j}$. Thus $S_{>i}$ maps $\cA_{\le j}/\cA_{\le i}$ into $\cA_{\le j}$, from which it easily follows that it is the adjoint to $T'$.

(b) Let $I$ be injective in $\cA_{\le j}/\cA_{\le i}$. Then $S'(I)$ is injective in $\cA_{\le j}$ since section functors always preserve injectives. By (a), $S'(I)=S_{>i}(I)$. Thus, by \eqref{eqn:prop-H}, we see that $S_{>i}(I)$ is injective in $\cA$. But $T_{>i}$ takes injectives to injectives, and so $I=T_{>i}(S_{>i}(I))$ is injective in $\cA/\cA_{\le i}$.

(c) This follows immediately from (a) and (b) (compute with injective resolutions).
\end{proof}

\begin{proposition} \label{prop:ind-inj}
We have a natural bijection $\II(\cA) = \coprod_{r=0}^d \II(\cA_r)$.
\end{proposition}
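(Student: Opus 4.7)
The plan is to deduce the statement by iterating the two-piece decomposition $\II(\cA') = \II(\cB') \amalg \II(\cA'/\cB')$ established earlier (the proposition right before Proposition~\ref{prop:triangle}) along the given chain of localizing subcategories. Applying that proposition with $\cA' = \cA$ and $\cB' = \cA_{\le d-1}$ gives
\[
\II(\cA) = \II(\cA_{\le d-1}) \amalg \II(\cA/\cA_{\le d-1}) = \II(\cA_{\le d-1}) \amalg \II(\cA_d),
\]
and an easy descending induction on $d$ will then finish the proof, provided we can apply the two-piece proposition to each inclusion $\cA_{\le r-1} \subset \cA_{\le r}$.

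To set up the induction, the one thing that must be checked is that the hypothesis \eqref{eqn:prop-H} is inherited: for each $r$, injectives of $\cA_{\le r-1}$ remain injective in $\cA_{\le r}$. By the standing assumption of \S\ref{ss:formalism2} applied to $\cA_{\le r-1} \subset \cA$, any injective $I$ of $\cA_{\le r-1}$ is already injective in $\cA$; since $\cA_{\le r}$ is a full subcategory of $\cA$ containing $I$, restricting $\Hom_{\cA}(-,I)$ to $\cA_{\le r}$ still gives an exact functor, so $I$ is injective in $\cA_{\le r}$ as well. Thus \eqref{eqn:prop-H} holds for $\cA_{\le r-1} \subset \cA_{\le r}$, and the two-piece proposition gives
\[
\II(\cA_{\le r}) = \II(\cA_{\le r-1}) \amalg \II(\cA_{\le r}/\cA_{\le r-1}) = \II(\cA_{\le r-1}) \amalg \II(\cA_r).
\]

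Iterating this starting from $\II(\cA) = \II(\cA_{\le d})$ and terminating at $\II(\cA_{\le -1}) = \II(0) = \emptyset$ (using the convention $\cA_{\le -1} = 0$, so that $\cA_0 = \cA_{\le 0}$), we assemble the decomposition
\[
\II(\cA) = \coprod_{r=0}^{d} \II(\cA_r),
\]
as desired. The only potential obstacle is the inheritance of \eqref{eqn:prop-H} to the relative inclusions, and as indicated above this is essentially formal from the fact that full inclusions preserve injectivity once one knows the objects are injective in the ambient category.
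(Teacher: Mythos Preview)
Your proof is correct and is exactly the intended argument: the paper states this proposition without proof because it follows immediately by iterating the earlier two-piece decomposition $\II(\cA)=\II(\cB)\amalg\II(\cA/\cB)$ along the chain, and your verification that \eqref{eqn:prop-H} passes to each inclusion $\cA_{\le r-1}\subset\cA_{\le r}$ (via injectivity in the ambient $\cA$) is the one point that needs checking.
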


Let $\rD^+(\cA)_i$ \Acom{changed notation from $\cD_i^+$ to be more consistent with what's used in following sections} be the full triangulated subcategory of $\rD^+(\cA)$ on objects $M$ such that $\rR \Gamma_{<i}(M)$ and $\rR \Sigma_{>i}(M)$ vanish.

\begin{proposition} \label{prop:derived-decomp}
We have the following:
\begin{enumerate}[\indent \rm (a)]
\item We have an equivalence $\rR S_{\ge i} \colon \rD^+(\cA_i) \to \rD^+(\cA)_i$.
\item We have a semi-orthogonal decomposition $\rD^+(\cA)=\langle \rD^+(\cA)_0, \ldots, \rD^+(\cA)_d \rangle$.
\end{enumerate}
\end{proposition}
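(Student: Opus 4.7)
The plan is to deduce both parts from the two-piece formalism of \S\ref{s:formalism}, iterated along the filtration. A preliminary observation is that an injective in $\cA_{\le r}$ automatically remains injective in every $\cA_{\le s}$ with $r \le s$: since $\Hom_{\cA_{\le s}}(-,I)$ is the restriction of $\Hom_{\cA}(-,I)$, which by \eqref{eqn:prop-H} is exact on $\cA_{\le r}$-injectives, this is automatic. Consequently $\rR \Gamma_{\le r}$ computed inside $\cA_{\le s}$ agrees with $\rR \Gamma_{\le r}$ computed inside $\cA$ (by termwise application to a common injective resolution), and the compatibility $\rR S' = \rR S_{>i}|_{\rD^+(\cA_{\le j}/\cA_{\le i})}$ from the proposition preceding \ref{prop:ind-inj} gives the analogous statement for $\rR \Sigma$.

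For part (a), I would first check that $\rR S_{\ge i}$ lands in $\cD_i^+$: its image lies in $\rD^+(\cA_{\le i})$, which kills $\rR \Sigma_{>i}$; and it consists of objects derived saturated with respect to $\cA_{\le i-1}$, so Proposition~\ref{prop:triangle} forces $\rR \Gamma_{<i} = \rR \Gamma_{\le i-1}$ to vanish on it. Conversely, given $M \in \cD_i^+$, the two-piece decomposition for $(\cA,\cA_{\le i})$ applied to $\rR \Sigma_{>i}(M)=0$ shows $M \simeq \rR \Gamma_{\le i}(M) \in \rD^+(\cA_{\le i})$; inside $\rD^+(\cA_{\le i})$ the condition $\rR \Gamma_{<i}(M)=0$ says $M$ is derived saturated with respect to $\cA_{\le i-1}$, so the two-piece equivalence for $(\cA_{\le i},\cA_{\le i-1})$ produces a unique $N \in \rD^+(\cA_i)$ with $M \simeq \rR S'(N)$, and by the preliminary compatibility $\rR S'(N) = \rR S_{\ge i}(N)$. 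Full faithfulness of the restriction is inherited from the two-piece equivalence $\rR S_{\ge i}\colon \rD^+(\cA/\cA_{\le i-1}) \to \rD^+(\cA)$ for the pair $(\cA,\cA_{\le i-1})$.

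For part (b), I would induct on $d$. The base $d=0$ is vacuous. For the inductive step, apply the two-piece decomposition to $(\cA,\cA_{\le d-1})$ to obtain a semi-orthogonal decomposition $\rD^+(\cA)=\langle \rD^+(\cA_{\le d-1}),\, \cD_d^+\rangle$; here $\cD_d^+$ matches the derived-saturated piece because $\rR \Sigma_{>d}=0$ trivially. The inductive hypothesis applied to the chain $\cA_{\le 0}\subset\cdots\subset\cA_{\le d-1}$ inside $\cA_{\le d-1}$ yields $\rD^+(\cA_{\le d-1})=\langle \cD_0'',\ldots,\cD_{d-1}''\rangle$, and the preliminary observation identifies each $\cD_r''$ with $\cD_r^+$ inside $\rD^+(\cA)$, since both $\rR\Gamma_{<r}$ and $\rR\Sigma_{>r}$ can be computed in either ambient category. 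Finally, semi-orthogonality $\Hom(X_i,X_j)=0$ for $X_i\in\cD_i^+$, $X_j\in\cD_j^+$, $i<j$ follows from Proposition~\ref{prop:derived-sat} applied with $\cB=\cA_{\le j-1}$: $X_j$ is derived saturated with respect to $\cA_{\le j-1}$, while $X_i \in \rD^+(\cA_{\le i}) \subseteq \rD^+(\cA_{\le j-1})$.

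The main obstacle is precisely this last identification of the inductively-produced $\cD_r''$ with the intrinsic $\cD_r^+$ defined in the statement: one must track that local cohomology and saturation are unchanged when passing between the ambient categories $\cA_{\le r} \subset \cA_{\le s} \subset \cA$. Everything follows cleanly from \eqref{eqn:prop-H} and the preceding proposition, but it is the step that most needs to be spelled out to make the induction go through.
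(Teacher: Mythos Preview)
Your proposal is correct and matches the paper's intent: the paper omits the proof of Proposition~\ref{prop:derived-decomp} entirely, treating it as a routine iteration of the two-piece decomposition established just before (the unnumbered proposition following Proposition~\ref{prop:derived-sat}), together with the compatibility results in the proposition preceding Proposition~\ref{prop:ind-inj}. Your write-up spells out exactly this iteration, including the key point that $\rR\Gamma_{\le r}$ and $\rR S_{>r}$ can be computed in any of the ambient categories $\cA_{\le s}$ ($s\ge r$) thanks to \eqref{eqn:prop-H} and the compatibility proposition, so there is nothing to add.
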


We now define a functor $\rR \Pi_i \colon \rD^+(\cA) \to \rD^+(\cA)_i$ by $\rR \Pi_i=\rR \Sigma_{\ge i} \rR \Gamma_{\le i}$. This is just the derived functor of $\Sigma_{\ge i} \Gamma_{\le i}$. The functor $\rR \Pi_i$ is idempotent, and projects onto the $i$th piece of the semi-orthogonal decomposition.

\subsection{Consequences of finiteness} \label{ss:Fin}

Maintain the set-up from the previous section. We now assume the following hypothesis:
\begin{itemize}
\item[(Fin)] If $M \in \cA$ is finitely generated then $\rR \Sigma_{>i}(M)$ and $\rR \Gamma_{\le i}(M)$ are finitely generated for all $i$ and vanish for $i \gg 0$.
\end{itemize}
We deduce a few consequences of this. We let $\rD^b_{\fgen}(\cA)_r = \rD^+(\cA)_r \cap \rD^b_{\fgen}(\cA)$.

\begin{proposition}
\begin{enumerate}[\indent \rm (a)]
\item We have an equivalence $\rR S_{\le r} \colon \rD^b_{\fgen}(\cA_r) \to \rD^b_{\fgen}(\cA)_r$. The inverse is induced by $T_{\le r}$.
\item We have a semi-orthogonal decomposition $\rD^b_{\fgen}(\cA) = \langle \rD^b_{\fgen}(\cA)_0, \ldots, \rD^b_{\fgen}(\cA)_d \rangle$.
\end{enumerate}
\end{proposition}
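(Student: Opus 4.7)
The plan is to deduce both parts from the analogous results in the preceding proposition by restricting to bounded complexes with finitely generated cohomology. The crucial input is hypothesis (Fin), which ensures that the relevant derived functors preserve this finiteness property; semi-orthogonality and generation both follow formally once this preservation is in hand.

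For part (a), the preceding proposition provides an equivalence $\rR S_{\ge r} \colon \rD^+(\cA_r) \to \cD_r^+$ with quasi-inverse induced by the exact localization; it suffices to check that both functors restrict to the subcategories defined by bounded finitely generated cohomology. The reverse direction is the easier one: if $N \in \cD_r^+ \cap \rD^b_{\fgen}(\cA)$, then the localization $T_{>r-1}$ is exact and preserves finite generation, so its cohomology sheaves are finitely generated in $\cA_{>r-1}$, and the vanishing $\rR \Sigma_{>r}(N)=0$ forces them into $\cA_r \subset \cA_{>r-1}$. For the forward direction, given a finitely generated $M \in \cA_r$, first use noetherianity to lift to a finitely generated $\widetilde{M} \in \cA_{\le r} \subset \cA$ whose image in $\cA_r$ is $M$ (pick any preimage, then take a finitely generated subobject still surjecting to $M$). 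Then $\rR S_{\ge r}(M)$ is identified with $\rR \Sigma_{>r-1}(\widetilde{M})$, which lies in $\rD^b_{\fgen}(\cA)$ by (Fin). Extend from single objects to bounded complexes of finitely generated objects by induction on length using the standard truncation triangles.

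For part (b), the semi-orthogonality relations from $\rD^+(\cA) = \langle \cD_0^+, \ldots, \cD_d^+ \rangle$ restrict automatically after intersecting with $\rD^b_{\fgen}(\cA)$, so the content is generation: every $M \in \rD^b_{\fgen}(\cA)$ must be built from finitely many cones of objects in the $\rD^b_{\fgen}(\cA)_i$. The projections $\rR \Pi_i(M) = \rR \Sigma_{\ge i} \rR \Gamma_{\le i}(M)$ preserve $\rD^b_{\fgen}$ by two successive applications of (Fin). Combining the exact triangles $\rR \Gamma_{\le i}(M) \to M \to \rR \Sigma_{>i}(M) \to$ for successive values of $i$ assembles $M$ as an iterated extension whose graded pieces are the $\rR \Pi_i(M)$, each lying in $\rD^b_{\fgen}(\cA)_i$ by part (a).

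The main obstacle is the forward direction in part (a), specifically the identification $\rR S_{\ge r}(M) \simeq \rR \Sigma_{>r-1}(\widetilde{M})$. This is essentially the assertion that the derived section functor applied to the localization of $\widetilde M$ coincides with the composite saturation functor applied to $\widetilde M$ itself; it follows from computing both sides using a common injective resolution of $\widetilde M$, with hypothesis (Inj) ensuring that an injective resolution built inside $\cA_{\le r}$ (or the intermediate subquotient) remains injective after inclusion into $\cA$. Once this compatibility is in hand, the remaining assembly of the equivalence and the semi-orthogonal decomposition is a formal manipulation of the triangles already provided.
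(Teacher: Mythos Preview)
Your argument is correct and is precisely the formal deduction the paper has in mind: the proposition is stated in \S\ref{ss:Fin} without proof, as an immediate consequence of (Fin) together with Proposition~\ref{prop:derived-decomp}, and your proposal spells out exactly that deduction. The only minor imprecision is in the reverse direction of part~(a): it is the vanishing $\rR\Sigma_{>r}(N)=0$ that forces the cohomology of $N$ itself into $\cA_{\le r}$, after which the exact localization $T_{>r-1}$ lands in $\cA_r$; your phrasing conflates these two steps slightly, but the logic is sound.
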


\begin{corollary} \label{cor:Kisom}
The functors $T_{\ge r}$ and $S_{\ge r}$ induce inverse isomorphisms $\rK(\rD^b_{\fgen}(\cA)_r)=\rK(\cA_r)$.
\end{corollary}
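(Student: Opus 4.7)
The plan is to deduce this corollary directly from the preceding proposition, which already does the real work. That proposition exhibits mutually inverse equivalences of triangulated categories between $\rD^b_{\fgen}(\cA_r)$ and $\rD^b_{\fgen}(\cA)_r$, implemented by the section functor $\rR S_{\ge r}$ (the restriction to $\rD^b_{\fgen}(\cA_r)$ of the functor appearing in Proposition~\ref{prop:derived-decomp}) and by the truncation/localization $T_{\ge r}$. Since any equivalence of triangulated categories induces an isomorphism on Grothendieck groups, and the two equivalences are mutually inverse, I obtain mutually inverse isomorphisms $\rK(\rD^b_{\fgen}(\cA_r)) \leftrightarrows \rK(\rD^b_{\fgen}(\cA)_r)$.

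To finish I would identify $\rK(\rD^b_{\fgen}(\cA_r))$ with $\rK(\cA_r)$ via the standard Euler characteristic isomorphism: for any abelian category $\cB$ with a well-behaved notion of ``finitely generated'', the map $\rK(\cB^{\fgen}) \to \rK(\rD^b_{\fgen}(\cB))$ sending $M$ to the class of the complex concentrated in degree $0$ is an isomorphism, with inverse $X \mapsto \sum_i (-1)^i [\rH^i(X)]$. Surjectivity follows from the fact that any bounded complex is built from its cohomology objects via iterated cones, so its class equals the alternating sum of its cohomology classes; injectivity comes from the observation that every short exact sequence in $\cB^{\fgen}$ extends to an exact triangle in $\rD^b_{\fgen}(\cB)$, and conversely every distinguished triangle produces a long exact sequence whose Euler characteristic vanishes. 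Applied to $\cB=\cA_r$ this yields $\rK(\cA_r)\cong \rK(\rD^b_{\fgen}(\cA_r))$, and composing with the previous isomorphism gives the corollary.

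The main obstacle, such as it is, is purely a bookkeeping one: one must verify that the functors named $T_{\ge r}$ and $S_{\ge r}$ in the statement of the corollary agree with those used in the preceding proposition (up to the $\cA_{>r-1}$ versus $\cA_{\le r}$ convention for specifying the piece $\cA_r$). This is immediate from the definitions, since $\rR\Pi_r = \rR\Sigma_{\ge r}\,\rR\Gamma_{\le r}$ projects onto $\rD^b_{\fgen}(\cA)_r$ and factors through $\rD^b_{\fgen}(\cA_r)$ by hypothesis (Fin). No further finiteness input is required at this stage, since (Fin) has already been absorbed into the triangulated equivalence of the preceding proposition.
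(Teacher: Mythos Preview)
Your proposal is correct and is exactly the argument the paper has in mind: the preceding proposition supplies mutually inverse triangulated equivalences $\rR S_{\ge r}$ and $T_{\ge r}$ between $\rD^b_{\fgen}(\cA_r)$ and $\rD^b_{\fgen}(\cA)_r$, and the corollary follows by passing to Grothendieck groups together with the standard identification $\rK(\cA_r)\cong\rK(\rD^b_{\fgen}(\cA_r))$. The paper gives no separate proof, so your write-up simply makes explicit what is left implicit there.
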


\begin{proposition} \label{prop:Kdecomp}
We have an isomorphism
\begin{displaymath}
\rK(\cA) = \bigoplus_{r=0}^d \rK(\rD^b_{\fgen}(\cA)_r)
\end{displaymath}
The projection onto the $r$th factor is given by $\rR \Pi_r$.
\end{proposition}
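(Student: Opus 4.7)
The plan is to reduce the statement to two standard facts: (i) the Grothendieck group of an abelian category agrees with that of its bounded derived category, and (ii) a semi-orthogonal decomposition of a triangulated category induces a direct sum decomposition on $\rK$.

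First I would identify $\rK(\cA) = \rK(\cA^{\fgen})$ with $\rK(\rD^b_{\fgen}(\cA))$ by sending $[M]$ to the class of $M$ viewed as a complex concentrated in degree zero. This is well defined because short exact sequences in $\cA^{\fgen}$ lift to exact triangles, and it has inverse $[X] \mapsto \sum_i (-1)^i [\rH^i(X)]$, since the canonical truncation triangles $\tau_{\le n} X \to X \to \tau_{>n} X \to$ exhibit any bounded complex as an iterated extension (in the triangulated sense) of shifts of its cohomology objects.

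Next, for any $M \in \rD^b_{\fgen}(\cA)$ I would apply the triangle of Proposition~\ref{prop:triangle} to $\rR\Gamma_{\le r}(M)$ at level $r-1$; together with the commutation relations $\rR\Gamma_{\le r-1}\rR\Gamma_{\le r} = \rR\Gamma_{\le r-1}$ and $\rR\Sigma_{>r-1}\rR\Gamma_{\le r} = \rR\Sigma_{\ge r}\rR\Gamma_{\le r} = \rR\Pi_r$ from \S\ref{ss:formalism2}, this yields an exact triangle
\begin{displaymath}
\rR\Gamma_{\le r-1}(M) \to \rR\Gamma_{\le r}(M) \to \rR\Pi_r(M) \to.
\end{displaymath}
Stringing these together (with $\rR\Gamma_{\le -1}(M) = 0$ and $\rR\Gamma_{\le d}(M) = M$) produces a finite filtration of $M$ whose successive cones are the $\rR\Pi_r(M) \in \rD^b_{\fgen}(\cA)_r$; finite generation of each piece is guaranteed by (Fin). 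Passing to $\rK$ yields $[M] = \sum_{r=0}^d [\rR\Pi_r(M)]$, so the natural map $\bigoplus_r \rK(\rD^b_{\fgen}(\cA)_r) \to \rK(\cA)$ is surjective and $\rR\Pi_r$ computes the projection onto the $r$th summand.

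To see the sum is direct, I note that each $\rR\Pi_r$ is a triangulated functor preserving $\rD^b_{\fgen}$ by (Fin), so it descends to an additive map on Grothendieck groups. Semi-orthogonality of the decomposition from the preceding proposition forces $\rR\Pi_r$ to act as the identity on $\rD^b_{\fgen}(\cA)_r$ and as zero on $\rD^b_{\fgen}(\cA)_s$ for $s \ne r$, so the $\rR\Pi_r$ constitute mutually orthogonal retractions that split the direct sum. The main obstacle is not substantive here: everything is a formal consequence of the preceding proposition once (Fin) is granted, and the real work lies in establishing (Fin), carried out later for $\cA = \Mod_A$ in Theorem~\ref{thm:fin}.
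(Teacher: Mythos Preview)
Your proof is correct and somewhat cleaner than the paper's own argument. Both proofs rest on the same ingredients---the triangle of Proposition~\ref{prop:triangle}, the commutation relations among the $\rR\Gamma_{\le r}$ and $\rR\Sigma_{>r}$, and the fact that $\rR\Pi_r$ is idempotent with image $\rD^b_{\fgen}(\cA)_r$---but they package the injectivity differently. The paper shows surjectivity of the projection $\rK(\cA)\to\bigoplus_r\rK(\rD^b_{\fgen}(\cA)_r)$ via $p_r\circ\iota_r=\id$ and then proves injectivity by a minimal-counterexample argument: assuming some $[M]\ne 0$ has vanishing projections, it picks the least $i$ with $[M]$ in the image of $\rK(\cA_{\le i})$ and uses the triangle to push $[M]$ down to level $i-1$, a contradiction. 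You instead verify directly that $\sum_r\iota_r\circ p_r=\id$ on $\rK(\cA)$ from the telescoping filtration $\rR\Gamma_{\le -1}(M)=0\to\cdots\to\rR\Gamma_{\le d}(M)=M$ with cones $\rR\Pi_r(M)$, and that $p_s\circ\iota_r=0$ for $s\ne r$ from the vanishing of $\rR\Pi_s$ on $\rD^b_{\fgen}(\cA)_r$. This gives two-sided inverses at once and avoids the contradiction step; it is the standard argument that a semi-orthogonal decomposition splits $\rK$. Either way, as you note, the substance lies entirely in (Fin).
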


\begin{proof}
The map above is surjective: the composition $\rK(\rD^b_\fgen(\cA)_r) \to \rK(\cA) \to \rK(\rD^b_\fgen(\cA)_r)$, where the first map comes from the inclusion, is an isomorphism.

Suppose it is not injective. Pick an object $M \in \rD^b(\cA)$ whose image is $0$ but such that $[M] \in \rK(\cA)$ is nonzero. Let $i$ be minimal so that $[M]$ is in the image of  $\rK(\cA_{\le i}) \to \rK(\cA)$ but not in the image of $\rK(\cA_{<i}) \to \rK(\cA)$. Also, suppose we have chosen $M$ so that this index $i$ is as small as possible. Note that $[\rR \Pi_r M] = [\rR \Sigma_{\ge i} M]$, so by our choice of $M$, $[\rR \Sigma_{\ge i} M] = 0$. Proposition~\ref{prop:triangle} then implies that $[M] = [\rR \Gamma_{<i} M]$, which contradicts our choice of $i$, so no such object $M$ exists.
\end{proof}

\subsection{Examples and non-examples of property \eqref{eqn:prop-H}}

We first give some positive results on property \eqref{eqn:prop-H}. Recall that a Grothendieck abelian category is {\bf locally noetherian} if it has a set of generators which are noetherian objects.

\begin{proposition}
Let $\cA$ be a locally noetherian Grothendieck category and let $\cB$ be a localizing subcategory satisfying the following condition:
\[
\label{eqn:star}
\makebox{\parbox{\dimexpr\linewidth-20\fboxsep-2\fboxrule}{Given $M \in \cA$ finitely generated, there exists $M_0 \subset M$ such that $M/M_0 \in \cB$ and $M_0$ has no subobject in $\cB$.}} {\tag{$\ast$}}
\]
Then $\cB \subset \cA$ satisfies \eqref{eqn:prop-H}.
\end{proposition}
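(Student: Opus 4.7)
The plan is to show that if $I \in \cB$ is injective in $\cB$, then its injective envelope $E$ in $\cA$ (which exists because $\cA$ is Grothendieck) must coincide with $I$; once this is established, $I = E$ is injective in $\cA$. I would argue by contradiction, assuming $I \subsetneq E$. Since $\cA$ is locally noetherian, every object is the directed union of its finitely generated (equivalently, noetherian) subobjects, so I can choose a finitely generated $X \subseteq E$ with $X \not\subseteq I$.

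I would then apply $(\ast)$ to $X$ to obtain $X_0 \subseteq X$ with $X/X_0 \in \cB$ and $X_0$ having no nonzero subobject in $\cB$. The intersection $X_0 \cap I$, formed inside $E$, is a subobject of $I \in \cB$, hence lies in $\cB$; but it is also a subobject of $X_0$, which by construction contains no nonzero subobject in $\cB$. Hence $X_0 \cap I = 0$. Because $E$ is an essential extension of $I$, this forces $X_0 = 0$, and therefore $X = X/X_0 \in \cB$.

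Now I would set $J := X + I \subseteq E$. Since $\cB$ is localizing (hence closed under quotients and extensions) and $J$ is an extension of $I$ by $X/(X \cap I)$ with both ends in $\cB$, we get $J \in \cB$. The inclusion $I \hookrightarrow J$ takes place in $\cB$, and since $I$ is injective in $\cB$ it splits: $J = I \oplus K$ with $K \in \cB$ and $K \cap I = 0$. But $K \subseteq E$, so essentiality of $E$ over $I$ forces $K = 0$, i.e., $J = I$, i.e., $X \subseteq I$, contradicting the choice of $X$. This contradiction shows $E = I$, completing the argument.

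The main obstacle, in my view, is recognizing that property $(\ast)$ alone is not enough: one needs to combine it with essentiality of the injective envelope, so that the torsion-free part $X_0$ of $X$ (which would otherwise be uncontrolled) is forced to vanish. With this observation the contradiction via the splitting $J = I \oplus K$ is immediate; the steps above are then routine consequences of $\cB$ being localizing and $\cA$ being locally noetherian.
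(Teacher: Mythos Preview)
Your proof is correct, but it takes a genuinely different route from the paper's. The paper argues by showing directly that $\Ext^1_{\cA}(M,I)=0$ for every finitely generated $M$: given an extension $0 \to I \to E \to M \to 0$, it lifts $M$ to a finitely generated $N \subseteq E$, applies $(\ast)$ to $N$ to get a torsion-free $N_0$, observes that $N_0 \cap I = 0$ so that $N_0$ splits off the sub-extension over its image $M_0 \subseteq M$, and then uses the long exact sequence together with $M/M_0 \in \cB$ to kill the full Ext group; a Baer-type criterion finishes. Your argument instead compares $I$ to its injective envelope $E$ in $\cA$ and exploits essentiality: the torsion-free piece $X_0$ of any finitely generated $X \subseteq E$ must meet $I$ trivially, hence vanish by essentiality, forcing $X \in \cB$ and then $X \subseteq I$ via a splitting inside $\cB$. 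Your approach is shorter and avoids explicit Ext bookkeeping and Baer's criterion; the paper's approach is slightly more constructive in that it builds the splitting of an arbitrary extension rather than appealing to the existence of an injective envelope.
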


\begin{proof}
Let $I$ be an injective object of $\cB$. We first note that $\Ext^1_{\cA}(M,I)=0$ if $M \in \cB$. Indeed, a class in $\Ext^1_{\cA}(M,I)$ is represented by an extension
\begin{displaymath}
0 \to I \to E \to M \to 0,
\end{displaymath}
and $E$ necessarily belongs to $\cB$ since this is a Serre subcategory. Since $I$ is injective in $\cB$, the above sequence splits in $\cB$, and this gives a splitting in $\cA$. The result follows.

Now suppose that $M$ is a finitely generated object of $\cA$ and we have an extension
\begin{displaymath}
0 \to I \to E \to M \to 0.
\end{displaymath}
Let $N$ be a finitely generated submodule of $E$ that surjects onto $M$; this exists because $E$ is the sum of its finitely generated subobjects. Using \eqref{eqn:star}, pick $N_0 \subset N$ such that $N/N_0 \in \cB$ and $N_0$ has no subobject in $\cB$. Let $M_0$ be the image of $N_0$ in $M$, and let $E_0=I+N_0$, so that we have an extension
\begin{displaymath}
0 \to I \to E_0 \to M_0 \to 0.
\end{displaymath}
Now, $N_0 \cap I$ is a subobject of $N_0$ belonging to $\cB$, and thus is zero. Thus the map $N_0 \to M_0$ is an isomorphism, and so the above extension splits.

From the exact sequence
\begin{displaymath}
0 \to M_0 \to M \to M/M_0 \to 0
\end{displaymath}
we obtain a sequence
\begin{displaymath}
\Ext^1_{\cA}(M/M_0, I) \to \Ext^1_{\cA}(M, I) \stackrel{i}{\to} \Ext^1_{\cA}(M_0, I).
\end{displaymath}
As $M/M_0$ is a quotient of $N/N_0$, it belongs to $\cB$, and so the leftmost group vanishes by the first paragraph. Thus $i$ is injective. We have shown that the image of the class of $E$ under $i$ vanishes, and so the class of $E$ is in fact 0. However, $E$ was an arbitrary extension, so we conclude that $\Ext^1_{\cA}(M, I)=0$.

We have thus shown that $\Ext^1_{\cA}(M,I)=0$ whenever $M \in \cA$ is a finitely generated. A variant of Baer's criterion now shows that $I$ is injective.
\end{proof}

\begin{corollary}
Let $\cA$ be a locally noetherian Grothendieck category with a right-exact symmetric tensor product. Let $\fa$ be an ideal, that is, a subobject of the unit object, and let $\cB \subset \cA$ be the full subcategory spanned by objects that are locally annihilated by a power of $\fa$. Suppose that the Artin--Rees lemma holds, that is:
\[
\label{eqn:artin-rees}
\makebox{\parbox{\dimexpr\linewidth-20\fboxsep-2\fboxrule}{If $M \in \cA$ is finitely generated and $N \subset M$ then there exists $r$ such that $\fa^n M \cap N = \fa^{n-r} (\fa^r M \cap N)$ for all $n \ge r$.}} {\tag{$\ast\ast$}}
\]
Then $\cB$ satisfies \eqref{eqn:star}, and thus \eqref{eqn:prop-H} as well. 
\end{corollary}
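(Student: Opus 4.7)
The plan is to verify $(\ast)$ by taking $M_0 = \fa^n M$ for $n$ sufficiently large; the previous proposition will then give \eqref{eqn:prop-H} for free. The two ingredients are the locally noetherian hypothesis (so subobjects of finitely generated objects are finitely generated) and the Artin--Rees lemma \eqref{eqn:artin-rees}, which together let us separate the torsion part of $M$ from a high power of $\fa$.

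Let $M$ be finitely generated and let $N \subset M$ be the maximal subobject lying in $\cB$; this exists because $\cB$ is localizing. Since $\cA$ is locally noetherian and $M$ is finitely generated, $M$ is noetherian, so $N$ is also finitely generated. But $N$ lies in $\cB$, meaning it is locally annihilated by a power of $\fa$, and a finitely generated object that is locally annihilated by powers of $\fa$ is annihilated by a single power; hence $\fa^r N = 0$ for some $r \ge 0$.

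Now apply \eqref{eqn:artin-rees} to the pair $N \subset M$: there exists $r' \ge 0$ with $\fa^n M \cap N = \fa^{n-r'}(\fa^{r'} M \cap N)$ for all $n \ge r'$. Choose any $n \ge r + r'$. Then
\begin{displaymath}
\fa^n M \cap N = \fa^{n-r'}(\fa^{r'}M \cap N) \subset \fa^{n-r'} N = 0.
\end{displaymath}
Set $M_0 = \fa^n M$. The quotient $M/M_0$ is annihilated by $\fa^n$, so $M/M_0 \in \cB$. On the other hand, if $K \subset M_0$ is any subobject belonging to $\cB$, then by maximality $K \subset N$, so $K \subset M_0 \cap N = 0$. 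This verifies $(\ast)$, and the preceding proposition finishes the proof.

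The only real subtlety is the step in which $N = \Gamma_\cB(M)$ is shown to be annihilated by a uniform power of $\fa$; this rests entirely on the locally noetherian hypothesis, and the rest of the argument is a direct categorical transcription of the classical deduction of Krull's intersection theorem from Artin--Rees. I do not expect any genuine obstacle here.
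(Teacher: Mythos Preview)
Your proof is correct and follows essentially the same approach as the paper: take the maximal $\cB$-subobject $N$ (the paper calls it $T$), use noetherianity to find a uniform power of $\fa$ killing it, then use Artin--Rees to show a sufficiently high $\fa^n M$ meets $N$ trivially, and set $M_0=\fa^n M$. The only point the paper makes slightly more explicit is that the claim ``$M/\fa^n M$ is annihilated by $\fa^n$'' uses right-exactness of the tensor product; you might add a word to that effect.
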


\begin{proof}
  Let $M \in \cA$ be finitely generated and let $T$ be the maximal subobject of $M$ in $\cB$. The subobjects $T[\fa^k]$ form an ascending chain in $T$ that union to all of $T$, and so $T=T[\fa^n]$ for some $n$ by noetherianity. Let $r$ be the Artin--Rees constant for $T \subset M$. Put $M_0=\fa^{n+r} M$. Then $M/M_0$ belongs to $\cB$: by right-exactness of tensor products, we have $\fa^{n+r} M \to \fa^{n+r}(M/M_0) \to 0$, but the image of this map is $0$, and hence $\fa^{n+r}(M/M_0)=0$.

  Finally, by \eqref{eqn:artin-rees}, we have $M_0 \cap T=\fa^n (\fa^r M \cap T) \subset \fa^n T=0$, and so $M_0$ has no subobject belonging to $\cB$.
\end{proof}

\begin{corollary} \label{cor:B-propH}
Let $A$ be the tca $\bA(\cE)$ and let $\fa$ be an ideal in $A$. Then the Artin--Rees lemma holds. In particular, $\Mod_A[\fa^{\infty}] \subset \Mod_A$ satisfies \eqref{eqn:prop-H}.
\end{corollary}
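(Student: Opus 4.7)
The plan is to imitate the classical Artin--Rees argument by introducing a Rees tca and then invoking noetherianity. Concretely, I would set
\[
R = \bigoplus_{n \ge 0} \fa^n,
\]
viewed as a graded commutative algebra object in $\cV_X$ with multiplication inherited from $A$. Each summand $\fa^n \subset A$ has $\ell \le d$, so $R$ is a bounded tca with $\ell(R) \le d$.

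The one nontrivial step is to prove that $R$ is noetherian, which I would carry out by exactly the template used for $A$ in the excerpt. Evaluation gives $R(\bC^n) = \bigoplus_m \fa(\bC^n)^m$, which is the classical Rees algebra of $A(\bC^n)$ at the finitely generated ideal $\fa(\bC^n)$; being a finitely generated algebra over the noetherian ring $A(\bC^n)$, it is noetherian. A finitely generated $R$-module $\widetilde{M}$ is bounded (it is a quotient of some $R \otimes V$ with $V$ of finite length, and $\ell(R \otimes V) = \ell(R) + \ell(V) < \infty$), so for $n = \ell(\widetilde{M})$ any ascending chain of $R$-submodules of $\widetilde{M}$ evaluates to a chain in the noetherian $R(\bC^n)$-module $\widetilde{M}(\bC^n)$, which stabilizes; by full faithfulness of $\cV_{\le n} \to \Rep(\GL_n)$ the original chain stabilizes too.

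Granted noetherianity of $R$, the Artin--Rees identity is immediate. For $N \subset M$ with $M$ finitely generated, form the graded $R$-modules
\[
\widetilde{M} = \bigoplus_{n \ge 0} \fa^n M, \qquad \widetilde{N} = \bigoplus_{n \ge 0} (\fa^n M \cap N);
\]
the second is an $R$-submodule of the first because $N$ is $A$-stable. Since $\widetilde{M}$ is generated in degree $0$ by the finitely generated $A$-module $M$, it is finitely generated over $R$, so by noetherianity $\widetilde{N}$ has generators concentrated in degrees $\le r$ for some $r$. For $n \ge r$ this forces
\[
\fa^n M \cap N = \sum_{k=0}^{r} \fa^{n-k}(\fa^k M \cap N) = \fa^{n-r}(\fa^r M \cap N),
\]
which is exactly the Artin--Rees conclusion. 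The main obstacle is checking noetherianity of $R$, but this is essentially no harder than noetherianity of $A$ itself: via boundedness and $\bC^n$-evaluation it reduces to the classical fact that Rees algebras of noetherian rings at finitely generated ideals are noetherian.
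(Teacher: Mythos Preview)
Your argument is correct, but it takes a different route from the paper. The paper's proof is a one-liner: for $n \ge \max(\ell(M),\ell(A))$, the Artin--Rees identity $\fa^m M \cap N = \fa^{m-r}(\fa^r M \cap N)$ holds in $\Mod_A$ if and only if it holds after evaluating on $\bC^n$ (by full faithfulness of $\cV_{\le n} \to \Rep(\GL_n)$); after evaluation one is in the classical setting, where Artin--Rees holds locally on $X$, and a single $r$ works globally since $X$ is noetherian. No auxiliary tca is introduced.

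Your approach instead lifts the standard Rees-algebra proof of Artin--Rees to the tca world. This is perfectly valid and arguably more conceptual, and it gives you a potentially reusable object (the Rees tca $R$). One point you should make explicit: $R$ must be treated as a \emph{Rees-graded} tca, and $\widetilde{M}$, $\widetilde{N}$ as Rees-graded $R$-modules, so that ``generators concentrated in degrees $\le r$'' makes sense. The noetherianity you need is therefore noetherianity in the category of Rees-graded $R$-modules; this follows by the same evaluation argument (graded submodules of a finitely generated graded $R(\bC^n)$-module stabilize, since $R(\bC^n)$ is a noetherian graded ring, and this transfers back via $\cV_{\le n} \hookrightarrow \Rep(\GL_n)$). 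Without tracking the extra grading, a surjection $R \otimes V \twoheadrightarrow \widetilde{N}$ in $\cV_X$ need not be Rees-homogeneous, and the passage to ``bounded Rees degree'' would not be immediate. With that caveat, your proof and the paper's both ultimately reduce to the classical Artin--Rees via evaluation on $\bC^n$; the paper just skips the intermediate Rees tca and invokes the classical statement directly.
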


\begin{proof}
Let $M$ be a finitely generated $A$-module with submodule $N$. Pick $n \ge \max(\ell(M), \ell(A))$. Given $r$, the equality $\fa^m M \cap N = \fa^{m-r}(\fa^r M \cap N)$ holds for all $m \ge r$ if and only if it holds when we evaluate on $\bC^n$. The existence of $r$ after we evaluate on $\bC^n$ can be deduced from the standard Artin--Rees lemma which guarantees that such an $r$ exists locally; since our space is noetherian, one can find a value of $r$ that works globally.
\end{proof}

We now give two examples where \eqref{eqn:prop-H} does not hold.

\begin{example} \label{ex:nonH}
Let $\cA$ be the category of $R$-modules for a commutative ring $R$. Fix a multiplicative subset $S \subset R$, and let $\cB$ be the category of modules $M$ such that $S^{-1} M=0$. Then $\Sigma(M)=S^{-1} M$. If $R$ is noetherian then it satisfies Artin--Rees, and so \eqref{eqn:prop-H} holds by the above results. Thus Proposition~\ref{prop:injses} implies that the localization of an injective $R$-module with respect to $S$ remains injective. 

However, there are examples of non-noetherian rings $R$ where injectives localize to non-injectives \cite[\S 3]{dade}. In such a case, \eqref{eqn:prop-H} must fail. 
\end{example}

\begin{example} \label{ex:nonH2}
Let $\cA$ be the category of graded $\bC[t]$-modules supported in degrees~0 and~1 (where $t$ has degree~1), and let $\cB$ be the subcategory of modules supported in degree~1. Then $\cB$ is equivalent to the category of vector spaces, and thus is semi-simple. However, except for 0, no object of $\cB$ is injective in $\cA$, and so \eqref{eqn:prop-H} does not hold. In this example, $\cA$ is locally noetherian, but \eqref{eqn:star} does not hold. Furthermore, $\Sigma$ and $T$ carry injectives to injectives, so these properties are weaker than \eqref{eqn:prop-H}.
\end{example}

\section{Modules supported at 0 and generic modules} \label{s:at0}

\subsection{Set-up}

We fix (for all of \S \ref{s:at0}) a scheme $X$ over $\bC$ (noetherian, separated, and of finite Krull dimension, as always) and a vector bundle $\cE$ on $X$ of rank $d$. We let $A$ be the tca $\bA(\cE)$. We let $\Mod_A^0$ denote the category of $A$-modules supported at~0: precisely, this consists of those $A$-modules $M$ that are locally annihilated by a power of $\cE \otimes \bV \subset A$. We say that an $A$-module is {\bf torsion} if every element is annihilated by a non-zero element of $A$ of positive degree. We let $\Mod_A^{\gen}$ (the ``generic category'') be the Serre quotient of $\Mod_A$ by the subcategory of torsion modules. The goal of \S \ref{s:at0} is to analyze the two categories $\Mod_A^0$ and $\Mod_A^{\gen}$.

In general, we denote trivial bundles of the form $V \otimes \cO_X$ by simply $V$.

\subsection{Modules supported at 0}

Recall from \S \ref{ss:internalhom} the tensor product $\odot$ on $\Mod_A$. The object $\cE^* \oplus \bV$ is naturally a finitely generated $A$-module supported at 0. It follows that $(\cE^* \oplus \bV)^{\odot n}$ is again a finitely generated module supported at 0. The $S_n$-action permuting the $\odot$ factors is $A$-linear, and so $J_{\lambda}=\bS_{\lambda}(\cE^* \oplus \bV)$ is a finitely generated torsion $A$-module.

\begin{proposition} \label{prop:tors}
We have the following:
\begin{enumerate}[\indent \rm (a)]
\item We have an isomorphism of $A$-modules $J_{\lambda} = \usHom(A, \bS_{\lambda}(\bV))$.
\item If $M$ is an $A$-module and $\cF$ is an $\cO_X$-module then
\begin{displaymath}
\sHom_A(M, \cF \otimes J_{\lambda}) = \sHom_{\cO_X}(M_{\lambda}, \cF).
\end{displaymath}
\item If $\cF$ is an injective $\cO_X$-module then $\cF \otimes J_{\lambda}$ is an injective $A$-module.
\item Every finitely generated object $M$ in $\Mod_A^0$ admits a resolution $M \to N^{\bullet}$ where each $N^i$ is a finite sum of modules of the form $\cF \otimes J_{\lambda}$ with $\cF \in \Mod_X^{\fgen}$ and $N^i=0$ for $i \gg 0$.
\item Every finitely generated object of $\Mod_A^0$ admits a finite length filtration where the graded pieces have the form $\cF \otimes \bS_{\lambda}(\bV)$, where $\cF \in \Mod_X^{\fgen}$ and $A_+$ acts by zero on $\bS_{\lambda}(\bV)$. Moreover, if $X$ is connected then the module $\bS_{\lambda}(\bV)$ has no non-trivial $\cO_X$-flat quotients.
\end{enumerate}
\end{proposition}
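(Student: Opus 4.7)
The plan is to prove (a)--(e) roughly in order, handling (d) last via induction on the maximum multiplicity-space degree of $M$.

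For (a), Proposition~\ref{prop:uhom-ten} makes $\usHom(A, -) \colon \cV_X \to (\Mod_A, \odot)$ a symmetric monoidal functor and thus compatible with Schur functors; combined with the formula $\usHom(A, \bV) = \cE^* \oplus \bV$ obtained from the proof of that proposition, this gives $\usHom(A, \bS_\lambda(\bV)) = \bS_\lambda(\cE^* \oplus \bV) = J_\lambda$. For (b), the cofree adjunction $\sHom_A(M, \usHom(A, W)) = \sHom_{\cV_X}(M, W)$ (which one checks for free $A$-modules using the $\cV_X$-adjunction $\sHom(U, \usHom(V,W)) = \sHom(U \otimes V, W)$ and then extends via a free presentation), combined with the identification $\cF \otimes J_\lambda = \usHom(A, \cF \otimes \bS_\lambda(\bV))$ (proved by a multiplicity-space computation using that $\cE$ is locally free, so $\sHom_{\cO_X}(\bS_\alpha(\cE), \cF) = \cF \otimes \bS_\alpha(\cE^*)$), yields $\sHom_A(M, \cF \otimes J_\lambda) = \sHom_{\cV_X}(M, \cF \otimes \bS_\lambda(\bV)) = \sHom_{\cO_X}(M_\lambda, \cF)$. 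Part (c) is immediate: the functor $(-)_\lambda$ is exact, and $\sHom_{\cO_X}(-, \cF)$ is exact by injectivity of $\cF$, so $\Hom_A(-, \cF \otimes J_\lambda)$ is exact.

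For (e), the hypothesis gives $A_+^n M = 0$ for some $n$, so the $A_+$-adic filtration has length $\le n$ and each graded piece is a finitely generated $A$-module annihilated by $A_+$, hence of the form $\bigoplus_\lambda \cF_\lambda \otimes \bS_\lambda(\bV)$ for finitely many finitely generated $\cO_X$-modules $\cF_\lambda$; refining the direct sum yields the desired finite filtration. For the last sentence, an $A$-module quotient of $\bS_\lambda(\bV)$ is of the form $(\cO_X/\cJ) \otimes \bS_\lambda(\bV)$ for some ideal $\cJ$ (since $A_+$ must act by zero on the quotient), and flatness of $\cO_X/\cJ$ on a noetherian scheme forces $\cJ$ to be locally either zero or all of $\cO_X$; connectedness of $X$ then forces the global dichotomy.

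For (d), I induct on $d_0 = \max\{|\mu| : M_\mu \ne 0\}$, which is finite because $M$ is finitely generated and supported at $0$. Via (b), the identity on each $M_\mu$ corresponds to an $A$-linear map $M \to M_\mu \otimes J_\mu$; summing yields $M \to N^0 := \bigoplus_{|\mu| \le d_0} M_\mu \otimes J_\mu$, which is a finite sum of the right form. The key input is the Littlewood--Richardson decomposition
\begin{displaymath}
J_\lambda = \bigoplus_{\nu \subseteq \lambda} \bS_{\lambda/\nu}(\cE^*) \otimes \bS_\nu(\bV),
\end{displaymath}
which shows $(J_\lambda)_\nu = 0$ unless $\nu \subseteq \lambda$ and $(J_\mu)_\mu = \cO_X$. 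In top degree $|\mu| = d_0$ only the $\lambda = \mu$ summand contributes and the induced map $M_\mu \to (N^0)_\mu$ is the identity, so $M \hookrightarrow N^0$ is injective and the cokernel is finitely generated, supported at $0$, and has non-zero multiplicity spaces only in degrees $< d_0$. Applying the inductive hypothesis to the cokernel and splicing then gives the resolution; the base case $d_0 = 0$ reduces to the isomorphism $M = M_\emptyset \otimes J_\emptyset$. The main obstacle is the degree analysis above: one must verify that the canonical map $M \to N^0$ really is an $A$-linear injection and that its cokernel strictly drops in top degree.
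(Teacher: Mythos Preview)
Your treatments of (a), (b), (c), and (e) track the paper's proof closely; the paper is terser (it cites Proposition~\ref{prop:uhom-ten} for (a), says (b) follows from (a) plus the fact that $\cF$ pulls through $\usHom$ since the multiplicity spaces of $A$ are locally free of finite rank, derives (c) from (b), and for (e) uses the $A_+$-adic filtration and irreducibility of $\bS_\lambda(\bV)$), but the substance is the same.

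For (d) your approach is a mild variant: the paper takes $N^0=\bigoplus_{|\lambda|=d_0} M_\lambda\otimes J_\lambda$ (top degree only) and asserts that the cokernel lies in degrees $<d_0$, whereas you sum over all $|\mu|\le d_0$. Your choice is the cleaner one, since it makes injectivity of $M\to N^0$ genuinely available. However, your stated justification for injectivity is incomplete: showing that the map is an isomorphism in the top degree $d_0$ only tells you the kernel lives in degrees $<d_0$, not that it vanishes. The repair is immediate once you run the same computation in \emph{every} degree $\nu$: the summand with $\mu=\nu$ contributes $M_\nu\otimes (J_\nu)_\nu=M_\nu$ to $(N^0)_\nu$, and by (b) the $\nu$-component of the canonical map $M\to M_\nu\otimes J_\nu$ is exactly the identity on $M_\nu$, so $M_\nu\to (N^0)_\nu$ is split injective for all $\nu$. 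Equivalently, since you are summing over all $\mu$ with $M_\mu\ne 0$, your $N^0$ is precisely $\usHom(A,M)$ and the map $M\to N^0$ is the unit of the (forgetful, $\usHom(A,-)$) adjunction you invoked in (b); the triangle identity then shows it is split monic in $\cV_X$. With this correction the induction goes through as you describe.
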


\begin{proof}
(a) This is clear for $\lambda=1$. The general case then follows from Proposition~\ref{prop:uhom-ten}.

(b) This follows immediately from (a), and the fact (tensor-hom adjunction) that
\[
  \usHom(A, \cF \otimes \bS_{\lambda}(\bV))=\cF \otimes \usHom(A, \bS_{\lambda}(\bV))
\]
since the multiplicity spaces of $A$ are locally free of finite rank as $\cO_X$-modules.

(c) This follows immediately from (b).

(d) Suppose $M$ is supported in degrees $\le n$. Let $J=\bigoplus_{\vert \lambda \vert=n} M_{\lambda} \otimes J_{\lambda}$. By part (b), there is a canonical map $M \to J$, the cokernel of which is supported in degrees $\le n-1$. The result follows from induction on $n$.

(e) Given $M \in \Mod_A^0$, first consider the filtration $M \supset A_+ M \supset A_+^2 M \supset \cdots$. By Nakayama's lemma, $A_+^r M = A_+^{r+1} M$ for some $r$, in which case the common value is $0$ since $M$ is torsion. Each quotient $A_+^i M / A_+^{i+1} M$ has a trivial action of $A$, so is a direct sum of modules of the form $\cF \otimes \bS_\lambda(\bV)$. These can be used to refine our filtration to the desired form. The last statement follows from irreducibility of $\bS_\lambda(\bV)$.
\end{proof}

\begin{remark}
If $X=\Spec(\bC)$ then part~(e) says that every finitely generated object of $\Mod_A^0$ has finite length and the $\bS_{\lambda}(\bV)$ are the simple objects. In general, the statement in part~(e) is a relative version of this, taking into account the non-trivial structure of $\cO_X$-modules.
\end{remark}

The category $\Mod_A^{0,\fgen}$ is naturally a module for the tensor category $\cV^{\fgen}$. Thus $\rK(\Mod_A^{0,\fgen})$ is a module for $\Lambda=\rK(\cV^{\fgen})$. The following result describes $\rK(\Mod_A^{0,\fgen})$ as a $\Lambda$-module.

\begin{corollary} \label{cor:Ktors}
The map $\rK(X) \to \rK(\Mod_A^0)$ taking $[V]$ to the class of the trivial $A$-module $V$ induces an isomorphism $\Lambda \otimes \rK(X) \to \rK(\Mod_A^0)$.
\end{corollary}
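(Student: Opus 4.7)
The plan is to deduce both surjectivity and injectivity from Proposition~\ref{prop:tors}(e), which supplies a finite filtration with graded pieces of the form $\cF\otimes\bS_{\mu}(\bV)$ (with trivial $A$-action).

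For surjectivity, observe that part~(e) immediately shows $\rK(\Mod_A^{0,\fgen})$ is generated as an abelian group by classes of the form $[\cF\otimes\bS_\mu(\bV)]$ with $\cF\in\Mod_X^{\fgen}$. But $[\cF\otimes\bS_\mu(\bV)]$ is, by construction, the image of $s_\mu\otimes[\cF]\in\Lambda\otimes\rK(X)$ under the map in question. So the map is surjective.

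For injectivity, I will build a candidate inverse $\Phi\colon\rK(\Mod_A^{0,\fgen})\to\Lambda\otimes\rK(X)$ by the formula $\Phi([M])=\sum_\mu s_\mu\otimes[M_\mu]$, where $M_\mu$ is the $\mu$-multiplicity space. To see this is well-defined: for finitely generated $M\in\Mod_A^0$, $M$ is bounded (since $A$ is) and supported in finitely many degrees (since it is torsion and generated in finitely many degrees), so only finitely many $M_\mu$ are nonzero; moreover each $M_\mu$ is a coherent $\cO_X$-module, since $M$ is a quotient of $A\otimes V$ for a coherent $V\in\cV_X^{\fgen}$ and the multiplicity spaces of $A$ are coherent. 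Semisimplicity of $\cV_X$ in the multiplicity direction makes the functor $M\mapsto M_\mu$ exact, so $\Phi$ descends to K-theory.

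Showing $\Phi$ composed with the corollary's map is the identity on $\Lambda\otimes\rK(X)$ is immediate on the generators $s_\lambda\otimes[\cF]$: the $\mu$-multiplicity space of $\cF\otimes\bS_\lambda(\bV)$ is $\cF$ if $\mu=\lambda$ and zero otherwise. For the other composition, given $M\in\Mod_A^{0,\fgen}$, use the $A_+$-adic filtration $M\supset A_+M\supset A_+^2M\supset\cdots$, which terminates by Nakayama and the torsion hypothesis as in the proof of (e). Each graded piece $\gr^iM$ carries trivial $A$-action, hence decomposes as $\bigoplus_\mu(\gr^iM)_\mu\otimes\bS_\mu(\bV)$ as an $A$-module. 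Exactness of the multiplicity functor gives $[M_\mu]=\sum_i[(\gr^iM)_\mu]$ in $\rK(X)$, and substituting into the image of $\Phi([M])$ recovers $\sum_i[\gr^iM]=[M]$. No step looks genuinely difficult; the mild obstacle is just the bookkeeping that $M_\mu$ is well-defined in $\rK(X)$ (finitely many nonzero components, each coherent) and that the filtration from (e) is compatible with $\Phi$ in both directions.
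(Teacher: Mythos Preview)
Your proof is correct and is essentially a detailed elaboration of the paper's one-line proof, which simply says ``This follows from Proposition~\ref{prop:tors}(e).'' The paper leaves the injectivity argument implicit; your construction of the inverse $\Phi$ via multiplicity spaces is the natural way to supply it, and the bookkeeping you do (finitely many nonzero $M_\mu$, each coherent, exactness of $M\mapsto M_\mu$) is exactly what is needed.
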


\begin{proof}
This follows from Proposition~\ref{prop:tors}(e). 
\end{proof}

\subsection{Representations of general affine groups}

Define $\bG(\cE) = \GL(\bV)_X \ltimes (\bV \otimes \cE)$, which is an (infinite dimensional) algebraic group over $X$. A {\bf representation} of $\bG(\cE)$ is a quasi-coherent sheaf on $X$ on which $\bG(\cE)$ acts $\cO_X$-linearly. For example, $\bV \oplus \cE^*$ is naturally a representation of $\bG(\cE)$, which we call the {\bf standard representation}. A representation is {\bf polynomial} if it is a subquotient of a direct sum of representations of the form $\cF \otimes (\bV \oplus \cE^*)^{\otimes k}$ with $\cF$ is a quasi-coherent sheaf on $X$ and $k \ge 0$. We note that if $V$ and $W$ are polynomial representations then $V \otimes W$ (tensor product as $\cO_X$-modules) is again a polynomial representation. We write $\Rep^{\pol}(\bG(\cE))$ for the category of polynomial representations.

\begin{proposition} \label{prop:GAtors}
We have a natural equivalence of categories $\Mod^0_{\bA(\cE)} = \Rep^{\pol}(\bG(\cE))$, under which the tensor product $\odot$ corresponds to the tensor product $\otimes$.
\end{proposition}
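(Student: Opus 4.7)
The plan is to build quasi-inverse functors by differentiating (resp.\ exponentiating) the action of the additive group $\bV\otimes\cE$. In characteristic zero, a locally nilpotent action of a vector space $W$ (viewed as an algebraic group under addition) on an $\cO_X$-module is the same data as a locally nilpotent action of $W$ viewed as an abelian Lie algebra, which in turn is the same as a module over $\Sym(W)$ on which $W$ acts locally nilpotently. With $W=\bV\otimes\cE$ and the additional $\GL(\bV)$-equivariance on both sides, this already identifies the underlying data on both sides. Two things then remain to check: (i) the polynomiality conditions correspond, and (ii) the tensor structures agree.

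In one direction, given a polynomial representation $V$ of $\bG(\cE)$ I would differentiate the $\bV\otimes\cE$-action to an action of the enveloping algebra $\Sym(\bV\otimes\cE)=A$; since the $\bV\otimes\cE$-action is unipotent on the standard representation $\bV\oplus\cE^*$ (and hence on all tensor powers and subquotients), the resulting $A$-action is locally nilpotent, so we land in $\Mod_A^0$, and the $\GL(\bV)$-equivariance is automatic. In the opposite direction, given $M\in\Mod_A^0$ the action of $\bV\otimes\cE$ is locally nilpotent and exponentiates to an algebraic action of the additive group $\bV\otimes\cE$, which together with the polynomial $\GL(\bV)$-action assembles into a $\bG(\cE)$-action. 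The two constructions are manifestly mutually inverse on underlying objects. To verify that the $\bG(\cE)$-action so obtained really is polynomial in the paper's sense, I would first reduce to the finitely generated case (polynomial representations being closed under filtered colimits, since they are defined as subquotients of sums) and then apply Proposition~\ref{prop:tors}(d) to embed $M$ into a finite sum of modules $\cF\otimes J_\lambda=\cF\otimes\bS_\lambda(\bV\oplus\cE^*)$, each visibly polynomial as a $\bG(\cE)$-representation. The key compatibility is that the two $\bG(\cE)$-structures on $J_\lambda=\usHom(A,\bS_\lambda(\bV))$---the tautological one coming from $\bS_\lambda(\bV\oplus\cE^*)$ and the one produced by exponentiating the $A$-module structure---coincide; by Proposition~\ref{prop:uhom-ten} and multilinearity this reduces to the case $\lambda=(1)$, which is a direct computation on the standard representation, where both structures describe the same unipotent action of $v\otimes e$ sending $(w,\phi)\mapsto (w+\phi(e)v,\phi)$.

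For the tensor compatibility, the $\odot$ tensor product in $\Mod_A$ is by definition built from the Hopf comultiplication $\Delta\colon A\to A\otimes A$ of $\Sym(\bV\otimes\cE)$, which is primitive on generators: $\Delta(x)=x\otimes 1+1\otimes x$ for $x\in\bV\otimes\cE$. On the $\bG(\cE)$ side, the tensor product of representations uses the diagonal embedding, so $v\otimes e$ acts on $V\otimes W$ as a sum of the two individual actions, whose infinitesimal version is exactly the same derivation formula; the $\GL(\bV)$-part is diagonal on both sides. Thus $\odot$ corresponds to $\otimes$ under the equivalence.

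The main obstacle is item (i): upgrading the pointwise/infinitesimal matching of the two actions to the statement that the reconstructed $\bG(\cE)$-representation satisfies the paper's concrete polynomiality requirement (subquotient of direct sums of $\cF\otimes(\bV\oplus\cE^*)^{\otimes k}$). This is precisely where Proposition~\ref{prop:tors}(d) is doing the real work, since without an external supply of explicit polynomial models like the $J_\lambda$ there is no a priori reason an arbitrary locally nilpotent $A$-action should globally present as a subquotient of tensor powers of $\bV\oplus\cE^*$.
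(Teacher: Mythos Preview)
Your proposal is correct and follows essentially the same approach as the paper: both identify an $A$-module structure with a representation of the additive group $\bV\otimes\cE$ via the universal enveloping algebra, combine it with the $\GL(\bV)$-action to obtain a $\bG(\cE)$-representation, and then use the embedding into the $J_\lambda$'s (Proposition~\ref{prop:tors}) to verify polynomiality. The only cosmetic difference is that the paper constructs one functor and checks it is fully faithful and essentially surjective (the latter via closure of the essential image under subquotients), whereas you build both functors explicitly and verify they are mutually inverse; your extra detail reducing the compatibility of $\bG(\cE)$-structures on $J_\lambda$ to the case $\lambda=(1)$ via Proposition~\ref{prop:uhom-ten} is a nice touch that the paper leaves implicit.
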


\begin{proof}
Let $M$ be an $\bA(\cE)$-module. Since $\bA(\cE)$ is the universal enveloping algebra of the abelian Lie algebra $\bV \otimes \cE$, the $\bA(\cE)$-module structure on $M$ gives a representation of the algebraic group $\bV \otimes \cE$ on $M$. Moreover, the $\GL(\bV)$-action on $M$ interacts with the $\bV \otimes \cE$ action in the appropriate way to define an action of $\bG(\cE)$ on $M$. We show that this construction induces the equivalence of categories.

We first observe that this construction is compatible with tensor products, that is, if $M$ and $N$ are $\bA(\cE)$-modules then the $\bG(\cE)$-representation on $M \odot N$ is just the tensor product of the $\bG(\cE)$-representations on $M$ and $N$. This follows immediately from the definitions.

Next we show that if $M$ is a torsion $\bA(\cE)$-module then the associated $\bG(\cE)$-representation is polynomial. It suffices to treat the case where $M$ is finitely generated, since a direct limit of polynomial representations is still polynomial. In this case, we can embed $M$ into a module of the form $\cF \otimes J_{\lambda}$ by Proposition~\ref{prop:tors}. By the previous paragraph, we see that $J_{\lambda} = \bS_{\lambda}(\cE^* \oplus \bV)$ as a representation of $\bG(\cE)$, and thus is polynomial. Thus $M$ embeds into a polynomial representation, and is therefore polynomial.

It is clear from the construction that one can recover the $\bA(\cE)$-module structure on $M$ from the $\bG(\cE)$-representation, and so the functor in question is fully faithful. Moreover, it follows that if $M$ is an $\bA(\cE)$-module and $N$ is a $\bG(\cE)$-subrepresentation then $N$ is in fact an $\bA(\cE)$-submodule. From this, it follows that the essential image of the functor in question is closed under formation of subquotients. Since the essential image includes all representations of the form $\cF \otimes (\cE^* \oplus \bV)^{\otimes n}$, it follows that our functor is essentially surjective.
\end{proof}

\begin{corollary} \label{cor:GAtors}
We have the following:
\begin{enumerate}[\indent \rm (a)]
\item If $\cF$ is an injective $\cO_X$-module then $\cF \otimes \bS_{\lambda}(\bV \oplus \cE^*)$ is an injective object of $\Rep^{\pol}(\bG(\cE))$.
\item Every finitely generated object $M$ of $\Rep^{\pol}(\bG(\cE))$ admits a resolution $M \to N^{\bullet}$ where each $N^i$ is a finite direct sum of representations of the form $\cF \otimes \bS_{\lambda}(\bV \oplus \cE^*)$ with $\cF \in \Mod_X^{\fgen}$ and $N^i=0$ for $i \gg 0$.
\item Every finitely generated object of $\Rep^{\pol}(\bG(\cE))$ admits a finite length filtration where the graded pieces have the form $\cF \otimes \bS_{\lambda}(\bV)$, where $\cF \in \Mod_X^{\fgen}$. Moreover, if $X$ is connected then $\bS_{\lambda}(\bV)$ admits no non-trivial $\cO_X$-flat quotients.
\end{enumerate}
\end{corollary}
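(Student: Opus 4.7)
The plan is to deduce every claim directly from Proposition~\ref{prop:tors} by transporting it across the tensor equivalence
$\Mod^0_{\bA(\cE)} \simeq \Rep^{\pol}(\bG(\cE))$ of Proposition~\ref{prop:GAtors}. The key translation, already observed in the proof of that proposition, is that under the equivalence the $A$-module $J_\lambda = \bS_\lambda(\cE^* \oplus \bV)$ corresponds to the $\bG(\cE)$-representation $\bS_\lambda(\bV \oplus \cE^*)$, and that tensoring with an $\cO_X$-module $\cF$ on either side matches (since $\cF$ carries a trivial $\GL(\bV)_X$- and $(\bV \otimes \cE)$-action, so the $A$-module $\cF \otimes J_\lambda$ is carried to the polynomial representation $\cF \otimes \bS_\lambda(\bV \oplus \cE^*)$).

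For part (a), I would start from Proposition~\ref{prop:tors}(c), which says that $\cF \otimes J_\lambda$ is injective as an $A$-module when $\cF$ is injective over $\cO_X$. I then need the auxiliary observation that $\cF \otimes J_\lambda$ actually lies in $\Mod^0_A$: this is immediate because $J_\lambda$ is concentrated in boundedly many polynomial degrees and hence killed by some power of $A_+ = \cE \otimes \bV$. Since $\Mod^0_A$ is a Serre subcategory of $\Mod_A$, monomorphisms in $\Mod^0_A$ are monomorphisms in $\Mod_A$, so an object of $\Mod^0_A$ that is injective in $\Mod_A$ is also injective in $\Mod^0_A$. Transferring through the equivalence gives injectivity of $\cF \otimes \bS_\lambda(\bV \oplus \cE^*)$ in $\Rep^{\pol}(\bG(\cE))$.

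Parts (b) and (c) are essentially the statements of Proposition~\ref{prop:tors}(d), (e), respectively, rewritten on the representation-theoretic side. For (b) I apply the equivalence to a resolution of the finitely generated object $M$ by modules of the form $\cF \otimes J_\lambda$ with $\cF$ finitely generated over $\cO_X$. For (c), the filtration from Proposition~\ref{prop:tors}(e) has graded pieces on which $A_+ = \cE \otimes \bV$ acts by zero; under Proposition~\ref{prop:GAtors} this corresponds to the unipotent radical $\bV \otimes \cE \subset \bG(\cE)$ acting trivially, so the piece is pulled back from a polynomial representation of $\GL(\bV)_X$, i.e.\ has the form $\cF \otimes \bS_\lambda(\bV)$. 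The statement about $\cO_X$-flat quotients of $\bS_\lambda(\bV)$ for connected $X$ is already part of Proposition~\ref{prop:tors}(e) and passes through the equivalence unchanged.

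There is no real obstacle here: the content is packed into Propositions~\ref{prop:tors} and~\ref{prop:GAtors}. The only point worth verifying carefully is the preservation of injectivity when restricting from $\Mod_A$ to the Serre subcategory $\Mod^0_A$, as sketched above; everything else is a direct rewriting via the equivalence.
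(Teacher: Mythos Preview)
Your proposal is correct and is exactly the argument the paper intends: the corollary is stated without proof immediately after Proposition~\ref{prop:GAtors}, and the content is precisely the transport of Proposition~\ref{prop:tors}(c), (d), (e) through the equivalence $\Mod^0_{\bA(\cE)} \simeq \Rep^{\pol}(\bG(\cE))$, using the identification $J_\lambda \leftrightarrow \bS_\lambda(\bV \oplus \cE^*)$ noted in the proof of Proposition~\ref{prop:GAtors}. Your extra care about restricting injectivity from $\Mod_A$ to the Serre subcategory $\Mod^0_A$ is a harmless elaboration of a point the paper leaves implicit.
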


\subsection{The generic category} \label{ss:gen}

We now study the category $\Mod_A^{\gen}$. The key result is:

\begin{proposition}
There exists an $\cO_X$-linear equivalence of categories $\Phi \colon \Mod_A^{\gen} \to \Rep^{\pol}(\bG)$ that is compatible with tensor products and carries $T(A \otimes \bV)$ to the standard representation $\cE^* \oplus \bV$.
\end{proposition}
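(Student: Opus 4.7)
The plan is to build $\Phi$ by matching generating families on both sides and extending by cocontinuity. On the $\Mod_A^{\gen}$ side, the objects $P_V := T(A \otimes V)$ with $V \in \cV_X$ finitely generated form a generating family, since free modules generate $\Mod_A$ and the localization $T$ preserves colimits. On the $\Rep^{\pol}(\bG)$ side, by Corollary~\ref{cor:GAtors}, the objects $J_V := V(\cE^* \oplus \bV)$ generate, and via the equivalence of Proposition~\ref{prop:GAtors} they correspond to the $A$-modules $\usHom(A, V)$ supported at $0$. The target behavior $\Phi(T(A \otimes \bV)) = \cE^* \oplus \bV$ forces $\Phi(P_V) = J_V$ for all $V$, once tensor compatibility is imposed.

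The crux is constructing a natural isomorphism
\[
\Hom_{\Mod_A^{\gen}}(P_V, P_W) \;\cong\; \Hom_{\bG}(J_V, J_W)
\]
compatible with composition and the $\cO_X$-action. Proposition~\ref{prop:tors}(b) together with the Littlewood--Richardson decomposition $\bS_\mu(\cE^* \oplus \bV) = \bigoplus_{\alpha, \beta} c^\mu_{\alpha, \beta}\, \bS_\alpha(\cE^*) \otimes \bS_\beta(\bV)$ gives an explicit formula for the right side, namely $\Hom_\bG(J_\lambda, J_\mu) \cong \bigoplus_\alpha c^\mu_{\alpha, \lambda}\, \bS_\alpha(\cE)$. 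The left side equals $\Hom_A(A \otimes V, \Sigma(A \otimes W)) = (\Sigma(A \otimes W))_V$ by the localization--saturation adjunction, so the problem reduces to computing the $\cV_X$-multiplicity spaces of $\Sigma(A \otimes W)$. A direct calculation shows that $(A \otimes W)_V$ is given by the same combinatorial formula, so the key point is to verify that saturation does not introduce additional contributions to maps out of free modules, i.e., that every map $A \otimes V \to \Sigma(A \otimes W)$ factors through $A \otimes W \to \Sigma(A \otimes W)$.

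Once this Hom identification is established, $\Phi$ extends to an $\cO_X$-linear, cocontinuous, fully faithful functor $\Mod_A^{\gen} \to \Rep^{\pol}(\bG)$, which is an equivalence because it matches the two generating families. Tensor compatibility holds because both $V \mapsto P_V$ (using $\otimes_A$ descended to $\Mod_A^{\gen}$) and $V \mapsto J_V$ (using $\otimes$ on $\bG$-reps, which by Proposition~\ref{prop:uhom-ten} matches $\odot$ on $\Mod_A^0$) are symmetric monoidal functors from $\cV_X$ agreeing on the generator $\bV$.

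The principal obstacle is controlling the saturation $\Sigma(A \otimes W)$: a priori it differs from $A \otimes W$ by torsion-extension data detected by $\Ext^1_A(N, A \otimes W)$ for torsion $N$, and one must show this extra structure does not increase Hom from free modules. A careful computation using Proposition~\ref{prop:ext-specialize} to reduce to the polynomial ring $A(\bC^n)$ should suffice. As a fallback, one could construct an inverse $\Psi \colon \Rep^{\pol}(\bG) \to \Mod_A^{\gen}$ directly---for example, by sending a polynomial $\bG$-representation, regarded as an $A$-module, through a suitable shift or generic-induction construction before localizing---and then verify $\Psi \circ \Phi \cong \mathrm{id}$ on generators.
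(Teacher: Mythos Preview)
Your approach is genuinely different from the paper's and has a real circularity problem at its heart.

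The paper constructs the functor geometrically: it defines $\Psi(M) = M \otimes_A \cO_X$ (locally, using a trivialization of $\cE$ and a splitting $\bV = V_0 \oplus \bV'$), shows this carries a canonical $\bG'$-action, globalizes, and then verifies exactness, tensor compatibility, and full faithfulness by interpreting $M(\bC^n)$ as an equivariant sheaf on $\uHom(\cE^*,\bC^n)$ and restricting to the open orbit $\uHom(\cE^*,\bC^n)^\circ$, which is a $\GL_n$-torsor over $X$. Essential surjectivity then follows from Corollary~\ref{cor:GAtors}(b). No prior knowledge of $\Sigma(A\otimes W)$ is required.

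Your approach, by contrast, hinges on the identity $\Sigma(A\otimes W)=A\otimes W$, since you need $\Hom_{\Mod_A^{\gen}}(P_V,P_W)=\Hom_A(A\otimes V,\Sigma(A\otimes W))$ to equal $(A\otimes W)_V$. In the paper this identity is Corollary~\ref{cor:dersat}, which is proved in \S\ref{ss:section} \emph{after} the proposition, and its proof for the higher derived functors uses that every object of $\Mod_A^{\gen}$ embeds in some $T(V\otimes A)$, which is Corollary~\ref{cor:generic-inj}(a)---a direct consequence of the proposition you are trying to prove. Your suggested workaround via Proposition~\ref{prop:ext-specialize} reduces the question to showing $\Ext^1_{|A(\bC^n)|}(N(\bC^n),A(\bC^n)\otimes W(\bC^n))^{\GL_n}=0$ for torsion $N$, but this $\Ext^1$ is not zero before taking invariants (free modules over a polynomial ring are not injective), and isolating the invariant part without the geometric picture of the open orbit is essentially the same difficulty as proving the proposition. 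One \emph{can} extract from the proof of Theorem~\ref{thm:geoS} a direct Borel--Weil computation showing $(V\otimes A)(\bC^n)\cong j_*j^*((V\otimes A)(\bC^n))$, and this would give you what you need, but that is precisely the paper's geometric argument entering through the back door.

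There is a second gap: even granting the Hom identification, ``extend by cocontinuity'' is not justified. The objects $P_V$ are not projective in $\Mod_A^{\gen}$ (they are in fact injective, again by Corollary~\ref{cor:generic-inj}), so you cannot build $\Phi$ on a general object via a projective presentation. You would instead need to verify that your bijection on Hom sets respects composition and then extend via injective copresentations, neither of which you address. (Incidentally, your displayed formula for $\Hom_{\bG}(J_\lambda,J_\mu)$ has the Littlewood--Richardson indices transposed: it should be $\bigoplus_\alpha c^{\lambda}_{\alpha,\mu}\,\bS_\alpha(\cE)$, though the two sides then do match.)
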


Here is the idea of the proof: an $A$-module is a quasi-coherent equivariant sheaf on the space $\uHom(\cE^*, \bV)$. The category $\Mod_A^{\gen}$ can be identified with quasi-coherent equivariant sheaves on the open subscheme $\uHom(\cE^*, \bV)^{\circ}$ where the map is injective. The group $\GL(\bV)_X$ acts transitively on this space with stabilizer $\bG$ (almost), and so $\Mod_A^{\gen}$ is equivalent to $\Rep(\bG)$. We now carry out the details rigorously. This is unfortunately lengthy.

\begin{proof} 
First suppose that $\cE$ is trivial. Let $d$ be the rank of $\cE$, and choose a decomposition $\bV=V_0 \oplus \bV'$ where $V_0$ has dimension $d$. Also choose an isomorphism $i \colon \cE \to V_0^* \otimes \cO_X$. This isomorphism induces a pairing $\cE \otimes \bV \to \cO_X$, which in turn induces a homomorphism $A \to \cO_X$. For an $A$-module $M$, let $\Psi_i(M)=M \otimes_A \cO_X$.

Suppose now that $i'$ is a second choice of isomorphism, and write $i'=gi$ where $g$ is a section of $\GL(V_0)_X$. Regard $\GL(V_0)$ as a subgroup of $\GL(\bV)$ in the obvious manner. Since $\GL(\bV)$ acts on $M$, there is an induced map $g \colon M \to M$. One readily verifies that this induces an isomorphism $\Psi_i(M) \to \Psi_{i'}(M)$. We thus see that $\Psi_i(M)$ is canonically independent of $i$, and so denote it by $\Psi(M)$. (To be more canonical, one could define $\Psi(M)$ as the limit of the $\Psi_i(M)$ over the category of isomorphisms $i$.)

Let $\bG'$ be defined like $\bG$, but using $\bV'$, that is, $\bG'=\GL(\bV') \ltimes (\cE \otimes \bV')$. Let $\bG''$ be the subgroup of $\GL(\bV)_X$ consisting of elements $g$ such that $g(\bV')=\bV'$ and the map $g \colon V_0 \to \bV/\bV'=V_0$ is the identity. Note that $\bG''=\GL(\bV')_X \ltimes (V_0^* \otimes \bV'_X)$, and so $i$ induces an isomorphism $\bG' \cong \bG''$. The group $\bG''$ stabilizes the pairing $\cE \otimes \bV \to \cO_X$, and thus acts on $\Psi_i(M)$. The group $\GL(V_0)$ acts on $\bG''$, via its action on $V_0$. If $i'=gi$ then the induced isomorphism $\varphi \colon \Psi_i(M) \to \Psi_{i'}(M)$ is compatible with the $\bG''$ actions in the sense that $\varphi(hx)={}^gh \varphi(x)$ for $h \in \bG''$. It follows that if we let $\bG'$ act on $\Psi_i(M)$ via the isomorphism $\bG' \cong \bG''$ induced by $i$, then $\varphi(hx)=h \varphi(x)$ for $h \in \bG'$. We thus see that $\bG'$ canonically acts on $\Psi(M)$.

Now suppose that $\cE$ is arbitrary. Then we can define $\Psi(M)$ with its $\bG'$ action over a cover trivializing $\cE$. Since everything is canonical, the pieces patch to define $\Psi(M)$ over all of $X$. We have thus defined a functor $\Psi \colon \Mod_A \to \Rep(\bG')$. We will eventually deduce the desired equivalence $\Phi$ from this functor.

It is clear that $\Psi$ is a tensor functor: indeed, working locally,
\begin{displaymath}
\Psi_i(M \otimes_A N)=(M \otimes_A N) \otimes_A \cO_X=(M \otimes_A \cO_X) \otimes_{\cO_X} (N \otimes_A \cO_X) = \Psi_i(M) \otimes_{\cO_X} \Psi_i(N).
\end{displaymath}
Working locally, we have $\Psi_i(A \otimes \bV)=\bV_X=(V_0)_X \oplus \bV'_X$. This globalizes to $\Psi(A \otimes \bV)=\cE^* \oplus \bV'$, the standard representation of $\bG'$. We thus see that $\Psi(A \otimes \bV^{\otimes n})=(\cE^* \otimes \bV')^{\otimes n}$ is a polynomial representation of $\bG'$. Since every $A$-module is a quotient of a sum of modules of the form $A \otimes \bV^{\otimes n}$, it follows that $\Psi(M)$ is a polynomial representation of $\bG'$ for any $A$-module $M$.

Suppose now that $\cE$ is trivial and $M$ is finitely generated. Let $V$ be a sufficiently large finite dimensional vector space, and choose a decomposition $V=V_0 \oplus V'$. Then $\Psi(M)(V)$ is identified with $M(V) \otimes_{A(V)} \cO_X$. Now, the spectrum of $A(V)$ is identified with the space $\uHom(\cE, V^*)$ over $X$, and so $\Psi(M)(V)$ is identified with the pullback of the coherent sheaf $M(V)$ along the section $X \to \uHom(\cE, V^*)$ induced by $i$. This section lands in the open subscheme $\uHom(\cE, V^*)^{\circ}$ consisting of injective maps.

We now claim that $\Psi$ is exact. This can be checked locally. Furthermore, since $\Psi$ commutes with direct limits, it suffices to check on finitely generated modules. We can therefore place ourselves in the situation of the previous paragraph. We can compute $\Psi(M)(V)$ in two steps: first restrict from $\uHom(\cE, V^*)$ to $\uHom(\cE, V^*)^{\circ}$, and then restrict again to $X$. The first step is exact since restriction to an open subscheme is always exact. Now the key point: $\uHom(\cE, V^*)^{\circ}$ is a $\GL(V)_X$-torsor over $X$, and so any equivariant sheaf or equivariant map of such sheaves is pulled back from $X$. Thus pullback of such modules to $X$ is again exact. This completes the proof of the claim.

We next claim that $\Psi$ kills torsion modules. Again, we can work locally and assume $M$ is finitely generated. If $M$ is torsion then the support of $M(V)$ in $\uHom(\cE, V^*)$ does not meet $\uHom(\cE, V^*)^{\circ}$, and so the pullback to $X$ vanishes. This proves the claim.

We thus see that $\Psi$ induces an exact tensor functor $\ol{\Psi} \colon \Mod_A^{\gen} \to \Rep^{\pol}(\bG')$. We claim that $\ol{\Psi}$ is fully faithful. This can again be checked locally for finitely generated modules after evaluating on $V$ of dimension $n \gg 0$. Since $\GL(V)_X$ acts transitively on $\uHom(\cE, V^*)^{\circ}$ with stabilizer $\bG'$, giving a map of $\GL(V)_X$-equivariant quasi-coherent sheaves on $\uHom(\cE, V^*)^{\circ}$ is the same as giving maps at the fibers at $i$, as $\bG'$-representations.

Suppose that $M \to N$ is a map of $A$-modules such that the induced map $\Psi(M) \to \Psi(N)$ vanishes. Then the map $M(V) \to N(V)$ vanishes over $\uHom(\cE, V^*)^{\circ}$. This implies that the image of $M(V) \to N(V)$ is torsion, and so the image of $M \to N$ is torsion, and so the map $M \to N$ is 0 in $\Mod_A^{\gen}$. This proves faithfulness of $\ol{\Psi}$.

Now suppose that a map $\Psi(M)(V) \to \Psi(N)(V)$ of $\bG'$-representations is given. This is induced from a map $M(V) \to N(V)$ over $\uHom(\cE, V^*)^{\circ}$. This induces a map of quasi-coherent sheaves $M(V) \to j_*(N(V))$, where $j$ is the open immersion. Now, $j_*(N(V))$ is a $\GL(V)$-equivariant $A(V)$-module, but may not be polynomial. Let $N(V)'$ be the maximal polynomial subrepresentation, which is an $A(V)$-submodule containing $N(V)$. Let $N'$ be the canonical $A$-module with $\ell(N') \le n$ satisfying $N'(V)=N(V)'$. The map $M(V) \to N(V)'$ is induced from a map of $A$-modules $M \to N'$. Now, $N'(V)/N(V)=j_*(j^*(N(V)))/N(V)$ pulls back to 0 under $j^*$, and is thus torsion. It follows that $N'/N$ is torsion, and so $N=N'$ in $\Mod_A^{\gen}$. Thus the constructed map $M \to N'$ of $A$-modules gives the required map $M \to N$ in $\Mod_A^{\gen}$. This proves fullness of $\ol{\Psi}$.

We now claim that $\ol{\Psi}$ is essentially surjective. Since all categories are cocomplete and $\Psi$ is cocontinuous and fully faithful, it suffices to show that all finitely generated objects are in the essential image. By Proposition~\ref{cor:GAtors}(b), a finitely generated object $M$ of $\Rep^{\pol}(\bG)$ can be realized as the kernel of a map $f \colon P \to Q$, where $P$ and $Q$ are each sums of representations of the form $\cF \otimes (\cE^* \oplus \bV)^{\otimes n}$ with $\cF$ an $\cO_X$-module. We have already shown that such modules are in the essential image of $\Psi$. Thus $P=\ol{\Psi}(P')$ and $Q=\ol{\Psi}(Q')$ for $P'$ and $Q'$ in $\Mod_A^{\gen}$. Since $\ol{\Psi}$ is full, $f=\ol{\Psi}(f')$ for some $f' \colon P' \to Q'$ in $\Mod_A^{\gen}$. Finally, since $\ol{\Psi}$ is exact, $M=\ker(f)=\ol{\Psi}(\ker(f'))$, which shows that $M$ is in the essential image of $\ol{\Psi}$.

We have thus shown that $\ol{\Psi}$ is an equivalence of categories $\Mod_A^{\gen} \to \Rep^{\pol}(\bG')$. Combining this with the obvious equivalence $\Rep^{\pol}(\bG')=\Rep^{\pol}(\bG)$ coming from a choice of isomorphism $\bV' \cong \bV$, we obtain the desired equivalence $\Phi$.
\end{proof}

There is a canonical map $\bV \otimes A \to \cE^* \otimes A$. We let $\cK$ be the kernel of the corresponding map in $\Mod_A^{\gen}$. Under the equivalence $\Phi$ in the proposition, we have $\Phi(\cK)=\bV$. Combining the proposition with Corollary~\ref{cor:GAtors}, we obtain:

\begin{corollary} \label{cor:generic-inj}
We have the following:
\begin{enumerate}[\indent \rm (a)]
\item If $\cF$ is an injective object in $\cV_X$ then $T(\cF \otimes A)$ is injective in $\Mod_A^{\gen}$.
\item Every finitely generated object $M$ of $\Mod_A^{\gen}$ admits a resolution $M \to N^{\bullet}$ where each $N^i$ has the form $T(\cF \otimes A)$ with $\cF \in \cV_X^{\fgen}$ and $N^i=0$ for $i \gg 0$.
\item Every finitely generated object of $\Mod_A^{\gen}$ admits a finite length filtration where the graded pieces have the form $\cF \otimes \bS_{\lambda}(\cK)$ where $\cF \in \Mod_X^{\fgen}$. Moreover, if $X$ is connected then $\bS_{\lambda}(\cK)$ has no non-trivial $\cO_X$-flat quotients.
\end{enumerate}
\end{corollary}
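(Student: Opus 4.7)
The strategy is to transport all three parts across the equivalence $\Phi \colon \Mod_A^{\gen} \to \Rep^{\pol}(\bG(\cE))$ just established in the proposition, reducing everything to the corresponding parts of Corollary~\ref{cor:GAtors}.

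First, I work out the action of $\Phi$ on the building blocks that appear in the statement. From the proof of the preceding proposition we have $\Phi(T(A \otimes \bV)) = \cE^* \oplus \bV$, the standard $\bG(\cE)$-representation. Since $\Phi$ is a tensor functor (for $\otimes_A$ on the source and $\otimes_{\cO_X}$ on the target), iterating gives $\Phi(T(A \otimes \bV^{\otimes n})) = (\cE^* \oplus \bV)^{\otimes n}$, and compatibility with the $S_n$-permutation action yields $\Phi(T(A \otimes \bS_\mu(\bV))) = \bS_\mu(\cE^* \oplus \bV)$. Writing $\cF \in \cV_X$ as $\cF = \bigoplus_\mu \cF_\mu \otimes \bS_\mu(\bV)$ and using that $\Phi$, being an equivalence, commutes with direct sums, I obtain
\begin{displaymath}
\Phi(T(\cF \otimes A)) \;=\; \bigoplus_\mu \cF_\mu \otimes \bS_\mu(\cE^* \oplus \bV).
\end{displaymath}
Next, I must check that $\Phi$ sends the canonical map $T(\bV \otimes A) \to T(\cE^* \otimes A)$ to the projection $\cE^* \oplus \bV \to \cE^*$; this is where I reopen the construction of $\Phi$ using the chosen decomposition $\bV = V_0 \oplus \bV'$ with $V_0 \cong \cE^*$ and trace through the induced map on $\Psi(-) = (-) \otimes_A \cO_X$. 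Granting this, $\Phi(\cK) = \bV$, and hence $\Phi(\cF \otimes \bS_\lambda(\cK)) = \cF \otimes \bS_\lambda(\bV)$ for any $\cF \in \Mod_X^{\fgen}$.

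With these identifications in hand, each part is a formal transport. For (a), injectivity of $\cF \in \cV_X$ means each multiplicity space $\cF_\mu$ is injective in $\Mod_X$, so Corollary~\ref{cor:GAtors}(a) makes each summand $\cF_\mu \otimes \bS_\mu(\cE^* \oplus \bV)$ injective in $\Rep^{\pol}(\bG(\cE))$; since this category is locally noetherian (being equivalent via Proposition~\ref{prop:GAtors} to $\Mod_A^0$, a localizing subcategory of the locally noetherian $\Mod_A$), arbitrary direct sums of injectives are injective, so $\Phi(T(\cF \otimes A))$, and hence $T(\cF \otimes A)$, is injective. For (b), I take finitely generated $M$, apply Corollary~\ref{cor:GAtors}(b) to $\Phi(M)$ to get a bounded resolution whose $i$th term is $\bigoplus_j \cF_j \otimes \bS_{\lambda_j}(\bV \oplus \cE^*)$ with $\cF_j \in \Mod_X^{\fgen}$, recognize this as $\Phi(T(\cG_i \otimes A))$ for $\cG_i = \bigoplus_j \cF_j \otimes \bS_{\lambda_j}(\bV) \in \cV_X^{\fgen}$, and pull back through $\Phi^{-1}$ (which is exact). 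For (c), the filtration of $\Phi(M)$ from Corollary~\ref{cor:GAtors}(c) has graded pieces $\cF \otimes \bS_\lambda(\bV) = \Phi(\cF \otimes \bS_\lambda(\cK))$, and the absence of non-trivial $\cO_X$-flat quotients of $\bS_\lambda(\bV)$ transports to the same statement for $\bS_\lambda(\cK)$ because $\Phi$ is exact and $\cO_X$-linear.

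The only technical point that is not automatic is the explicit identification $\Phi(\cK) = \bV$, which requires working through the (mildly noncanonical) construction of $\Phi$ to check that the defining map $\bV \otimes A \to \cE^* \otimes A$ goes to the standard projection $\cE^* \oplus \bV \to \cE^*$; all remaining steps are mechanical applications of the equivalence $\Phi$ together with Corollary~\ref{cor:GAtors}.
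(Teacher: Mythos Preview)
Your proposal is correct and follows exactly the approach the paper intends: the paper states the corollary as an immediate consequence of the equivalence $\Phi$ and Corollary~\ref{cor:GAtors}, and you have simply supplied the details of that transport, including the identification $\Phi(\cK)=\bV$ which the paper records just before the corollary. The only addition you make beyond the paper's one-line justification is the observation that direct sums of injectives remain injective in the locally noetherian category $\Rep^{\pol}(\bG(\cE))$, which is needed to handle an arbitrary injective $\cF\in\cV_X$ rather than a single $\cF_\mu\otimes\bS_\mu(\bV)$; this is a genuine detail the paper elides.
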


The category $\Mod_A^{\gen}$ is naturally a module for the tensor category $\cV$. Thus $\rK(\Mod_A^{\gen})$ is a module for $\Lambda=\rK(\cV)$. The following result describes $\rK(\Mod_A^{\gen})$ as a $\Lambda$-module.

\begin{corollary} \label{cor:Ktheory-generic}
The map $\rK(X) \to \rK(\Mod_A^{\gen})$ taking $[V]$ to $[T(A \otimes V)]$ induces an isomorphism $\Lambda \otimes \rK(X) \to \rK(\Mod_A^{\gen})$.
\end{corollary}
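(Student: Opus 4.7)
The plan is to follow the pattern of Corollary~\ref{cor:Ktors}, using the equivalence $\Phi$ of the preceding proposition to transport the injectivity argument to $\Rep^{\pol}(\bG(\cE))$.

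For surjectivity, I would apply Corollary~\ref{cor:generic-inj}(b): any finitely generated $M \in \Mod_A^{\gen}$ admits a finite resolution $M \to N^\bullet$ with each $N^i = T(\cF^i \otimes A)$ for some $\cF^i \in \cV_X^{\fgen}$, giving $[M] = \sum_i (-1)^i [N^i]$ in $\rK(\Mod_A^{\gen})$. Decomposing $\cF^i = \bigoplus_\lambda \bS_\lambda(\bV) \otimes \cF^i_\lambda$ with $\cF^i_\lambda \in \Mod_X^{\fgen}$ (only finitely many nonzero), we obtain $[N^i] = \sum_\lambda s_\lambda \cdot [T(A \otimes \cF^i_\lambda)]$, which lies in the image of the $\Lambda$-linear map in the statement.

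For injectivity, I would produce an explicit left inverse. The forgetful functor $f \colon \Rep^{\pol}(\bG(\cE)) \to \cV_X$ (discarding the action of $\bV \otimes \cE$ and retaining only the $\GL(\bV)$-action) is exact, and by Corollary~\ref{cor:GAtors}(c) preserves finite generation. Composing with $\Phi_* \colon \rK(\Mod_A^{\gen}) \cong \rK(\Rep^{\pol}(\bG(\cE)))$ and the identification $\rK(\cV_X^{\fgen}) = \Lambda \otimes \rK(X)$ (which holds because $\cV_X^{\fgen} = \bigoplus_\lambda \Mod_X^{\fgen}$ splits as a coproduct of abelian categories indexed by partitions), this yields a $\Lambda$-linear map $g \colon \rK(\Mod_A^{\gen}) \to \Lambda \otimes \rK(X)$. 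To finish I would check that $g$ composed with the map in the statement is the identity: by $\Lambda$-linearity it suffices to test on $[V]$ with $V \in \Mod_X^{\fgen}$, where the composite sends $[V]$ to $f_* \Phi_* [T(A \otimes V)] = f_*[V] = [V]$.

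The hard part will be confirming the identification $\Phi(T(A \otimes V)) = V$ (as a trivial $\bG(\cE)$-representation) used in the last step. This requires unpacking the construction of $\Psi$ from the proof of the preceding proposition: $\Psi_i(A \otimes V) = (A \otimes V) \otimes_A \cO_X = V$, and the subgroup $\bG''$ acts trivially on $V$ because the $\GL(\bV)$-action on the $V$-factor of $A \otimes V$ was trivial to begin with. Given this identification, the rest of the argument is a formal consequence of the filtration and resolution theorems in Corollaries~\ref{cor:generic-inj} and~\ref{cor:GAtors}.
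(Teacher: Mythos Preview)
Your surjectivity argument via Corollary~\ref{cor:generic-inj}(b) is fine, and in the same spirit as the paper's (which cites part~(c) instead).

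The injectivity argument has a genuine gap: the map $g=f_*\circ\Phi_*$ is \emph{not} $\Lambda$-linear, so checking the composite on $\rK(X)$ alone does not finish the proof. The issue is that $\Phi$ is not $\cV$-linear. Indeed, since $\Phi$ is a tensor functor and $\bS_\lambda(\bV)\otimes T(A)=T(\bS_\lambda(\bV)\otimes A)=\bS_\lambda(T(\bV\otimes A))$ (the tensor power here is over $A$), one has
\[
\Phi\bigl(\bS_\lambda(\bV)\otimes M\bigr)=\bS_\lambda\bigl(\Phi(T(\bV\otimes A))\bigr)\otimes\Phi(M)=\bS_\lambda(\cE^*\oplus\bV)\otimes\Phi(M),
\]
which is not $\bS_\lambda(\bV)\otimes\Phi(M)$ unless $\cE=0$. (This is exactly what the Remark following the corollary records: the induced isomorphism $\rK(\Mod_A^{\gen})\to\rK(\Mod_A^0)$ is not $\Lambda$-linear.)

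Your approach is easily repaired. Computing the composite $\Lambda\otimes\rK(X)\to\rK(\Mod_A^{\gen})\xrightarrow{g}\Lambda\otimes\rK(X)$ on $s_\lambda\otimes[V]$ gives
\[
[f(\bS_\lambda(\cE^*\oplus\bV)\otimes V)]=\sum_{\mu\subseteq\lambda}s_\mu\otimes[\bS_{\lambda/\mu}(\cE^*)\otimes V].
\]
This endomorphism of $\Lambda\otimes\rK(X)$ is $\rK(X)$-linear and, in the basis $\{s_\lambda\}$, is upper-triangular with $1$'s on the diagonal (with respect to containment of partitions). It is therefore an automorphism, and injectivity follows. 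Equivalently, and closer to the paper's citation of part~(c): the map $s_\lambda\otimes[\cF]\mapsto[\cF\otimes\bS_\lambda(\cK)]$ is an isomorphism (surjective by~(c), split by $f_*\circ\Phi_*$ since $\Phi(\bS_\lambda(\cK))=\bS_\lambda(\bV)$), and the map in the statement differs from it by the same unitriangular automorphism.
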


\begin{proof}
This follows from Corollary~\ref{cor:generic-inj}(c).
\end{proof}

\begin{remark}
Combining the results of the previous several sections, we obtain an equivalence $\Psi \colon \Mod_A^{\gen} \to \Mod_A^0$. Note that this equivalence is \emph{not} $\cV$-linear! Indeed, $\Psi(\bS_{\lambda}(\bV) \otimes A)=J_{\lambda}$, and this is not isomorphic to $\bS_{\lambda}(\bV) \otimes \Psi(A)=\bS_{\lambda}(\bV)$. This computation also shows that the isomorphism $\rK(\Mod_A^{\gen,\fgen}) \to \rK(\Mod_A^{0,\fgen})$ induced by $\Psi$ is not $\Lambda$-linear.
\end{remark}

\begin{remark}
Suppose that $R=\Sym(V)$ is a general polynomial tca, where $V$ is a finite length object of $\cV$. One can then show (by direct calculation) that the subcategory of $\Mod_R^{\gen}$ spanned by the images of the projective objects is equivalent to the subcategory of $\Mod_R^0$ spanned by the injective objects. From this, it follows that there is a unique left exact functor $\Mod_R^0 \to \Mod_R^{\gen}$ taking each injective to the corresponding localized projective. We expect that this functor is an equivalence in general. However, we have only been able to prove this in essentially two cases: the one above, and the case where $V$ is $\Sym^2(\bC^{\infty})$ or $\lw^2(\bC^{\infty})$, which is treated in \cite{sym2noeth}. In each case, it has been essential to use the description of $\Mod_R^{\gen}$ as the representation category of a generic stabilizer; without this, we have not found a way to show that objects in $\Mod_R^{\gen}$ behave as we expect.
\end{remark}

\subsection{The section functor} \label{ss:section}

We now study the section functor $S \colon \Mod_A^{\gen} \to \Mod_A$ using a geometric approach. Let $n \ge \rank(\cE)$ be an integer. Let $\fH_n$ be the space of linear maps $\bC^n \to \cE^*$, thought of as a scheme over $X$; in fact, $\fH_n$ is just $\Spec(A(\bC^n))$. Let $\fU_n$ be the open subscheme of $\fH_n$ where the map is surjective, and write $j \colon \fU_n \to \fH_n$ for the inclusion.

By a {\bf polynomially (resp.\ algebraically) equivariant sheaf} on $\fH_n$ we mean a $\GL_n$-equivariant quasi-coherent sheaf that is a subquotient of a direct sum of sheaves of the form $\cF \otimes V \otimes \cO_{\fH_n}$, where $\cF$ is a quasi-coherent sheaf on $X$ and $V$ is a polynomial (resp.\ algebraic) representation of $\GL_n$. We write $\Mod_{\fH_n}$ (resp.\ $\Mod_{\fH_n}^{\alg}$) for the category of polynomially (resp.\ algebraically) equivariant sheaves. We make similar definitions for $\fU_n$, though we will only use polynomially equivariant sheaves on $\fU_n$.

We can identify $\Mod_{\fH_n}$ (resp.\ $\Mod^{\alg}_{\fH_n}$) with the category of $\GL_n$-equivariant $A(\bC^n)$-modules that decompose as a polynomial (resp.\ algebraic) representation of $\GL_n$. If $V$ is an algebraic representation of $\GL_n$ over $\cO_X$ then it has a maximal polynomial subrepresentation $V^{\pol}$, and the construction $V \mapsto V^{\pol}$ is exact. This construction induces an exact functor $\Mod_{\fH_n}^{\alg} \to \Mod_{\fH_n}$, denoted $M \mapsto M^{\pol}$.

Let $Y=\Gr_d(\bC^n)_X$, let $\pi \colon Y \to X$ be the structure map, and let $\cQ$ be the tautological bundle on $Y$. A point in $\fU_n$ is a surjection $f \colon \bC^n \to \cE^*$. We thus obtain a map $\rho \colon \fU_n \to Y$ by associating to $f$ the quotient $\coker(f)$ of $\bC^n$. In fact, specifying $f$ is the same as specifying an isomorphism $\coker(f) \to \cE^*$, and so we see that $\fU_n$ is identified with the space $\uIsom(\cQ, \cE^*)$ over $\Gr_d(\bC^n)$. In particular, the map $\rho$ is affine: $\uIsom(\cQ, \cE^*)$ is the relative spectrum of the algebra
\begin{displaymath}
\bigoplus_{\lambda} \bS_{\lambda}(\cQ) \otimes \bS_{\lambda}(\cE),
\end{displaymath}
the sum taken over all dominant weights $\lambda$. We thus see that $\rR^n j_*$ can be identified with $\rR^n \pi_* \circ \rho_*$, where we identify $\fH_n$-modules with $A(\bC^n)$-modules on $X$.

\begin{lemma} \label{lem:algcoh}
Let $M$ be a $\GL_n$-equivariant quasi-coherent sheaf on $Y$ that is a subquotient of a direct sum of sheaves of the form $\pi^*\cF \otimes \bS_{\lambda}(\bC^n)$ where $\cF$ is an $\cO_X$-module. Then $\rR^i \pi_*(M)$ is an algebraic representation of $\GL_n$ over $\cO_X$, for any $i$.
\end{lemma}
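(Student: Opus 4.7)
The plan is to observe that any $M$ satisfying the hypothesis admits an isotypic decomposition that reduces the computation to ordinary sheaf cohomology. Grouping summands by partition, any direct sum $N = \bigoplus_\alpha \pi^*\cF_\alpha \otimes \bS_{\lambda_\alpha}(\bC^n)$ can be rewritten as $\bigoplus_\lambda \cH_\lambda \otimes \bS_\lambda(\bC^n)$, where $\cH_\lambda := \bigoplus_{\alpha\,:\,\lambda_\alpha = \lambda} \pi^*\cF_\alpha$ carries the trivial $\GL_n$-action (since $\GL_n$ acts trivially on $X$ and hence on each pullback $\pi^*\cF_\alpha$). By Schur's lemma together with complete reducibility of polynomial $\GL_n$-representations in characteristic zero, any $\GL_n$-equivariant subsheaf $P \subset N$ has the shape $P = \bigoplus_\lambda \cG^P_\lambda \otimes \bS_\lambda(\bC^n)$, where $\cG^P_\lambda := \sHom_{\GL_n}(\bS_\lambda(\bC^n), P)$ is an ordinary $\cO_Y$-submodule of $\cH_\lambda$. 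Writing $M = P/K$ with $K \subset P \subset N$ and applying this decomposition to both $K$ and $P$, the subquotient $M$ inherits the form $\bigoplus_\lambda (\cG^P_\lambda/\cG^K_\lambda) \otimes \bS_\lambda(\bC^n)$.

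Since each $\bS_\lambda(\bC^n)$ is a finite-dimensional constant sheaf on $Y$, tensoring with it commutes with $\rR^i\pi_*$; and since $\pi$ is proper between noetherian schemes, $\rR^i\pi_*$ also commutes with arbitrary direct sums. Thus
$$\rR^i\pi_*(M) = \bigoplus_\lambda \rR^i\pi_*\bigl(\cG^P_\lambda/\cG^K_\lambda\bigr) \otimes_\bC \bS_\lambda(\bC^n).$$
On the right, the $\GL_n$-action sits entirely inside the $\bS_\lambda(\bC^n)$ factors, each of which is a polynomial representation, while the coefficient sheaves $\rR^i\pi_*(\cG^P_\lambda/\cG^K_\lambda)$ are plain quasi-coherent $\cO_X$-modules with trivial $\GL_n$-action. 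Hence $\rR^i\pi_*(M)$ is manifestly a direct sum of sheaves of the form $\cF \otimes V$ with $V$ a polynomial $\GL_n$-representation, so it is polynomial, and in particular algebraic.

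The main technical point is establishing the isotypic decomposition in the sheaf setting. Locally this reduces to the classical statement that a $\GL_n$-equivariant subobject of $F \otimes V$, with $V$ irreducible polynomial and the action trivial on $F$, is automatically of the form $F' \otimes V$ for some $F' \subset F$; the needed care is in globalizing from local submodules to $\cO_Y$-submodules over $Y$, after which everything else is formal.
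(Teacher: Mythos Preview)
Your argument has a genuine gap at the isotypic decomposition step. The group $\GL_n$ acts \emph{nontrivially} on $Y=\Gr_d(\bC^n)_X$: it moves points of the Grassmannian. Thus a $\GL_n$-equivariant sheaf on $Y$ is not a sheaf equipped with a fiberwise $\GL_n$-action, but rather a sheaf $M$ together with an isomorphism $\alpha^*M \cong p_2^*M$ on $\GL_n \times Y$, where $\alpha$ is the action map. Your claim that $\pi^*\cF$ ``carries the trivial $\GL_n$-action'' conflates the canonical equivariant structure on a pullback along an equivariant map with a trivial fiberwise action; these are different notions, and only the latter would allow the Schur-lemma splitting you invoke.

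Here is an explicit counterexample to your decomposition. Take $N = \pi^*\cO_X \otimes \bS_{(1)}(\bC^n) = \bC^n \otimes \cO_Y$. The tautological subbundle $\cR \subset \bC^n \otimes \cO_Y$ is a $\GL_n$-equivariant subsheaf, but it is \emph{not} of the form $\cG \otimes \bC^n$ for any $\cO_Y$-submodule $\cG \subset \cO_Y$: it has rank $n-d$, which is not a multiple of $n$. The point is that over a fiber $\Gr_d(\bC^n)$, equivariant sheaves correspond to representations of the parabolic stabilizer $P$, and $\bS_\lambda(\bC^n)$ restricted to $P$ is generally reducible, so it admits equivariant subquotients that do not come from subspaces of the coefficient sheaf.

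The paper's proof avoids this entirely: it works locally on $X$ so that $Y = G/H$ with $G = (\GL_n)_X$ and $H$ a parabolic, identifies $M$ with an algebraic $H$-representation, and then recognizes $\rR^i\pi_*(M)$ as the $i$th derived induction from $H$ to $G$, which is algebraic by construction. This is the standard mechanism relating equivariant sheaf cohomology on $G/H$ to induced representations, and it is what your argument is missing.
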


\begin{proof}
This can be checked locally on $X$, so we can assume $X$ is affine. Write $Y=G/H$ where $G=(\GL_n)_X$ and $H$ is an appropriate parabolic subgroup of $G$. Then $M$ corresponds to an algebraic representation $N$ of $H$ over $\cO_X$. The push-forward $\rR^i \pi_*(M)$ is then identified with the derived induction from $H$ to $G$ of $N$ by \cite[Proposition I.5.12(a)]{jantzen}, which is an algebraic representation of $G$ by definition.
\end{proof}

\begin{lemma}
Let $M \in \Mod_{\fU_n}$. Then $\rR^n j_*(M) \in \Mod_{\fH_n}^{\alg}$.
\end{lemma}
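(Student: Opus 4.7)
The approach is to factor $j$ through the Grassmannian $Y$ as in the preceding discussion: since the structure map $\fU_n \to X$ factors as $\pi \circ \rho$ and $\rho$ is affine (so $\rho_*$ is exact and $\rR^i\rho_* = 0$ for $i>0$), a Leray spectral sequence collapse gives
\begin{displaymath}
\rR^i j_*(M) = \rR^i\pi_*(\rho_*(M)) \qquad \text{for every } i \ge 0,
\end{displaymath}
as already noted in the paragraph before the statement. Thus the plan is to verify that $\rho_*(M)$ fits the hypothesis of Lemma~\ref{lem:algcoh}, i.e., is a subquotient of a direct sum of sheaves of the form $\pi^*\cF \otimes \bS_\lambda(\bC^n)$; that lemma will then yield the conclusion for every $i$, in particular for the index appearing in the statement.

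By the definition of $\Mod_{\fU_n}$, $M$ is a subquotient of a direct sum of sheaves $\cF \otimes V \otimes \cO_{\fU_n}$ with $\cF$ quasi-coherent on $X$ and $V$ a polynomial $\GL_n$-representation. Decomposing $V$ into isotypic components, we reduce to the case $V = \bS_\mu(\bC^n)$. Applying the exact functor $\rho_*$ and using the explicit formula
\begin{displaymath}
\rho_*\cO_{\fU_n} = \bigoplus_\lambda \bS_\lambda(\cQ) \otimes \bS_\lambda(\cE)
\end{displaymath}
recalled just above the lemma, we conclude that $\rho_*(M)$ is a subquotient of a direct sum of sheaves of the form $\pi^*(\cF \otimes \bS_\lambda(\cE)) \otimes \bS_\mu(\bC^n) \otimes \bS_\lambda(\cQ)$.

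It then remains to exhibit each such sheaf as a subquotient of a direct sum of sheaves of the shape $\pi^*\cF' \otimes \bS_\nu(\bC^n)$. For this, note that the tautological surjection $\bC^n \twoheadrightarrow \cQ$ on $Y$ induces a $\GL_n$-equivariant surjection $\bS_\lambda(\bC^n) \twoheadrightarrow \bS_\lambda(\cQ)$, so $\bS_\mu(\bC^n) \otimes \bS_\lambda(\cQ)$ is a quotient of $\bS_\mu(\bC^n) \otimes \bS_\lambda(\bC^n) = \bigoplus_\nu c^\nu_{\mu\lambda}\bS_\nu(\bC^n)$ by Littlewood--Richardson. Since the relation ``is a subquotient of'' is transitive and passes through direct sums in the abelian category of $\GL_n$-equivariant quasi-coherent sheaves on $Y$, we obtain the desired description of $\rho_*(M)$ and Lemma~\ref{lem:algcoh} finishes the proof. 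I do not anticipate any serious obstacle; all the substantive work has been packaged into Lemma~\ref{lem:algcoh} and the explicit formula for $\rho_*\cO_{\fU_n}$, and the remaining manipulations are formal bookkeeping about subquotients and Schur functors.
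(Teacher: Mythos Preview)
Your approach is exactly the paper's: factor $j$ through the Grassmannian via the affine map $\rho$, verify that $\rho_*(M)$ meets the hypothesis of Lemma~\ref{lem:algcoh}, and conclude. The paper is terser and leaves the passage from $\bS_\lambda(\cQ)$ to subquotients of $\pi^*\cF \otimes \bS_\nu(\bC^n)$ implicit, whereas you spell it out.

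One small gap in your explicit argument: the sum in the formula for $\rho_*\cO_{\fU_n}$ runs over \emph{all dominant weights} $\lambda$ of $\GL_d$, not just partitions. When $\lambda$ has negative entries, $\bS_\lambda$ is not a polynomial functor and the surjection $\bC^n \twoheadrightarrow \cQ$ does \emph{not} induce a map $\bS_\lambda(\bC^n) \to \bS_\lambda(\cQ)$, so your surjection claim and the Littlewood--Richardson step are not literally valid there. The fix is immediate: write $\bS_\lambda(\cQ) \cong \bS_{\lambda+(k^d)}(\cQ) \otimes \det(\cQ)^{-k}$ with $k$ large enough that $\lambda+(k^d)$ is a partition; then $\bS_{\lambda+(k^d)}(\cQ)$ is a quotient of $\bS_{\lambda+(k^d)}(\bC^n)$ while $\det(\cQ)^{-k}=\lw^d(\cQ^*)^{\otimes k}$ is a sub of $\lw^d((\bC^n)^*)^{\otimes k}$, so $\bS_\lambda(\cQ)$ is still a subquotient of a direct sum of sheaves $\bS_\nu(\bC^n)\otimes\cO_Y$ with $\nu$ a dominant weight. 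Since Lemma~\ref{lem:algcoh} allows arbitrary dominant $\nu$ (its conclusion is only that the pushforward is \emph{algebraic}), this is enough.
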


\begin{proof}
By definition, $M$ is a subquotient of a sheaf of the form $V \otimes \cO_{\fU_n}$, where $V$ is a polynomial representation of $\GL_n$ over $\cO_X$. We thus see that $\rho_*(M)$ is a subquotient of
\begin{displaymath}
\rho_*(V \otimes \cO_{\fU_n}) = V \otimes \bigoplus_{\lambda} \bS_{\lambda}(\cQ) \otimes \bS_{\lambda}(\cE).
\end{displaymath}
Thus $\rho_*(M)$ is a $\GL_n$-equivariant quasi-coherent sheaf on $Y$ that is a subquotient of a direct sum of sheaves of the form $\pi^* \cF \otimes \bS_{\nu}(\bC^n)$ where $\cF$ is an $\cO_X$-module. The result now follows from Lemma~\ref{lem:algcoh}.
\end{proof}

\begin{lemma} \label{lem:jsupp}
\begin{enumerate}[\indent \rm (a)]
\item If $M \in \Mod_{\fH_n}$ then the natural map $M \to j_*(j^*(M))^{\pol}$ has kernel and cokernel supported on the complement of $\fU_n$, as does the cokernel of the inclusion $j_*(j^*(M))^{\pol} \to j_*(j^*(M))$.
\item If $M \in \Mod_{\fU_n}$ then the inclusion $j_*(M)^{\pol} \to j_*(M)$ has cokernel supported on the complement of $\fU_n$.
\end{enumerate}
\end{lemma}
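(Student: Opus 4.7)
My plan is to prove (a) directly from the observation that, for any open immersion $j$, the unit map $M \to j_*j^*M$ restricts to the identity on $\fU_n$, so its kernel and cokernel are automatically supported on the closed complement $Z := \fH_n \setminus \fU_n$; then (b) will follow by showing that every $N \in \Mod_{\fU_n}$ has the form $j^*\widetilde N$ for some $\widetilde N \in \Mod_{\fH_n}$, after which (b) becomes literally the second assertion of (a) applied to $\widetilde N$.

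For (a), since $M$ is polynomially equivariant and the unit map is $\GL_n$-equivariant and $A(\bC^n)$-linear, its image is polynomially equivariant and therefore lies in $(j_*j^*M)^{\pol}$ (which is stable under multiplication by the polynomial tca $A(\bC^n)$, hence an $A(\bC^n)$-submodule). Writing the unit as $M \xrightarrow{f_1} (j_*j^*M)^{\pol} \hookrightarrow j_*j^*M$, with the second arrow $f_2$ injective, we get $\ker f_1 = \ker(f_2 \circ f_1)$, which is supported on $Z$. For cokernels, the filtration $\mathrm{im}(f_2 \circ f_1) \subseteq (j_*j^*M)^{\pol} \subseteq j_*j^*M$ produces a short exact sequence
\[
0 \to \mathrm{coker}(f_1) \to \mathrm{coker}(f_2 \circ f_1) \to \mathrm{coker}(f_2) \to 0;
\]
since the middle term is supported on $Z$, so are the outer two, proving both halves of (a).

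For (b), I need to produce $\widetilde N \in \Mod_{\fH_n}$ with $j^*\widetilde N \cong N$. When $N$ is finitely generated, choose a polynomial representation $V$ and a surjection $V \otimes \cO_{\fU_n} \twoheadrightarrow N$ with kernel $K$, and set
\[
\widetilde K := \ker\bigl( V \otimes \cO_{\fH_n} \to j_*((V \otimes \cO_{\fU_n})/K) \bigr), \qquad \widetilde N := (V \otimes \cO_{\fH_n}) / \widetilde K.
\]
Since $\widetilde K$ is a $\GL_n$-stable $A(\bC^n)$-submodule of the polynomial sheaf $V \otimes \cO_{\fH_n}$, it belongs to $\Mod_{\fH_n}$. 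Exactness of $j^*$ together with the isomorphism $j^*j_* \cong \mathrm{id}$ on $\Mod_{\fU_n}$ yields $j^*\widetilde K = K$, hence $j^*\widetilde N \cong N$. For general $N$, one writes $N$ as a filtered colimit of finitely generated polynomial subobjects and uses that $j^*$ commutes with colimits.

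The main delicate point is the extension construction in (b): one must check that $\widetilde K$ is polynomially (not merely algebraically) equivariant, which is automatic since the definition of $\Mod_{\fH_n}$ is closed under taking subsheaves inside polynomial sheaves. Everything else reduces to formal properties of the adjunction $j^* \dashv j_*$ and a short diagram chase.
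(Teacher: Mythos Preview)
Your argument for (a) is correct and is exactly the paper's argument, just written out with a bit more detail on the diagram chase.

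For (b), however, there is a genuine gap. You write ``choose a polynomial representation $V$ and a surjection $V \otimes \cO_{\fU_n} \twoheadrightarrow N$.'' But the definition of $\Mod_{\fU_n}$ only asserts that $N$ is a \emph{subquotient} of some $\cF \otimes V \otimes \cO_{\fU_n}$, not a quotient; nothing you have available at this point guarantees that polynomially equivariant sheaves on $\fU_n$ admit surjections from such free sheaves. (Indeed, the next lemma in the paper, which identifies $\Mod_{\fU_n}$ with a Serre quotient of $\Mod_{\fH_n}$, is what would supply this after the fact --- but its proof uses (b).) Your colimit reduction for general $N$ inherits the same problem: you need the finitely generated subobjects to admit such presentations.

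There is an easy fix that stays close to your strategy: work with the subquotient presentation directly. Write $N = N_1/N_2$ with $N_2 \subset N_1 \subset W \otimes \cO_{\fU_n}$, and let $\widetilde N_i \subset W \otimes \cO_{\fH_n}$ be the preimage of $j_* N_i$ under the unit map $W \otimes \cO_{\fH_n} \to j_*(W \otimes \cO_{\fU_n})$. Each $\widetilde N_i$ is a subsheaf of a polynomial sheaf, hence lies in $\Mod_{\fH_n}$, and $j^*\widetilde N_i = N_i$ since $j^*$ of the unit is the identity. Then $\widetilde N := \widetilde N_1/\widetilde N_2$ does the job. Alternatively, the paper avoids constructing any extension at all: it simply checks that the class of $M \in \Mod_{\fU_n}$ satisfying (b) is closed under subobjects and quotients, and then invokes the subquotient definition. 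That approach is less conceptual than yours but sidesteps the extension issue entirely.
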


\begin{proof}
(a) The map $M \to j_*(j^*(M))$ has kernel and cokernel supported on the complement of $\fU_n$, and factors through the inclusion $j_*(j^*(M))^{\pol} \to j_*(j^*(M))$. The result follows.

(b) If $M$ has the form $j^*(N)$ for $N \in \Mod_{\fH_n}$ then the result follows from (a). Since every object of $\Mod_{\fU_n}$ is, by definition, a subquotient of one this form, it suffices to show that if (b) holds for $M$ then it holds for subs and quotients of $M$. Thus let $M$ be given and let $N$ be a submodule of $M$. Then $j_*(N)$ is a submodule of $j_*(M)$, and $j_*(N)^{\pol} = j_*(N) \cap j_*(M)^{\pol}$. Thus the map $j_*(N)/j_*(N)^{\pol} \to j_*(M)/j_*(M)^{\pol}$ is injective. Since the target is supported on the complement of $\fU_n$, it follows that the source is as well. Now let $N$ be a quotient of $M$. The cokernel of the map $j_*(M) \to j_*(N)$ is then supported on the complement of $\fU_n$, by general theory. Thus the same is true for the cokernel of the map $j_*(M)/j_*(M)^{\pol} \to j_*(N)/j_*(N)^{\pol}$. Since the source is supported on the complement of $\fU_n$, the same is thus true for the target.
\end{proof}

\begin{lemma}
The restriction functor $\Mod_{\fH_n} \to \Mod_{\fU_n}$ identifies $\Mod_{\fU_n}$ with the Serre quotient of $\Mod_{\fH_n}$ by the subcategory of sheaves supported on the complement of $\fU_n$.
\end{lemma}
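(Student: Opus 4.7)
The plan is to exhibit $j^{\ast}$ as the Serre quotient functor by producing a fully faithful right adjoint. First I would record the easy facts: $j^{\ast}$ is exact, and its kernel is exactly the subcategory $\cB \subset \Mod_{\fH_n}$ of sheaves supported on $Z = \fH_n \setminus \fU_n$ (an object of $\Mod_{\fH_n}$ restricts to zero on $\fU_n$ if and only if it is supported on $Z$). Polynomial equivariance is stable under subs, quotients, extensions, and arbitrary direct sums, as is the condition of being supported on $Z$, so $\cB$ is a localizing subcategory of the Grothendieck category $\Mod_{\fH_n}$.

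Next I would define the candidate section functor $\sigma \colon \Mod_{\fU_n} \to \Mod_{\fH_n}$ by $\sigma(M) = j_{\ast}(M)^{\pol}$. The preceding lemma guarantees that $j_{\ast}(M)$ is algebraically equivariant, so the polynomial part construction applies and $\sigma(M)$ is genuinely an object of $\Mod_{\fH_n}$. To see that $\sigma$ is right adjoint to $j^{\ast}$, I would combine the usual $(j^{\ast},j_{\ast})$-adjunction for $\GL_n$-equivariant quasi-coherent sheaves with the fact that $(-)^{\pol}$ is right adjoint to the inclusion $\Mod_{\fH_n} \hookrightarrow \Mod_{\fH_n}^{\alg}$:
\begin{displaymath}
\Hom_{\Mod_{\fU_n}}(j^{\ast} N, M) = \Hom_{\GL_n}(N, j_{\ast} M) = \Hom_{\Mod_{\fH_n}}(N, j_{\ast}(M)^{\pol}),
\end{displaymath}
the final equality holding because any $\GL_n$-equivariant map from a polynomial representation $N$ into $j_{\ast} M$ factors through its maximal polynomial subrepresentation.

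The crux is then to check that the counit $j^{\ast}\sigma \to {\rm id}$ is an isomorphism, equivalently that $\sigma$ is fully faithful. For $M \in \Mod_{\fU_n}$ the canonical map $j^{\ast} j_{\ast}(M) \to M$ is an isomorphism by standard sheaf theory on an open immersion, and Lemma~\ref{lem:jsupp}(b) asserts that the cokernel of the inclusion $j_{\ast}(M)^{\pol} \hookrightarrow j_{\ast}(M)$ is supported on $Z$, hence annihilated by $j^{\ast}$. Composing gives the desired isomorphism $j^{\ast}\sigma(M) \xrightarrow{\sim} M$. Having an exact functor $j^{\ast}$ with kernel the localizing subcategory $\cB$ and a fully faithful right adjoint is precisely the standard criterion (cf.\ the formalism of \S\ref{s:formalism}) for $j^{\ast}$ to descend to an equivalence $\Mod_{\fH_n}/\cB \xrightarrow{\sim} \Mod_{\fU_n}$. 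I do not anticipate a serious obstacle: the essential technical content has been packaged into Lemma~\ref{lem:jsupp}, whose part (a) in fact supplies the complementary statement that the unit $N \to \sigma j^{\ast} N$ has kernel and cokernel in $\cB$, should one prefer to verify the universal property of the Serre quotient directly rather than invoke the abstract criterion.
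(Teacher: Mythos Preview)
Your argument is correct and closely parallel to the paper's, but organized around a cleaner categorical principle. The paper proceeds by direct verification: it checks that the induced functor $\Mod_{\fH_n}/\cC \to \Mod_{\fU_n}$ is faithful (image supported on $Z$ means zero in the quotient), full (push $g$ forward, take polynomial parts, and use Lemma~\ref{lem:jsupp}(a) to see that $M \to j_*(j^*M)^{\pol}$ is an isomorphism in the quotient), and essentially surjective (Lemma~\ref{lem:jsupp}(b) gives $j^*(j_*M)^{\pol} \cong M$). You instead package the same ingredients into the construction of the right adjoint $\sigma = (j_*(-))^{\pol}$, verify the counit is an isomorphism via Lemma~\ref{lem:jsupp}(b), and invoke the standard fact that an exact functor with a fully faithful right adjoint realizes the Serre quotient by its kernel. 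Your route has the minor bonus of explicitly identifying the section functor, which the paper in any case uses immediately afterward in Theorem~\ref{thm:geoS}; the paper's route is slightly more self-contained in that it does not appeal to the abstract criterion.
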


\begin{proof}
Let $\cC$ be the subcategory of sheaves supported on $\fH_n \setminus \fU_n$. Restriction to the open subscheme $\fU_n$ is an exact functor and annihilates $\cC$, so we get a functor $\Mod_{\fH_n} / \cC \to \Mod_{\fU_n}$. To see that it is faithful, consider a morphism $f \colon M \to N$ of sheaves on $\fH_n$ whose restriction to $\fU_n$ is $0$. This means that the image of $f$ is supported on $\fH_n \setminus \fU_n$, so $f=0$ in the Serre quotient $\Mod_{\fH_n} / \cC$. To get fullness, let $g \colon j^*(M) \to j^*(N)$ be a morphism of sheaves. Then we get $j_* g \colon j_*(j^*(M)) \to j_*(j^*(N))$, which induces a map $g' \colon j_*(j^*(M))^{\pol} \to j_*(j^*(N))^{\pol}$. By Lemma~\ref{lem:jsupp}(a), we have $g=j^*(g')$. Also by Lemma~\ref{lem:jsupp}(a), the map $M \to j_*(j^*(M))^{\pol}$ is an isomorphism in $\Mod_{\fH_n}/\cC$, and similarly for $N$, and so $g'$ actually defines a map $g' \colon M \to N$ in the quotient category. Finally, for essential surjectivity, let $M \in \Mod_{\fU_n}$ be given. By Lemma~\ref{lem:jsupp}(b), the natural map $j^*(j_*(M)^{\pol}) \to j^*(j_*(M))$ is an isomorphism. By general theory, there is a natural isomorphism $j^*(j_*(M)) \to M$. We thus see that $M \cong j^*(N)$ where $N=j_*(M)^{\pol}$ is an object of $\Mod_{\fH_n}$.
\end{proof}

Suppose $M$ is an $A$-module. Then $M(\bC^n)$ is a polynomially $\GL_n$-equivariant $A(\bC^n)$-module, and thus defines an $\fH_n$-module. This gives an exact functor $\Mod_A \to \Mod_{\fH_n}$.

\begin{lemma} \label{lem:geogen}
We have a commutative $($up to isomorphism$)$ diagram
\begin{displaymath}
\xymatrix{
\Mod_A \ar[r]^-T \ar[d] & \Mod_A^{\gen} \ar@{..>}[d] \\
\Mod_{\fH_n} \ar[r]^{j^*} & \Mod_{\fU_n} }
\end{displaymath}
where the left map is $M \mapsto M(\bC^n)$.
\end{lemma}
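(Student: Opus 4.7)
The plan is to apply the universal property of the Serre quotient $T \colon \Mod_A \to \Mod_A^{\gen}$. The composition
\[
F \colon \Mod_A \xrightarrow{M \mapsto M(\bC^n)} \Mod_{\fH_n} \xrightarrow{j^*} \Mod_{\fU_n}
\]
is exact, since evaluation at $\bC^n$ is exact for polynomial functors and $j^*$ is the exact restriction to the open subscheme $\fU_n \subset \fH_n$. By the universal property of $T$, the functor $F$ factors through $T$ precisely when $F$ annihilates torsion modules, and the resulting factorization is the dotted functor that makes the diagram commute.

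Thus the task reduces to showing $M(\bC^n)|_{\fU_n} = 0$ for every torsion $A$-module $M$. Since $F$ preserves filtered colimits and every torsion module is a filtered colimit of its finitely generated torsion submodules, I may assume $M$ is finitely generated. Working locally on $X$ so that $\cE$ is trivial, fix an isomorphism $i \colon \cE \xrightarrow{\sim} V_0^* \otimes \cO_X$ for a $d$-dimensional subspace $V_0 \subset \bC^n \subset \bV$. This data yields a section $s \colon X \to \fU_n$ of the projection $\pi \colon \fU_n \to X$, and the construction of the functor $\Psi$ in the preceding proposition on the generic category identifies $\Psi(M)$ with $s^*(M(\bC^n))$. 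Since $\Psi$ kills torsion modules (as shown in that proposition), we conclude $s^*(M(\bC^n)) = 0$.

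To upgrade vanishing along $s$ to vanishing on all of $\fU_n$, I exploit the $\GL_n$-equivariance of $M(\bC^n)|_{\fU_n}$. The fibers of $\pi$ are single $\GL_n$-orbits: over $y \in X$, the fiber is the space of surjections $\bC^n \twoheadrightarrow \cE^*_y$, on which $\GL_n$ acts transitively (two such surjections differ by an automorphism of the source $\bC^n$). The support of $M(\bC^n)|_{\fU_n}$, being a $\GL_n$-stable closed subset, is therefore a union of $\pi$-fibers, but it omits the point $s(y)$ of every fiber, forcing it to be empty.

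The main subtlety is the identification $\Psi(M) \cong s^*(M(\bC^n))$, which traces back to the identification $\fH_n = \Spec A(\bC^n)$ and the explicit realization of the section $s$ from $i$. This is essentially bookkeeping, using the setup of $\Psi$ in the preceding proposition; the only real content is the passage from vanishing along a section to global vanishing via transitivity of $\GL_n$ on the fibers of $\pi$.
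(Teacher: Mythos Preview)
Your proof is correct and shares the same skeleton as the paper's: invoke the universal property of the Serre quotient, so that the only content is showing that the composite $M \mapsto M(\bC^n)|_{\fU_n}$ annihilates torsion modules. The difference lies in how this vanishing is established. The paper argues directly from the annihilator: if $M$ is (finitely generated) torsion then $\mathrm{ann}(M)$ is nonzero, hence $\mathrm{ann}(M)(\bC^n)$ is nonzero (as $n \ge \ell(A)$), so the support of $M(\bC^n)$ is a proper closed $\GL_n$-stable subset of $\fH_n$; since $\fH_n \setminus \fU_n$ is the unique maximal such subset, the support lies there. You instead bootstrap from the earlier result that $\Psi$ kills torsion: this gives vanishing along a local section $s$ of $\pi \colon \fU_n \to X$, and then $\GL_n$-transitivity on the fibers of $\pi$ spreads this to all of $\fU_n$.

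Both arguments ultimately rest on the same orbit structure of $\fU_n$, but the paper's route is shorter and avoids reimporting machinery from the preceding proposition. Your route is a mild detour, since the fact that $\Psi$ kills torsion was itself proved in the paper by showing the support of $M(V)$ misses $\uHom(\cE^*,V)^\circ$ --- which is exactly the statement you are re-deriving. Two small points worth tightening: the identification you write as ``$\Psi(M) \cong s^*(M(\bC^n))$'' is really $\Psi(M)(V') \cong s^*(M(\bC^n))$ after evaluating on an appropriate complement $V'$; and the passage from $s^*(M(\bC^n)) = 0$ to ``the support omits $s(y)$'' uses Nakayama and the coherence of $M(\bC^n)$, which you might make explicit.
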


\begin{proof}
By the universal property of Serre quotients, it suffices to show that if $M$ is an $A$-module with $T(M)=0$ then the $\fH_n$-module $M(\bC^n)$ is supported on the complement of $\fU_n$. If $T(M)=0$ then the annihilator $\fa$ of $M$ is non-zero, and so $\fa(\bC^n)$ is also non-zero (since $n \ge \rank(\cE) = \ell(A)$). We thus see that the support of $M(\bC^n)$ is a proper closed subset of $\fH_n$. It is therefore contained in the complement of $\fU_n$, as this is the maximal proper closed $\GL_n$-stable subset.
\end{proof}

The following theorem is the key to our understanding of the saturation functor and its derived functors.

\begin{theorem} \label{thm:geoS}
We have a diagram
\begin{displaymath}
\xymatrix{
\Mod_A^{\gen} \ar[rr]^{\rR^n S} \ar[d] && \Mod_A \ar[d] \\
\Mod_{\fU_n} \ar[rr]^{(\rR^n j_*)^{\pol}} && \Mod_{\fH_n} }
\end{displaymath}
that commutes up to a canonical isomorphism. Here the vertical maps are as in Lemma~\ref{lem:geogen}.
\end{theorem}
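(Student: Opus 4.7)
The strategy is to identify the two sides first in degree zero, and then bootstrap to all degrees by showing injective objects in $\Mod_A^{\gen}$ are sent to objects acyclic for $\rR j_*(-)^{\pol}$. Since $\mathrm{ev}_n$, $F$ and $(-)^{\pol}$ are all exact, universality of derived functors will then yield the claimed natural isomorphism.

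For the underived identification $S(M)(\bC^n) \cong j_*(F(M))^{\pol}$, I would argue by uniqueness of right adjoints. On the geometric side, the adjunction $j^* \dashv j_*$ on the algebraically equivariant categories restricts to an adjunction $j^* \dashv (j_*(-))^{\pol}$ on the polynomial categories, since any map from a polynomial sheaf into $j_*(M)$ factors through its polynomial part. On the algebraic side, the hypothesis $n \ge \ell(A)$ ensures that evaluation $\mathrm{ev}_n \colon \Mod_A \to \Mod_{\fH_n}$ has a fully faithful left adjoint $\iota$, namely the inclusion of $A$-modules of length at most $n$. Composing with $T \dashv S$, both sides corepresent the same functor on $\Mod_{\fH_n}$, namely $L \mapsto \Hom_{\Mod_A^{\gen}}(T\iota L, M)$; Lemma~\ref{lem:geogen} supplies $F(T\iota L) = j^*L$, and the identification of tca-torsion $A$-modules with $\GL_n$-equivariant $A(\bC^n)$-modules supported on $\fH_n \setminus \fU_n$ (which holds because $\GL_n$-invariant proper closed subsets of $\fH_n$ lie in the rank-deficiency locus $V(\fa_{d-1}(\bC^n))$) ensures that the $\Hom$ groups compared through the two Serre quotients genuinely match.

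For the acyclicity step, take $I \in \Mod_A^{\gen}$ injective. By Proposition~\ref{prop:injses}, $S(I)$ is injective in $\Mod_A$, so $\rR^i S(I) = 0$ for $i > 0$. By Corollary~\ref{cor:generic-inj}(a) we may assume $I = T(\cF \otimes A)$ with $\cF$ injective in $\cV_X$, in which case $F(I) = \cF \otimes \cO_{\fU_n}$. The factorization $\rR j_* = \rR\pi_* \circ \rho_*$ described before Lemma~\ref{lem:algcoh}, together with
\[
\rho_*(\cO_{\fU_n}) = \bigoplus_{\ell(\lambda) \le d} \bS_\lambda(\cQ) \otimes \bS_\lambda(\cE),
\]
reduces vanishing of $\rR^i j_*(F(I))$ for $i > 0$ to vanishing of $\rR^i \pi_*(\bS_\lambda(\cQ))$ on the relative Grassmannian $Y = \Gr_d(\bC^n)_X$. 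This is a direct application of Borel--Weil--Bott: for $\ell(\lambda) \le d$, the cohomology of $\bS_\lambda(\cQ)$ is concentrated in degree zero. Taking polynomial parts, both derived functors agree on injectives in all degrees, and so are isomorphic as $\delta$-functors.

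The delicate point is the underived step: one must carefully verify the matching of the two Serre-quotient structures, namely tca-torsion on one side versus sheaves supported on $\fH_n \setminus \fU_n$ on the other, while keeping track of the polynomial truncation $(-)^{\pol}$ throughout (since $j_*$ naturally lands in the algebraic, not polynomial, equivariant category). Once this compatibility is established, the acyclicity step is a standard cohomology calculation on Grassmannians and presents no real obstacle.
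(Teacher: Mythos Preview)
Your overall architecture is the same as the paper's: establish the $n=0$ case, then use coeffaceability to identify the two $\delta$-functors. However, the acyclicity step contains a genuine error.

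Your formula
\[
\rho_*(\cO_{\fU_n}) = \bigoplus_{\ell(\lambda) \le d} \bS_\lambda(\cQ) \otimes \bS_\lambda(\cE)
\]
is wrong: this is the coordinate ring of $\uHom(\cQ,\cE^*)$, not of $\fU_n = \uIsom(\cQ,\cE^*)$. The correct pushforward is the sum over \emph{all dominant weights} $\lambda \in \bZ^d$, not just partitions. Consequently your claim ``for $\ell(\lambda)\le d$, the cohomology of $\bS_\lambda(\cQ)$ is concentrated in degree zero'' does not cover the terms that actually appear; for a dominant weight $\lambda$ with $\lambda_d<0$, Borel--Weil--Bott can produce $\rR^i\pi_*(\bS_\lambda(\cQ))\ne 0$ in positive degree. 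So $\rR^i j_*(F(I))$ does \emph{not} vanish for $i>0$, and your argument as written fails.

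The repair is exactly the point of the functor $(-)^{\pol}$, which you mention but do not use in the computation. When $\lambda$ is not a partition, Borel--Weil--Bott gives (when nonzero) $\rR^i\pi_*(\bS_\lambda(\cQ)) = \bS_\nu(\bC^n)$ for a weight $\nu$ obtained from $(\lambda_1,\dots,\lambda_d,0,\dots,0)$ by the dotted Weyl action; since $\lambda_d<0$, one checks $\nu$ still has a negative entry, hence $\bS_\nu(\bC^n)^{\pol}=0$. Thus $(\rR^i j_*(F(I)))^{\pol}=0$ for $i>0$ even though $\rR^i j_*(F(I))$ itself need not vanish. This is the substantive content of the theorem, and it is missing from your proposal. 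Your underived step via adjunctions is plausible, though you should note that the paper handles it instead by constructing an explicit injection $S(M)(\bC^n)\hookrightarrow j_*(M(\bC^n))$ and checking it is an isomorphism on $T(V\otimes A)$, then on all $M$ by a two-term resolution; this avoids having to match Hom-sets through two different Serre quotients when $M$ may have $\ell(M)>n$.
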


\addtocounter{equation}{-1}
\begin{subequations}
\begin{proof}
In this proof, a torsion $A$-module is one localizing to~0 in $\Mod_A^{\gen}$, and a torsion $\fH_n$-module is one restricting to~0 on $\fU_n$.

We first construct a canonical injection $S(M)(\bC^n) \to j_*(M(\bC^n))$ for $M \in \Mod_A^{\gen}$. First suppose that $N$ is an $A$-module. Then $\Sigma(N)$ is torsion-free and so $\Sigma(N)(\bC^n)$ is as well. Thus $N(\bC^n) \to \Sigma(N)(\bC^n)$ is a map from $N(\bC^n)$ to a torsion-free object with torsion kernel and cokernel. However, $N(\bC^n) \to j_*(N(\bC^n) \vert_{\fU_n})$ is the universal such map, and so we obtain a canonical map $\Sigma(N)(\bC^n) \to j_*(N(\bC^n) \vert_{\fU_n})$, which is necessarily injective. Now, let $M=T(N)$. Then $\Sigma(N)=S(M)$ and $N(\bC^n) \vert_{\fU_n}=M(\bC^n)$, by definition. We thus obtain the desired map.

We now claim that the map just constructed is an isomorphism if $M=T(V \otimes A)$, with $V \in \cV_X$. We have maps
\begin{displaymath}
(V \otimes A)(\bC^n) \to S(M)(\bC^n) \to j_*(M(\bC^n)).
\end{displaymath}
Since the second map is injective, it suffices to show that the composite map is an isomorphism. For this, we will compute the rightmost object. The $\fH_n$-module $(V \otimes A)(\bC^n)$ is $V(\bC^n) \otimes \cO_{\fH_n}$, and so we see that the $\fU_n$-module $M(\bC^n)$ is $V(\bC^n) \otimes \cO_{\fU_n}$. We thus have
\begin{equation} \label{eq:geoS}
\rho_*(M(\bC^n)) = V(\bC^n) \otimes \bigoplus_{\lambda} \bS_{\lambda}(\cQ) \otimes \bS_{\lambda}(\cE)
\end{equation}
where the sum is over all dominant weights $\lambda$. We now apply $\pi_*$, and use the fact that $\pi_*(\bS_{\lambda}(\cQ))=\bS_{\lambda}(\bC^n)$ if $\lambda$ is a partition and $\pi_*(\bS_{\lambda}(\cQ))=0$ otherwise. We obtain
\begin{displaymath}
j_*(M(\bC^n)) = V(\bC^n) \otimes \bigoplus_{\lambda} \bS_{\lambda}(\bC^n) \otimes \bS_{\lambda}(\cE) = V(\bC^n) \otimes A(\bC^n)
\end{displaymath}
where now the sum is over all partitions $\lambda$. We leave to the reader the verification that the natural map $(V \otimes A)(\bC^n) \to j_*(M(\bC^n))$ is the identity with the above identification.

We now claim that the map $S(M)(\bC^n) \to j_*(M(\bC^n))$ is an isomorphism for all $M \in \Mod_A^{\gen}$. To see this, let $M$ be given and choose an exact sequence
\begin{displaymath}
0 \to M \to I^0 \to I^1
\end{displaymath}
where $I^0$ and $I^1$ have the form $T(V \otimes A)$ for $V \in \cV_X$. We then obtain a commutative diagram
\begin{displaymath}
\xymatrix{
0 \to S(M)(\bC^n) \ar[r] \ar[d] & S(I^0)(\bC^n) \ar[r] \ar[d] & S(I^1)(\bC^n) \ar[d] \\
0 \to j_*(M(\bC^n)) \ar[r] & j_*(I^0(\bC^n)) \ar[r] & j_*(I^1(\bC^n)) }
\end{displaymath}
with exact rows. Since the right two vertical maps are isomorphisms, so is the left vertical map.

We have thus proved the result for $n=0$. (In fact, we showed that one does not even need to take the polynomial piece in this case.) We now prove the result for arbitrary $n$. The functors $\rR^n j_* \colon \Mod_{\fU_n} \to \Mod_{\fH_n}^{\alg}$ form a cohomological $\delta$-functor. Since formation of the polynomial subrepresentation is exact on $\Mod_{\fH_n}^{\alg}$, it follows that the functors $(\rR^n j_*)^{\pol} \colon \Mod_{\fU_n} \to \Mod_{\fH_n}$ also form a cohomological $\delta$-functor. Since evaluation on $\bC^n$ is exact, the functors $(\rR^n S(-))(\bC^n)$ and $\rR^n j_*((-)(\bC^n))^{\pol}$ are both cohomological $\delta$-functors $\Mod_A^{\gen} \to \Mod_{\fH_n}$. The first is clearly universal, since the higher derived functors kill injective objects of $\Mod_A^{\gen}$. Thus to prove the result, it suffices to show that the second one is universal, and for this it suffices to show that it is coeffaceable. Since every object of $\Mod_A^{\gen}$ injects into an object of the form $M=T(V \otimes A)$ with $V \in \cV_X$, it suffices to show that $\rR^n j_*(M(\bC^n))^{\pol}=0$ for $n>0$. Applying $\rR^n \pi_*$ to \eqref{eq:geoS}, and using the projection formula, we find
\begin{displaymath}
\rR^n j_*(M(\bC^n)) = V(\bC^n) \otimes \bigoplus_{\lambda} \bS_{\lambda}(\cE) \otimes \rR^n \pi_*(\bS_{\lambda}(\cQ)).
\end{displaymath}
We now come to the point: $\rR^n \pi_*(\bS_{\lambda}(\cQ))^{\pol}=0$ for all $n>0$. Indeed, if $\lambda$ is a partition then $\rR^n \pi_*(\bS_{\lambda}(\cQ))=0$ for $n>0$. Now suppose $\lambda$ is not a partition. By Borel--Weil--Bott (Theorem~\ref{thm:BWB}), either $\rR^n \pi_*(\bS_{\lambda}(\cQ))$ vanishes for all $n$, or vanishes for all $n \ne n_0$ and for $n=n_0$ has the form $\bS_{\nu}(\bC^n)$. In the latter case, $\nu = \sigma \bullet \lambda'$ where $\lambda' = (\lambda_1, \ldots, \lambda_d, 0, \ldots, 0) \in \bZ^n$ and $\sigma \in S_n$. If $\lambda$ is not a partition then $\lambda_d<0$, and it is clear from the formulation of Theorem~\ref{thm:BWB} that $\nu$ has a negative entry. Thus $\nu$ is not a partition, and so $\bS_{\nu}(\bC^n)^{\pol}=0$.
\end{proof}
\end{subequations}

\begin{corollary}
The functor $\rR^n S$ is $\cV_X$-linear. Precisely, if $V \in \cV_X$ is $\cO_X$-flat then there is a canonical isomorphism $\rR^n S(V \otimes M) = V \otimes \rR^n S(M)$ for all $M \in \Mod_A^{\gen}$. More generally, we have a canonical isomorphism $\rR S(V \stackrel{\rL}{\otimes}_{\cO_X} M)=V \stackrel{\rL}{\otimes}_{\cO_X} \rR S(M)$.
\end{corollary}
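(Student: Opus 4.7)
The plan is to reduce everything to the geometric description of $\rR^n S$ furnished by Theorem~\ref{thm:geoS} and to apply the projection formula for the open immersion $j \colon \fU_n \to \fH_n$. The first isomorphism (for flat $V$) is then essentially a direct computation; the derived statement is obtained by bootstrapping with a flat resolution of $V$.

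First I would treat the case of $\cO_X$-flat $V \in \cV_X$. Choose $n$ sufficiently large so that $\ell(M) \le n$, $\ell(V \otimes M) \le n$, and $n \ge \rank(\cE)$. By Proposition~\ref{prop:bdinj} (applied, after passing to the quotient, to the generic category), this does not lose information. Evaluating both sides on $\bC^n$ and applying Theorem~\ref{thm:geoS} yields
\[
(\rR^n S(V \otimes M))(\bC^n) = \bigl(\rR^n j_*\bigl(V(\bC^n) \otimes_{\cO_{\fU_n}} M(\bC^n)\bigr)\bigr)^{\pol},
\]
where $V(\bC^n) \otimes M(\bC^n)$ is understood as $j^*(V(\bC^n) \otimes \cO_{\fH_n}) \otimes_{\cO_{\fU_n}} M(\bC^n)$. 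Flatness of $V$ over $\cO_X$ makes $V(\bC^n) \otimes \cO_{\fH_n}$ a flat $\cO_{\fH_n}$-module, so the projection formula for the quasi-compact separated open immersion $j$ gives
\[
\rR j_*\bigl(j^*(V(\bC^n) \otimes \cO_{\fH_n}) \otimes M(\bC^n)\bigr) = (V(\bC^n) \otimes \cO_{\fH_n}) \otimes \rR j_*(M(\bC^n)),
\]
and in particular in degree $n$ we obtain $V(\bC^n) \otimes \rR^n j_*(M(\bC^n))$. Because $V(\bC^n)$ is a polynomial representation of $\GL_n$, the operation $(\cdot)^{\pol}$ commutes with tensoring by $V(\bC^n)$ (in characteristic zero every algebraic $\GL_n$-representation is a sum of Schur functors twisted by integer powers of $\det$, and tensoring a non-polynomial summand $\bS_\mu(\bC^n) \otimes \det^{-k}$, $k>0$, by a polynomial representation cannot produce polynomial summands). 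Combining these identifications gives
\[
(\rR^n S(V \otimes M))(\bC^n) = V(\bC^n) \otimes (\rR^n S(M))(\bC^n) = (V \otimes \rR^n S(M))(\bC^n),
\]
and since $n \ge \ell$ of both sides this upgrades to a canonical isomorphism of $A$-modules.

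For the general derived statement, I would choose a resolution $P_\bullet \to V$ by $\cO_X$-flat objects of $\cV_X$; such resolutions exist because $\cV_X$ is the product of copies of $\Mod_X$ indexed by partitions and each factor has enough flats. Then $V \stackrel{\rL}{\otimes}_{\cO_X} M$ is represented by the total complex of $P_\bullet \otimes_{\cO_X} M$, and applying $\rR S$ termwise the first part of the corollary provides natural isomorphisms
\[
\rR S(P_i \otimes_{\cO_X} M) \cong P_i \otimes_{\cO_X} \rR S(M)
\]
for each $i$. Passing to total complexes (using a Cartan--Eilenberg resolution of $\rR S(M)$ to compute $\rR S$ applied to the bicomplex) yields the desired canonical isomorphism $\rR S(V \stackrel{\rL}{\otimes}_{\cO_X} M) \cong V \stackrel{\rL}{\otimes}_{\cO_X} \rR S(M)$.

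The main obstacle is the verification that taking polynomial parts commutes with tensoring by the polynomial representation $V(\bC^n)$; once that semi-simplicity argument is in hand, the projection formula does all the real work, and naturality follows from naturality of the projection formula and of the identification in Theorem~\ref{thm:geoS}. A secondary technical point is ensuring that the isomorphism obtained by evaluating at $\bC^n$ is independent of the choice of $n$, but this is automatic from the fact that the truncation functors $M \mapsto M_{\le n}$ are fully faithful on objects of length $\le n$ and compatible with all constructions in sight.
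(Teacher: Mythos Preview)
Your approach is essentially the same as the paper's: reduce to the projection formula for $j \colon \fU_n \to \fH_n$ via Theorem~\ref{thm:geoS}. You are in fact more explicit than the paper about why taking polynomial parts commutes with tensoring by $V(\bC^n)$; the paper simply invokes ``the usual projection formula'' and leaves that step implicit.

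There is, however, a genuine gap in how you globalize. You begin by choosing $n$ with $\ell(M) \le n$ and $\ell(V \otimes M) \le n$, but an arbitrary $M \in \Mod_A^{\gen}$ need not be bounded, so no such $n$ exists in general. Your reference to Proposition~\ref{prop:bdinj} does not help: that result concerns bounded injective resolutions of bounded modules, not faithfulness of evaluation. Consequently the sentence ``since $n \ge \ell$ of both sides this upgrades to a canonical isomorphism of $A$-modules'' is unjustified. The paper avoids this problem by first constructing a single global comparison map in $\rD(\Mod_A)$: take a flat resolution $V_\bullet \to V$, an injective resolution $M \to I(M)$, and injective resolutions $V_i \otimes M \to I(V_i \otimes M)$; lifting the identity on $V \otimes M$ produces a map $V_\bullet \otimes I(M) \to I(V_\bullet \otimes M)$, and applying $S$ yields a map $V \stackrel{\rL}{\otimes}_{\cO_X} \rR S(M) \to \rR S(V \stackrel{\rL}{\otimes}_{\cO_X} M)$. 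One then checks this fixed map is a quasi-isomorphism by evaluating at $\bC^n$ for every $n \ge \rank(\cE)$ and invoking the projection formula. Because the map already lives globally, no boundedness hypothesis on $M$ is needed and no patching argument is required. You could also repair your argument by producing the pointwise isomorphisms for all $n$ and checking compatibility under the restriction maps, but constructing the map first is cleaner.
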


\begin{proof}
Let $V_\bullet$ be an $\cO_X$-flat complex quasi-isomorphic to $V$ and let $M \to I(M)$ be an injective resolution of $M$ in $\Mod_A^\gen$. For each $i$, choose an injective resolution $V_i \otimes M \to I(V_i \otimes M)$, the iterated mapping cone of these complexes is denoted $I(V_\bullet \otimes M)$. Then both $V_\bullet \otimes I(M)$ and $I(V_\bullet \otimes M)$ are quasi-isomorphic to $V \otimes M$ and we can lift the identity map on $V \otimes M$ to get a morphism $V_\bullet \otimes I(M) \to I(V_\bullet \otimes M)$. Apply $S$ to both sides to get
\[
V \stackrel{\rL}{\otimes}_{\cO_X} \rR S(M) \to \rR S(V \stackrel{\rL}{\otimes}_{\cO_X} M).
\]
Now evaluate this map on $\bC^n$. By Theorem~\ref{thm:geoS}, this replaces $\Mod_A^\gen$ by $\Mod_{\fU_n}$ and $\Mod_A$ by $\Mod_{\fH_n}$, in which case $\rR S$ is identified with $\rR j_*$. Then the map above is a quasi-isomorphism by the usual projection formula.
\end{proof}

\begin{corollary} \label{cor:dersat}
If $V \in \cV_X$ then $A \otimes V$ is derived saturated, that is, the natural map $A \otimes V \to \rR \Sigma(A \otimes V)$ is an isomorphism.
\end{corollary}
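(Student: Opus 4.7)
The claim is essentially contained in the computation carried out in the proof of Theorem~\ref{thm:geoS}. Setting $M = T(V \otimes A) \in \Mod_A^{\gen}$, I would verify the derived saturation assertion pointwise: for each $n \ge \rank(\cE)$, Theorem~\ref{thm:geoS} gives
\[
\rR^i \Sigma(V \otimes A)(\bC^n) \;=\; \rR^i S(M)(\bC^n) \;=\; \rR^i j_*(M(\bC^n))^{\pol},
\]
and one computes the right-hand side via the factorization $j_* = \pi_* \circ \rho_*$ with $\rho\colon \fU_n \to Y = \Gr_d(\bC^n)_X$. Since $M(\bC^n)$ is the restriction of $V(\bC^n) \otimes \cO_{\fH_n}$ to $\fU_n$, the affine pushforward $\rho_*$ yields
\[
\rho_*(M(\bC^n)) \;=\; V(\bC^n) \otimes \bigoplus_\lambda \bS_\lambda(\cQ) \otimes \bS_\lambda(\cE),
\]
the sum over all dominant weights $\lambda$.

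Next I would push down along $\pi$ using Borel--Weil--Bott. For $i = 0$ only partitions $\lambda$ contribute (the non-partition pieces have zero polynomial part), and $\pi_*(\bS_\lambda(\cQ)) = \bS_\lambda(\bC^n)$ for such $\lambda$, giving $\rR^0 j_*(M(\bC^n))^\pol = V(\bC^n) \otimes A(\bC^n) = (V \otimes A)(\bC^n)$. For $i > 0$, the Bott-theoretic analysis at the end of the proof of Theorem~\ref{thm:geoS} shows $\rR^i \pi_*(\bS_\lambda(\cQ))^\pol = 0$ for every dominant weight $\lambda$, so $\rR^i \Sigma(V \otimes A)(\bC^n) = 0$. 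A small bookkeeping step verifies that, under these identifications, the natural map $V \otimes A \to \Sigma(V \otimes A)$ evaluates to the identity on $(V \otimes A)(\bC^n)$; this is implicit in the construction of the map in the proof of Theorem~\ref{thm:geoS}.

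Finally, I would pass from these pointwise identities to identities of objects in $\Mod_A \subset \cV_X$. For this I note that if $W \in \cV_X$ satisfies $W(\bC^n) = 0$ for all $n$ sufficiently large, then each multiplicity space $W_\lambda$ vanishes, since $\bS_\lambda(\bC^n) \ne 0$ as soon as $n \ge \ell(\lambda)$. Applying this to the cone of $V \otimes A \to \Sigma(V \otimes A)$ and to each $\rR^i \Sigma(V \otimes A)$ for $i > 0$ yields the corollary. There is no substantive obstacle here: the genuinely nontrivial content---the vanishing of $\rR^i\pi_*(\bS_\lambda(\cQ))^{\pol}$ for $i > 0$ via Borel--Weil--Bott---has already been established inside the proof of Theorem~\ref{thm:geoS}, and the present corollary is simply the result of unpacking that calculation.
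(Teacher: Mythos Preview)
Your proposal is correct and takes essentially the same approach as the paper: the paper's proof simply says that it suffices to check after evaluating on $\bC^n$ and that this was established in the course of proving Theorem~\ref{thm:geoS}, and you have faithfully unpacked exactly that computation. The only difference is that you make explicit the routine step that vanishing of $W(\bC^n)$ for all large $n$ forces $W=0$ in $\cV_X$, which the paper leaves implicit.
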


\begin{proof}
It suffices to show this after evaluating on $\bC^n$ for all $n$. This was shown in the course of the proof of Theorem~\ref{thm:geoS}.
\end{proof}

\begin{corollary}
If $V \in \cV_X$ is injective then $V \otimes A$ is an injective $A$-module. In particular, if $X=\Spec(\bC)$ then all projective $A$-modules are also injective.
\end{corollary}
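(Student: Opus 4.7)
The plan is to combine Corollary~\ref{cor:generic-inj}(a) with Corollary~\ref{cor:dersat} via the standard fact that right adjoints to exact functors preserve injectives.

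First, I would observe that the localization functor $T \colon \Mod_A \to \Mod_A^{\gen}$ is exact, so its right adjoint, the section functor $S$, preserves injective objects. Assume now $V \in \cV_X$ is injective. By Corollary~\ref{cor:generic-inj}(a), $T(A \otimes V)$ is then injective in $\Mod_A^{\gen}$, and therefore $\Sigma(A \otimes V) = S(T(A \otimes V))$ is injective in $\Mod_A$.

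Next, I would use Corollary~\ref{cor:dersat}, which says that the canonical map $A \otimes V \to \rR\Sigma(A \otimes V)$ is a quasi-isomorphism. Taking $\rH^0$, this gives an isomorphism $A \otimes V \xrightarrow{\sim} \Sigma(A \otimes V)$ in $\Mod_A$. Combined with the previous paragraph, this shows that $A \otimes V$ is isomorphic to an injective object of $\Mod_A$, and is therefore itself injective.

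For the statement about $X=\Spec(\bC)$, I would first note that in this case $\cV$ is semi-simple, so every object of $\cV$ is injective. To conclude that every projective $A$-module is injective, I would argue that the indecomposable projectives in $\Mod_A$ are precisely the modules $\bS_\lambda(\bV) \otimes A$: every $A$-module is a quotient of some $V \otimes A$ with $V \in \cV$, and since $\cV$ is semi-simple, a projective direct summand of such a free module is again of the form $V' \otimes A$ for some $V' \in \cV$. By the first part, each such $V' \otimes A$ is injective, so all projectives are injective. I do not expect any serious obstacles; the result is essentially a formal consequence of the two previously established facts, the main point being simply to recognize that derived saturatedness together with the generic injectivity statement collapses $\Sigma$ to an honest injection-preserving equivalence for these free modules.
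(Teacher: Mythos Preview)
Your argument for the main statement is correct and essentially identical to the paper's: both use that $S$ preserves injectives (being right adjoint to the exact functor $T$), invoke Corollary~\ref{cor:generic-inj}(a) to get that $T(V\otimes A)$ is injective in $\Mod_A^{\gen}$, and then use Corollary~\ref{cor:dersat} to identify $\Sigma(V\otimes A)$ with $V\otimes A$.

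For the $X=\Spec(\bC)$ statement, your detour through ``every projective is of the form $V'\otimes A$'' is unnecessary and, as written, not fully justified: semisimplicity of $\cV$ alone does not immediately imply that an $A$-module direct summand of $V\otimes A$ has this form (one would need, e.g., a Krull--Schmidt argument together with the computation $\End_A(\bS_\lambda(\bV)\otimes A)=\bC$). The cleaner route is simply: any projective is a direct summand of some $V\otimes A$; by the first part $V\otimes A$ is injective; and direct summands of injectives are injective.
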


\begin{proof}
The functor $S$ takes injectives to injectives. By Corollary~\ref{cor:generic-inj}, $T(V \otimes A)$ is injective in $\Mod_A^{\gen}$, and we have just shown that $S(T(V \otimes A))=\Sigma(V \otimes A)$ is $V \otimes A$.
\end{proof}

\begin{corollary} \label{cor:Sfin}
If $M \in \Mod_A^{\gen}$ is finitely generated then $\rR S(M)$ is represented by a finite length complex of modules of the form $V \otimes A$ with $V \in \cV_X^{\fgen}$. In particular, $\rR^n S(M)$ is finitely generated for all $n \ge 0$ and vanishes for $n \gg 0$.
\end{corollary}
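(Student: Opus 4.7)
The plan is to compute $\rR S(M)$ by applying $S$ to a convenient $S$-acyclic resolution rather than an injective one. By Corollary~\ref{cor:generic-inj}(b), the finitely generated $M$ admits a finite resolution $M \to N^{\bullet}$ where each $N^i$ has the form $T(V_i \otimes A)$ with $V_i \in \cV_X^{\fgen}$, and $N^i = 0$ for $i \gg 0$. If I can show each $N^i$ is $S$-acyclic, then the standard homological principle will identify $\rR S(M)$ with the finite length complex $S(N^{\bullet})$, whose $i$-th term $S(T(V_i \otimes A)) = \Sigma(V_i \otimes A) = V_i \otimes A$ is of precisely the desired form.

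To verify $S$-acyclicity I will invoke Corollary~\ref{cor:dersat}, which asserts that $V_i \otimes A \to \rR \Sigma(V_i \otimes A)$ is an isomorphism. Since $T$ is exact we have $\rR \Sigma = \rR S \circ T$, so $\rR S(N^i) = \rR S(T(V_i \otimes A)) = \rR \Sigma(V_i \otimes A) = V_i \otimes A$ is concentrated in degree~$0$. In particular $\rR^k S(N^i) = 0$ for all $k > 0$, which is exactly the $S$-acyclicity needed.

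With acyclicity established, $\rR S(M)$ is quasi-isomorphic to $S(N^{\bullet})$, a finite length complex with $i$-th term $V_i \otimes A$ and $V_i \in \cV_X^{\fgen}$. This proves the first assertion of the corollary. The second assertion follows immediately: each term of $S(N^{\bullet})$ is finitely generated as an $A$-module (by noetherianity of $A$, which was recorded in \S\ref{s:tca}), so each cohomology $\rR^n S(M)$ is finitely generated; and the complex is bounded, so these cohomologies vanish outside a finite range.

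I do not anticipate a serious obstacle here: the main content of the argument is the combination of the two earlier results (the explicit finite resolution from Corollary~\ref{cor:generic-inj} and the derived saturation of $V \otimes A$ from Corollary~\ref{cor:dersat}), together with the routine fact that derived functors may be computed via acyclic resolutions. The only thing worth double-checking is that the latter fact applies verbatim in our Grothendieck-category setting, which it does since $\Mod_A^{\gen}$ has enough injectives and $S$ is left exact.
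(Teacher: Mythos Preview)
Your argument is correct and follows essentially the same approach as the paper: both use the finite resolution by objects $T(V_i \otimes A)$ from Corollary~\ref{cor:generic-inj}(b), invoke Corollary~\ref{cor:dersat} to deduce $S$-acyclicity of these terms, and conclude that $\rR S(M)$ is represented by the complex $V^{\bullet} \otimes A$.
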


\begin{proof}
Using Corollary~\ref{cor:generic-inj}, pick a resolution $M \to T(V^{\bullet} \otimes A)$ where $V^i \in \cV_X^{\fgen}$ and $V^i=0$ for $i \gg 0$. Since $V^i \otimes A$ is $\Sigma$-acyclic, it follows that $T(V^i \otimes A)$ is $S$-acyclic. We can thus use this resolution to compute $\rR S$. Since $\Sigma(V^i \otimes A)=V^i \otimes A$, we obtain a quasi-isomorphism $\rR S(M) \to V^{\bullet} \otimes A$. The result follows.
\end{proof}

Finally, we compute the derived saturation of the objects $\bS_{\mu}(\cK)$. Recall that $\cK$ is the kernel of the canonical map $\bV \otimes A \to \cE^* \otimes A$ in $\Mod_A^{\gen}$. For a weight $\lambda$ and partition $\nu$, write $\lambda \xrightarrow{n} \nu$ if Bott's algorithm applied to $\lambda$ terminates after $n$ steps on $\nu$, see Remark~\ref{rmk:bott-algorithm}.

\begin{corollary}
For a partition $\mu$, we have
\begin{displaymath}
\rR^i S (\bS_{\mu}(\cK)) = \bigoplus_{[\lambda,\mu] \xrightarrow{i} \nu} \bS_{\nu}(\bV) \otimes \bS_{\lambda}(\cE),
\end{displaymath}
where the sum is over all partitions $\lambda$ and $\nu$ with $\ell(\lambda) \le d$ and thus related, and $[\lambda,\mu]$ is the weight $(\lambda_1, \ldots, \lambda_d, \mu_1, \mu_2, \ldots)$. In particular,
\begin{displaymath}
S(\bS_{\mu}(\cK)) = \bigoplus_{\lambda_d \ge \mu_1} \bS_{[\lambda,\mu]}(\bV) \otimes \bS_{\lambda}(\cE).
\end{displaymath}
\end{corollary}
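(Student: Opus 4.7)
The strategy is to reduce the computation to a Borel--Weil--Bott calculation on the relative Grassmannian $Y = \Gr_d(\bC^n)_X$ via Theorem~\ref{thm:geoS}. Since both sides of the claimed identity are polynomial functors of $\bV$, it suffices to check the identity after evaluating on $\bC^n$ for all sufficiently large $n$; by Theorem~\ref{thm:geoS} this amounts to computing the polynomial part of $\rR^i j_*(\bS_\mu(\cK)(\bC^n))$ on $\fH_n$, where $j \colon \fU_n \hookrightarrow \fH_n$ is the open inclusion.

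The first step is to identify the sheaf $\bS_\mu(\cK)(\bC^n)$ on $\fU_n$. From the defining short exact sequence $0 \to \cK \to T(\bV \otimes A) \to T(\cE^* \otimes A) \to 0$ in $\Mod_A^{\gen}$, together with the definition of $\fU_n$ as the locus where the universal map $\bC^n \to \cE^*$ is surjective with kernel canonically identified with $\rho^*(\cR)$ along $\rho \colon \fU_n \to Y$, we obtain $\bS_\mu(\cK)(\bC^n)|_{\fU_n} \cong \rho^*(\bS_\mu \cR)$. Since $\rho$ is affine, $\rR j_* = \rR \pi_* \circ \rho_*$ (viewing the target as $A(\bC^n)$-modules on $X$, as in the proof of Theorem~\ref{thm:geoS}), and the projection formula combined with the decomposition
\[
\rho_* \cO_{\fU_n} = \bigoplus_\lambda \bS_\lambda(\cQ) \otimes \bS_\lambda(\cE)
\]
used in the proof of Theorem~\ref{thm:geoS} yields
\[
\rR^i j_*(\bS_\mu(\cK)(\bC^n)) = \bigoplus_\lambda \rR^i \pi_*\!\left(\bS_\lambda(\cQ) \otimes \bS_\mu(\cR)\right) \otimes \bS_\lambda(\cE).
\]

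The final step is Borel--Weil--Bott. The bundle $\bS_\lambda(\cQ) \otimes \bS_\mu(\cR)$ is the irreducible $\GL_n$-equivariant vector bundle on $Y$ corresponding to the weight $[\lambda,\mu] = (\lambda_1, \ldots, \lambda_d, \mu_1, \mu_2, \ldots)$, with the convention---matching that of the proof of Theorem~\ref{thm:geoS}---that the entries of $\lambda$ occupy the first $d$ positions. By Theorem~\ref{thm:BWB}, $\rR^i \pi_*$ of this bundle vanishes unless Bott's algorithm applied to $[\lambda,\mu]$ terminates after exactly $i$ steps at a weight $\nu$, in which case the pushforward equals $\bS_\nu(\bC^n)$; the subsequent polynomial truncation keeps precisely the contributions with $\nu$ a partition. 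Reassembling and letting $n$ vary produces the claimed formula for $\rR^i S(\bS_\mu \cK)$. The ``In particular'' assertion is the specialization $i=0$: no swaps simply means $[\lambda,\mu]$ is already weakly decreasing, which, since $\mu$ is a partition, reduces exactly to the condition $\lambda_d \ge \mu_1$. The main delicacy is not conceptual but bookkeeping: the ordering convention for $[\lambda,\mu]$ and the behavior of polynomial truncation on potentially non-partition $\lambda$, both of which have already been dealt with in the proof of Theorem~\ref{thm:geoS}.
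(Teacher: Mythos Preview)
Your proposal is correct and follows essentially the same approach as the paper: evaluate on $\bC^n$ via Theorem~\ref{thm:geoS}, identify $\cK(\bC^n)$ with $\rho^*\cR$, push forward along $\rho$ using the decomposition of $\rho_*\cO_{\fU_n}$, and apply Borel--Weil--Bott with the polynomial truncation removing the non-partition contributions. The only minor difference is that you justify $\cK(\bC^n)\cong\rho^*\cR$ via the defining exact sequence of $\cK$, whereas the paper simply asserts it.
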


\begin{proof}
Take $n \ge \rank \cE + \ell(\mu)$. Using Theorem~\ref{thm:geoS}, we have
\[
\rR^i S (\bS_\mu(\cK))(\bC^n) = (\rR^i j_* \bS_\mu(\cK(\bC^n)))^{\pol}
\]
where $j \colon \fU_n \to \fH_n$ is the inclusion. As discussed above, we have a factorization $\rR^i j_* = \rR^i \pi_* \circ \rho_*$ where $\pi \colon \Gr_d(\bC^n)_X \to X$ is the structure map and $\rho \colon \fU_n \to \Gr_d(\bC^n)_X$ sends a map in $\fU_n$ to its cokernel. Note that $\cK(\bC^n) = \rho^* \cR$, and since pullback commutes with tensor operations, we get
\[
\rho_* ((\bS_\mu \cK)(\bC^n)) = \bS_\mu \cR \otimes \rho_* \cO_{\fU_n} = \bS_\mu \cR \otimes \bigoplus_\lambda \bS_\lambda \cQ \otimes \bS_\lambda \cE
\]
where the sum is over all dominant weights $\lambda$. Hence, the desired result follows from Borel--Weil--Bott (Theorem~\ref{thm:BWB}), noting that any $\lambda$ with negative entries are deleted from the final computation since we need to take the polynomial piece.
\end{proof}

\begin{remark}
When $d=1$ and $X = \Spec(\bC)$, this essentially recovers \cite[Proposition~7.4.3]{symc1}. To be precise, Corollary~\ref{cor:generic-inj}(d) says that the $\bS_\mu \cK$ are the simple objects of $\Mod_A^\gen$, so they are the simple objects $L_\mu$ defined in \cite{symc1}. The local cohomology calculation there for $i \ge 2$ agrees with $\rR^i S$ by \cite[Corollary 4.4.3]{symc1}, and the discussion in \cite[\S 7.4]{symc1} connects the border strip combinatorics mentioned there with Borel--Weil--Bott.
\end{remark}

\section{Rank subquotient categories} \label{s:rank}

\subsection{Set-up}

We fix, for all of \S \ref{s:rank}, a scheme $X$ over $\bC$ (noetherian, separated, and of finite Krull dimension, as always) and a vector bundle $\cE$ of rank $d$ on $X$. We let $A=\bA(\cE)$.

We introduce some notation mirroring that from \S \ref{ss:formalism2}. We write $\Mod_{A,\le r}$ for the category of $A$-modules supported on $V(\fa_r)$, i.e., that are locally annihilated by powers of $\fa_r$. This gives an ascending chain of Serre subcategories
\begin{displaymath}
\Mod_{A,\le 0} \subset \Mod_{A,\le 1} \subset \cdots \subset \Mod_{A,\le d}=\Mod_A
\end{displaymath}
that we refer to as the {\bf rank stratification}. We define quotient categories 
\begin{align*}
\Mod_{A,>r} &= \Mod_A / \Mod_{A,\le r}\\
\Mod_{A,r} &= \Mod_{A,\le r}/\Mod_{A,\le r-1}.
\end{align*}
We let $T_{>r} \colon \Mod_A \to \Mod_{A,>r}$ be the localization functor and $S_{>r}$ its right adjoint. We put $\Sigma_{>r}=S_{>r} \circ T_{>r}$, as usual, and let $\Gamma_{\le r} \colon \Mod_A \to \Mod_{A,\le r}$ be the functor that assigns to a module the maximal submodule supported on $V(\fa_r)$.

We let $\rD(A)_{\le r}$ be the full subcategory of $\rD(A)$ on objects $M$ such that $\rR \Sigma_{>r}(M)=0$, and we let $\rD(A)_{>r}$ be the full subcategory on objects $M$ such that $\rR \Gamma_{\le r}(M)=0$. We also put $\rD(A)_r=\rD(A)_{\le r} \cap \rD(A)_{\ge r}$. These are all triangulated subcategories of $\rD(A)$. By \S \ref{s:formalism}, we have a semi-orthogonal decomposition $\rD^+(A)=\langle \rD^+(A)_0, \ldots, \rD^+(A)_d \rangle$.

\subsection{The category $\Mod_{A,r}[\fa_r]$} \label{ss:modar}

We let $\Mod_A[\fa_r]$ be the category of $A$-modules annihilated by $\fa_r$. This is a subcategory of $\Mod_{A,\le r}$. We let $\Mod_{A,r}[\fa_r]$ be the subcategory of $\Mod_{A,r}$ on objects of the form $T_{\ge r}(M)$, where $M$ is an $A$-module such that $\fa_r M$ is supported on $V(\fa_{r-1})$. Obviously, $T_{\ge r}$ carries $\Mod_A[\fa_r]$ into $\Mod_{A,r}[\fa_r]$. In fact:

\begin{proposition}
The functor $T_{\ge r} \colon \Mod_A[\fa_r] \to \Mod_{A,r}[\fa_r]$ identifies $\Mod_{A,r}[\fa_r]$ with the Serre quotient of $\Mod_A[\fa_r]$ by $\Mod_{A,<r}[\fa_r]$.
\end{proposition}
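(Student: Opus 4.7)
The plan is to show that $T_{\ge r}$ restricted to $\Mod_A[\fa_r]$ is exact, that its kernel is precisely $\Mod_{A,<r}[\fa_r]$, and that the induced functor $\bar T \colon \Mod_A[\fa_r]/\Mod_{A,<r}[\fa_r] \to \Mod_{A,r}[\fa_r]$ is an equivalence. First I will note that $\Mod_A[\fa_r]$ is an abelian subcategory of $\Mod_{A,\le r}$ (it is automatically closed under subs and quotients), and that $\Mod_{A,<r}[\fa_r] = \Mod_A[\fa_r] \cap \Mod_{A,<r}$ is a Serre subcategory of $\Mod_A[\fa_r]$, so the quotient on the left makes sense. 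Exactness of $\bar T$ and the identification of its kernel with $\Mod_{A,<r}[\fa_r]$ are immediate from the corresponding properties of $T_{\ge r}$ on $\Mod_{A,\le r}$, so the content lies in essential surjectivity and fullness.

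For essential surjectivity, the key trick is: an object of $\Mod_{A,r}[\fa_r]$ is by definition $T_{\ge r}(M)$ for some $M$ with $\fa_r M$ supported on $V(\fa_{r-1})$, i.e., $\fa_r M \in \Mod_{A,<r}$. Then the short exact sequence $0 \to \fa_r M \to M \to M/\fa_r M \to 0$ becomes an isomorphism $T_{\ge r}(M) \cong T_{\ge r}(M/\fa_r M)$ after applying the exact functor $T_{\ge r}$, since its first term is killed. Now $M/\fa_r M$ is annihilated by $\fa_r$, and it lies in $\Mod_{A,\le r}$ (being a quotient of $M$, which is in $\Mod_{A,\le r}$ by an extension argument from $\fa_r M \in \Mod_{A,<r}$ and $M/\fa_r M \in \Mod_A[\fa_r]$). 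So $M/\fa_r M \in \Mod_A[\fa_r]$ and realizes $T_{\ge r}(M)$ as coming from the subcategory.

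For fullness, I invoke the standard description of morphisms in a Serre quotient: any morphism $f \colon T_{\ge r}(M) \to T_{\ge r}(N)$ in $\Mod_{A,r} = \Mod_{A,\le r}/\Mod_{A,<r}$ with $M,N \in \Mod_A[\fa_r]$ is represented by a genuine morphism $\tilde f \colon M' \to N/N''$ in $\Mod_{A,\le r}$, with $M' \subset M$, $N'' \subset N$, and $M/M', N'' \in \Mod_{A,<r}$. Since $M$ and $N$ are annihilated by $\fa_r$, so are their subquotients $M'$, $N/N''$, $M/M'$, and $N''$; hence $\tilde f$ is a morphism in $\Mod_A[\fa_r]$ and $M/M', N''$ lie in $\Mod_{A,<r}[\fa_r]$, so $\tilde f$ represents a morphism in $\Mod_A[\fa_r]/\Mod_{A,<r}[\fa_r]$ whose image under $\bar T$ is $f$. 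Faithfulness is the same argument in reverse: if $\bar T$ sends the class of $\tilde f$ to zero, the image of $\tilde f$ lies in $\Mod_{A,<r}$, but being a subobject of $N/N'' \in \Mod_A[\fa_r]$ it lies in $\Mod_{A,<r}[\fa_r]$, so $\tilde f$ already represents zero in the quotient.

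I expect no real obstacle; the argument is essentially a formal manipulation of Serre quotients, and the only step that requires a small idea is the reduction $M \rightsquigarrow M/\fa_r M$ used for essential surjectivity. The one place to be slightly careful is to verify that $\Mod_A[\fa_r]$, although not a Serre subcategory of $\Mod_A$ (it fails to be extension-closed), is nonetheless an abelian subcategory, and that $\Mod_{A,<r}[\fa_r]$ is Serre inside it, so that forming the quotient on the left is legitimate.
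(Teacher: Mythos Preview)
Your proof is correct and follows essentially the same approach as the paper's own proof. The essential surjectivity argument via $M \rightsquigarrow M/\fa_r M$ is identical, and your fullness/faithfulness argument via representatives $\tilde f \colon M' \to N/N''$ is just a different way of phrasing the paper's observation that the two colimit formulas for $\Hom$ in the respective Serre quotients coincide (since subquotients of $\fa_r$-annihilated modules are again $\fa_r$-annihilated).
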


\begin{proof}
The functor $T_{\ge r}$ is exact and kills $\Mod_{A,<r}[\fa_r]$, and thus induces a functor
\begin{displaymath}
\Phi \colon \frac{\Mod_A[\fa_r]}{\Mod_{A,<r}[\fa_r]} \to \Mod_{A,r}[\fa_r].
\end{displaymath}
We must show that the functor $\Phi$ is an equivalence. We write $\cC$ for the domain of $\Phi$.

We first claim that the functor $\Phi$ is essentially surjective. It suffices to show that the functor $T_{\ge r} \colon \Mod_A[\fa_r] \to \Mod_{A,r}[\fa_r]$ is essentially surjective. Thus let $T_{\ge r}(M) \in \Mod_{A,r}[\fa_r]$ be a typical object, so that $M$ is an $A$-module such that $\fa_r M$ is supported on $V(\fa_{r-1})$. Then $\ol{M}=M/\fa_r M$ belongs to $\Mod_A[\fa_r]$. Since the map $M \to \ol{M}$ is surjective and has kernel supported on $V(\fa_{r-1})$, it follows that $T_{\ge r}(M) \to T_{\ge r}(\ol{M})$ is an isomorphism. Since $\ol{M} \in \Mod_A[\fa_r]$, this establishes the claim.

We now show that $\Phi$ is fully faithful. Let $M, N \in \Mod_A[\fa_r]$. Let $T \colon \Mod_A[\fa_r] \to \Mod_{A,r}[\fa_r]$ be the localization functor. Then
\begin{displaymath}
\Hom_{\Mod_{A,r}[\fa_r]}(T(M), T(N)) = \varinjlim \Hom_{\Mod_A[\fa_r]}(M', N'),
\end{displaymath}
where the colimit is over $M' \subset M$ such that $M/M' \in \Mod_{A,<r}[\fa_r]$ and quotients $N \to N'$ with kernel in $\Mod_{A,<r}[\fa_r]$. On the other hand,
\begin{displaymath}
\Hom_{\Mod_{A,r}[\fa_r]}(T_{\ge r}(M), T_{\ge r}(N)) = \varinjlim \Hom_A(M'', N''),
\end{displaymath}
where the colimit is over $M'' \subset M$ such that $M/M'' \in \Mod_{A,<r}$ and quotients $N \to N''$ with kernel in $\Mod_{A,<r}$. Since $M$ and $N$ are killed by $\fa_r$, it follows that $M/M''$ and $\ker(N \to N'')$ are as well, and so this colimit is exactly the same as the previous one.
\end{proof}

We write $\ol{S}_{\ge r} \colon \Mod_{A,r}[\fa_r] \to \Mod_A[\fa_r]$ for the right adjoint of the localization functor $T_{\ge r}$ appearing in the proposition. We write $\rR \ol{S}_{\ge r}$ for the derived functor of $\ol{S}_{\ge r}$. The notation $\rR S_{\ge r}$ always means the derived functor of $S_{\ge r} \colon \Mod_{A,\ge r} \to \Mod_A$. Thus for $M \in \Mod_{A,r}[\fa_r]$ one computes $\rR \ol{S}_{\ge r}(M)$ by using an injective resolution of $M$ in the category $\Mod_{A,r}[\fa_r]$, while one computes $\rR S_{\ge r}(M)$ by using an injective resolution in $\Mod_{A,\ge r}$ (or simply $\Mod_{A,r}$). Injective objects in these two categories are quite different; nonetheless, we have:

\begin{proposition} \label{prop:equivS}
The functor $\rR \ol{S}_{\ge r}$ is isomorphic to the restriction of $\rR S_{\ge r}$ to the derived category of $\Mod_{A,r}[\fa_r]$.
\end{proposition}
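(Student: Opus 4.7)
The plan is to verify that $\iota \rR \ol{S}_{\ge r}(M)$, viewed in $\rD^+(\Mod_A)$ via the exact inclusion $\iota \colon \Mod_A[\fa_r] \hookrightarrow \Mod_A$, satisfies the two defining properties of $\rR S_{\ge r}(M)$ in the formalism of \S\ref{s:formalism}: that its image under $T_{\ge r}$ is $M$, and that it is derived saturated with respect to $\Mod_{A,\le r-1}$.

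First I would handle the underived comparison. For $M \in \Mod_{A,r}[\fa_r]$, exactness of $T_{\ge r}$ gives $T_{\ge r}(\fa_r \cdot S_{\ge r}(M)) = \fa_r \cdot M = 0$, so $\fa_r \cdot S_{\ge r}(M) \in \Mod_{A,\le r-1}$; but $S_{\ge r}(M)$ is saturated and thus has no non-zero subobject in $\Mod_{A,\le r-1}$ by Proposition~\ref{prop:satcrit}, forcing $\fa_r \cdot S_{\ge r}(M) = 0$. Hence $S_{\ge r}(M)$ actually lies in $\Mod_A[\fa_r]$, and comparing the two adjunctions identifies it with $\ol{S}_{\ge r}(M)$ there.

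For the derived statement, fix an injective resolution $M \to I^\bullet$ in $\Mod_{A,r}[\fa_r]$, so that $\rR \ol{S}_{\ge r}(M)$ is represented by the bounded-below complex $\ol{S}_{\ge r}(I^\bullet)$ in $\Mod_A[\fa_r]$. The underived step yields $T_{\ge r}(\iota \ol{S}_{\ge r}(I^k)) = I^k$ termwise, so exactness of $T_{\ge r}$ gives $T_{\ge r}(\iota \rR \ol{S}_{\ge r}(M)) \simeq M$. For derived saturation, let $\Gamma^{\fa_r}_{\le r-1} \colon \Mod_A[\fa_r] \to \Mod_{A,<r}[\fa_r]$ denote the analogue of $\Gamma_{\le r-1}$ in the ambient category $\Mod_A[\fa_r]$, and let $\iota' \colon \Mod_{A,<r}[\fa_r] \hookrightarrow \Mod_{A,\le r-1}$ be the inclusion. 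Since any submodule of an $\fa_r$-annihilated module is itself $\fa_r$-annihilated, the underived identity $\Gamma_{\le r-1} \circ \iota = \iota' \circ \Gamma^{\fa_r}_{\le r-1}$ holds, and exactness of both $\iota$ and $\iota'$ lifts it to
\begin{displaymath}
\rR \Gamma_{\le r-1} \circ \iota \;\simeq\; \iota' \circ \rR \Gamma^{\fa_r}_{\le r-1}.
\end{displaymath}
Because $\rR \ol{S}_{\ge r}(M)$ is derived saturated in $\Mod_A[\fa_r]$ by the general formalism applied to the pair $(\Mod_A[\fa_r], \Mod_{A,<r}[\fa_r])$ (cf.\ Proposition~\ref{prop:derived-sat}), one gets $\rR \Gamma^{\fa_r}_{\le r-1}(\rR \ol{S}_{\ge r}(M)) = 0$, and consequently $\rR \Gamma_{\le r-1}(\iota \rR \ol{S}_{\ge r}(M)) = 0$.

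The triangle of Proposition~\ref{prop:triangle} then collapses to yield $\iota \rR \ol{S}_{\ge r}(M) \simeq \rR \Sigma_{\ge r}(\iota \rR \ol{S}_{\ge r}(M)) = \rR S_{\ge r}(T_{\ge r}(\iota \rR \ol{S}_{\ge r}(M))) \simeq \rR S_{\ge r}(M)$, naturally in $M$. The main subtlety to watch is that injective objects of $\Mod_A[\fa_r]$ need not remain injective in $\Mod_A$, which rules out the most direct ``compute-with-injective-resolutions'' strategy of the sort used in Proposition~\ref{prop:derived-decomp}; the Grothendieck composition argument above sidesteps this by comparing the torsion functors instead, where only exactness of the inclusions $\iota$ and $\iota'$ is needed.
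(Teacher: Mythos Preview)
Your argument has a genuine gap at the displayed isomorphism
\[
\rR \Gamma_{\le r-1} \circ \iota \;\simeq\; \iota' \circ \rR \Gamma^{\fa_r}_{\le r-1}.
\]
Exactness of $\iota$ and $\iota'$ is \emph{not} enough to lift the underived identity $\Gamma_{\le r-1}\circ\iota=\iota'\circ\Gamma^{\fa_r}_{\le r-1}$ to this derived statement. Exactness of $\iota'$ does give $\rR(\iota'\circ\Gamma^{\fa_r}_{\le r-1})=\iota'\circ\rR\Gamma^{\fa_r}_{\le r-1}$, but on the left you need $\rR\Gamma_{\le r-1}\circ\iota=\rR(\Gamma_{\le r-1}\circ\iota)$, and for that the exact functor $\iota$ must carry injectives of $\Mod_A[\fa_r]$ to $\Gamma_{\le r-1}$-acyclic objects of $\Mod_A$. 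Concretely: if $J^\bullet$ is an injective resolution in $\Mod_A[\fa_r]$, then $\Gamma_{\le r-1}(\iota J^\bullet)$ computes $\rR(\Gamma_{\le r-1}\circ\iota)$, but it computes $\rR\Gamma_{\le r-1}(\iota(-))$ only if each $\iota J^k$ is $\Gamma_{\le r-1}$-acyclic. Your final paragraph correctly flags that injectives of $\Mod_A[\fa_r]$ need not be injective in $\Mod_A$, but the weaker acyclicity statement you actually need is still unproved---and it is exactly the substance of the proposition.

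The paper establishes precisely this acyclicity, for $I=\ol{S}_{\ge r}(I_0)$ with $I_0$ injective in $\Mod_{A,r}[\fa_r]$, by a change-of-rings argument: since $I$ is an $A/\fa_r$-module, derived adjunction gives
\[
\rR\Hom_A(N,I)\;=\;\rR\Hom_{A/\fa_r}\bigl(N\stackrel{\rL}{\otimes}_A A/\fa_r,\,I\bigr),
\]
and because $I$ is injective over $A/\fa_r$ this collapses to $\Hom_{A/\fa_r}(\Tor^A_j(N,A/\fa_r),I)$, which vanishes since the $\Tor$ groups are supported on $V(\fa_{r-1})$ and $I$ has no such submodules. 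This is the missing ingredient; once you have it, either your packaging or the paper's direct argument finishes the proof.
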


\begin{proof}
Let $I_0$ be an injective of $\Mod_{A,r}[\fa_r]$, and put $I=\ol{S}_{\ge r}(I_0)$, an injective of $\Mod_A[\fa_r]$. It suffices to show that the map $I \to \rR \Sigma_{\ge r}(I)$ is an isomorphism. Indeed, suppose this is the case. Then $I_0=T_{\ge r}(I)$ is $S_{\ge r}$-acyclic and satisfies $S_{\ge r}(I_0)=I=\ol{S}_{\ge r}(I_0)$. Thus if $M \to I^{\bullet}$ is an injective resolution in $\Mod_{A,r}[\fa_r]$ then $S_{\ge r}(I^{\bullet})$ computes $\rR S_{\ge r}(M)$, since the objects $I^k$ are $S_{\ge r}$-acyclic, and equals $\ol{S}_{\ge r}(I^{\bullet})$, which computes $\rR \ol{S}_{\ge r}$.

To prove that $I \to \rR \Sigma_{\ge r}(I)$ is an isomorphism, it suffices (by Proposition~\ref{prop:satcrit}) to show $\Ext^j_A(N,I)=0$ for all $N \in \Mod_{A,<r}$ and $j \ge 0$. We first treat the $j=0$ case, i.e., we show that any map $N \to I$ with $N \in \Mod_{A,<r}$ is zero. It suffices to treat the case where $N$ is finitely generated and thus annihilated by a power of $\fa_{r-1}$. By d\'evissage, we can assume $\fa_{r-1} N=0$. But then $\fa_r N=0$ as well, and so $N \in \Mod_{A,<r}[\fa_r]$. Since $I$ is saturated with respect to this category, the result follows.

We now consider the case $j>0$. Since $I$ is an $A/\fa_r$-module, derived adjunction gives
\begin{displaymath}
\rR \Hom_A(N,I)=\rR \Hom_{A/\fa_r}(N \stackrel{\rL}{\otimes}_A A/\fa_r, I).
\end{displaymath}
As $I$ is injective as an $A/\fa_r$-module, this $\rR \Hom$ can be changed to $\Hom$. We find
\begin{displaymath}
\Ext^j_A(N,I)=\Hom_{A/\fa_r}(\Tor^A_j(N, A/\fa_r), I)
\end{displaymath}
Since $N$ is supported on $V(\fa_{r-1})$, so are the $\Tor$'s. Thus, by the $j=0$ case, the above $\Hom$ vanishes. This completes the proof.
\end{proof}

\begin{corollary}
Let $M$ be an $A$-module annihilated by $\fa_r$. Then $\rR^n \Sigma_{\ge r}(M)$ is annihilated by $\fa_r$ for all $n \ge 0$.
\end{corollary}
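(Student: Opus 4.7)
The plan is to read this off directly from Proposition~\ref{prop:equivS}, essentially as a formal consequence. Since $M$ is annihilated by $\fa_r$, it lies in $\Mod_A[\fa_r]$, and so $T_{\ge r}(M)$ lies in $\Mod_{A,r}[\fa_r]$. Because the localization $T_{\ge r}$ is exact, we have a natural identification $\rR \Sigma_{\ge r}(M) = \rR S_{\ge r}(T_{\ge r}(M))$, and Proposition~\ref{prop:equivS} lets us replace the right-hand side with $\rR \ol{S}_{\ge r}(T_{\ge r}(M))$.

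Next, I would compute this derived functor using an injective resolution $T_{\ge r}(M) \to I^{\bullet}$ formed inside the category $\Mod_{A,r}[\fa_r]$. Applying $\ol{S}_{\ge r}$ (which lands in $\Mod_A[\fa_r]$ by construction), we obtain a complex $\ol{S}_{\ge r}(I^{\bullet})$ whose terms are all annihilated by $\fa_r$. Since $\Mod_A[\fa_r]$ is a Serre subcategory of $\Mod_A$ (closed under subobjects, quotients, and hence kernels and cokernels of maps between its objects), the cohomology of this complex again lies in $\Mod_A[\fa_r]$. Thus each $\rR^n \ol{S}_{\ge r}(T_{\ge r}(M))$, and therefore each $\rR^n \Sigma_{\ge r}(M)$, is annihilated by $\fa_r$.

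There is no real obstacle; the only thing to be careful about is matching the two different derived functors ($\rR S_{\ge r}$ computed via injectives in $\Mod_{A,\ge r}$ vs.\ $\rR \ol{S}_{\ge r}$ computed via injectives in $\Mod_{A,r}[\fa_r]$), but this was precisely the content of Proposition~\ref{prop:equivS}, so we are free to use whichever is convenient, and the $\fa_r$-annihilation is manifest from the latter.
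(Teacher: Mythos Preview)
Your proof is correct and follows essentially the same approach as the paper: both use Proposition~\ref{prop:equivS} to identify $\rR^n \Sigma_{\ge r}(M)$ with $\rR^n \ol{S}_{\ge r}(T_{\ge r}(M))$, and then observe that $\ol{S}_{\ge r}$ and its derived functors take values in $\Mod_A[\fa_r]$. You spell out a bit more explicitly why the cohomology stays in $\Mod_A[\fa_r]$ (via the Serre subcategory closure), but this is the same argument.
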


\begin{proof}
Indeed, $\rR^n \Sigma_{\ge r}(M)$ is by definition $\rR^n S_{\ge r}(T_{\ge r}(M))$, which by Proposition~\ref{prop:equivS} is identified with $\rR^n \ol{S}_{\ge r}(T_{\ge r}(M))$, and $\ol{S}_{\ge r}$ (and its derived functors) take values in $\Mod_A[\fa_r]$.
\end{proof}

\begin{remark}
We note that, a priori, $\rR^n \Sigma_{\ge r}(M)$ is supported on $V(\fa_{r-1})$ for $n>0$, and thus locally annihilated by a power of $\fa_{r-1}$. However, this does not directly imply that $\rR^n \Sigma_{\ge r}(M)$ is annihilated by $\fa_r$.
\end{remark}

We now give a complete description of the category $\Mod_{A,r}[\fa_r]$. Let $Y=\Gr_r(\cE)$ be the Grassmannian of rank $r$ quotients of $\cE$. Let $\pi \colon Y \to X$ be the natural map, and let $\cQ$ be the tautological rank $r$ quotient bundle of $\pi^*(\cE)$. Let $B=\bA(\cQ)$. We let $S'$ and $T'$ be the usual functors between $\Mod_B$ and $\Mod_B^{\gen}$. We have a natural map $\pi^*(A) \to B$, which induces a functor $\Phi \colon \Mod_A \to \Mod_B$ via $M \mapsto \pi^*(M) \otimes_{\pi^*(A)} B$.

\begin{theorem} \label{thm:ABequiv}
The functor $T' \circ \Phi \colon \Mod_A[\fa_r] \to \Mod_B^{\gen}$ is exact and kills $\Mod_{A,<r}[\fa_r]$. The induced functor
\begin{displaymath}
\Psi \colon \Mod_{A,r}[\fa_r] \to \Mod_B^{\rm gen}
\end{displaymath}
is an equivalence and compatible with tensor products.
\end{theorem}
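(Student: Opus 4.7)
The plan is to realize both sides as $\GL_\infty$-equivariant sheaves on the rank-exactly-$r$ stratum of $\Spec_X(A)$, using the relative scheme $\Spec_Y(B)$ as a Grassmannian resolution of the determinantal variety $V(\fa_r)$. First I would dispose of the easy statements. The map $\pi^*(A) \to B$ is the surjection $\Sym(\pi^*(\cE) \otimes \bV) \twoheadrightarrow \Sym(\cQ \otimes \bV)$ induced by $\pi^*(\cE) \twoheadrightarrow \cQ$, with kernel generated by $\cR \otimes \bV$. Since $\lw^{r+1}(\cQ) = 0$, the ideal $\pi^*(\fa_r)$ maps to zero, so $\Phi$ sends $\Mod_A[\fa_r]$ into $\Mod_B$. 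On the other hand, the image of $\pi^*(\fa_{r-1})$ in $B$ is the ideal generated by $\det(\cQ) \otimes \lw^r(\bV)$, which is nonzero; hence any $M \in \Mod_{A,<r}[\fa_r]$ is carried by $\Phi$ to a torsion $B$-module, so $T' \circ \Phi$ kills $\Mod_{A,<r}[\fa_r]$ and the universal property of Serre quotients produces $\Psi$.

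The substantive step is exactness of $T' \circ \Phi$, which I would argue after evaluating on $\bC^n$ for $n$ large, since $(-)(\bC^n)$ is faithful and exact on bounded modules. Write $\fH_n = \Spec(A(\bC^n))$ and $Z_n = \Spec_Y(B(\bC^n))$, and let $\fH_n^{(s)}$ denote the rank-$\le s$ locus in $\fH_n$. There is a natural map $Z_n \to \fH_n$ sending a pair (quotient $\cE \twoheadrightarrow \cQ$, map $\phi \colon \bC^n \to \cQ^*$) to the composite $\bC^n \to \cQ^* \hookrightarrow \cE^*$; this factors through $\fH_n^{(r)}$ and restricts to an isomorphism $Z_n^\circ \xrightarrow{\sim} \fH_n^{(r)} \setminus \fH_n^{(r-1)}$, where $Z_n^\circ \subset Z_n$ is the open locus on which $\phi$ is surjective (the inverse sends a rank-$r$ map to its image, providing both the quotient and an injection of the image into $\cE^*$). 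Transcribing $T' \circ \Phi$ through this geometry, using the identification of $\Mod_B^\gen$ with polynomial equivariant sheaves on $Z_n^\circ$ in the style of \S\ref{ss:section}, the functor becomes restriction of $M(\bC^n)$ to the open stratum $\fH_n^{(r)} \setminus \fH_n^{(r-1)}$ followed by transport along the isomorphism, which is manifestly exact.

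Once exactness is in hand, fully faithfulness of $\Psi$ follows from the same geometric identification together with the Serre-quotient description of equivariant sheaves on an open subscheme in the spirit of Lemma~\ref{lem:geogen}. For essential surjectivity, I would show that $\Mod_B^\gen$ is generated as an abelian category by the objects $T'(\pi^*(V) \otimes B)$ with $V \in \cV_X$: any $\cO_Y$-module is a quotient of one pulled back from $\cO_X$, so the free $B$-modules $W \otimes B$ with $W \in \cV_Y$ are dominated by objects of this form. Since $\pi^*(V) \otimes B = \Phi(V \otimes A/\fa_r)$, these generators lie in the essential image, and exactness and fullness of $\Psi$ then yield essential surjectivity. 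Compatibility with the $\odot$ tensor product is a direct check from the definitions, using that the comultiplication $A \to A \otimes A$ pulls back to a compatible comultiplication on $\pi^*(A)$ and descends to that of $B$.

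The main obstacle is the exactness step. The closed immersion $\Spec_Y(B) \hookrightarrow \Spec_Y(\pi^*(A)/\fa_r)$ is not flat, so $\Phi$ itself has nontrivial higher Tor contributions, and exactness is recovered only after passing to the generic category via the isomorphism $Z_n^\circ \cong \fH_n^{(r)} \setminus \fH_n^{(r-1)}$. The subtle part is tracking polynomial versus algebraic $\GL_n$-equivariance across this identification, in the spirit of Theorem~\ref{thm:geoS}.
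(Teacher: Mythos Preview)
Your approach to exactness and full faithfulness matches the paper's: both evaluate on $\bC^n$, identify the rank-exactly-$r$ locus $\fH_n^{=r}$ with the surjective locus $\fU'_n \subset \Spec_Y(B(\bC^n))$, and read off the claims from the geometry. Your algebraic observation that $\pi^*(\fa_{r-1})$ generates a nonzero ideal in $B$ is a clean alternative to the paper's diagrammatic argument for killing $\Mod_{A,<r}[\fa_r]$.

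There is, however, a genuine gap in your essential surjectivity argument. You assert that ``any $\cO_Y$-module is a quotient of one pulled back from $\cO_X$,'' but this is false when $\pi \colon Y \to X$ is nontrivial. For instance, if $X$ is a point and $0 < r < d$, then $Y$ is a positive-dimensional Grassmannian and $\det(\cQ)^{-1}$ has no global sections, so it admits no nonzero map from a trivial bundle. More generally, a pullback $\pi^*(\cF)$ is globally generated relative to $X$, while a typical coherent sheaf on $Y$ is not.

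The paper repairs this by working in $\Mod_B^{\gen}$ rather than $\Mod_B$: there one has the extra surjection $T'(\bV \otimes B) \twoheadrightarrow T'(\cQ^* \otimes B)$ (under the equivalence $\Mod_B^{\gen} \simeq \Mod_B^0$ this is just $\cQ^* \oplus \bV \twoheadrightarrow \cQ^*$), whence $T'(\bV^{\otimes r} \otimes B) \twoheadrightarrow T'(\det(\cQ)^{-1} \otimes B)$. Since $\det(\cQ)$ is ample relative to $X$, any coherent $\cF$ on $Y$ is a quotient of a sum of sheaves of the form $\pi^*(\cG) \otimes \det(\cQ)^{-n}$, and combining these two facts gives the required surjections $T'(\pi^*(V) \otimes B) \twoheadrightarrow T'(\cF \otimes B)$. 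The point is that the missing negative twists become available only \emph{after} passing to the generic category; your shortcut bypasses this and cannot succeed as stated.
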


\begin{proof}
Let $\fH_n$ be defined as in \S \ref{ss:section}, let $\fH_n^{\le r}$ be the closed subscheme defined by $\fa_r(\bC^n)$, and let $\fH_n^{=r}$ be the complement of $\fH_n^{\le r-1}$ in $\fH_n^{\le r}$. Specialization defines a functor $\Mod_A[\fa_r] \to \Mod_{\fH_n^{\le r}}$, which induces a functor $\Mod_{A,r}[\fa_r] \to \Mod_{\fH_n^{=r}}$, just as in Lemma~\ref{lem:geogen}. Let $\fU'_n$ be defined like $\fU_n$ as in \S \ref{ss:section} but with respect to $B$; thus $\fU'_n$ is the scheme of surjections $\bC^n \to \cQ^*$. There is an isomorphism of schemes $\fU'_n \to \fH_n^{=r}$, since a map $\bC^n \to \cE^*$ of rank $r$ determines a rank $r$ quotient of $\cE^*$. Consider the diagram
\begin{displaymath}
\xymatrix{
\Mod_A[\fa_r] \ar[r] \ar[d] \ar@/^2em/[rr]^{T' \circ \Phi} & \Mod_{A,r}[\fa_r] \ar@{..>}[r]^-{\Psi} \ar[d] & \Mod_B^{\gen} \ar[d] \\
\Mod_{\fH_n^{\le r}} \ar[r] & \Mod_{\fH_n^{=r}} \ar[r] & \Mod_{\fU'_n} }
\end{displaymath}
Both functors in the bottom row are exact. It follows that $T' \circ \Phi$ is exact. Indeed, it is right exact, so it suffices to verify that it preserves injections. If $M \to N$ were an injection such that $T'(\Phi(N)) \to T'(\Phi(M))$ were not injective, then for $n \gg 0$ the specialization of the kernel to $\bC^n$ would be a non-zero object of $\Mod_{\fU_n'}$, contradicting exactness of the bottom row. Thus $T' \circ \Phi$ is exact. It follows from the above diagram that $T' \circ \Phi$ kills $\Mod_{A,<r}[\fa_r]$: indeed, if $M$ were in this category then its specialization to $\bC^n$ would restrict to~0 on $\fH_n^{=r}$ for all $n$, and so $T'(\Phi(M))=0$. We thus get the induced functor $\Psi$ as in the diagram.

We first show that $\Psi$ is fully faithful. Let $M,N \in \Mod_{A,r}[\fa_r]$ be finitely generated, and thus bounded. To verify that $\hom_{\Mod_{A,r}[\fa_r]}(M,N) \to \hom_{\Mod_B^{\gen}}(\Psi (M), \Psi(N))$ is an isomorphism, we can do so after specializing to $\bC^n$ for $n$ sufficiently large. But this is clear, since the bottom right map in the above diagram is an equivalence.

We now claim that every object of $\Mod_B^{\gen}$ is a quotient of one of the form $T'(\pi^*(V) \otimes B)$ with $V \in \cV_X$. By definition, a $B$-module is a quotient of $W \otimes B$ for some $W \in \cV_Y$. It thus suffices to show that if $\cF$ is an $\cO_Y$-module then $T'(\cF \otimes B)$ is a quotient of $T'(\pi^*(V) \otimes B)$ for some $V \in \cV_X$. We note that the natural map $T'(\bV \otimes B) \to T'(\cQ^* \otimes B)$ is surjective. Indeed, under the equivalence $\Mod_B^{\gen}=\Mod_B^0$, this corresponds to the natural surjection $\cQ^* \oplus \bV \to \cQ^*$. We thus have a surjection $T'(\bV^{\otimes r} \otimes B) \to T'(\cL^* \otimes B)$, where $\cL=\lw^r(\cQ)$. Since $\cL$ is an ample line bundle relative to $X$, any $\cO_Y$-module $\cF$ can be written as a quotient of a sum of $\cO_Y$-modules of the form $\pi^*(\cG) \otimes (\cL^*)^{\otimes n}$ where $\cG$ is an $\cO_X$-module and $n>0$ is an integer. In this way, we obtain a surjection $T'(\pi^*(V) \otimes B) \to T'(\cF \otimes B)$ where $V \in \cV_X$.

We now verify that $\Psi$ is essentially surjective. Let $M \in \Mod_B^{\gen}$ be given. Choose a presentation
\begin{displaymath}
T'(\pi^*(W) \otimes B) \stackrel{f}{\to} T'(\pi^*(V) \otimes B) \to M \to 0
\end{displaymath}
with $V,W \in \cV_X$, which is possible by the previous paragraph. Since $\Psi$ is fully faithful, we can write $f=\Psi(g)$ for some morphism $g \colon W \otimes A/\fa_r \to V \otimes A/\fa_r$ in $\Mod_{A,r}[\fa_r]$. Since $\Psi$ is exact, we have $M=\Psi(\coker(g))$.

It is clear from the construction that $\Psi$ is a compatible with tensor products.
\end{proof}

\begin{proposition} \label{prop:pi-adjoint}
The functor $\pi_* \colon \Mod_B \to \Mod_A[\fa_r]$ is the right adjoint to the functor $\Phi \colon \Mod_A[\fa_r] \to \Mod_B$. Moreover, $\rR \pi_*$ is the derived functor of $\pi_*$ on $\Mod_B$.
\end{proposition}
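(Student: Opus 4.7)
I would treat the two assertions separately. The adjunction is a formal consequence of composing two standard adjunctions, while the derived-functor identification reduces to acyclicity of injective $B$-modules as $\cO_Y$-modules.

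\textbf{Part 1 (Adjunction).} Write $\phi \colon \pi^*(A) \to B$ for the canonical surjection of sheaves of $\cO_Y$-algebras induced by $\pi^*\cE \twoheadrightarrow \cQ$. I would factor $\Phi$ as the composition
\[
\Mod_A \xrightarrow{\;\pi^{*}_{\mathrm{mod}}\;} \Mod_{\pi^{*}A} \xrightarrow{\;-\otimes_{\pi^{*}A} B\;} \Mod_B,
\]
and observe that each factor has an obvious right adjoint: module-pushforward $\pi_{*}^{\mathrm{mod}} \colon \Mod_{\pi^{*}A} \to \Mod_A$, and restriction of scalars along $\phi$, which is $\Mod_B \to \Mod_{\pi^{*}A}$. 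Composing these gives a right adjoint to $\Phi$ whose underlying functor is $\pi_*$ applied to the $\cO_Y$-module structure, with the $A$-action coming via $A \to \pi_*\pi^{*}A \to \pi_* B$. To see that the target of this right adjoint lies in $\Mod_A[\fa_r]$ rather than just $\Mod_A$, I would note that $\phi$ kills the generators $\lw^{r+1}\cE \otimes \lw^{r+1}\bV$ of $\fa_r$, because $\lw^{r+1}\cQ = 0$; hence $\fa_r$ acts as zero on any $B$-module $N$, and this vanishing is preserved by $\pi_*$. The restriction of this right adjoint to the full subcategory $\Mod_A[\fa_r] \subset \Mod_A$ (where $\Phi$ is defined) is then the desired $\pi_*$.

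\textbf{Part 2 (Derived functors).} I need to show that the right derived functor of $\pi_* \colon \Mod_B \to \Mod_A[\fa_r]$, computed via injective resolutions in $\Mod_B$, agrees with the usual $\rR\pi_*$ on the underlying quasi-coherent sheaves. The cleanest route is to prove that every injective object $I \in \Mod_B$ is $\pi_*$-acyclic as an $\cO_Y$-module. Arguing as in Proposition~\ref{prop:injOX}, the forgetful functor $\Mod_B \to \cV_Y$ is right adjoint to the exact functor $(-)\otimes B$, so $I$ is injective in $\cV_Y$; equivalently each multiplicity space $I_\lambda$ is an injective $\cO_Y$-module. Injective quasi-coherent sheaves on a noetherian scheme are flasque, and therefore $\pi_*$-acyclic.

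\textbf{Assembly.} Since $\pi_*$ commutes with the isotypic decomposition and higher pushforwards along a separated morphism of noetherian finite-dimensional schemes commute with direct sums of sheaves, I obtain $\rR^i\pi_*(I) = \bigoplus_\lambda \rR^i\pi_*(I_\lambda)\otimes \bS_\lambda(\bV) = 0$ for $i>0$. Consequently an injective resolution $N \to I^\bullet$ in $\Mod_B$ also computes the sheaf-theoretic $\rR\pi_*(N)$, and the two derived functors coincide.

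\textbf{Main obstacle.} The only genuinely nontrivial point is Part 2: one must know that injectivity in the category $\Mod_B$ is strong enough to force flasqueness of the underlying $\cO_Y$-modules. Once the reduction to injectivity of the multiplicity spaces is in hand (via the forgetful adjunction), the rest is routine.
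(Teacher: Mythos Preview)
Your proof is correct and follows essentially the same route as the paper. The only cosmetic difference is in Part~1: the paper repackages the adjunction geometrically by identifying $\Mod_A$ and $\Mod_B$ with quasi-coherent sheaves on $\Spec(A(\bV))$ and $\Spec(B(\bV))$ and invoking the $(f^*,f_*)$ adjunction for the induced map $f$, whereas you factor $\Phi$ algebraically through $\Mod_{\pi^*A}$ and compose the two evident adjunctions; Part~2 is identical in substance to the paper's argument.
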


\begin{proof}
We can identify $\Mod_A$ and $\Mod_B$ with categories of quasi-coherent sheaves on the schemes $\Spec(A(\bV))$ and $\Spec(B(\bV))$. The map $\pi^*(A) \to B$ induces a map $f \colon \Spec(B(\bV)) \to \Spec(A(\bV))$. Under the previous identifications, $\pi_*$ corresponds to $f_*$ and $\Phi$ to $f^*$. The adjointness statement follows from the usual adjointness of $f_*$ and $f^*$.

We now show that the $\rR\pi_*$ is the derived functor of $\pi_*$ on $\Mod_B$. It suffices to show that injective $B$-modules are $\pi_*$-acyclic. Thus let $I$ be an injective $B$-module. Then each multiplicity space $I_{\lambda}$ is injective as an $\cO_X$-module by Proposition~\ref{prop:injOX}, and therefore acyclic for $\pi_*$  (see \cite[Tag 0BDY]{stacks}). Since $\pi_*$ is computed on $\cV_X$ simply by applying $\pi_*$ to each multiplicity space, it follows that $I$ is $\pi_*$-acyclic.
\end{proof}

The following diagram summarizes the picture:
\begin{displaymath}
\xymatrix@C=90pt@R=35pt{
\Mod_A[\fa_r] \ar@<3pt>[d]^{T_{\ge r}} \ar@<3pt>[r]^{\Phi} &
\Mod_B \ar@<3pt>[l]^{\pi_*} \ar@<3pt>[d]^{T'} \\
\Mod_{A,r}[\fa_r] \ar[r]^{\Psi} \ar@<3pt>[u]^{S_{\ge r}} &
\Mod_B^{\gen} \ar@<3pt>[u]^{S'} }
\end{displaymath}

\begin{lemma} \label{lem:Sr-formula}
$S_{\ge r} = \pi_* \circ S' \circ \Psi$.
\end{lemma}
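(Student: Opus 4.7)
The plan is to deduce the lemma by an adjunction argument, using the equivalence $\Psi$ together with the universal properties already established for the other functors. The key observation is that $\Psi$ is, by construction in Theorem~\ref{thm:ABequiv}, the unique functor such that $\Psi \circ T_{\ge r} \cong T' \circ \Phi$ as functors $\Mod_A[\fa_r] \to \Mod_B^{\gen}$.

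Taking right adjoints of both sides of this isomorphism, and using that the right adjoint of a composite is the composite of right adjoints in reverse, we obtain
\begin{displaymath}
\ol{S}_{\ge r} \circ \Psi^{-1} \cong \pi_* \circ S',
\end{displaymath}
where: the right adjoint of $T_{\ge r} \colon \Mod_A[\fa_r] \to \Mod_{A,r}[\fa_r]$ is $\ol{S}_{\ge r}$ by definition; the right adjoint of $\Psi$ is its quasi-inverse $\Psi^{-1}$ since $\Psi$ is an equivalence; the right adjoint of $\Phi \colon \Mod_A[\fa_r] \to \Mod_B$ is $\pi_*$ by Proposition~\ref{prop:pi-adjoint}; and the right adjoint of $T'$ is $S'$ by definition. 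Composing on the right with $\Psi$ yields $\ol{S}_{\ge r} \cong \pi_* \circ S' \circ \Psi$ as functors $\Mod_{A,r}[\fa_r] \to \Mod_A[\fa_r]$.

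It remains to identify $\ol{S}_{\ge r}$ with the restriction of $S_{\ge r}$ to $\Mod_{A,r}[\fa_r]$. For $M \in \Mod_A[\fa_r]$, the Corollary following Proposition~\ref{prop:equivS} states that $\Sigma_{\ge r}(M) = S_{\ge r}(T_{\ge r}(M))$ is annihilated by $\fa_r$. Hence the unit $M \to \Sigma_{\ge r}(M)$ is a morphism in $\Mod_A[\fa_r]$ with kernel and cokernel in $\Mod_{A,<r} \cap \Mod_A[\fa_r] = \Mod_{A,<r}[\fa_r]$, and the target has no nonzero $\Ext^i$ into it from objects of $\Mod_{A,<r}$ for $i=0,1$, a fortiori from objects of $\Mod_{A,<r}[\fa_r]$. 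By the universal property of $\ol{S}_{\ge r}$ (Proposition~\ref{prop:satcrit} applied inside $\Mod_A[\fa_r]$), there is a canonical isomorphism $\Sigma_{\ge r}(M) \cong \ol{S}_{\ge r}(T_{\ge r}(M))$. Combining with the previous paragraph gives $S_{\ge r} \cong \pi_* \circ S' \circ \Psi$ on $\Mod_{A,r}[\fa_r]$.

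The main subtlety, and the only nontrivial point beyond formal adjoint juggling, is the last step: verifying that the section functor into the larger category $\Mod_A$ and the section functor into $\Mod_A[\fa_r]$ coincide on $\Mod_{A,r}[\fa_r]$. This rests squarely on the stability of $\Mod_A[\fa_r]$ under $\Sigma_{\ge r}$, which was precisely the content of the corollary to Proposition~\ref{prop:equivS}.
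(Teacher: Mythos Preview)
Your proof is correct and follows the same adjunction argument as the paper: both start from the defining relation $\Psi \circ T_{\ge r} \cong T' \circ \Phi$ and pass to right adjoints. The paper's proof is terser because in its diagram it already writes $S_{\ge r}$ for the arrow $\Mod_{A,r}[\fa_r] \to \Mod_A[\fa_r]$, implicitly identifying it with $\ol{S}_{\ge r}$ via Proposition~\ref{prop:equivS}; you make this identification explicit in your third paragraph, which is fine but slightly redundant, since the $n=0$ case of Proposition~\ref{prop:equivS} already gives $S_{\ge r} \cong \ol{S}_{\ge r}$ on $\Mod_{A,r}[\fa_r]$ directly (and the Corollary you cite is itself derived from that proposition).
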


\begin{proof}
The two paths from $\Mod_A[\fa_r]$ to $\Mod_B^{\gen}$ commute by definition of $\Psi$. Since $\Psi$ is an equivalence, it follows that $S_{\ge r} \circ \Psi^{-1}$ is the right adjoint to $\Psi \circ T_{\ge r}$. On the other hand, since $S'$ is right adjoint to $T'$ and $\pi_*$ is right adjoint to $\Phi$, it follows that $\pi_* \circ S'$ is right adjoint to $T' \circ \Phi$. Thus the two paths from $\Mod_B^{\gen}$ to $\Mod_A[\fa_r]$ (one of which uses the undrawn $\Psi^{-1}$) also agree.
\end{proof}

\begin{proposition} \label{prop:Srformula}
Let $M \in \Mod_{A,r}[\fa_r]$, and let $N=\Psi(M)$ be the corresponding object of $\Mod_B^{\gen}$. Then $\rR S_{\ge r}(M)$ is canonically isomorphic to $\rR \pi_*(\rR S'(N))$.
\end{proposition}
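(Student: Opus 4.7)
The plan is to chain together three identifications, two of which are already at hand in the excerpt. First I would invoke Proposition~\ref{prop:equivS} to replace $\rR S_{\ge r}(M)$ by $\rR \ol{S}_{\ge r}(M)$, where $\ol{S}_{\ge r}\colon \Mod_{A,r}[\fa_r]\to \Mod_A[\fa_r]$ is the right adjoint to the localization $T_{\ge r}$. This brings the computation inside the square of categories preceding Lemma~\ref{lem:Sr-formula}, where that lemma factors the underived functor as $\ol{S}_{\ge r} = \pi_* \circ S' \circ \Psi$. Since $\Psi$ is an equivalence of abelian categories, it is exact and preserves injectives, so it passes through to the derived level without any work; the whole statement therefore reduces to verifying the Grothendieck-style composition identity
\[
\rR(\pi_* \circ S') \;\simeq\; \rR \pi_* \circ \rR S' \qquad \text{as functors } \rD^+(\Mod_B^{\gen}) \to \rD^+(\Mod_A[\fa_r]).
\]

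To establish this it is enough to exhibit a class of objects adapted to $S'$ that lands in $\pi_*$-acyclics. Since $T'$ is exact, its right adjoint $S'$ carries injectives of $\Mod_B^{\gen}$ to injectives of $\Mod_B$. By Proposition~\ref{prop:injOX}, every injective $B$-module has all its $\cO_X$-module multiplicity spaces injective; and injective $\cO_X$-modules are $\pi_*$-acyclic (this is the input already used in the second half of Proposition~\ref{prop:pi-adjoint}). So $S'$ carries injectives into $\pi_*$-acyclics, which is the standard hypothesis allowing one to compose derived functors. Concretely, given an injective resolution $N \to I^{\bullet}$ in $\Mod_B^{\gen}$, the complex $S'(I^{\bullet})$ is a $\pi_*$-acyclic complex whose cohomology computes $\rR S'(N)$, so applying $\pi_*$ termwise computes both $\rR(\pi_* \circ S')(N)$ and $\rR \pi_*(\rR S'(N))$, canonically and compatibly.

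Assembling the three identifications, with $N = \Psi(M)$, yields
\[
\rR S_{\ge r}(M) \;\simeq\; \rR \ol{S}_{\ge r}(M) \;\simeq\; \rR(\pi_* \circ S' \circ \Psi)(M) \;\simeq\; \rR \pi_*\bigl(\rR S'(N)\bigr),
\]
which is the desired canonical isomorphism. The only step with actual content is the acyclicity verification for the Grothendieck composition, and this is essentially already done in Proposition~\ref{prop:pi-adjoint} via Proposition~\ref{prop:injOX}; the rest is formal bookkeeping with adjunctions and the equivalence $\Psi$.
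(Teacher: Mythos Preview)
Your proof is correct and follows essentially the same route as the paper's own argument: both invoke Proposition~\ref{prop:equivS} to identify $\rR S_{\ge r}$ with $\rR\ol{S}_{\ge r}$, use Lemma~\ref{lem:Sr-formula} for the underived factorization, and then compose derived functors by checking that $S'$ sends injectives to $\pi_*$-acyclics via Proposition~\ref{prop:injOX} (exactly the step carried out in Proposition~\ref{prop:pi-adjoint}). One small slip: since $B=\bA(\cQ)$ lives over $Y=\Gr_r(\cE)$, the multiplicity spaces of an injective $B$-module are injective $\cO_Y$-modules (not $\cO_X$-modules), and it is these that are $\pi_*$-acyclic; your citations make clear you have the right picture, so this is purely notational.
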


\begin{proof}
By Lemma~\ref{lem:Sr-formula}, $S_{\ge r} = \pi_* \circ S' \circ \Psi$. We thus see that $\rR S_{\ge r} = \rR \pi_* \circ \rR S' \circ \Psi$. Here we have used the fact that $\rR \pi_*$ is the derived functor of $\pi_*$ on $\Mod_B$ (Proposition~\ref{prop:pi-adjoint}) and the fact that $\rR S_{\ge r}$ is the derived functor of $S_{\ge r}$ on $\Mod_{A,r}[\fa_r]$ (Proposition~\ref{prop:equivS}).
\end{proof}

\begin{corollary} \label{cor:Srfin}
Let $M \in \Mod_{A,r}^{\fgen}$. Then $\rR^n S_{\ge r}(M)$ is a finitely generated $A$-module for all $n \ge 0$, and vanishes for $n \gg 0$.
\end{corollary}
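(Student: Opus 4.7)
The plan is a two-step reduction. First, use the $\fa_r$-adic filtration to reduce from $\Mod_{A,r}^{\fgen}$ to the subcategory $\Mod_{A,r}[\fa_r]^{\fgen}$. Then, transport the problem to the Grassmannian $Y = \Gr_r(\cE)$ via the equivalence $\Psi$ from Theorem~\ref{thm:ABequiv}, and combine the finiteness results for generic categories and proper pushforwards.

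For the reduction, pick a finitely generated lift $\tilde M \in \Mod_{A,\le r}$ with $T_{\ge r}(\tilde M) \cong M$ (standard for Serre quotients of locally noetherian Grothendieck categories). Since $\tilde M$ is finitely generated and locally annihilated by a power of $\fa_r$, in fact $\fa_r^k \tilde M = 0$ for some $k$. The $\fa_r$-adic filtration
\[
\tilde M \supset \fa_r \tilde M \supset \cdots \supset \fa_r^k \tilde M = 0
\]
has finitely generated subquotients (by noetherianity) that are killed by $\fa_r$; applying the exact functor $T_{\ge r}$ yields a finite filtration of $M$ whose subquotients lie in $\Mod_{A,r}[\fa_r]^{\fgen}$. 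The long exact sequence associated to $\rR^\bullet S_{\ge r}$, together with noetherianity, then propagates finite generation and eventual vanishing up the filtration.

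For the second step, assume $M \in \Mod_{A,r}[\fa_r]^{\fgen}$ and set $N = \Psi(M) \in \Mod_B^{\gen}$, which is finitely generated since $\Psi$ is an equivalence identifying finite-generation conditions. By Proposition~\ref{prop:Srformula},
\[
\rR S_{\ge r}(M) \;\cong\; \rR \pi_*\bigl(\rR S'(N)\bigr),
\]
where $\pi \colon Y \to X$ is the structure map and $S'$ is the section functor for the tca $B = \bA(\cQ)$ on $Y$. Applying Corollary~\ref{cor:Sfin} to $B$ on $Y$ gives $\rR S'(N) \in \rD^b_{\fgen}(B)$. Via the surjection $\pi^*(A) \twoheadrightarrow B$, finitely generated $B$-modules are finitely generated as $\pi^*(A)$-modules. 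Since $\pi$ is projective, Corollary~\ref{cor:finpushfwd} gives $\rR \pi_* \colon \rD^b_{\fgen}(\pi^*(A)) \to \rD^b_{\fgen}(A)$, so $\rR S_{\ge r}(M) \in \rD^b_{\fgen}(A)$, as desired.

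The main obstacle is really only bookkeeping in the reduction step: one must ensure the lift $\tilde M$ is finitely generated, that the filtration descends to finitely generated subquotients in $\Mod_{A,r}[\fa_r]$ rather than merely in $\Mod_A[\fa_r]$, and that both finite generation and eventual vanishing of $\rR^n S_{\ge r}$ are stable under extensions in the long exact sequence. The substantive finiteness content is already supplied by Corollaries~\ref{cor:Sfin} and~\ref{cor:finpushfwd}, and the geometric bridge is provided by Proposition~\ref{prop:Srformula}.
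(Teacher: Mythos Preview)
Your proof is correct and follows essentially the same approach as the paper: d\'evissage to the case $M \in \Mod_{A,r}[\fa_r]^{\fgen}$, then the formula $\rR S_{\ge r}(M) \cong \rR\pi_*(\rR S'(\Psi(M)))$ from Proposition~\ref{prop:Srformula}, combined with Corollary~\ref{cor:Sfin} and Corollary~\ref{cor:finpushfwd}. Your explicit spelling-out of the d\'evissage via the $\fa_r$-adic filtration, and the observation that finitely generated $B$-modules are finitely generated over $\pi^*(A)$ (needed to invoke Corollary~\ref{cor:finpushfwd} as stated), are both useful clarifications of points the paper leaves implicit.
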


\begin{proof}
By d\'evissage, we can reduce to the case $M \in \Mod_{A,r}^{\fgen}[\fa_r]$. By Proposition~\ref{prop:Srformula}, we have $\rR S_{\ge r}(M)=\rR \pi_*(\rR S'(N))$, where $N=\Psi(M)$ is a finitely generated object of $\Mod_B^{\gen}$. Since $\rR S'$ carries $\rD^b_{\fgen}(\Mod_B^{\gen})$ into $\rD^b_{\fgen}(\Mod_B)$ (Corollary~\ref{cor:Sfin}) and $\rR \pi_*$ carries $\rD^b_{\fgen}(\Mod_B)$ to $\rD^b_{\fgen}(\Mod_A)$ (Corollary~\ref{cor:finpushfwd}), the result follows.
\end{proof}

\subsection{Finiteness of local cohomology and derived saturation}

The following theorem is one of the fundamental results of this paper.

\begin{theorem} \label{thm:satfin}
Let $M \in \rD^b_{\fgen}(A)$. Then $\rR \Sigma_{>r}(M)$ and $\rR \Gamma_{\le r}(M)$ also belong to $\rD^b_{\fgen}(A)$.
\end{theorem}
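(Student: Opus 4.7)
The plan is to prove $\rR\Gamma_{\le r}(M) \in \rD^b_{\fgen}(A)$ by ascending induction on $r$; the statement for $\rR\Sigma_{>r}(M)$ is then immediate from the triangle of Proposition~\ref{prop:triangle}. The base case $r=-1$ is trivial since $\rR\Gamma_{\le -1}=0$. For the inductive step, applying $\rR\Gamma_{\le r}$ to the defining triangle at level $r-1$, and using that $\rR\Gamma_{\le j}$ and $\rR\Sigma_{>i}$ commute, produces a triangle
\[
\rR\Gamma_{\le r-1}(M) \to \rR\Gamma_{\le r}(M) \to \rR\Pi_r(M),
\]
so the problem reduces to showing that each $\rR\Pi_r(M)$ lies in $\rD^b_{\fgen}(A)$.

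The object $\rR\Pi_r(M)$ lies in $\cD^+_r$, and under the equivalence $\rR S_{\ge r}\colon \rD^+(\Mod_{A,r}) \simeq \cD_r^+$ of Proposition~\ref{prop:derived-decomp}(a) it is the image of some $N \in \rD^+(\Mod_{A,r})$. By Corollary~\ref{cor:Srfin}, $\rR S_{\ge r}$ sends $\rD^b_{\fgen}(\Mod_{A,r})$ into $\rD^b_{\fgen}(A)$, so it suffices to exhibit $N$ as a bounded complex with finitely generated cohomology in $\Mod_{A,r}$. Since every finitely generated object of $\Mod_{A,r}$ admits a finite filtration with graded pieces in $\Mod_{A,r}[\fa_r]$, this in turn reduces, through the equivalence $\Psi\colon\Mod_{A,r}[\fa_r] \simeq \Mod_B^{\gen}$ of Theorem~\ref{thm:ABequiv}, to a finiteness question about the generic category of the smaller tca $B = \bA(\cQ)$ on $\Gr_r(\cE)$. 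The factorization $\rR S_{\ge r} \cong \rR\pi_* \circ \rR S'$ of Proposition~\ref{prop:Srformula}, combined with Corollary~\ref{cor:finpushfwd} (proper pushforward preserves finite generation) and Corollary~\ref{cor:Sfin} applied to $B$ (generic saturation preserves finite generation), then delivers the required finiteness on the $A$-side.

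The hard part will be producing $N$ concretely from $M$ and identifying it with an honest $B$-module. My plan is to exploit the short exact sequence $0 \to \fa_r M \to M \to M/\fa_r M \to 0$: the quotient $M/\fa_r M$ lies in $\Mod_A[\fa_r]$, so its image under $\Psi \circ T_{\ge r}$ is directly a finitely generated object of $\Mod_B^{\gen}$ and handles the lowest cohomology of $N$. Higher cohomology will come either from iterating on $\fa_r M$ or, more cleanly, from the injective resolution of $M$ guaranteed by Proposition~\ref{prop:bdinj} with bounded $\ell$-invariant and of finite length by Corollary~\ref{cor:fin-inj-dim}, so that only finitely many terms can contribute. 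The main subtlety will be verifying that the candidate complex so produced really represents $N$ under the equivalence --- this boils down to comparing injective objects in $\Mod_A$, $\Mod_{A,r}[\fa_r]$, and $\Mod_B^{\gen}$, and showing that the ones used to compute $\rR\Gamma_{\le r}$ are adapted to all the functors in sight.
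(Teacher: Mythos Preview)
Your inductive scheme has a circularity that the vague second paragraph does not resolve. In the ascending induction you reduce to showing $\rR\Pi_r(M)\in\rD^b_{\fgen}(A)$, and then invoke Corollary~\ref{cor:Srfin}, which requires the input $N\in\Mod_{A,r}$ to be finitely generated. But $N=T_{\ge r}\bigl(\rR\Gamma_{\le r}(M)\bigr)$, and the finite generation of the cohomologies of $\rR\Gamma_{\le r}(M)$ is exactly what you are trying to prove at this step. Your injective-resolution argument (Proposition~\ref{prop:bdinj} and Corollary~\ref{cor:fin-inj-dim}) shows only that $\rR\Gamma_{\le r}(M)$ is \emph{bounded}; it gives no control on finite generation of the terms $\Gamma_{\le r}(I^j)$, which are typically not finitely generated. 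The suggestion to iterate on $\fa_rM$ does not help either: it produces finitely generated $A/\fa_r$-modules in degree~$0$, but says nothing about the higher derived pieces of $\Gamma_{\le r}$.

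The paper sidesteps this by running the induction \emph{downward}. At step~$r$, one already knows (by the hypothesis at level~$r+1$) that $\rR\Gamma_{\le r+1}(M)\in\rD^b_{\fgen}(A)$ and is supported on $V(\fa_{r+1})$; hence $T_{\ge r+1}\bigl(\rR\Gamma_{\le r+1}(M)\bigr)\in\rD^b_{\fgen}(\Mod_{A,r+1})$ is legitimately available as input to Corollary~\ref{cor:Srfin}. Applying $\rR\Sigma_{>r}$ to the triangle $\rR\Gamma_{\le r+1}(M)\to M\to\rR\Sigma_{>r+1}(M)$ then gives the result directly. If you want to keep the ascending direction, you would need an independent argument that $T_{\ge r}\rR\Gamma_{\le r}(M)$ has finitely generated cohomology in $\Mod_{A,r}$ for arbitrary finitely generated $M$, and nothing in your proposal supplies that.
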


\begin{proof}
We proceed by descending induction on $r$. When $r=d$, we have that $\Gamma_{\le r}$ is the identity functor and $\Sigma_{>r}=0$, so the statement is clear. Now let us prove the statement for $r$, assuming it has been proved for $r+1$. Consider the triangle
\begin{displaymath}
\rR \Gamma_{\le r+1}(M) \to M \to \rR \Sigma_{>r+1}(M) \to
\end{displaymath}
Applying $\rR \Sigma_{>r}$, we obtain a triangle
\begin{displaymath}
\rR \Sigma_{>r}(\rR \Gamma_{\le r+1}(M)) \to \rR \Sigma_{>r}(M) \to \rR \Sigma_{>r}(\rR \Sigma_{>r+1}(M)) \to
\end{displaymath}
But $\Sigma_{>r} \Sigma_{>r+1}=\Sigma_{>r+1}$, so the rightmost term is $\rR \Sigma_{>r+1}(M)$, which belongs to $\rD^b_{\fgen}(A)$ by the inductive hypothesis. Since $\rR \Gamma_{\le r+1}(M)$ belongs to $\rD^b_{\fgen}(A)$ and is supported on $V(\fa_{r+1})$, it follows from Corollary~\ref{cor:Srfin} that $\rR \Sigma_{>r}(\rR \Gamma_{\le r+1}(M))$ belongs to $\rD^b_{\fgen}(A)$. It now follows from the above triangle that $\rR \Sigma_{>r}(M)$ belongs to $\rD^b_{\fgen}(A)$. From the canonical triangle relating $\rR \Sigma_{>r}$ and $\rR \Gamma_{\le r}$, we see that $\rR \Gamma_{\le r}(M)$ also belongs to $\rD^b_{\fgen}(A)$.
\end{proof}

The theorem exactly states that the hypothesis (Fin) from \S \ref{ss:Fin} holds, and so all the consequences of (Fin) given there hold as well.

\begin{remark}
We summarize the proof of Theorem~\ref{thm:satfin}. There are two parts. The first is that we can compute $\rR \Sigma_{\ge r}(M)$ if $M$ is an $A/\fa_r$-module since we can relate it to cohomology of sheaves on Grassmannians by Proposition~\ref{prop:Srformula}. (Note that in the formula in that proposition, $\rR \pi_*$ is sheaf cohomology on $\Gr_r(\cE)$, while $\rR S'$ is essentially sheaf cohomology on $\Gr_r(\bC^{\infty})$ by Theorem~\ref{thm:geoS}.) The second is that we can formally deduce the full result from this particular case via the inductive procedure in the above proof.
\end{remark}

\begin{remark} \label{rmk:loccohfin}
One can define local cohomology functor with respect to any ideal of $A$. However, the finiteness observed in the theorem for determinantal ideals does not hold in general. In fact, it seems plausible that finiteness essentially holds only for determinantal ideals (essentially because the property only depends on the radical).
\end{remark}

\subsection{Generators for $\rD^b_{\fgen}(A)$} \label{ss:Dgenerators}

Let $\cT$ be a triangulated category and let $S$ be a collection of objects in $\cT$. The triangulated subcategory of $\cT$ {\bf generated} by $S$ is the smallest triangulated subcategory of $\cT$ containing $S$. The following result gives a useful set of generators for $\rD^b_{\fgen}(A)_r$. We use notation as in \S \ref{ss:modar}: $Y=\Gr_r(\cE)$, $\cQ$ is the tautological bundle, $B=\bA(\cQ)$, and $\pi \colon Y \to X$ is the structure map.

\begin{proposition} \label{prop:Dr}
The category $\rD^b_{\fgen}(A)_r$ is the triangulated subcategory of $\rD^b_{\fgen}(A)$ generated by the objects $\rR \pi_*(V \otimes B)$ with $V \in \cV_Y^{\fgen}$.
\end{proposition}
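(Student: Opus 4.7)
The plan is to leverage the equivalence $\rR S_{\ge r} \colon \rD^b_{\fgen}(\Mod_{A,r}) \to \rD^b_{\fgen}(A)_r$ provided by the finiteness theorem (Theorem~\ref{thm:satfin}) together with the formalism of \S\ref{ss:Fin} and Proposition~\ref{prop:derived-decomp}, and to transport the question through the equivalence $\Psi \colon \Mod_{A,r}[\fa_r] \to \Mod_B^{\gen}$ of Theorem~\ref{thm:ABequiv}, using the factorization $\rR S_{\ge r} = \rR \pi_* \circ \rR S' \circ \Psi$ on $\Mod_{A,r}[\fa_r]$ from Proposition~\ref{prop:Srformula}.

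First I will verify that the proposed generators lie in $\rD^b_{\fgen}(A)_r$. For $V \in \cV_Y^{\fgen}$, Corollary~\ref{cor:dersat} (applied to the tca $B$ over $Y$) says $V \otimes B$ is derived saturated, so $\rR S'(T'(V \otimes B)) \cong V \otimes B$. Taking $N := \Psi^{-1}(T'(V \otimes B)) \in \Mod_{A,r}[\fa_r]^{\fgen}$, Proposition~\ref{prop:Srformula} gives $\rR S_{\ge r}(N) \cong \rR \pi_*(V \otimes B)$, so this object lies in the essential image of $\rR S_{\ge r}$ and is bounded with finitely generated cohomology by Corollary~\ref{cor:Srfin}.

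Next I will show the generators generate. Given $M \in \rD^b_{\fgen}(A)_r$, the semi-orthogonal decomposition identifies $M \cong \rR S_{\ge r}(T_{\ge r}(M))$ with $T_{\ge r}(M) \in \rD^b_{\fgen}(\Mod_{A,r})$. Any finitely generated module in $\Mod_{A,\le r}$ is annihilated by a uniform power $\fa_r^k$, so its $\fa_r$-adic filtration realizes it as a finite iterated extension of objects in $\Mod_A[\fa_r]^{\fgen}$; after applying $T_{\ge r}$, this shows every finitely generated object of $\Mod_{A,r}$ is a finite iterated extension of objects in $\Mod_{A,r}[\fa_r]^{\fgen}$, and hence $\rD^b_{\fgen}(\Mod_{A,r})$ is generated as a triangulated subcategory by $\Mod_{A,r}[\fa_r]^{\fgen}$. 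Via $\Psi$, this transfers to $(\Mod_B^{\gen})^{\fgen}$; for any such $N$, Corollary~\ref{cor:Sfin} represents $\rR S'(N)$ by a bounded complex of modules $V^i \otimes B$ with $V^i \in \cV_Y^{\fgen}$. Applying the triangulated functor $\rR \pi_*$ (filtering by brutal truncations) exhibits $\rR S_{\ge r}(\Psi^{-1}(N)) \cong \rR \pi_*(\rR S'(N))$ as an iterated extension of the $\rR \pi_*(V^i \otimes B)$, and combined with the previous step this places $M$ in the triangulated subcategory generated by $\{\rR \pi_*(V \otimes B)\}_{V \in \cV_Y^{\fgen}}$.

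The conceptual work is already concentrated in the ingredients that have been established: finiteness of local cohomology (Theorem~\ref{thm:satfin}), the equivalence $\Psi$ of Theorem~\ref{thm:ABequiv}, and the pushforward formula of Proposition~\ref{prop:Srformula}. Granted these, the argument reduces to a two-step d\'evissage. The only mild subtlety is the reduction from $\Mod_{A,r}$ to $\Mod_{A,r}[\fa_r]$, which rests on the elementary observation that a finitely generated $\fa_r$-torsion $A$-module is killed by a uniform power of $\fa_r$.
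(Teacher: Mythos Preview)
Your proof is correct and follows essentially the same approach as the paper: reduce via the equivalence $\rR S_{\ge r}\colon \rD^b_{\fgen}(\Mod_{A,r})\to \rD^b_{\fgen}(A)_r$, d\'evissage from $\Mod_{A,r}^{\fgen}$ to $\Mod_{A,r}[\fa_r]^{\fgen}$ using the $\fa_r$-adic filtration, transport through $\Psi$ to $(\Mod_B^{\gen})^{\fgen}$, and then use the finite resolutions by $T'(V\otimes B)$'s together with $\rR S_{\ge r}=\rR\pi_*\circ\rR S'\circ\Psi$ and $\rR\Sigma'(V\otimes B)=V\otimes B$. Your version is slightly more explicit (you spell out why the generators land in $\rD^b_{\fgen}(A)_r$ and invoke Corollary~\ref{cor:Sfin} directly rather than Corollary~\ref{cor:generic-inj}(b)), but the structure is the same.
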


\begin{proof} 
By Proposition~\ref{prop:derived-decomp}, the functor $\rR S_{\ge r} \colon \rD^b_{\fgen}(\Mod_{A,r}) \to \rD^b_{\fgen}(A)_r$ is an equivalence. Now, $\rD^b_{\fgen}(\Mod_{A,r})$ is generated by $\Mod_{A,r}^{\fgen}$ (thought of as complexes in degree~0). Every object of $\Mod_{A,r}^{\fgen}$ has a finite length filtration where the graded pieces belong to $\Mod_{A,r}^{\fgen}[\fa_r]$, and so it follows that $\Mod_{A,r}^{\fgen}[\fa_r]$ generates $\rD^b_{\fgen}(\Mod_{A,r})$. By Theorem~\ref{thm:ABequiv}, $\Mod_{A,r}^{\fgen}[\fa_r]$ is equivalent to $(\Mod_B^{\gen})^{\fgen}$, and under this equivalence, $\rR S_{\ge r}$ corresponds to $\rR \pi_* \circ \rR S'$ (Proposition~\ref{prop:Srformula}). We thus see that the image of $(\Mod_B^{\gen})^{\fgen}$ in $\rD^b_{\fgen}(A)$ under $\rR \pi_* \circ \rR S'$ generates $\rD^b_{\fgen}(A)_r$.

Now, by Corollary~\ref{cor:generic-inj}, every object of $(\Mod_B^{\gen})^{\fgen}$ admits a finite length forward resolution by objects of the form $T'(V \otimes B)$ with $V \in \cV_Y^{\fgen}$. It follows that the objects $\rR\pi_*(\rR \Sigma'(V \otimes B))$ generate $\rD^b_{\fgen}(A)_r$. By Corollary~\ref{cor:generic-inj}, $\rR \Sigma'(V \otimes B)=V \otimes B$, so the proposition follows.
\end{proof}

\begin{remark}
Let us spell out a little more precisely what Proposition~\ref{prop:Dr} means. Given $M \in \rD^b_{\fgen}(A)_r$, Proposition~\ref{prop:Dr} implies that there are objects $0=M_0, \ldots, M_n=M \in \rD^b_{\fgen}(A)_r$, objects $V_1, \ldots, V_n \in \cV^{\fgen}_Y$, integers $k_1, \ldots, k_n$, and exact triangles
\begin{displaymath}
M_i \to M_{i+1} \to \rR f_*(V_{i+1} \otimes B)[k_i] \to.
\end{displaymath}
This gives a way of inductively building arbitrary objects of $\rD^b_{\fgen}(A)_r$ from objects of the form $\rR f_*(V \otimes B)$. One often has tools to study these more simple objects, which is why Proposition~\ref{prop:Dr} is useful.
\end{remark}

\begin{remark} \label{rmk:proof-thm:struc}
If one takes $V = \bS_\lambda(\cQ)$ in Proposition~\ref{prop:Dr}, then there are no higher pushforwards, and $\pi_*(\bS_\lambda \cQ \otimes B)$ is the module $K_{r,\lambda}$ appearing in Theorem~\ref{thm:struc}. Since the objects $\pi^*(\cF) \otimes \bS_{\lambda}(\cQ)$, with $\lambda \subseteq r \times (d-r)$ and $\cF$ a finitely generated $\cO_X$-module, generate $\rD^b_{\fgen}(Y)$ (see Corollary~\ref{cor:grass-gen}), we find that the objects $\pi^*(\cF) \otimes \bS_{\mu}(\bV) \otimes K_{r,\lambda}$ generate $\rD^b_{\fgen}(A)_r$. This proves (a generalization of) Theorem~\ref{thm:struc}.
\end{remark}

\begin{corollary} \label{cor:DbA-gens}
The category $\rD^b_{\fgen}(A)$ is generated by the objects from Proposition~\ref{prop:Dr}, allowing $r$ to vary.
\end{corollary}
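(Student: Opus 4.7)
The plan is to combine the semi-orthogonal decomposition established in this section with the per-stratum generation statement of Proposition~\ref{prop:Dr}. First I would invoke Theorem~\ref{thm:satfin}, which verifies hypothesis (Fin) from \S\ref{ss:Fin}; by the general formalism of that subsection, this yields the semi-orthogonal decomposition
\[
\rD^b_{\fgen}(A) = \langle \rD^b_{\fgen}(A)_0, \ldots, \rD^b_{\fgen}(A)_d \rangle.
\]
By definition of semi-orthogonal decomposition, the smallest triangulated subcategory of $\rD^b_{\fgen}(A)$ containing all of the $\rD^b_{\fgen}(A)_r$ is $\rD^b_{\fgen}(A)$ itself.

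Next I would apply Proposition~\ref{prop:Dr} once for each $0 \le r \le d$: each $\rD^b_{\fgen}(A)_r$ is generated as a triangulated subcategory by the objects $\rR \pi_{r,*}(V \otimes B_r)$, where $\pi_r \colon \Gr_r(\cE) \to X$ is the structure map, $B_r = \bA(\cQ_r)$ for $\cQ_r$ the tautological bundle on $\Gr_r(\cE)$, and $V$ ranges over $\cV^{\fgen}_{\Gr_r(\cE)}$. Combining the two statements, the union of these generating sets over $0 \le r \le d$ generates $\rD^b_{\fgen}(A)$.

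For a more constructive viewpoint, given $M \in \rD^b_{\fgen}(A)$ one can iteratively apply the triangles $\rR \Gamma_{\le r}(M') \to M' \to \rR \Sigma_{>r}(M') \to$ starting from $M' = M$ to successively split off the projections $\rR \Pi_r(M) \in \rD^b_{\fgen}(A)_r$; Theorem~\ref{thm:satfin} guarantees that every intermediate term remains in $\rD^b_{\fgen}(A)$, and Proposition~\ref{prop:Dr} handles each piece. There is essentially no obstacle: the corollary is a clean formal consequence of the two inputs it cites, with all technical content already absorbed into Theorem~\ref{thm:satfin} (finiteness of saturation and local cohomology) and Proposition~\ref{prop:Dr} (reduction of each stratum to pushforwards from a Grassmannian).
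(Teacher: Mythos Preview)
Your proposal is correct and follows the same approach as the paper: use Theorem~\ref{thm:satfin} to conclude that $\rD^b_{\fgen}(A)$ is generated by the pieces $\rD^b_{\fgen}(A)_r$, then invoke Proposition~\ref{prop:Dr} on each piece. The paper's proof is just a one-line version of what you wrote.
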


\begin{proof}
This is immediate since, by Theorem~\ref{thm:satfin}, $\rD^b_{\fgen}(A)$ is generated by the $\rD^b_{\fgen}(A)_r$.
\end{proof}

\subsection{An axiomatic approach to $\bA$-modules} \label{ss:axiomatic}

Using Proposition~\ref{prop:Dr}, we now formulate an axiomatic approach to proving results about $\bA$-modules. By a {\bf property of $\bA$-modules} we mean a rule that assigns to every triple $(X, \cE, M)$ consisting of a scheme $X$, a locally free coherent sheaf $\cE$ on $X$, and an object $M$ of $\rD^b_{\fgen}(\bA(\cE))$ a boolean value $\cP_{X,\cE}(M)$.

\begin{proposition} \label{prop:axiomatic}
Let $\cP$ be a property of $\bA$-modules. Suppose the following:
\begin{enumerate}[\indent \rm (a)]
\item If $\cP_{X,\cE}$ is true for two terms in an exact triangle then it is true for the third.
\item If $\cP_{X,\cE}(M)$ is true then so is $\cP_{X,\cE}(M[n])$ for all $n \in \bZ$.
\item If $\cE \to \cQ$ is a surjection then $\cP_{X,\cQ}(M) \implies \cP_{X,\cE}(M)$ for $M \in \rD^b_{\fgen}(\bA(\cQ))$.
\item Suppose $f \colon Y \to X$ is a proper map of schemes and $\cE$ is a locally free coherent sheaf on $X$. Then $\cP_{Y,f^*(\cE)}(M) \implies \cP_{X,\cE}(\rR f_*(M))$ for $M \in \rD^b_{\fgen}(\bA(f^*(\cE)))$.
\item $\cP_{X,\cE}$ is true for modules of the form $\bA(\cE) \otimes V$ with $V \in \cV_X^{\fgen}$.
\end{enumerate}
Then $\cP_{X,\cE}(M)$ is true for all $X$, $\cE$, and $M$.
\end{proposition}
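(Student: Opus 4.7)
The plan is to invoke the generators for $\rD^b_{\fgen}(\bA(\cE))$ produced by Corollary~\ref{cor:DbA-gens}: every object in this category lies in the triangulated subcategory generated by modules of the form $\rR\pi_{r,*}(V \otimes \bA(\cQ_r))$, where $\pi_r\colon Y_r := \Gr_r(\cE) \to X$ is the structure map of the Grassmannian, $\cQ_r$ is the tautological quotient on $Y_r$, $V$ ranges over $\cV_{Y_r}^{\fgen}$, and $r$ runs from $0$ to $d$. Hypotheses (a) and (b) say precisely that the class of objects satisfying $\cP_{X,\cE}$ is closed under cones and shifts, and hence forms a triangulated subcategory of $\rD^b_{\fgen}(\bA(\cE))$. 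It will therefore suffice to verify $\cP_{X,\cE}$ on each such generator.

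Fix $r$ and a generator $\rR\pi_{r,*}(V \otimes \bA(\cQ_r))$; the verification proceeds in three steps. First, hypothesis (e) applied on the base $Y_r$ with bundle $\cQ_r$ gives $\cP_{Y_r,\cQ_r}(V \otimes \bA(\cQ_r))$. Second, the tautological surjection $\pi_r^*(\cE) \twoheadrightarrow \cQ_r$ of bundles on $Y_r$ induces a surjection of tca's $\bA(\pi_r^*(\cE)) \twoheadrightarrow \bA(\cQ_r)$, so $V \otimes \bA(\cQ_r)$ is naturally an object of $\rD^b_{\fgen}(\bA(\pi_r^*(\cE)))$, and hypothesis (c) promotes the preceding conclusion to $\cP_{Y_r,\pi_r^*(\cE)}(V \otimes \bA(\cQ_r))$. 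Third, the map $\pi_r$ is proper since the Grassmann bundle is projective over $X$, so hypothesis (d) applied with $f = \pi_r$ delivers $\cP_{X,\cE}\bigl(\rR\pi_{r,*}(V \otimes \bA(\cQ_r))\bigr)$, completing the argument.

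I do not anticipate any real obstacle: the proof is essentially formal once Corollary~\ref{cor:DbA-gens} is available, and all the substantive work has already been absorbed into the earlier analysis of rank subquotient categories (Theorem~\ref{thm:ABequiv}, Proposition~\ref{prop:Dr}) and the finiteness theorem~\ref{thm:satfin}. The only consistency checks are that $V \otimes \bA(\cQ_r)$ lies in $\rD^b_{\fgen}(\bA(\pi_r^*(\cE)))$ when applying (c) (immediate via the surjection of tca's) and that $\rR\pi_{r,*}(V \otimes \bA(\cQ_r))$ actually lies in $\rD^b_{\fgen}(\bA(\cE))$ so that the conclusion makes sense (Corollary~\ref{cor:finpushfwd}). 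The conceptual point of the proposition is simply that (a)--(e) are precisely the closure properties needed to propagate the trivial case (e) along the generation statement of Corollary~\ref{cor:DbA-gens}.
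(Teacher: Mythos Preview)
Your proposal is correct and follows essentially the same approach as the paper's proof: both use hypotheses (a) and (b) to reduce to checking $\cP_{X,\cE}$ on the generators $\rR\pi_{r,*}(V \otimes \bA(\cQ_r))$ supplied by Corollary~\ref{cor:DbA-gens}, and then apply (e), (c), (d) in exactly the same three-step sequence you describe.
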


\begin{proof}
Let $X$ and $\cE$ be given, and let us prove $\cP_{X,\cE}$ holds on all of $\rD^b_{\fgen}(\bA(\cE))$. We note that by~(a) and~(b), the full subcategory on objects for which $\cP_{X,\cE}$ holds is a triangulated subcategory of $\rD^b_{\fgen}(\bA(\cE))$. Let $Y=\Gr_r(\cE)$, let $\cQ$ be the tautological bundle on $Y$, and let $f \colon Y \to X$ be the structure map. By~(e), $\cP_{Y,\cQ}$ holds for all modules of the form $\bA(\cQ) \otimes V$ with $V \in \cV_Y^{\fgen}$. Thus by~(c), $\cP_{Y,f^*(\cE)}$ holds for all such modules as well. By~(d), we see that $\cP_{X,\cE}$ holds for all modules of the form $\rR f_*(\bA(\cQ) \otimes V)$ with $V \in \cV_Y^{\fgen}$. By Proposition~\ref{prop:Dr}, it follows that $\cP_{X,\cE}$ holds for all objects in $\rD^b_{\fgen}(\bA(\cE))_r$, for all $r$. Finally, $\rD^b_{\fgen}(\bA(\cE))$ is generated by the categories $\rD^b_{\fgen}(\bA(\cE))_r$ as $r$ varies (Corollary~\ref{cor:DbA-gens}), so $\cP_{X,\cE}$ holds on all of $\rD^b_{\fgen}(\bA(\cE))$, completing the proof.
\end{proof}

\begin{remark}
It is clear from the proof that the conditions in Proposition~\ref{prop:axiomatic} are stronger than what is actually needed: for instance, in~(d) it is enough to consider $Y$ that are relative Grassmannians. For our applications, the above proposition is enough though.
\end{remark}

\subsection{Grothendieck groups} \label{ss:groth}

The category $\Mod_A$ is naturally a $\cV$-module, and so $\rK(A)$ is a $\Lambda$-module. We now describe its structure as a $\Lambda$-module. Let $\pi_r \colon \Gr_r(\cE) \to X$ be the structure map, and let $\cQ_r$ be the tautological quotient bundle on $\Gr_r(\cE)$. Define
\begin{displaymath}
i_r \colon \rK(\Gr_r(\cE)) \to \rK(A), \qquad
i_r([V]) = [\rR \pi_r{}_*(V \otimes \bA(\cQ_r))].
\end{displaymath}
The main result is then:

\begin{theorem} \label{thm:groth}
The maps $i_r$ induce an isomorphism of $\Lambda$-modules
\begin{displaymath}
\bigoplus_{r=0}^d \Lambda \otimes \rK(\Gr_r(\cE)) \to \rK(A).
\end{displaymath}
\end{theorem}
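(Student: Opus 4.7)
The plan is to combine the semi-orthogonal decomposition coming from the rank stratification with the explicit description of the graded pieces obtained in \S\ref{ss:modar}. By Theorem~\ref{thm:satfin}, the finiteness hypothesis (Fin) of \S\ref{ss:Fin} holds for $\Mod_A$, so Proposition~\ref{prop:Kdecomp} yields
\[
\rK(A) \;=\; \bigoplus_{r=0}^d \rK\bigl(\rD^b_{\fgen}(A)_r\bigr),
\]
and Corollary~\ref{cor:Kisom} identifies each summand with $\rK(\Mod_{A,r}^{\fgen})$ via the mutually inverse functors $T_{\ge r}$ and $\rR S_{\ge r}$. All the constructions in sight are $\cV$-linear, so this is a decomposition of $\Lambda$-modules; it therefore suffices to produce a $\Lambda$-linear isomorphism $\Lambda \otimes \rK(\Gr_r(\cE)) \cong \rK(\Mod_{A,r}^{\fgen})$ for each $r$ and to check that the composite with $\rR S_{\ge r}$ agrees with the $\Lambda$-linear extension of $i_r$.

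For the $r$th summand I would pass through the Serre subcategory $\Mod_{A,r}[\fa_r]$. Every $M \in \Mod_{A,r}^{\fgen}$ is annihilated by some power of $\fa_r$, so its finite $\fa_r$-adic filtration $M \supset \fa_r M \supset \fa_r^2 M \supset \cdots$ expresses $[M] \in \rK(\Mod_{A,r}^{\fgen})$ as a sum of classes in $\Mod_{A,r}^{\fgen}[\fa_r]$; hence the inclusion induces a surjection $\rK(\Mod_{A,r}^{\fgen}[\fa_r]) \twoheadrightarrow \rK(\Mod_{A,r}^{\fgen})$. Theorem~\ref{thm:ABequiv} provides a tensor-compatible equivalence $\Psi \colon \Mod_{A,r}[\fa_r] \xrightarrow{\sim} \Mod_B^{\gen}$ for $B = \bA(\cQ_r)$ on $Y = \Gr_r(\cE)$, and Corollary~\ref{cor:Ktheory-generic} applied to $B$ over $Y$ produces a $\Lambda$-linear isomorphism $\Lambda \otimes \rK(Y) \cong \rK\bigl((\Mod_B^{\gen})^{\fgen}\bigr)$ with $[V]$ corresponding to $[T'(V \otimes B)]$. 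Composing these gives a surjective $\Lambda$-linear map $\Lambda \otimes \rK(Y) \to \rK(\Mod_{A,r}^{\fgen}) \cong \rK(\rD^b_{\fgen}(A)_r)$.

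To verify that it equals $i_r$, trace a class $[V] \in \rK(Y)$: it is sent successively to $[T'(V \otimes B)]$, to $[\Psi^{-1}(T'(V \otimes B))] \in \rK(\Mod_{A,r}^{\fgen})$, and finally to $[\rR S_{\ge r}(\Psi^{-1}(T'(V \otimes B)))] \in \rK(A)$. By Proposition~\ref{prop:Srformula} this last class equals $[\rR \pi_*(\rR S'(T'(V \otimes B)))]$, and by Corollary~\ref{cor:dersat} (applied to $B$ over $Y$; any coherent $\cO_Y$-module sits in $\cV_Y$ concentrated in the trivial Schur component) the canonical map $V \otimes B \to \rR S'(T'(V \otimes B))$ is an isomorphism. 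The composite therefore sends $[V]$ to $[\rR \pi_*(V \otimes B)] = i_r([V])$, as required. Its $\Lambda$-linear extension $[W] \otimes [V] \mapsto [W] \cdot i_r([V]) = [\rR \pi_*(W \otimes V \otimes B)]$ matches via the projection formula combined with the $\cV$-linearity of the functors involved.

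The main obstacle is promoting surjectivity to bijectivity, equivalently showing injectivity of the natural map $\rK(\Mod_{A,r}^{\fgen}[\fa_r]) \to \rK(\Mod_{A,r}^{\fgen})$. The most robust route I see is to construct an inverse via the associated graded $\gr_{\fa_r}$ with respect to the $\fa_r$-adic filtration and to verify additivity on short exact sequences; here the Artin--Rees lemma (available by Corollary~\ref{cor:B-propH}) is what makes induced filtrations eventually agree with intrinsic ones, so that $\gr_{\fa_r}$ descends to Grothendieck groups. Alternatively, one can combine Proposition~\ref{prop:Dr} with the direct-sum decomposition of $\rK(A)$ already in hand: the displayed classes $[\rR \pi_*(V \otimes B)]$ generate $\rK(\rD^b_{\fgen}(A)_r)$, and their linear independence across varying $r$ is forced by the semi-orthogonal decomposition, giving injectivity of the total map $\bigoplus_r \Lambda \otimes \rK(\Gr_r(\cE)) \to \rK(A)$ summand by summand.
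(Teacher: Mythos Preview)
Your proposal is correct and follows essentially the same route as the paper: decompose $\rK(A)$ via Proposition~\ref{prop:Kdecomp} and Corollary~\ref{cor:Kisom}, then chain the identifications $\rK(\Mod_{A,r})=\rK(\Mod_{A,r}[\fa_r])=\rK(\Mod_B^{\gen})=\Lambda\otimes\rK(\Gr_r(\cE))$, and finally trace $[V]$ through using Proposition~\ref{prop:Srformula} and Corollary~\ref{cor:dersat} to identify the composite with $i_r$.

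The only divergence is that you flag the equality $\rK(\Mod_{A,r}^{\fgen}[\fa_r])=\rK(\Mod_{A,r}^{\fgen})$ as the ``main obstacle,'' whereas the paper treats it as immediate. It is immediate: $\Mod_{A,r}[\fa_r]$ is a Serre subcategory of $\Mod_{A,r}$ (it is closed under subobjects, quotients, and extensions), and every finitely generated object of $\Mod_{A,r}$ admits a finite filtration with graded pieces in $\Mod_{A,r}[\fa_r]$ (the $\fa_r$-adic filtration). This is exactly the hypothesis of the d\'evissage theorem for $\rK_0$, which gives the isomorphism directly; your Artin--Rees argument is one way to spell out the proof of d\'evissage, but you need not work that hard.
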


\addtocounter{equation}{-1}
\begin{subequations}
\begin{proof} 
We have $\rK(A) = \bigoplus_{r=0}^d \rK(\rD^b_{\fgen}(A)_r)$ by Proposition~\ref{prop:Kdecomp}. We now have identifications
\begin{equation}
\label{eq:groth}
\rK(\rD^b_{\fgen}(A)_r)=\rK(\Mod_{A,r})=\rK(\Mod_{A,r}[\fa_r])=\rK(\Mod_{\bA(\cQ_r)}^{\gen})=\Lambda \otimes \rK(\Gr_r(\cE)).
\end{equation}
The first follows from Proposition~\ref{cor:Kisom}; the second from the fact that everything in $\Mod_{A,r}$ has a filtration with graded pieces in $\Mod_{A,r}[\fa_r]$; the third from the equivalence of $\Mod_{A,r}[\fa_r]$ with $\Mod_{\bA(\cQ_r)}^{\gen}$ (Theorem~\ref{thm:ABequiv}); and the fourth from Proposition~\ref{cor:Ktheory-generic}. We thus have an isomorphism $\rK(A)=\bigoplus_{r=0}^d \Lambda \otimes \rK(\Gr_r(\cE))$. It only remains to verify that this isomorphism is given by the claimed formula.

Both isomorphisms are $\Lambda$-linear, so it suffices to check that they agree on $[V] \in \rK(\Gr_r(\cE))$. We now trace $[V]$ backwards through the identifications in \eqref{eq:groth}, using notation as in \S \ref{ss:modar}. It gives $[M]$ in $\rK(\Mod_{\bA(\cQ_r)}^{\gen})$, with $M=T'(\bA(\cQ_r) \otimes V)$; which gives $[\Psi^{-1}(M)]$ in $\rK(\Mod_{A,r})$; which gives $[\rR S_{r-1}(\Psi^{-1}(M))]$ in $\rK(\rD^b_{\fgen}(A)_r)$. From Proposition~\ref{prop:Srformula} we have an isomorphism
\begin{displaymath}
\rR S_{\ge r} (\Psi^{-1}(M)) = \rR \pi_r{}_* \rR S'(M).
\end{displaymath}
By Corollary~\ref{cor:dersat} we have $\rR S'(M)=\bA(\cQ_r) \otimes V$. We thus see that $[V]$ gives $[\rR \pi_r{}_*(\bA(\cQ_r) \otimes V)]$ in $\rK(A)$, which is exactly $i_r([V])$.
\end{proof}
\end{subequations}

\begin{corollary}
$\rK(A)$ is isomorphic to a direct sum of $2^d$ copies of $\Lambda \otimes \rK(X)$. In particular, if $X=\Spec(\bC)$, then $\rK(A)$ is free of rank $2^d$ as a $\Lambda$-module.
\end{corollary}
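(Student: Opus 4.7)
The plan is to simply combine Theorem~\ref{thm:groth} with the standard projective-bundle-type formula that computes the Grothendieck group of a relative Grassmannian. More precisely, Theorem~\ref{thm:groth} already gives a $\Lambda$-linear isomorphism
\[
\rK(A) \;\cong\; \bigoplus_{r=0}^{d} \Lambda \otimes \rK(\Gr_r(\cE)),
\]
so the only remaining ingredient is the computation of $\rK(\Gr_r(\cE))$ as a module over $\rK(X)$.

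For this I would invoke the Grassmannian bundle formula: if $\pi_r \colon \Gr_r(\cE) \to X$ denotes the structure map and $\cQ$ the tautological rank~$r$ quotient, then pullback along $\pi_r$ makes $\rK(\Gr_r(\cE))$ into a free $\rK(X)$-module with basis $\{[\bS_{\lambda}(\cQ)]\}_{\lambda \in P(r)}$, where $P(r)$ is the set of partitions fitting in an $r \times (d-r)$ rectangle. This is precisely the kind of statement recorded in Appendix~\ref{s:grass} (and is classical over a point, where $\Gr_r(\bC^d)$ has a cell decomposition by Schubert varieties indexed by $P(r)$; the relative version follows either by Mayer--Vietoris on a trivialization of $\cE$ or directly from the semi-orthogonal decomposition of $\rD^b(\Gr_r(\cE))$). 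In particular $\rK(\Gr_r(\cE)) \cong \rK(X)^{\oplus \binom{d}{r}}$.

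Substituting this into the displayed isomorphism and using the identity $\sum_{r=0}^{d} \binom{d}{r} = 2^{d}$ yields
\[
\rK(A) \;\cong\; \bigoplus_{r=0}^{d} \bigl(\Lambda \otimes \rK(X)\bigr)^{\oplus \binom{d}{r}} \;\cong\; \bigl(\Lambda \otimes \rK(X)\bigr)^{\oplus 2^{d}},
\]
which proves the first assertion. The special case $X = \Spec(\bC)$ then gives $\rK(X) = \bZ$, so $\rK(A)$ is free of rank $2^{d}$ over $\Lambda$.

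There is essentially no obstacle here once Theorem~\ref{thm:groth} is in hand; the only nontrivial input is the Grassmannian bundle formula for K-theory, which is standard and is recorded in the appendix. The main conceptual point worth highlighting is that the rank $2^d$ is the sum of the Betti numbers of $\Gr(E) = \coprod_r \Gr_r(E)$, matching the semi-orthogonal decomposition of $\rD^b_{\fgen}(\Mod_A)$ into the $d+1$ strata $\rD^b_{\fgen}(\Mod_{A,r})$, with each stratum contributing $\binom{d}{r}$ independent $\Lambda$-generators.
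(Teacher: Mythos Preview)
Your proof is correct and follows essentially the same approach as the paper: combine Theorem~\ref{thm:groth} with the Grassmannian bundle formula $\rK(\Gr_r(\cE)) \cong \rK(X)^{\oplus \binom{d}{r}}$ (Corollary~\ref{cor:grass-Ktheory} in the appendix), then sum the binomial coefficients. The paper's own proof is a one-line citation of exactly these two ingredients.
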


\begin{proof}
By Corollary~\ref{cor:grass-Ktheory}, $\rK(\Gr_r(\cE)) \cong \rK(X)^{\oplus \binom{d}{r}}$, and $\sum_{r=0}^d \binom{d}{r} = 2^d$.
\end{proof}

\begin{remark}
Suppose $X=\Spec(\bC)$. Since $\bA(\bC)^{\otimes d}=\bA(\bC^d)$, there is a natural map
\begin{displaymath}
\rK(\bA(\bC))^{\otimes d} \to \rK(\bA(\bC^d)),
\end{displaymath}
given by taking the external tensor product of modules. One can take the tensor product on the left as $\Lambda$-modules, and so both sides are free $\Lambda$-modules of rank $2^d$. However, this map is not an isomorphism. We explain for $d=2$. Write $L_1$ and $L_2$ for two copies of $\bC$. The $\Lambda$-module $\rK(\bA(L_i))$ is free of rank~2, and the classes of $\bC$ and $\bA(L_i)$ form a basis. Thus the image of the above map is the $\Lambda$-module spanned by the external tensor product of these modules. These products are $\bC$, $\bA(L_1)$, $\bA(L_2)$, and $\bA(L_1 \oplus L_2)=\bA(\bC^2)$. However, the classes of $\bA(L_1)$ and $\bA(L_2)$ coincide: indeed, under the description of $\rK(\bA(\bC^2))$ in terms of Grassmannians the class of $\bA(L_1)$ corresponds to the class of the point $0 \in \bP^1$ (or rather, its structure sheaf), while the class of $\bA(L_2)$ corresponds to the class of the point $\infty$. Since all points in $\bP^1$ represent the same class in $\rK$-theory, we see that $[\bA(L_1)]=[\bA(L_2)]$ in the $\rK$-groups of $\bA(\bC^2)$. Thus the image of the external tensor product map on $\rK$-theory has rank at most~3 over $\Lambda$.
\end{remark}

\section{Koszul duality} \label{s:koszul}

\subsection{Three formulations of Koszul duality}

\subsubsection{Formulation 1: Exterior coalgebra comodules}

We let $X$ and $\cE$ and $A=\bA(\cE)$ be as in previous sections. Let $B=\lw(\cE\langle 1 \rangle)$. We note that $B$ is naturally a coalgebra; this structure will be relevant. Let $\bK=\bK(\cE)$ be the Koszul complex resolving $A/A_+=\cO_X$: this has $\bK^i=\lw^{-i}(\cE\langle 1 \rangle) \otimes A$ for $i \le 0$ and $\bK^i=0$ for $i>0$, and has the usual Koszul differential. Given a complex $M^{\bullet}$ of $A$-modules, the tensor product complex $\bK \otimes_A M$ is naturally a dg-comodule over $\Sym(\cE\langle 1 \rangle[-1])$. We now modify this construction to get a complex of $B$-comodules.

For a complex $M$ of objects in $\cV_X$, define the {\bf right shear} by
\begin{displaymath}
(M^R)^n = \bigoplus_{i \in \bZ} M^{n-i}_i.
\end{displaymath}
Here $M^{n-i}_i$ denotes the degree $i$ piece of $M^{n-i}$. The right shear shifts the degree $i$ piece of the complex $i$ units to the right. We also define the {\bf left shear} by
\begin{displaymath}
(M^L)^n = \bigoplus_{i \in \bZ} M^{n+i}_i.
\end{displaymath}
This is inverse to the right shear.

For a complex $M$ of $A$-modules we now define
\begin{displaymath}
\sK_{\cE}(M) = (\bK \otimes_A M)^R.
\end{displaymath}
Since the right shear of $\Sym(\cE\langle 1 \rangle[-1])$ is $B$, this is a complex of $B$-comodules. To be completely explicit, we have
\begin{displaymath}
\sK_{\cE}(M)^n = B \otimes \bigoplus_{i \ge 0} M^{n-i}_i = B \otimes (M^R)^n.
\end{displaymath}
The $B$-comodule structure on $\sK_{\cE}(M)^n$ is the obvious one (it is cofree). The differential (in the case where $X$ is affine) is given by 
\begin{align*}
d(x_1 \wedge \cdots \wedge x_n \otimes m) &=
x_1 \wedge \cdots \wedge x_n \otimes dm \\
&\qquad + \sum_{i=1}^n (-1)^{i+k+1} x_1 \wedge \cdots \wedge \hat{x}_i \wedge \cdots \wedge x_n \otimes x_i m,
\end{align*}
where $x_1, \ldots, x_n \in \cE\langle 1 \rangle$ and $m \in M^k$. We note that if $M$ is bounded below then so is $\sK_{\cE}(M)$. Furthermore, $\sK_{\cE}$ induces a functor $\rD(\Mod_A) \to \rD(\CoMod_B)$.

We now define a functor in the reverse direction. The degree~0 copy of $\cO_X$ in $B$ is a subcomodule. Let $\bL = \bL(\cE)$ be the Koszul complex resolving it: this has $\bL^i=\Sym^i(\cE\langle 1 \rangle) \otimes B$ for $i \ge 0$ and $\bL^i=0$ for $i<0$. Suppose that $N$ is a complex of $B$-comodules. We put
\begin{displaymath}
  \sL_{\cE}(N)=(\bL \otimes^B N)^L.
\end{displaymath}
Explicitly,
\begin{displaymath}
\sL_{\cE}(N)^n=A \otimes \bigoplus_{i \ge 0} N^{n+i}_i=A \otimes (N^L)^n.
\end{displaymath}
The $A$-module structure on $\sL_{\cE}(N)^n$ is the obvious one (it is free). Consider the comultiplication map $N \to B \otimes N$ and, in particular, the graded component $\nabla \colon N^k_n \to B_1 \otimes N^k_{n-1}$. When $X$ is affine, the differential on $\sL_{\cE}(N)$ is the sum of the following two maps:
\begin{align*}
  A_m \otimes N^k_i &\xrightarrow{1 \otimes \nabla} A_m \otimes B_1 \otimes N^k_{i-1} \xrightarrow{\mu \otimes 1} A_{m+1} \otimes N^k_{i-1}\\
  A_m \otimes N^k_i &\xrightarrow{(-1)^k \otimes d_N} A_m \otimes N^{k+1}_i
\end{align*}
where $\mu \colon A_m \otimes B_1 \to A_{m+1}$ is the multiplication map and $d_N$ is the differential on $N$. Note that this is dual to the differential that we have defined on $\sK_\cE(M)$ above. To see that this is a complex, consider the following diagram:
\[
  \xymatrix{ A_m \otimes N_i^k \ar[r]^-{1 \otimes \nabla} \ar[d]_-{1 \otimes d_N} & A_m \otimes B_1 \otimes N^k_{i-1} \ar[r]^-{\mu \otimes 1} \ar[d]_-{1 \otimes 1 \otimes d_N} & A_{m+1} \otimes N^k_{i-1} \ar[d]^-{1 \otimes d_N} \\
    A_m \otimes N^{k+1}_i \ar[r]^-{1 \otimes \nabla} & A_m \otimes B_1 \otimes N^{k+1}_{i-1} \ar[r]^-{\mu \otimes 1} & A_{m+1} \otimes N^{k+1}_{i-1} }
\]
The left hand square commutes since $N$ is a complex of $A$-modules, while the right hand square commutes since the horizontal and vertical maps act on different tensor factors. Hence the main rectangle commutes and the signed sum of the boundaries of this rectangle compute the components of the square of the differential of $\sL_\cE(N)$.
If $N$ is bounded above then so is $\sL_{\cE}(N)$. Furthermore, $\sL_{\cE}$ induces a functor $\rD(\CoMod_B) \to \rD(\Mod_A)$.

We can identify $\sL_{\cE}(\sK_{\cE}(M))$ with the complex $A \otimes B \otimes M$ with the cohomological grading
\[
(A \otimes B \otimes M)^n = A \otimes \bigoplus_{i \ge 0} B_i \otimes M^{n-i}
\]
and differential ($f \in A$, $x_1,\dots,x_i \in \cE\langle 1 \rangle$, $m \in M^{n-i}$) 
\begin{align*}
  d(f \otimes x_1 \wedge \cdots \wedge x_i \otimes m) &= \sum_{k=1}^i (-1)^{k-1} x_k f \otimes x_1 \wedge \cdots \hat{x}_k \cdots \wedge x_i \otimes m\\
                                                      &\qquad +(-1)^n f \otimes x_1 \wedge \cdots \wedge x_i \otimes d(m) \\
  & \qquad + \sum_{k=1}^i (-1)^{i+k+1} f \otimes x_1 \wedge \cdots \hat{x}_k \cdots \wedge x_i \otimes x_k m.
\end{align*}
We define maps $(A \otimes B \otimes M)^n \to M^n$ as follows. We have $(A \otimes B \otimes M)^n = A \otimes \bigoplus_{i \ge 0} B_i \otimes M^{n-i}$, and for $i=0$, we take the multiplication map $A \otimes M^n \to M^n$, and define it to be $0$ on all other components. This defines a morphism of chain complexes. Consider the corresponding cone $\sL_\cE(\sK_\cE(M)) \to M$. We can filter this complex by the cohomological grading on $M$. The associated graded complex is a direct sum of complexes of the form
\[
\cdots \to A \otimes B_2 \otimes M^i \to  A \otimes B_1 \otimes M^i \to A \otimes B_0 \otimes M^i \to M^i \to 0
\]
which are everywhere exact. Hence the cone is acyclic and the map $\sL_\cE(\sK_\cE(M)) \to M$ is a quasi-isomorphism.

Similarly, there is a canonical map $N \to \sK_{\cE}(\sL_{\cE}(N))$ of complexes of $B$-comodules that is always a quasi-isomorphism. Thus $\sK_{\cE}$ and $\sL_{\cE}$ induce mutually quasi-inverse equivalences of $\rD(\Mod_A)$ and $\rD(\CoMod_B)$.

For the benefit of later use, we record the following simple result here.

\begin{proposition} \label{prop:kozhom}
Let $M$ be a complex of $A$-modules. Then 
\begin{displaymath}
\rH^n(\sK_{\cE}(M)) = \bigoplus_{i \in \bZ} \Tor^A_{i-n}(M, \cO_X)_i.
\end{displaymath}
\end{proposition}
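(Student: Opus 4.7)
The plan is to reduce the statement to the fact that $\bK$ is a projective resolution of $\cO_X$ over $A$, combined with careful bookkeeping of how the right shear affects the bigrading.

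First I would observe that the Koszul complex $\bK = \bK(\cE)$ is a bounded-above complex of projective $A$-modules resolving $\cO_X = A/A_+$. Therefore, for any complex $M$ of $A$-modules, the (total complex of the double complex) $\bK \otimes_A M$ computes $M \otimes^{\rL}_A \cO_X$ in the derived category, and its cohomology in cohomological degree $m$ is the hyper-Tor
\[
\rH^m(\bK \otimes_A M) \;=\; \Tor^A_{-m}(M,\cO_X).
\]
Next I would note that the Koszul differential on $\bK$ (coming from the canonical map $\cE\langle 1\rangle \otimes A \to A$, which is a morphism in $\cV_X$) and the differential on $M$ both preserve polynomial grading. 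Consequently, the complex $\bK \otimes_A M$ decomposes as a direct sum over its polynomial-degree-$i$ subcomplexes, and cohomology likewise decomposes: $\rH^m(\bK \otimes_A M)_i = \Tor^A_{-m}(M,\cO_X)_i$.

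The remaining step is to unwind the effect of the right shear. By definition, $\sK_\cE(M) = (\bK \otimes_A M)^R$, and the underlying bigraded object of $(\bK \otimes_A M)^R$ is just a reindexing of that of $\bK \otimes_A M$: the summand of polynomial degree $i$ sitting in cohomological degree $n-i$ in the original is moved to cohomological degree $n$ in the shear. Since both differentials preserve polynomial grading, the shear is a genuine isomorphism of bigraded objects that intertwines the two differentials up to the sign $(-1)^j$ appearing on $d_B$, and such a sign change does not affect cohomology. Hence
\[
\rH^n(\sK_\cE(M))_i \;=\; \rH^{n-i}(\bK \otimes_A M)_i \;=\; \Tor^A_{i-n}(M,\cO_X)_i,
\]
and summing over $i$ yields the claimed formula.

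There is no genuine obstacle here; the only thing requiring mild care is the shear bookkeeping, namely verifying that for any bounded-below complex $N$ in $\cV_X$ whose differential preserves polynomial grading one has $\rH^n(N^R)_i = \rH^{n-i}(N)_i$. This is immediate once one checks that the right-shear is an isomorphism of complexes on each fixed polynomial-degree piece (with a harmless sign twist on the differential).
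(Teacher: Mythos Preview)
Your argument is correct. The paper does not actually prove this proposition: it simply records it as a ``simple result'' without proof, so there is no alternative approach to compare against. Your unwinding of the right shear together with the fact that $\bK$ is a free resolution of $\cO_X$ is exactly the intended (and essentially only) justification.
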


\subsubsection{Formulation 2: Exterior algebra modules}

We now want to modify the constructions of the previous section to replace the comodules that appear with modules. We do this by applying a duality to $\cV_X$ that interchanges $B$-modules and $B$-comodules.

In this section, we assume that $X$ has a dualizing complex $\omega$ (see \cite[Chapter V]{H-RD} for definitions and basic properties). Then $\omega$ induces a duality $\bD$ of $\rD_{\fgen}(X)$ via $\bD(M)=\rR \Hom_X(M, \omega)$. We say that $M \in \cV_X$ is {\bf degree-wise finitely generated} (dfg) if $M_{\lambda}$ is a coherent sheaf on $X$ for all $\lambda$. Similarly, we say that a complex $M$ in $\cV_X$ is dfg if each $\rH^i(M)$ is. We let $\rD_{\dfg}(\cV_X)$ be the full subcategory of $\rD(\cV_X)$ on the dfg objects. We extend $\bD$ to $\rD_{\dfg}(\cV_X)$ by simply applying $\bD$ to the multiplicity spaces. That is, for $M \in \rD_{\dfg}(\cV_X)$ we write $M=\bigoplus_{\lambda} M_{\lambda} \otimes \bS_{\lambda}(\bV)$, where $M_{\lambda} \in \rD_{\fgen}(X)$, and then define $\bD(M)=\bigoplus_{\lambda} \bD(M_{\lambda}) \otimes \bS_{\lambda}(\bV)$. 

\begin{lemma} \label{lem:duality-schur}
$\bD(M \otimes \bS_{\lambda}(\bV))$ is canonically isomorphic to $\bD(M) \otimes \bS_{\lambda}(\bV)$.
\end{lemma}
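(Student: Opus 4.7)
The plan is to compare both sides via their canonical isotypic decompositions. Writing $M = \bigoplus_\mu M_\mu \otimes \bS_\mu(\bV)$ with $M_\mu \in \rD_{\fgen}(X)$, I would first use the Littlewood--Richardson rule, applied multiplicity-space by multiplicity-space, to produce the canonical decomposition
\[
M \otimes \bS_\lambda(\bV) = \bigoplus_\nu \Bigl(\bigoplus_\mu M_\mu \otimes_\bC L^\nu_{\mu,\lambda}\Bigr) \otimes \bS_\nu(\bV),
\]
where $L^\nu_{\mu,\lambda} := \Hom_{\GL_\infty}(\bS_\nu(\bV), \bS_\mu(\bV) \otimes \bS_\lambda(\bV))$ is a finite-dimensional $\bC$-vector space that vanishes unless $\mu \subseteq \nu$. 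In particular, the inner sum is finite for each fixed $\nu$, so $M \otimes \bS_\lambda(\bV)$ is again dfg (and the statement of the lemma makes sense).

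Next I would apply $\bD$ multiplicity-space by multiplicity-space. The key fact is that $\rR \Hom_X(-,\omega)$ is an additive (in fact $\bC$-linear) functor, so $\rR \Hom_X(N \otimes_\bC L, \omega) \cong \rR \Hom_X(N,\omega) \otimes_\bC L$ canonically for any finite-dimensional $L$ (decompose $L$ as a finite sum of copies of $\bC$). Applying this to the displayed decomposition gives
\[
\bD(M \otimes \bS_\lambda(\bV)) = \bigoplus_\nu \Bigl(\bigoplus_\mu \bD(M_\mu) \otimes_\bC L^\nu_{\mu,\lambda}\Bigr) \otimes \bS_\nu(\bV),
\]
which is precisely the canonical isotypic decomposition of $\bD(M) \otimes \bS_\lambda(\bV)$ obtained by the same Littlewood--Richardson formula.

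The only conceptual concern is canonicity, i.e.\ naturality of the resulting isomorphism in $M$. This should be automatic: the isotypic decomposition of an object of $\rD_{\dfg}(\cV_X)$ is uniquely determined, the identification $\bS_\mu(\bV) \otimes \bS_\lambda(\bV) \cong \bigoplus_\nu L^\nu_{\mu,\lambda} \otimes \bS_\nu(\bV)$ is a canonical identification of $\GL_\infty$-representations once $L^\nu_{\mu,\lambda}$ is interpreted as the above $\Hom$-space, and $\bD$ is a functor. Thus no arbitrary choices enter the construction, and this will be the (mild) main obstacle to check rather than any nontrivial computation.
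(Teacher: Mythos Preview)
Your overall decomposition is the same as the paper's, but there is a genuine gap at the step where you claim $\rR\Hom_X(N \otimes_\bC L, \omega) \cong \rR\Hom_X(N,\omega) \otimes_\bC L$ canonically. The functor $\bD = \rR\Hom_X(-,\omega)$ is \emph{contravariant}: a map $N^{\oplus n} \to N^{\oplus n}$ given by a matrix $A$ is sent to the map given by $A^{T}$, so the canonical identification is $\bD(N \otimes_\bC L) \cong \bD(N) \otimes_\bC L^*$, not $\bD(N) \otimes_\bC L$. Your parenthetical ``decompose $L$ as a finite sum of copies of $\bC$'' is exactly a choice of basis, and the resulting isomorphism depends on that choice; so your later assertion that ``no arbitrary choices enter the construction'' is false as written.

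What you therefore obtain is
\[
\bD(M \otimes \bS_\lambda(\bV)) \;\cong\; \bigoplus_\nu \Bigl(\bigoplus_\mu \bD(M_\mu) \otimes_\bC (L^\nu_{\mu,\lambda})^*\Bigr) \otimes \bS_\nu(\bV),
\]
and to finish you must supply a \emph{canonical} isomorphism $(L^\nu_{\mu,\lambda})^* \cong L^\nu_{\mu,\lambda}$. This is exactly the point the paper addresses: it identifies $(L^\nu_{\mu,\lambda})^* = \Hom_\cV(\bS_\nu, \bS_\lambda \otimes \bS_\mu)^*$ with $\Hom_\cV(\bS_\lambda \otimes \bS_\mu, \bS_\nu)$, and then uses the canonical self-duality $F^\vee(V) := F(V^*)^*$ on polynomial functors (which is the identity on the standard functor and hence on all Schur functors and their tensor products) to pass back to $\Hom_\cV(\bS_\nu, \bS_\lambda \otimes \bS_\mu)$. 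Once you insert this self-duality step, your argument coincides with the paper's.
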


\begin{proof}
The $\bS_\nu(\bV)$ multiplicity space of $M \otimes \bS_\lambda(\bV)$ is $\bigoplus_\mu M_\mu \otimes \hom_{\cV}(\bS_\nu, \bS_\lambda \otimes \bS_\mu)$, so it suffices to construct a canonical isomorphism $\hom_\cV(\bS_\nu, \bS_\lambda \otimes \bS_\mu)^* \cong \hom_\cV(\bS_\nu, \bS_\lambda \otimes \bS_\mu)$. The former can be identified with $\hom_\cV(\bS_\lambda \otimes \bS_\mu, \bS_\nu)$. Note that there is a duality on $\cV$ given (in the polynomial functor perspective) by $F^\vee(V) := F(V^*)^*$ for finite-dimensional $V$. When $F$ is the identity, we canonically have $F^\vee = F$, and hence, we get a canonical identification for any Schur functor and their tensor products.
\end{proof}

Suppose now that $M$ is a complex of $B$-comodules. We thus have a comultiplication map $M \to M \otimes \cE\langle 1 \rangle$. Applying $\bD$ to this map yields a map $\bD(M \otimes \cE\langle 1 \rangle) \to \bD(M)$. Recall that $\cE\langle 1 \rangle$ is just $\cE \otimes \bV$. Then $\bV$ pulls out of $\bD$ by Lemma~\ref{lem:duality-schur}. Since $\cE$ is a locally free coherent sheaf, we have $\bD(\cE \otimes -)=\cE^* \otimes \bD(-)$. We thus have a map $\cE^*\langle 1 \rangle \otimes \bD(M) \to \bD(M)$. In fact, one can show that $\bD(M)$ naturally has the structure of a $B^*$-module, where $B^*=\lw(\cE^*\langle 1 \rangle)$. This construction gives an equivalence between $\rD_{\dfg}(\CoMod_B)$ and $\rD_{\dfg}(\Mod_{B^*})$. It interchanges the bounded below and bounded above subcategories, and preserves the bounded subcategory.

We now define
\begin{displaymath}
\sK^*_{\cE} \colon \rD_{\dfg}(\Mod_A)^{\op} \to \rD_{\dfg}(\Mod_{B^*}), \qquad \sK^*_{\cE} = \bD \circ \sK_{\cE}
\end{displaymath}
and
\begin{displaymath}
\sL^*_{\cE} \colon \rD_{\dfg}(\Mod_{B^*})^{\op} \to \rD_{\dfg}(\Mod_A), \qquad \sL^*_{\cE} = \sL_{\cE} \circ \bD.
\end{displaymath}
(We note that the functors $\sK_{\cE}$ and $\sL_{\cE}$ preserve the dfg condition.) It is clear that $\sK^*_{\cE}$ and $\sL^*_{\cE}$ are mutually quasi-inverse equivalences. We note that both $\sK^*_{\cE}$ and $\sL^*_{\cE}$ take the bounded below subcategory to the bounded above subcategory. We also note that $\sK^*_{\cE}$ and $\sL^*_{\cE}$ depend on the choice of dualizing complex $\omega$, though this dependence is absent from the notation.

\subsubsection{Formulation 3: Symmetric algebra modules}

We now want to modify the constructions of the previous section to replace modules over the exterior algebra with modules over the symmetric algebra. We do this by applying the transpose functor to $\cV_X$. Recall that this is a covariant functor $(-)^{\dag}$ that is $\Mod_X$-linear and satisfies $\bS_{\lambda}(\bV)^{\dag}=\bS_{\lambda^{\dag}}(\bV)$, where $\lambda^{\dag}$ is the transpose of the partition $\lambda$. Furthermore, while it is a tensor functor, it is not a {\it symmetric} tensor functor: it interchanges the usual symmetry and the graded symmetry of the tensor product on $\cV_X$ (see \cite[\S 7.4]{expos}). Let $A^*=\Sym(\cE^*\langle 1 \rangle)=(B^*)^{\dag}$. Then we have an equivalence of categories $\Mod_{B^*} \to \Mod_{A^*}$ via $M \mapsto M^{\dag}$. We now define
\begin{displaymath}
\sK^{*,\dag}_\cE \colon \rD_{\dfg}(\Mod_A)^{\op} \to \rD_{\dfg}(\Mod_{A^*}), \qquad \sK^{*,\dag}_\cE = (-)^{\dag} \circ \sK_{\cE}^*
\end{displaymath}
and
\begin{displaymath}
\sL^{*,\dag}_\cE \colon \rD_{\dfg}(\Mod_{A^*})^{\op} \to \rD_{\dfg}(\Mod_A), \qquad \sL^{*,\dag}_\cE = \sL_{\cE}^* \circ (-)^{\dag}.
\end{displaymath}
Once again, it is clear that $\sK^{*,\dag}_A$ and $\sL^{*,\dag}_A$ are mutually quasi-inverse equivalences. 

\begin{proposition} \label{prop:kosz-sym}
We have $\sK^{*,\dag}_\cE=\sL^{*,\dag}_{\cE^*}$ and $\sL^{*,\dag}_\cE = \sK^{*,\dag}_{\cE^*}$.
\end{proposition}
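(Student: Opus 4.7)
The second identity $\sL^{*,\dag}_\cE = \sK^{*,\dag}_{\cE^*}$ follows formally from the first by taking quasi-inverses, since $\sK^{*,\dag}_{(\cdot)}$ and $\sL^{*,\dag}_{(\cdot)}$ are mutually quasi-inverse equivalences by construction. So the plan is to prove $\sK^{*,\dag}_\cE(M) \simeq \sL^{*,\dag}_{\cE^*}(M)$, naturally in $M \in \rD_{\dfg}(\Mod_A)$, by a direct term-by-term comparison of complexes.

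The first step is to observe that $\bD$ and $(-)^\dag$ interact with the shears in the expected way. Since $\bD$ reverses cohomological degree while acting only on multiplicity spaces (hence preserving internal grading), a direct calculation yields $\bD(N^R) = \bD(N)^L$ and $\bD(N^L) = \bD(N)^R$. The transpose $(-)^\dag$ permutes only the Schur labels on the $\bV$-factor via $\lambda \mapsto \lambda^\dag$, preserving both cohomological degree and internal grading $\vert\lambda\vert$, and hence commutes with both shears; moreover, Lemma~\ref{lem:duality-schur} implies $(-)^\dag$ commutes with $\bD$. Applying these identities gives
\[
\sK^{*,\dag}_\cE(M) \;=\; \bigl((\bD(\bK(\cE) \otimes_A M))^\dag\bigr)^L,
\]
while $\sL^{*,\dag}_{\cE^*}(M) = (\bL(\cE^*) \otimes^{B^*} \bD(M^\dag))^L$ is already the left shear of an unsheared complex. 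It therefore suffices to produce a natural isomorphism of unsheared complexes $(\bD(\bK(\cE) \otimes_A M))^\dag \simeq \bL(\cE^*) \otimes^{B^*} \bD(M^\dag)$.

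Term by term this is straightforward: the cdeg $-p$ piece $\lw^p(\cE\langle 1\rangle) \otimes M$ of $\bK(\cE) \otimes_A M$ is sent by $\bD$ (reversing cdeg and pulling out a dual, since $\lw^p(\cE\langle 1\rangle)$ has locally free finite-rank multiplicity spaces) to the cdeg $p$ piece $\lw^p(\cE^*\langle 1\rangle) \otimes \bD(M)$; applying $(-)^\dag$ swaps $\lw^p \leftrightarrow \Sym^p$ on the $\bV$-factor and commutes with $\bD$, yielding $\Sym^p(\cE^*\langle 1\rangle) \otimes \bD(M^\dag)$. The cdeg $p$ piece of $\bL(\cE^*) \otimes^{B^*} \bD(M^\dag)$ is the same, since $\bL(\cE^*)^p = \Sym^p(\cE^*\langle 1\rangle) \otimes B^*$ is cofree as a $B^*$-comodule on $\Sym^p(\cE^*\langle 1\rangle)$, so the cotensor collapses to $\Sym^p(\cE^*\langle 1\rangle) \otimes \bD(M^\dag)$.

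The main technical obstacle is matching the two Koszul differentials. The differential on $\bK(\cE) \otimes_A M$ is the contraction $\lw^p(\cE\langle 1\rangle) \otimes M \to \lw^{p-1}(\cE\langle 1\rangle) \otimes M$ induced by the canonical pairing: contract a $\lw^p$-tensor by the universal element of $\cE\langle 1\rangle \otimes \cE\langle 1\rangle^\vee$ and act on $M$ via the $A$-module structure. Under $\bD$, contraction dualizes to exterior multiplication on $\lw^{p-1}(\cE^*\langle 1\rangle)$, and the $A$-action on $M$ dualizes to the $B^*$-coaction on $\bD(M)$; under $(-)^\dag$, the exterior multiplication in the $\bV$-factor becomes symmetric multiplication, and the $B^*$-coaction on $\bD(M)$ becomes the $B^*$-coaction on $\bD(M^\dag) = \bD(M)^\dag$. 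The resulting differential is exactly the Koszul differential of $\bL(\cE^*) \otimes^{B^*} \bD(M^\dag)$. The sign bookkeeping---introduced by the transpose swapping the usual and graded-commutative symmetries of $\cV$---agrees with the standard Koszul sign conventions, so no discrepancy arises. With components and differentials matched, the termwise isomorphism assembles into a natural isomorphism of complexes, completing the proof.
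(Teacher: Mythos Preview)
Your proof is correct and follows essentially the same route as the paper's. The paper packages the computation more concisely by first noting (as you do) that $(-)^\dag$ commutes with $\bD$ and that $\bD$ swaps the shears, then observing that the entire term-by-term and differential comparison you carry out can be summarized in the single identification $\bK(\cE)^{*,\dagger} = \bL(\cE^*)$; your explicit verification of the differentials is exactly what underlies this identification.
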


\begin{proof}
We have 
\begin{align*}
\sK^{*, \dag}_\cE(M) &= \bD(( \bK(\cE) \otimes_A M)^R)^\dagger = (\bD( \bK(\cE)^\dagger \otimes_B M^\dagger))^L = (\bK(\cE)^{*, \dagger} \otimes^{B^*} \bD(M^\dagger))^L.
\end{align*}
The second equality uses that $(-)^\dagger$ commutes with $\bD$ and is a tensor functor. The third equality uses that $\bK(\cE)$ is a complex of locally free sheaves. Finally, $\bK(\cE)^{*,\dagger} = \bL(\cE^*)$, so we see that $\sK^{*, \dag}_{\cE} = \sL^{*,\dag}_{\cE^*}$. The other identity is similar.
\end{proof}

\subsection{The Fourier transform} \label{ss:fourier}

We now define the {\bf Fourier transform}
\begin{displaymath}
\sF_{\cE} \colon \rD_{\dfg}(\bA(\cE))^{\op} \to \rD_{\dfg}(\bA(\cE^*))
\end{displaymath}
to be the functor $\sK^{*,\dag}_{\cE}$. It is an equivalence of categories. We gather some of its basic properties here.

\begin{proposition} \label{prop:FTprop}
We have the following:
\begin{enumerate}[\indent \rm (a)]
\item $\sF_{\cE}$ and $\sF_{\cE^*}$ are canonically quasi-inverse to each other.
\item $\sF_{\cE}$ carries $\rD^+_{\dfg}(\bA(\cE))$ into $\rD^-_{\dfg}(\bA(\cE^*))$.
\item $\sF_{\cE}(\bS_{\lambda}(\bV) \otimes -)=\bS_{\lambda^{\dag}}(\bV)[-n] \otimes \sF_{\cE}(-)$, where $n=\vert \lambda \vert$.
\item If $\cM$ is a locally free coherent sheaf on $X$ then $\sF_{\cE}(\cM \otimes -)=\cM^* \otimes \sF_{\cE}(-)$.
\item If $\cE \to \cQ$ is a surjection of vector bundles and $M \in \rD_{\dfg}(\bA(\cQ))$ then $\sF_{\cE}(M)$ is canonically isomorphic to $\sF_{\cQ}(M) \otimes_{\bA(\cQ^*)} \bA(\cE^*)$.
\item If $\cM$ is a coherent sheaf on $X$ then $\sF_{\cE}(\bA(\cE) \otimes \cM)=\bD(\cM)$, regarded as a trivial $\bA(\cE^*)$-module.
\item If $\sF_{\cE}'$ is defined with respect to a different dualizing complex then there is an integer $d$ and a line bundle $\cL$ on $X$ such that $\sF_\cE'(M) \cong \sF_\cE(M)[d] \otimes \cL$.
\end{enumerate}
\end{proposition}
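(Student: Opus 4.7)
The plan is to verify each of the seven claims by unwinding $\sF_\cE = (-)^\dag \circ \bD \circ \sK_\cE$ and tracking how each functor interacts with the operation in question.

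First I would dispose of the formal parts. Part (a) follows by combining the already-established fact that $\sK^{*,\dag}_\cE$ and $\sL^{*,\dag}_\cE$ are mutually quasi-inverse equivalences with Proposition~\ref{prop:kosz-sym}, which identifies $\sL^{*,\dag}_\cE$ with $\sK^{*,\dag}_{\cE^*} = \sF_{\cE^*}$. Part (b) follows because $\sK_\cE$ is given by tensoring with the bounded-above complex of free $A$-modules $\bK(\cE)$ and then shearing (hence preserving bounded below), while $\bD$ is a contravariant duality and $(-)^\dag$ preserves boundedness. Part (g) follows from the uniqueness of dualizing complexes up to shift and line-bundle twist: if $\omega' \simeq \omega \otimes \cL[d]$, then $\bD'(N) \simeq \bD(N) \otimes \cL[d]$, and this propagates unchanged through the definition of $\sF_\cE$ (since $\sK_\cE$ and the transpose are independent of the dualizing complex).

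Parts (c), (d), and (f) are then direct bookkeeping with the right shear. The bi-graded complex $\bK(\cE) \otimes_A M$ carries both a cohomological grading and the usual polynomial grading from $\cV_X$, and the right shear converts polynomial degree into cohomological degree; consequently tensoring the input with an object of polynomial degree $n$ shifts the cohomological grading of the output by $n$. This yields (c), after applying Lemma~\ref{lem:duality-schur} to pass $\bS_\lambda(\bV)$ through $\bD$ and using $\bS_\lambda(\bV)^\dag = \bS_{\lambda^\dag}(\bV)$. For (d), tensoring with a locally free sheaf $\cM$ of polynomial degree $0$ commutes with every step without introducing any shift, and the identity $\bD(\cM \otimes -) = \cM^* \otimes \bD(-)$ holds by local freeness. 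For (f), the complex $\bK(\cE) \otimes \cM$ has bigraded cohomology equal to $\cM$ concentrated at cohomological and polynomial degree zero, so shearing produces $\sK_\cE(\bA(\cE) \otimes \cM) \simeq \cM$ in cohomological degree zero with trivial comodule structure, and then $\bD$ and the (trivial-on-degree-$0$) transpose combine to give $\bD(\cM)$.

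The substantive step is part (e). For an $\bA(\cQ)$-module $M$, I would begin with $\bK(\cE) \otimes_{\bA(\cE)} M = \lw(\cE\langle 1\rangle) \otimes M$ endowed with the Koszul differential, and decompose via the short exact sequence $0 \to \cR \to \cE \to \cQ \to 0$. Writing a wedge in $\lw(\cE\langle 1\rangle)$ as an $\cR$-part times a $\cQ$-part, the Koszul differential contracts a one-form and multiplies it into $M$; but any one-form from $\cR\langle 1\rangle$ annihilates $M$, since $M$ is an $\bA(\cQ)$-module. Hence the complex splits as $\lw(\cR\langle 1\rangle) \otimes (\bK(\cQ) \otimes_{\bA(\cQ)} M)$, with $\lw(\cR\langle 1\rangle)$ carrying the zero differential. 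Since $\lw(\cR\langle 1\rangle)$ is concentrated in cohomological degree $0$, the shear passes through it, giving $\sK_\cE(M) \simeq \lw(\cR\langle 1\rangle) \otimes \sK_\cQ(M)$. Applying $\bD$ (which converts $\lw(\cR\langle 1\rangle)$ to $\lw(\cR^*\langle 1\rangle)$ by Lemma~\ref{lem:duality-schur} and local freeness of $\cR$) and then the transpose (which converts $\lw$ to $\Sym$) yields $\sF_\cE(M) \simeq \bA(\cR^*) \otimes \sF_\cQ(M)$. This matches $\sF_\cQ(M) \otimes_{\bA(\cQ^*)} \bA(\cE^*)$ via the polynomial-functor identification $\bA(\cE^*) \cong \bA(\cR^*) \otimes_{\cO_X} \bA(\cQ^*)$. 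The main obstacle is that globally the sequence $0 \to \cR \to \cE \to \cQ \to 0$ need not split, so $\lw(\cE\langle 1\rangle)$ is only filtered (not a tensor product) as a coalgebra; however, the arguments above depend only on the associated graded together with the vanishing $\cR\langle 1\rangle \cdot M = 0$, and one can legitimize the identifications either by working Zariski-locally and patching via canonicity of the constructions, or by observing that all relevant Schur-functor decompositions hold on the nose.
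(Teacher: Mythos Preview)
Your proposal is correct and follows essentially the same route as the paper. For parts (a), (b), (c), (d), (f), (g) your arguments match the paper's almost verbatim, only with more words; the paper disposes of each in one line by pointing to Proposition~\ref{prop:kosz-sym}, the earlier remark on boundedness, the shear computation, local freeness, the quasi-isomorphism $\bK(\cE)\otimes_{\bA(\cE)}(\bA(\cE)\otimes\cM)\simeq\cM$, and \cite[\S V.3]{H-RD}, respectively.

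For part (e) the paper packages the same decomposition you describe into a single chain of equalities
\[
\sF_\cE(M)=\big(\bD(\bK(\cE)\otimes_{\bK(\cQ)}(\bK(\cQ)\otimes_{\bA(\cQ)}M))^R\big)^\dagger
=\bA(\cE^*)\otimes_{\bA(\cQ^*)}\sF_\cQ(M),
\]
which encodes exactly your observation that the $\cR\langle 1\rangle$-part of the Koszul differential acts trivially on $M$ and hence factors out. Your explicit version, with the caveat about the non-split extension $0\to\cR\to\cE\to\cQ\to 0$ and the suggested local-patching fix, is a legitimate way to justify that line; the paper simply suppresses this discussion.
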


\begin{proof}
(a) This follows from Proposition~\ref{prop:kosz-sym}.

(b) We have already noted this for $\sK^{*,\dag}_{\cE}$.

(c) We have $\sK(\bS_{\lambda}(\bV) \otimes -)=\bS_{\lambda}(\bV)[n] \otimes \sK(-)$. Thus $\sK^*(\bS_{\lambda}(\bV) \otimes -)=\bS_{\lambda}(\bV)[-n] \otimes \sK^*(-)$. Finally, taking transposes yields the stated formula.

(d) This is clear.

(e) We have 
\begin{align*}
\sF_\cE(M) &= (\bD(\bK(\cE) \otimes_{\bA(\cE)} M)^R)^\dagger\\
&= (\bD(\bK(\cE) \otimes_{\bK(\cQ)} (\bK(\cQ) \otimes_{\bA(\cQ)} M))^R)^\dagger\\
&= \bA(\cE^*) \otimes_{\bA(\cQ^*)} (\bD(\bK(\cQ) \otimes_{\bA(\cQ)} M)^R)^\dagger.
\end{align*}

(f) $\bK(\cE) \otimes_{\bA(\cE)} (\bA(\cE) \otimes \cM)$ is quasi-isomorphic to $\cM$ concentrated in degree $0$, so $\sK(\cM)^\dagger \simeq \cM$, which gives $\sF_\cE(\bA(\cE) \otimes \cM) = \bD(\cM)$.

(g) Follows from \cite[\S V.3]{H-RD}.
\end{proof}

We now examine how the Fourier transform interacts with pushforwards. We first set some notation. Let $f \colon Y \to X$ be a proper map of schemes, let $\cE_X$ be a vector bundle $X$, and let $\cE_Y=f^*(\cE_X)$ be its pullback to $Y$. Put $A_Y=\bA(\cE_Y)$ and $A_X=\bA(\cE_X)$. We let $B_X$, $B^*_X$, and $A^*_X$ be defined as in previous sections. Let $\omega_X$ be a dualizing sheaf on $X$ and let $\omega_Y=f^!(\omega_X)$ be the corresponding one on $Y$. Write $\bD_Y$ and $\bD_X$ for the duality functors they give.

\begin{proposition} \label{koszul-pushfwd}
We have canonical functorial isomorphisms of functors on $\rD^+_{\dfg}(\Mod_{A_Y})$:
\begin{enumerate}[\indent \rm (a)]
\item $\rR f_* \circ \sK_{\cE_Y} = \sK_{\cE_X} \circ \rR f_*$.
\item $\rR f_* \circ \sK^*_{\cE_Y} = \sK^*_{\cE_X} \circ \rR f_*$.
\item $\rR f_* \circ \sF_{\cE_Y} = \sF_{\cE_X} \circ \rR f_*$.
\end{enumerate}
\end{proposition}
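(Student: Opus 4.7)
The plan is to reduce all three statements to standard facts about $\rR f_*$, namely the projection formula and Grothendieck--Serre duality, by systematically unwinding the definitions of $\sK_{\cE}$, $\sK^*_{\cE}$, and $\sF_{\cE}$.

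First I would prove (a). Since $\cE_Y = f^*(\cE_X)$, we have $\bA(\cE_Y) = f^*(\bA(\cE_X))$ and, similarly, the Koszul complex satisfies $\bK(\cE_Y) = f^*(\bK(\cE_X))$. Each term $\lw^{-i}(\cE_X\langle 1\rangle) \otimes \bA(\cE_X)$ of $\bK(\cE_X)$ is a direct sum of pieces of the form $\bS_{\lambda}(\cE_X) \otimes \bA(\cE_X) \otimes \bS_{\lambda}(\bV)$, where $\bS_{\lambda}(\cE_X)$ is locally free; in particular, $\bK(\cE_X)$ is a bounded complex of locally free $\bA(\cE_X)$-modules. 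The projection formula therefore yields a canonical isomorphism
\begin{displaymath}
\rR f_*\bigl( \bK(\cE_Y) \otimes_{\bA(\cE_Y)} M \bigr) \;\cong\; \bK(\cE_X) \otimes_{\bA(\cE_X)} \rR f_*(M)
\end{displaymath}
for $M \in \rD^+_{\dfg}(\bA(\cE_Y))$. The right shear $(-)^R$ is a regrading of multiplicity spaces and $\rR f_*$ is computed by applying $\rR f_*$ to each multiplicity space separately, so shear commutes with $\rR f_*$. Combining gives (a).

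Next I would deduce (b) from (a) via Grothendieck duality. With the normalization $\omega_Y = f^!(\omega_X)$, the functor $\rR f_*$ intertwines the dualities: $\rR f_* \circ \bD_Y \cong \bD_X \circ \rR f_*$ on $\rD_{\fgen}(Y)$. Since $\bD$ is defined on $\rD_{\dfg}(\cV)$ by applying it to each multiplicity space (and then reassembling, using Lemma~\ref{lem:duality-schur}), and since $\rR f_*$ is computed term-by-term on multiplicity spaces, this relation extends to $\rR f_* \circ \bD_Y \cong \bD_X \circ \rR f_*$ on $\rD_{\dfg}(\cV_Y)$. Chaining with (a),
\begin{displaymath}
\rR f_* \circ \sK^*_{\cE_Y} \;=\; \rR f_* \circ \bD_Y \circ \sK_{\cE_Y} \;\cong\; \bD_X \circ \rR f_* \circ \sK_{\cE_Y} \;\cong\; \bD_X \circ \sK_{\cE_X} \circ \rR f_* \;=\; \sK^*_{\cE_X} \circ \rR f_*.
\end{displaymath}

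Finally, I would deduce (c) from (b). The transpose $(-)^{\dag}$ is $\cO$-linear and simply permutes Schur functors via $\bS_{\lambda}(\bV) \mapsto \bS_{\lambda^{\dag}}(\bV)$, acting as the identity on multiplicity spaces. Hence it commutes with $\rR f_*$, and
\begin{displaymath}
\rR f_* \circ \sF_{\cE_Y} \;=\; \rR f_* \circ (-)^{\dag} \circ \sK^*_{\cE_Y} \;\cong\; (-)^{\dag} \circ \rR f_* \circ \sK^*_{\cE_Y} \;\cong\; (-)^{\dag} \circ \sK^*_{\cE_X} \circ \rR f_* \;=\; \sF_{\cE_X} \circ \rR f_*.
\end{displaymath}

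The main obstacle is step (b): one must verify that Grothendieck duality, a statement about coherent complexes on $Y$ and $X$, can be applied multiplicity-space-by-multiplicity-space on objects of $\cV$ that need not be globally coherent, and then that the resulting isomorphism is compatible with the $\bA$-module structure so that the intermediate terms in the chain above indeed live in $\rD_{\dfg}(\bA(\cE_X))$. This is where our standing assumption that $\rR f_*$ preserves finiteness (Proposition~\ref{prop:finpushfwd}) together with the dfg condition will be essential. Everything else is a bookkeeping exercise in unravelling the shear and the two dualities.
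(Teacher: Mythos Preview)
Your proposal is correct and follows essentially the same strategy as the paper: part~(a) via the projection formula (the paper makes this concrete by replacing $M$ with a bounded-below complex of injective $A_Y$-modules, so that $\rR f_*$ becomes termwise $f_*$ and the projection formula is applied at the level of sheaves rather than in the derived category), part~(b) by composing (a) with Grothendieck duality $\bD_X\circ\rR f_*=\rR f_*\circ\bD_Y$, and part~(c) by applying transpose. Your concern that (b) is the main obstacle is misplaced---the paper dispatches it in one line, since $\bD$ on $\cV$ is by definition applied multiplicity-space-by-multiplicity-space and the dfg hypothesis is exactly what makes each multiplicity space coherent.
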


\begin{proof}
(a) Let $M \in \rD^+_{\dfg}(\Mod_{A_Y})$ and pick a quasi-isomorphism $M \to I$ with $I$ a bounded-below complex of injective $A_Y$-modules. Note that each multiplicity space of an injective $A_Y$-module is injective as an $\cO_Y$-module (Proposition~\ref{prop:injOX}). Thus $\rR f_*(M) \cong f_*(I)$. Recall that
\begin{displaymath}
\sK_{\cE_Y}(I)^n = B_Y \otimes \bigoplus_{i \ge 0} I_i^{n-i}.
\end{displaymath}
This is a bounded-below complex. As $B_Y=f^*(B_X)$, the projection formula implies that the sheaf $B_Y \otimes I_i^{n-i}$ is $f_*$-acyclic. We can thus compute $\rR f_*$ of the above complex by simply applying $f_*$. Doing this, and using the projection formula again, gives
\begin{displaymath}
f_*(\sK_{\cE_Y}(I))^n = B_X \otimes \bigoplus_{i \ge 0} f_*(I)_i^{n-i}.
\end{displaymath}
However, this exactly coincides with $\sK_{\cE_X}(f_*(I))$.

(b) Precompose the identity in part~(a) with $\bD_X$ and use the duality theorem $\bD_X \circ \rR f_*=\rR f_* \circ \bD_Y$.

(c) Simply apply transpose to the identity in part~(b).
\end{proof}

We now carry out a fundamental computation. Let $Y=\Gr_r(\cE)$, let $\cQ$ and $\cR$ be the usual bundles on $Y$, and let $\pi \colon Y \to X$ be the structure map. Let $Y'=\Gr_{d-r}(\cE^*)$, and let $\cQ'$, $\cR'$, and $\pi'$ be defined analogously. We note that $Y$ and $Y'$ are canonically isomorphic.

\begin{proposition} \label{prop:FTgrass}
Let $\cM$ be a finitely generated $\cO_Y$-module, and let $\cM'$ be the corresponding $\cO_{Y'}$-module. Let $\lambda$ be a partition of size $n$. Then there is a canonical isomorphism
\begin{displaymath}
\sF_{\cE}(\rR \pi_*(\bS_{\lambda}(\bV) \otimes \cM \otimes \bA(\cQ))) =
\rR \pi'_*(\bS_{\lambda^{\dag}}(\bV)[-n] \otimes \bD(\cM') \otimes \bA(\cQ')).
\end{displaymath}
\end{proposition}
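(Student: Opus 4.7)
The plan is to reduce the identity to the formal properties of $\sF_{\cE}$ established in Proposition~\ref{prop:FTprop} together with its compatibility with pushforwards from Proposition~\ref{koszul-pushfwd}. Write $\cE_Y=\pi^*(\cE)$, so that on $Y$ we have the short exact sequence $0 \to \cR \to \cE_Y \to \cQ \to 0$ and, dually, $0 \to \cQ^* \to \cE_Y^* \to \cR^* \to 0$. Under the canonical identification $Y \cong Y'$ we have $\pi=\pi'$ and $\cR^* = \cQ'$, and if we take the dualizing complex on $Y$ to be $\pi^!(\omega_X) = \pi'{}^!(\omega_X)$ then $\bD_Y(\cM)$ is identified with $\bD_{Y'}(\cM')$.

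First, I would use Proposition~\ref{koszul-pushfwd}(c) to move the Fourier transform past $\rR\pi_*$:
\[
\sF_{\cE}(\rR\pi_*(\bS_{\lambda}(\bV) \otimes \cM \otimes \bA(\cQ))) = \rR\pi_*\bigl(\sF_{\cE_Y}(\bS_{\lambda}(\bV) \otimes \cM \otimes \bA(\cQ))\bigr).
\]
Since $\bS_{\lambda}(\bV) \otimes \cM \otimes \bA(\cQ)$ is naturally a $\bA(\cQ)$-module and $\cE_Y \twoheadrightarrow \cQ$, Proposition~\ref{prop:FTprop}(e) reduces the inner Fourier transform to one over $\bA(\cQ)$:
\[
\sF_{\cE_Y}(\bS_{\lambda}(\bV) \otimes \cM \otimes \bA(\cQ)) = \sF_{\cQ}(\bS_{\lambda}(\bV) \otimes \cM \otimes \bA(\cQ)) \otimes_{\bA(\cQ^*)} \bA(\cE_Y^*).
\]
Then Proposition~\ref{prop:FTprop}(c) peels off the $\bS_{\lambda}(\bV)$ factor with a shift by $-n=-|\lambda|$ and a transpose, and Proposition~\ref{prop:FTprop}(f) applied on $Y$ computes $\sF_{\cQ}(\bA(\cQ) \otimes \cM) = \bD_Y(\cM)$. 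Combining gives
\[
\sF_{\cQ}(\bS_{\lambda}(\bV) \otimes \cM \otimes \bA(\cQ)) = \bS_{\lambda^{\dag}}(\bV)[-n] \otimes \bD_Y(\cM).
\]

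Next, I would evaluate the tensor product over $\bA(\cQ^*)$. The module just computed carries only the trivial $\bA(\cQ^*)$-action (via the augmentation $\bA(\cQ^*) \to \cO_Y$), so the tensor product collapses to
\[
\bigl(\bS_{\lambda^{\dag}}(\bV)[-n] \otimes \bD_Y(\cM)\bigr) \otimes_{\cO_Y} \bigl(\cO_Y \otimes_{\bA(\cQ^*)} \bA(\cE_Y^*)\bigr) = \bS_{\lambda^{\dag}}(\bV)[-n] \otimes \bD_Y(\cM) \otimes \bA(\cR^*),
\]
using the identification $\bA(\cE_Y^*)/(\cQ^*\langle 1 \rangle \cdot \bA(\cE_Y^*)) = \bA(\cE_Y^*/\cQ^*) = \bA(\cR^*)$. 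Finally, applying $\rR\pi_*$ and using the projection formula to pull $\bS_{\lambda^{\dag}}(\bV)$ (which lives in $\cV$ and is ``constant'' over $X$) outside of $\rR\pi_*$, and invoking the identifications $\pi=\pi'$, $\cR^* = \cQ'$, $\bD_Y(\cM) = \bD_{Y'}(\cM')$, yields
\[
\rR\pi_*(\bS_{\lambda^{\dag}}(\bV)[-n] \otimes \bD_Y(\cM) \otimes \bA(\cR^*)) = \rR\pi'_*(\bS_{\lambda^{\dag}}(\bV)[-n] \otimes \bD(\cM') \otimes \bA(\cQ')),
\]
which is the desired formula.

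The main technical point, and the one I would write out most carefully, is justifying the collapse in the previous paragraph: one must check that the underived tensor product of $\sF_{\cQ}(\cdot)$ with $\bA(\cE_Y^*)$ over $\bA(\cQ^*)$ used in Proposition~\ref{prop:FTprop}(e) computes the correct object here. This reduces to the fact that $\bA(\cE_Y^*)$ is flat (indeed, locally free) over $\bA(\cQ^*)$: the exact sequence $0 \to \cQ^* \to \cE_Y^* \to \cR^* \to 0$ is locally split, and a local splitting gives $\bA(\cE_Y^*) \cong \bA(\cQ^*) \otimes \bA(\cR^*)$ as $\bA(\cQ^*)$-modules, so the derived and underived tensor products agree and produce $\bA(\cR^*)$ after killing $\bA(\cQ^*)_+$. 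The only other piece requiring care is the compatibility of dualizing complexes under $Y \cong Y'$, which is automatic once one takes $\omega_Y = \pi^!(\omega_X)$.
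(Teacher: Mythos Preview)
Your argument is correct and follows the same route as the paper: commute $\sF_{\cE}$ with $\rR\pi_*$ via Proposition~\ref{koszul-pushfwd}(c), then apply Proposition~\ref{prop:FTprop}(c), (e), and (f) to reduce to $\bD(\cM) \otimes \bA(\cR^*)$, and finally transport along $Y \cong Y'$. The paper's proof is terser and does not spell out the flatness of $\bA(\cE_Y^*)$ over $\bA(\cQ^*)$ that underlies the use of (e); your explicit treatment of that point is a welcome clarification rather than a departure.
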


\begin{proof}
We compute the left side. We first note that we can switch $\sF_{\cE}$ and $\rR \pi_*$ by Proposition~\ref{koszul-pushfwd}. Next, $\bS_{\lambda}(\bV)$ pulls out of $\sF_{\cE}$ and becomes $\bS_{\lambda^{\dag}}(\bV)[-n]$ by Proposition~\ref{prop:FTprop}(c). We have $\sF_{\cQ}(\cM \otimes \bA(\cQ))=\bD(\cM)$, a trivial $\bA(\cQ^*)$-module, by Proposition~\ref{prop:FTprop}(f), and so $\sF_{\cE}(\cM \otimes \bA(\cQ))=\bD(\cM) \otimes \bA(\cR^*)$ by Proposition~\ref{prop:FTprop}(e). We have thus shown
\begin{displaymath}
\sF_{\cE}(\rR \pi_*(\bS_{\lambda}(\bV) \otimes \cM \otimes \bA(\cQ)))
=\rR \pi_*(\bS_{\lambda^{\dag}}(\bV)[-n] \otimes \bD(\cM) \otimes \bA(\cR^*)).
\end{displaymath}
We now move everything to $Y'$ via the isomorphism between $Y$ and $Y'$. This changes $\pi$ to $\pi'$ and $\cM$ to $\cM'$ and $\cR^*$ to $\cQ'$. This yields the stated result.
\end{proof}

\subsection{The finiteness theorem} \label{ss:fourierfin}

The following is the fundamental finiteness result about the Fourier transform:

\begin{theorem} \label{thm:fourierfinite}
The Fourier transform $\sF_{\cE}$ carries $\rD^b_{\fgen}(\bA(\cE))$ into $\rD^b_{\fgen}(\bA(\cE^*))$.
\end{theorem}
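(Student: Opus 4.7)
The plan is to invoke the axiomatic criterion of Proposition~\ref{prop:axiomatic} with the property
\[
\cP_{X,\cE}(M) \;=\; \bigl(\sF_{\cE}(M) \in \rD^b_{\fgen}(\bA(\cE^*))\bigr).
\]
Conditions (a) and (b), closure under exact triangles and shifts, are immediate since $\sF_{\cE}$ is a triangulated functor and $\rD^b_{\fgen}$ is a triangulated subcategory.

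For condition (c), a surjection $\cE \twoheadrightarrow \cQ$ dualizes to an inclusion $\cQ^* \hookrightarrow \cE^*$ whose cokernel is locally free, so $\bA(\cE^*)$ is locally isomorphic to a polynomial extension of $\bA(\cQ^*)$ and in particular is flat and finitely presented over it. Combined with Proposition~\ref{prop:FTprop}(e), which identifies $\sF_{\cE}(M)$ with $\sF_{\cQ}(M) \otimes_{\bA(\cQ^*)} \bA(\cE^*)$, this propagates $\cP$ along surjections. Condition (d) follows directly from Proposition~\ref{koszul-pushfwd}(c), which intertwines $\sF$ with $\rR f_*$ for proper $f$, together with Corollary~\ref{cor:finpushfwd}.

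The substantive case is condition (e): for $V \in \cV_X^{\fgen}$, one must check that $\sF_{\cE}(\bA(\cE) \otimes V) \in \rD^b_{\fgen}(\bA(\cE^*))$. Decomposing $V = \bigoplus_{\lambda} V_{\lambda} \otimes \bS_{\lambda}(\bV)$ into its (finitely many) nonzero Schur isotypic pieces with each $V_{\lambda}$ a coherent $\cO_X$-module, Proposition~\ref{prop:FTprop}(c,d,f) computes the transform summand-wise as $\bS_{\lambda^{\dag}}(\bV)[-|\lambda|] \otimes \bD(V_{\lambda})$, regarded as a trivial $\bA(\cE^*)$-module. Since $X$ admits a dualizing complex, each $\bD(V_{\lambda})$ is a bounded complex of coherent sheaves, and this object manifestly lies in $\rD^b_{\fgen}(\bA(\cE^*))$.

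Having verified all the hypotheses, Proposition~\ref{prop:axiomatic} yields the theorem. Honestly, the bulk of the work is already contained in the machinery built earlier: the axiomatic framework itself rests on the finiteness theorem (Theorem~\ref{thm:satfin}) and on the generation result (Proposition~\ref{prop:Dr}) for $\rD^b_{\fgen}(\bA(\cE))_r$, and once those are available the present theorem reduces to the elementary computations collected in Proposition~\ref{prop:FTprop}. The one point that demands some care is the flatness assertion in step (c), which relies on the local splitting of a short exact sequence of locally free sheaves to express $\bA(\cE^*)$ as a polynomial extension of $\bA(\cQ^*)$; everything else is a formal consequence of the functoriality of Koszul duality.
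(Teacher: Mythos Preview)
Your proof is correct and follows essentially the same route as the paper's: both invoke Proposition~\ref{prop:axiomatic} and verify the five conditions via Proposition~\ref{prop:FTprop}(c,e,f), Proposition~\ref{koszul-pushfwd}(c), and Corollary~\ref{cor:finpushfwd}. The one point the paper makes explicit that you omit is that $\cP_{X,\cE}$ is independent of the choice of dualizing complex (Proposition~\ref{prop:FTprop}(g)); this is needed so that in step~(d) one may take $\omega_Y = f^!(\omega_X)$, as required by Proposition~\ref{koszul-pushfwd}, without worrying whether a different choice would change the truth value of $\cP$.
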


\begin{proof}
Let $\cP_{X,\cE}(M)$ be the truth-value of the statement ``$\sF_{\cE}(M)$ is bounded with finitely generated cohomology.'' (Note that while $\sF_{\cE}$ depends on the choice of a dualizing sheaf on $X$, the value of $\cP_{X,\cE}$ does not by Proposition~\ref{prop:FTprop}(g).) Then $\cP$ is a property of $\bA$-modules. We show that $\cP$ holds for all modules by verifying the five conditions in Proposition~\ref{prop:axiomatic}. The first two conditions are clear. We now consider the other three.

(c) Let $\cE \to \cQ$ be a surjection of locally free coherent sheaves on $X$ and let $M \in \rD^b_{\fgen}(\bA(\cQ))$. Then $\sF_{\cE}(M)$ is isomorphic to $\sF_{\cQ}(M) \otimes_{\bA(\cQ^*)} \bA(\cE^*)$ by Proposition~\ref{prop:FTprop}(e). Thus if $\cP_{X,\cQ}(M)$ holds then so does $\cP_{X,\cE}(M)$.

(d) Suppose $f \colon Y \to X$ is a proper morphism of schemes, $\cE$ is a locally free coherent sheaf on $X$, and $M \in \rD^b_{\fgen}(\bA(f^*(\cE)))$. Proposition~\ref{koszul-pushfwd}(c) gives an isomorphism $\rR f_*(\sF_{f^*(\cE)}(M))=\sF_{\cE}(\rR f_*(M))$. (We assume here that $\omega_Y$ is chosen to be $f^!(\omega_X)$.) So $\cP_{Y,f^*(\cE)}(M) \Rightarrow \cP_{X,\cE}(\rR f_*(M))$ by Corollary~\ref{cor:finpushfwd}.

(e) This follows from Proposition~\ref{prop:FTprop}(c,f).
\end{proof}

\begin{corollary}
A finitely generated $A$-module has finite regularity.
\end{corollary}

\subsection{The duality theorem} \label{ss:fourierdual}

The following is a sort of duality theorem involving the Fourier transform and the rank stratification.

\begin{theorem} \label{thm:ftduality}
Set $d=\rank(\cE)$. We have natural identifications of functors $\rD^b_{\fgen}(\bA(\cE)) \to \rD^b_{\fgen}(\bA(\cE^*))${\rm :}
\begin{enumerate}[\indent \rm (a)]
\item $\sF_{\cE} \circ \rR \Gamma_{\le r} = \rR \Sigma_{\ge d-r} \circ \sF_{\cE}$.
\item $\sF_{\cE} \circ \rR \Sigma_{\ge r} = \rR \Gamma_{\le d-r} \circ \sF_{\cE}$.
\item $\sF_{\cE} \circ \rR \Pi_r = \rR \Pi_{d-r} \circ \sF_{\cE}$.
\end{enumerate}
\end{theorem}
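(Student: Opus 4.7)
The plan is to reduce all three identifications to the single structural statement that the contravariant equivalence $\sF_\cE$ restricts to an equivalence
\[
\sF_\cE \colon \rD^b_{\fgen}(\bA(\cE))_r^{\op} \;\xrightarrow{\sim}\; \rD^b_{\fgen}(\bA(\cE^*))_{d-r}
\]
for each $r$, and then to appeal to the semi-orthogonal decomposition formalism of \S\ref{s:rank}. First I would dispose of the formal reductions among (a), (b), (c). The triangle $\rR \Gamma_{\le r}(M) \to M \to \rR \Sigma_{>r}(M) \to$ shows (a) $\Leftrightarrow$ (b): applying the contravariant $\sF_\cE$ and using (a), the resulting triangle forces $\sF_\cE(\rR \Sigma_{>r}(M)) \cong \rR \Gamma_{\le d-r-1}(\sF_\cE M)$, which is (b) after reindexing. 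For (c), I would use $\rR \Pi_r = \rR \Sigma_{\ge r} \circ \rR \Gamma_{\le r}$, together with the commutation of the $\rR \Gamma_{\le i}$'s with the $\rR \Sigma_{>j}$'s recorded at the start of \S\ref{ss:formalism2}, and compose (a) and (b).

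For the equivalence of subcategories, the main input is Proposition~\ref{prop:Dr}: letting $\pi \colon Y = \Gr_r(\cE) \to X$ with tautological quotient $\cQ$, the category $\rD^b_{\fgen}(\bA(\cE))_r$ is generated as a triangulated subcategory by objects $\rR \pi_*(V \otimes \bA(\cQ))$ with $V \in \cV_Y^{\fgen}$. Any such $V$ decomposes as a finite sum $\bigoplus_\lambda \bS_\lambda(\bV) \otimes V_\lambda$ with $V_\lambda$ a coherent $\cO_Y$-module, and Proposition~\ref{prop:FTgrass} gives
\[
\sF_\cE\bigl(\rR \pi_*(\bS_\lambda(\bV) \otimes V_\lambda \otimes \bA(\cQ))\bigr) \;\cong\; \rR \pi'_*\bigl(\bS_{\lambda^\dag}(\bV)[-|\lambda|] \otimes \bD(V_\lambda') \otimes \bA(\cQ')\bigr),
\]
where $\pi' \colon Y' = \Gr_{d-r}(\cE^*) \to X$ and $V_\lambda'$ is the transport of $V_\lambda$ under the canonical isomorphism $Y \simeq Y'$. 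Since $\bD(V_\lambda')$ lies in $\rD^b_{\fgen}(Y')$, brutal truncation presents the right-hand side as an iterated extension of objects $\rR \pi'_*(W \otimes \bA(\cQ'))[k]$ with $W \in \cV_{Y'}^{\fgen}$, i.e., shifts of generators of $\rD^b_{\fgen}(\bA(\cE^*))_{d-r}$ by Proposition~\ref{prop:Dr} applied to $\cE^*$. Thus $\sF_\cE$ sends the generating set of $\rD^b_{\fgen}(\bA(\cE))_r$ into $\rD^b_{\fgen}(\bA(\cE^*))_{d-r}$; the symmetric argument with $\sF_{\cE^*}$ in place of $\sF_\cE$, combined with Proposition~\ref{prop:FTprop}(a), supplies the inverse and yields the equivalence.

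With the equivalence in hand, the identifications are formal. Writing $\cT_i := \rD^b_{\fgen}(\bA(\cE))_i$ and $\cT_i' := \rD^b_{\fgen}(\bA(\cE^*))_i$, the contravariant equivalence $\sF_\cE$ reverses semi-orthogonality and so transports the decomposition $\langle \cT_0,\ldots,\cT_d\rangle$ to the decomposition of $\rD^b_{\fgen}(\bA(\cE^*))$ with pieces $\sF_\cE(\cT_i) = \cT_{d-i}'$ in reversed order; this coincides with the canonical decomposition $\langle \cT_0',\ldots,\cT_d'\rangle$. Applying $\sF_\cE$ to the projection triangle $\rR \Gamma_{\le r}(M) \to M \to \rR \Sigma_{>r}(M) \to$ therefore produces a triangle whose outer terms lie respectively in $\langle \cT_{d-r}',\ldots,\cT_d'\rangle$ and in $\langle \cT_0',\ldots,\cT_{d-r-1}'\rangle$; by uniqueness of decomposition triangles this is precisely the projection triangle $\rR \Gamma_{\le d-r-1}(\sF_\cE M) \to \sF_\cE M \to \rR \Sigma_{\ge d-r}(\sF_\cE M) \to$. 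Reading off the outer terms gives (a) and (b), and composition gives (c).

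The main obstacle is the generator-to-generator calculation in the second paragraph, which really depends on Proposition~\ref{prop:FTgrass}—namely matching $\sF_\cE$ of a pushforward from $\Gr_r(\cE)$ with a pushforward from the complementary Grassmannian $\Gr_{d-r}(\cE^*)$. The remaining triangulated bookkeeping—commutation of $\rR \Gamma$ and $\rR \Sigma$, reversal of semi-orthogonality under a contravariant equivalence, and uniqueness of decomposition triangles—is routine once the structural equivalence of subcategories is in place.
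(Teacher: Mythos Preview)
Your proposal is correct and follows essentially the same approach as the paper: use Propositions~\ref{prop:Dr} and~\ref{prop:FTgrass} to show that $\sF_\cE$ carries $\rD^b_{\fgen}(\bA(\cE))_r$ into $\rD^b_{\fgen}(\bA(\cE^*))_{d-r}$, then apply $\sF_\cE$ to the projection triangle and invoke uniqueness in the semi-orthogonal decomposition to identify the pieces. The paper's write-up is terser (it does not spell out the brutal-truncation step for $\bD(V_\lambda')$ or the formal reductions among (a), (b), (c)), but the logical skeleton is identical.
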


\begin{proof}
It follows immediately from Propositions~\ref{prop:Dr} and~\ref{prop:FTgrass} that $\sF_{\cE}$ carries $\rD^b_{\fgen}(A)_r$ into $\rD^b_{\fgen}(A^*)_{d-r}$. Now, let $M \in \rD^b_{\fgen}(A)$. We then have an exact triangle
\begin{displaymath}
\rR \Gamma_{\le r}(M) \to M \to \rR \Sigma_{>r}(M) \to.
\end{displaymath}
Applying $\sF_\cE$ yields an exact triangle
\begin{displaymath}
\sF_\cE(\rR \Sigma_{>r}(M)) \to \sF_\cE(M) \to \sF_\cE(\rR \Gamma_{\le r}(M)) \to.
\end{displaymath}
Since $\rR \Gamma_{\le r}(M)$ belongs to $\rD^b_{\fgen}(A)_{\le r}$, it follows that $\sF_\cE(\rR \Gamma_{\le r}(M))$ belongs to $\rD^b_{\fgen}(A^*)_{\ge d-r}$. Similarly, $\sF_\cE(\rR \Sigma_{>r}(M))$ belongs to $\rD^b_{\fgen}(A^*)_{<d-r}$. We also have an exact triangle
\begin{displaymath}
\Gamma_{<d-r}(\sF_{\cE}(M)) \to \sF_{\cE}(M) \to \Sigma_{\ge d-r}(\sF_{\cE}(M))
\end{displaymath}
Since $\rD^b_{\fgen}(A^*)$ admits a semi-orthogonal decomposition $\langle \rD^b_{\fgen}(A^*)_{<d-r}, \rD^b_{\fgen}(A^*)_{\ge d-r} \rangle$, it follows that there are canonical isomorphisms $\sF_\cE(\rR \Sigma_{>r}(M))=\rR \Gamma_{<d-r}(\sF_\cE(M))$ and $\sF_\cE(\rR \Gamma_{\le r}(M))=\rR \Sigma_{\ge d-r}(\sF_\cE(M))$. This proves (a) and (b). As for (c), we have
\begin{displaymath}
\begin{split}
\sF_\cE \circ \rR \Pi_r
&= \sF_\cE \circ \rR \Gamma_{\le r} \circ \rR \Sigma_{\ge r} \\
&= \rR \Sigma_{\ge d-r} \circ \sF_\cE \circ \rR \Sigma_{\ge r} \\
&= \rR \Sigma_{\ge d-r} \circ \rR \Gamma_{\le d-r} \circ \sF_\cE \\
&= \rR \Pi_{d-r} \circ \sF_\cE.
\end{split}
\end{displaymath}
In the first and fourth lines we used the definition of $\rR \Pi_r$, in the second line we used part~(a), and in the third line we used part~(b).
\end{proof}

\subsection{The induced map on Grothendieck groups} \label{ss:fouriergroth}

Let $(-)^{\ast} \colon \Lambda \to \Lambda$ be the map taking $s_{\lambda}$ to $(-1)^{\vert \lambda \vert} s_{\lambda^{\dag}}$. This is a ring homomorphism. Since $\sF_{\cE}$ is an equivalence $\rD^b_{\fgen}(\bA(\cE)) \to \rD^b_{\fgen}(\bA(\cE^*))$, it induces an isomorphism $\varphi \colon \rK(\bA(\cE)) \to \rK(\bA(\cE^*))$. This map is $\ast$-linear, meaning $\varphi(ax)=a^* \varphi(x)$ for $a \in \Lambda$ and $x \in \rK(\bA(\cE))$, by Proposition~\ref{prop:FTprop}(c). The following result gives a complete description of $\varphi$.

\begin{proposition}
We have a commutative diagram
\begin{displaymath}
\xymatrix{
\bigoplus_{r=0}^d \Lambda \otimes \rK(\Gr_r(\cE)) \ar[r] \ar[d] & \rK(\bA(\cE)) \ar[d]^{\varphi} \\
\bigoplus_{r=0}^d \Lambda \otimes \rK(\Gr_r(\cE^*)) \ar[r] & \rK(\bA(\cE^*)) }
\end{displaymath}
where the horizontal maps are the ones from Theorem~\ref{thm:groth}, and the left vertical map is $(-)^{\ast}$ on the $\Lambda$ factors, and takes $[\cM] \in \rK(\Gr_r(\cE))$ to $[\bD(\cM')] \in \rK(\Gr_{d-r}(\cE^*))$, where $\cM'$ corresponds to $\cM$ under the isomorphism $\Gr_r(\cE)=\Gr_{d-r}(\cE^*)$.
\end{proposition}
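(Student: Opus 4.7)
The plan is to reduce the commutativity to the single identity provided by Proposition~\ref{prop:FTgrass}, exploiting the module structures of both squares over $\Lambda$.

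First I would observe that both squares are compatible with the $\Lambda$-module structures in a precise semilinear sense: the top horizontal map is $\Lambda$-linear by Theorem~\ref{thm:groth}; the right vertical map $\varphi$ is $\ast$-linear because $\sF_\cE(\bS_\lambda(\bV) \otimes -) = \bS_{\lambda^\dag}(\bV)[-|\lambda|] \otimes \sF_\cE(-)$ by Proposition~\ref{prop:FTprop}(c), which on $\rK$-classes gives multiplication by $(-1)^{|\lambda|} s_{\lambda^\dag} = s_\lambda^*$; the bottom horizontal map is $\Lambda$-linear; and the left vertical map is defined to be $\ast$-linear. Consequently, to verify commutativity it suffices to check the equality of the two compositions on pure generators of the form $1 \otimes [\cM]$, where $\cM$ is a finitely generated $\cO_{\Gr_r(\cE)}$-module, since such elements generate $\Lambda \otimes \rK(\Gr_r(\cE))$ as a $\Lambda$-module.

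Next I would chase $1 \otimes [\cM]$ around the diagram. Going right and then down, we obtain
\[
[\cM] \;\longmapsto\; [\rR \pi_{r*}(\cM \otimes \bA(\cQ_r))] \;\longmapsto\; \bigl[\sF_\cE(\rR \pi_{r*}(\cM \otimes \bA(\cQ_r)))\bigr].
\]
Going down and then right, we obtain
\[
[\cM] \;\longmapsto\; [\bD(\cM')] \;\longmapsto\; [\rR \pi'_{(d-r)*}(\bD(\cM') \otimes \bA(\cQ'_{d-r}))].
\]
The equality of these two classes is exactly Proposition~\ref{prop:FTgrass} applied with $\lambda = \emptyset$ (so that $n=0$ and both Schur factors disappear). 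This is the content of the proof; no further work is required beyond citing that proposition.

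There is essentially no obstacle here, since Proposition~\ref{prop:FTgrass} was built precisely for this kind of computation; the only subtlety is keeping track of signs and transposes when we reintroduce the $\Lambda$-factor. For a general element $s_\lambda \otimes [\cM]$, the top-right route produces $[\rR \pi_{r*}(\bS_\lambda(\bV) \otimes \cM \otimes \bA(\cQ_r))]$ and then its Fourier transform, which by Proposition~\ref{prop:FTgrass} (for general $\lambda$) equals $[\rR \pi'_{(d-r)*}(\bS_{\lambda^\dag}(\bV)[-|\lambda|] \otimes \bD(\cM') \otimes \bA(\cQ'_{d-r}))]$; in $\rK$-theory this is $s_\lambda^* \cdot i_{d-r}([\bD(\cM')])$, which agrees with the image along the down-then-right route. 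This completes the verification.
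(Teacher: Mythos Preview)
Your proof is correct and follows the same approach as the paper, which simply states that the result follows immediately from the description of the maps in Theorem~\ref{thm:groth} and the calculation in Proposition~\ref{prop:FTgrass}. You have spelled out the semilinearity reduction and the diagram chase that the paper leaves implicit, but the substance is identical.
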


\begin{proof}
This follows immediately from the description of the maps in Theorem~\ref{thm:groth} and the calculation in Proposition~\ref{prop:FTgrass}.
\end{proof}

\appendix

\section{Basic facts about Grassmannians} \label{s:grass}

Let $X$ be a noetherian separated scheme of finite Krull dimension over a field of characteristic $0$ and let $\cE$ be a vector bundle of rank $d$. let $Y=\Gr_r(\cE)$, let $\pi \colon Y \to X$ be the structure map, and let $\cQ$ and $\cR$ be the tautological bundles.

\subsection{Borel--Weil--Bott} \label{sec:BWB}

Let $S_d$ denote the symmetric group on $d$ letters, more precisely the group of bijections of $[d] = \{1,\dots,d\}$. Given $\sigma \in S_d$, define its length to be
\[
\ell(\sigma) = \#\{(i,j) \mid 1 \le i < j \le d, \quad \sigma(i) > \sigma(j)\}.
\]
Also define 
\[
\rho = (d-1, d-2, \dots, 1 ,0) \in \bZ^d.
\]
Given $v \in \bZ^d$, define $\sigma(v) = (v_{\sigma^{-1}(1)}, \dots, v_{\sigma^{-1}(d)})$ and $\sigma \bullet v = \sigma(v + \rho) - \rho$. Note that given any $v \in \bZ^d$, either there exists $\sigma \ne 1$ such that $\sigma \bullet v = v$, or there exists a unique $\sigma$ such that $\sigma \bullet v$ is weakly decreasing.

Let $\alpha = (\alpha_1, \dots, \alpha_r) \in \bZ^{r}$ and $\beta = (\beta_1, \dots, \beta_{d-r}) \in \bZ^{d-r}$ be weakly decreasing and set $v = (\alpha_1, \dots, \alpha_{r}, \beta_1, \dots, \beta_{d-r})$. For the following, see \cite[Corollary 4.1.9]{weyman}. 

\begin{theorem}[Borel--Weil--Bott] \label{thm:BWB}
Exactly one of the following two cases happens:
\begin{enumerate}[\rm (a)]
\item If there exists $\sigma \ne 1$ such that $\sigma \bullet v = v$, then $\rR^j \pi_*(\bS_\alpha(\cQ) \otimes \bS_\beta(\cR)) = 0$ for all $j$.
\item Otherwise, there exists unique $\sigma$ such that $\gamma = \sigma \bullet v$ is weakly decreasing, and 
\[
\rR^j \pi_*(\bS_\alpha(\cQ) \otimes \bS_\beta(\cR)) \cong 
\begin{cases} 
\bS_\gamma(\cE) & \text{if $j = \ell(\sigma)$}\\
0 & \text{if $j \ne \ell(\sigma)$} \end{cases}.
\]
\end{enumerate}
\end{theorem}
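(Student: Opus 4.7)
\medskip

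\noindent\textbf{Proof plan.} The assertion is local on $X$, so the plan is to reduce to the case $X = \Spec(\bC)$ with $\cE$ a vector space, and then derive the general statement by flat base change and the projection formula. Indeed, once the formula is known fiberwise, the higher pushforwards $\rR^j\pi_*(\bS_\alpha(\cQ)\otimes\bS_\beta(\cR))$ are obtained by tensoring with the corresponding $\bS_\gamma(\cE)$ on $X$, because the relevant Schur-bundles on $\Gr_r(\cE)$ are pulled back from universal constructions over a point.

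Once we have reduced to a single Grassmannian $Y = \Gr_r(\bC^d)$, the next step is to pass from the Grassmannian to the complete flag variety $Z = \Fl(\bC^d)$. Let $\rho \colon Z \to Y$ be the natural map. The line bundles $\cL_i = \ker(\cQ_i \to \cQ_{i-1})$ on $Z$ pull back to line bundles whose products give, via the Bott-Borel-Weil theorem fiber-by-fiber (using that the fibers of $\rho$ are flag varieties for the Levi of the parabolic defining $Y$), the Schur bundles $\bS_\alpha(\cQ)\otimes\bS_\beta(\cR)$. Thus it suffices to compute $\rR\Gamma(Z, \cL_w)$ for a line bundle $\cL_w$ indexed by an integer vector $w \in \bZ^d$, and then trace how the weight $w = (\alpha,\beta)$ evolves.

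To compute $\rR\Gamma(Z, \cL_w)$, I would factor $Z \to \Spec(\bC)$ as a tower of $\bP^1$-bundles indexed by simple reflections $s_1,\ldots,s_{d-1}$, and compute cohomology one step at a time. On each $\bP^1$-fiber, the cohomology of $\cO(n)$ is nonzero only in degree $0$ (for $n \ge 0$) or in degree $1$ (for $n \le -2$), and vanishes identically for $n = -1$. The classical observation is that this dichotomy is governed by the dot action $s_i \bullet w = s_i(w + \rho) - \rho$: applying $s_i$ either preserves cohomology with an index shift of $+1$, or annihilates it (the singular case when $(w+\rho)_i = (w+\rho)_{i+1}$). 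Iterating, either some partial product $(w + \rho)$ has two equal coordinates and we land in case~(a), or there is a unique shortest sequence of simple reflections straightening $w + \rho$ to a strictly decreasing tuple. In the latter case, the length of this sequence is $\ell(\sigma)$ for the unique $\sigma \in S_d$ with $\sigma \bullet w$ dominant, giving case~(b).

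The main obstacle in carrying this out is bookkeeping: one must check that the $\bP^1$-bundle computation is compatible with the chosen factorization, verify that the degenerate case (equal entries of $w+\rho$) cannot be avoided by a different choice of simple reflections, and confirm that the resulting element of length $\ell(\sigma)$ is independent of the reduced expression chosen. This is standard material and is carried out in detail in \cite[Ch.~4]{weyman}; in particular \cite[Corollary 4.1.9]{weyman} gives exactly the stated formula for Grassmannians.
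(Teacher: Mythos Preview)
The paper does not give a proof of this statement at all: it simply cites \cite[Corollary~4.1.9]{weyman} and moves on. Your proposal ultimately does the same, closing with the identical citation, so there is no discrepancy in approach. The sketch you supply (reduce to a point by flat base change, pass to the full flag variety, and run the $\bP^1$-tower argument governed by the dot action) is the standard proof strategy and is exactly what is carried out in the cited reference, so your outline is correct and compatible with what the paper invokes.
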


Note that $\bS_\lambda (\cQ^*) \cong \bS_\mu \cQ$ where $\mu = (-\lambda_r, \dots, -\lambda_1)$, and similarly for any vector bundle.

\begin{remark} \label{rmk:bott-algorithm}
The length $\ell(\sigma)$ of a permutation is also equal to the minimal number of adjacent transpositions $s_i = (i,i+1)$ needed to write $\sigma$, i.e., the minimal $\ell$ such that we can write $\sigma = s_{i_1} s_{i_2} \cdots s_{i_\ell}$. The operation $s_i \bullet v$ has the effect of replacing $v_i,v_{i+1}$ with $v_{i+1} - 1, v_i + 1$. 

So in (b) above, the process of getting $\gamma$ from $v$ can be thought of in terms of a bubble sorting procedure: if $v_i < v_{i+1}$, apply $s_i \bullet$ to get a new sequence with $v'_i > v'_{i+1}$; the number of times needed to do this is $\ell(\sigma)$. We will refer to this procedure as ``Bott's algorithm'', and keeping the notation of (b), we write $v \xrightarrow{n} \gamma$ where $n = \ell(\sigma)$.
\end{remark}

\begin{corollary} \label{cor:dual-basis}
Suppose $\alpha, \beta \subseteq r \times (d-r)$ and $\cF$ is a coherent sheaf on $X$. Then
\[
\rR^j \pi_*( \bS_\alpha(\cQ^*) \otimes \bS_{\beta^\dagger}(\cR) \otimes \pi^* \cF) \cong
\begin{cases}
\cF & \text{if $\alpha=\beta$ and $j = |\alpha|$}\\
0 & \text{otherwise}
\end{cases}.
\]
\end{corollary}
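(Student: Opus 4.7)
The plan is to reduce to the case $\cF = \cO_X$ and then apply Borel--Weil--Bott directly. Since $\bS_\alpha(\cQ^*) \otimes \bS_{\beta^\dagger}(\cR)$ is locally free on $Y$ and $\pi$ is smooth and proper, the projection formula gives a canonical isomorphism
\[
\rR \pi_*\bigl(\bS_\alpha(\cQ^*) \otimes \bS_{\beta^\dagger}(\cR) \otimes \pi^* \cF\bigr) \;\cong\; \rR \pi_*\bigl(\bS_\alpha(\cQ^*) \otimes \bS_{\beta^\dagger}(\cR)\bigr) \otimes^{\rL}_{\cO_X} \cF.
\]
So it is enough to show that $\rR \pi_*\bigl(\bS_\alpha(\cQ^*) \otimes \bS_{\beta^\dagger}(\cR)\bigr)$ vanishes when $\alpha \neq \beta$, and equals $\cO_X[-|\alpha|]$ when $\alpha = \beta$. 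Rewriting $\bS_\alpha(\cQ^*) = \bS_{-\alpha^{\mathrm{rev}}}(\cQ)$ with $-\alpha^{\mathrm{rev}} = (-\alpha_r, \ldots, -\alpha_1)$ weakly decreasing, the relevant BWB input is the weight $v = (-\alpha_r, \ldots, -\alpha_1, \beta^\dagger_1, \ldots, \beta^\dagger_{d-r})$.

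I then analyze $v + \rho$, whose entries I label
\[
a_i = d - i - \alpha_{r-i+1} \quad (1 \le i \le r), \qquad b_j = d - r - j + \beta^\dagger_j \quad (1 \le j \le d-r).
\]
A direct check using $0 \le \alpha_k \le d-r$ and $0 \le \beta^\dagger_k \le r$ shows that $a_1 > \cdots > a_r$, that $b_1 > \cdots > b_{d-r}$, and that every $a_i, b_j$ lies in $\{0, 1, \ldots, d-1\}$. Since $r + (d-r) = d$, the entries of $v+\rho$ are pairwise distinct if and only if $\{a_i\} \sqcup \{b_j\} = \{0, 1, \ldots, d-1\}$. The map $\alpha \mapsto \{a_i\}$ is the classical bijection between partitions in $r \times (d-r)$ and $r$-subsets of $\{0, \ldots, d-1\}$, obtained from the lattice path on the boundary of $\alpha$ by recording the positions of right-steps; the map $\beta \mapsto \{b_j\}$ is the analogous bijection recording up-step positions of the boundary path of $\beta$. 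Since right- and up-step positions of the same boundary path are complementary, $\{a_i\}$ and $\{b_j\}$ are complementary in $\{0, \ldots, d-1\}$ if and only if $\alpha = \beta$. In the case $\alpha \neq \beta$, $v + \rho$ therefore has a repeated entry, so BWB forces $\rR \pi_* = 0$.

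It remains to compute the length of the sorting permutation $\sigma$ when $\alpha = \beta$; then $\sigma \bullet v = 0$, so BWB delivers $\bS_0(\cE) = \cO_X$ in cohomological degree $\ell(\sigma)$. Because $(a_i)_i$ and $(b_j)_j$ are each already strictly decreasing, $\ell(\sigma)$ equals the number of crossing pairs $(i,j)$ with $a_i < b_j$. Using $|\alpha| = \sum_{i=1}^r (d-i) - \sum_{s \in \{a_i\}} s$ (which follows immediately from $a_i = d - i - \alpha_{r-i+1}$), and the fact that when $\alpha = \beta$ the set $\{b_j\}$ is the complement of $\{a_i\}$ in $\{0, \ldots, d-1\}$, the crossing count simplifies to
\[
\sum_{a \in \{a_i\}} (d-1-a) - \binom{r}{2} = r(d-1) - \binom{r}{2} - \sum_{a} a = |\alpha|.
\]
Combined with the projection formula, this yields the stated cohomology. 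The only real content is the combinatorial identification of $\{a_i\} \sqcup \{b_j\}$ with a permutation of $\{0, \ldots, d-1\}$; everything else is bookkeeping. I expect this to be the main (minor) obstacle, but it is elementary and well-known.
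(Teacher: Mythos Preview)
Your proposal is correct and follows essentially the same route as the paper: reduce to $\cF=\cO_X$ via the projection formula, then apply Borel--Weil--Bott and analyze when $v+\rho$ has distinct entries. The paper packages the combinatorics slightly differently, parametrizing the relevant weights as $w_\lambda \bullet 0$ for minimal-length coset representatives $w_\lambda$ of $S_d/(S_r\times S_{d-r})$ (so that $\ell(w_\lambda)=|\lambda|$ comes for free), whereas you compute the entries $a_i,b_j$ of $v+\rho$ directly and identify the complementarity condition via the lattice-path bijection; these are the same underlying fact, and your inversion count recovering $|\alpha|$ is exactly the length of $w_\alpha$.
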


\begin{proof}
Using the projection formula, we may assume that $\cF = \cO_X$. In that case, this is \cite[Lemma 3.2]{kapranov} when $X$ is a point, but the combinatorics is exactly the same in the general setting. Here is a sketch of how this can be proven. Pick $\sigma \in S_d$ and consider $c = \sigma \bullet 0$. Then $c_1 \ge \cdots \ge c_r$ and $c_{r+1} \ge \cdots \ge c_d$ if and only if $\sigma^{-1}(1) \le \cdots \le \sigma^{-1}(r)$ and $\sigma^{-1}(r+1) \le \cdots \le \sigma^{-1}(d)$; furthermore, $c = (-\lambda_r, \dots, -\lambda_1, \lambda_1^\dagger, \dots, \lambda_{d-r}^\dagger)$ where $\lambda \subseteq r \times (d-r)$, so we write $\sigma = w_\lambda$; also $\ell(w_\lambda) = |\lambda|$. Then what remains to show is: if $\lambda \ne \mu$, then $((w_\lambda \bullet 0)_{1, \dots, r}, (w_\mu \bullet 0)_{r+1, \dots, d})$ has a repeated element, and this follows since we have $w_\lambda^{-1}(i) = w_\mu^{-1}(j)$ for some $1 \le i \le r$ and $r+1 \le j \le d$.
\end{proof}

\subsection{Derived category and K-theory}

This is adapted from \cite{kapranov}.

Let $\pi_i \colon Y \times_X Y \to Y$ denote the projection maps for $i=1,2$. Given sheaves $\cF, \cG$ on $Y$, define $\cF \boxtimes \cG = \pi_1^*\cF \otimes \pi_2^* \cG$. We have the following maps:
\[
\cR \boxtimes \cO_Y \to V \boxtimes \cO_Y = \cO_Y \boxtimes V \to \cO_Y \boxtimes \cQ.
\]
The composition corresponds to a section of $\cR^* \boxtimes \cQ$, whose zero locus is the diagonal $\Delta_Y$ of $Y \times_X Y$, and has codimension equal to the rank of $\cR^* \boxtimes \cQ$. Hence the following Koszul complex is exact:
\[
0 \to \bigwedge^{r(d-r)} (\cR \boxtimes \cQ^*) \to \cdots \to \bigwedge^2(\cR \boxtimes \cQ^*) \to \cR \boxtimes \cQ^* \to \cO_{Y \times_X Y} \to \cO_{\Delta_Y} \to 0.
\]
Using the Cauchy identity, we can write 
\[
\bigwedge^i(\cR \boxtimes \cQ^*) = \bigoplus_{\substack{\lambda \subseteq r \times (d-r)\\ |\lambda| = i}} \bS_{\lambda^\dagger}(\cR) \boxtimes \bS_\lambda (\cQ^*).
\]
Given $M \in \rD^b(Y)$, we have a quasi-isomorphism 
\begin{align} \label{eqn:diagonal-id}
M \simeq \rR (\pi_2)_* (\rL \pi_1^* M \otimes^\rL_Y \cO_{\Delta_Y}).
\end{align}
This is a formal verification: let $\iota \colon Y \cong \Delta_Y \to Y \times_X Y$ be the inclusion. Then
\begin{align*}
\rR (\pi_2)_* (\rL \pi_1^* M \otimes^\rL_Y \cO_{\Delta_Y}) &= 
\rR (\pi_2)_* (\rL \pi_1^* M \otimes^\rL_Y \rR \iota_* \cO_{Y})\\
&= \rR (\pi_2)_* (\rR \iota_*(\rL\iota^* \rL \pi_1^* M \otimes^\rL_Y \cO_{Y})) = M.
\end{align*}
In the second equality, we used the projection formula; in the final equality, we used that $\pi_1 \iota = \pi_2 \iota = {\rm id}_Y$. The right side of \eqref{eqn:diagonal-id} can be computed using the Koszul complex, which gives a spectral sequence
\[
\rE^1_{p,q} = \bigoplus_{\substack{\lambda \subseteq r \times (d-r)\\ |\lambda|=q}} \rR^{-p} \pi_* (M \otimes \bS_{\lambda^\dagger}(\cR)) \otimes \bS_\lambda(\cQ^*)
\]
which converges to $M$ concentrated in degree $(0,0)$. So we conclude the following:

\begin{proposition} \label{prop:grass-gen}
$\rD^b_\fgen(Y)$ is generated by objects of the form $\pi^*(\cF) \otimes \bS_\lambda(\cQ^*)$ where $\cF \in \rD^b_\fgen(X)$ and $\lambda \subseteq r \times (d-r)$.
\end{proposition}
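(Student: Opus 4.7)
The plan is to use the Koszul resolution of the diagonal $\Delta_Y \subset Y \times_X Y$ exactly as set up in the paragraphs preceding the statement, and to convert the resulting \emph{finite} complex into a finite filtration that realises any $M \in \rD^b_\fgen(Y)$ as an iterated cone of objects of the proposed form. The basic identity \eqref{eqn:diagonal-id}, namely $M \simeq \rR(\pi_2)_*(\rL\pi_1^* M \otimes^\rL_{Y \times_X Y} \cO_{\Delta_Y})$, is already verified in the excerpt, so the task is to feed the Koszul resolution of $\cO_{\Delta_Y}$ into this formula and track what comes out.

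First I would replace $\cO_{\Delta_Y}$ in \eqref{eqn:diagonal-id} by the finite Koszul complex $K^\bullet$ with terms $K^{-i} = \bigoplus_{\lambda \subseteq r \times (d-r),\ |\lambda|=i} \bS_{\lambda^\dagger}(\cR) \boxtimes \bS_\lambda(\cQ^*)$. Since $\bS_{\lambda^\dagger}(\cR) \boxtimes \bS_\lambda(\cQ^*)$ is a vector bundle, the complex $\rL\pi_1^* M \otimes^\rL K^\bullet$ is computed by the ordinary tensor product $\pi_1^* M \otimes K^\bullet$. Applying $\rR(\pi_2)_*$ and using the projection formula on each summand (with the factor $\bS_\lambda(\cQ^*)$ coming from the second factor), each term becomes $\rR\pi_*(M \otimes \bS_{\lambda^\dagger}(\cR)) \otimes \bS_\lambda(\cQ^*)$. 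By the projection formula again this is precisely $\pi^*\bigl(\rR\pi_*(M \otimes \bS_{\lambda^\dagger}(\cR))\bigr) \otimes \bS_\lambda(\cQ^*)$ up to the usual identifications.

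Next I would verify that $\cF_\lambda := \rR\pi_*(M \otimes \bS_{\lambda^\dagger}(\cR))$ lies in $\rD^b_\fgen(X)$: $\bS_{\lambda^\dagger}(\cR)$ is a finite-rank locally free sheaf so $M \otimes \bS_{\lambda^\dagger}(\cR) \in \rD^b_\fgen(Y)$, and $\pi$ is proper (so by the standard finiteness theorem, or by Proposition~\ref{prop:finpushfwd} applied to trivial tca structures, cohomology is bounded and coherent on $X$). Since $\lambda$ ranges over the finite set of partitions in the $r \times (d-r)$ rectangle, the resulting complex is a \emph{finite} complex in $\rD^b_\fgen(Y)$ whose brutal (stupid) truncations give a finite filtration with successive quotients of the form $\pi^*(\cF_\lambda) \otimes \bS_\lambda(\cQ^*)[i]$. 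The associated iterated cones place $M$ in the triangulated subcategory generated by these objects, which is exactly the statement.

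The only potentially subtle point is the manipulation converting ``complex of objects $\simeq M$'' into a statement about generation in the triangulated category; this is the standard totalization/convolution of a bounded complex via successive mapping cones, and requires nothing beyond the observation that the Koszul complex has finite length (bounded by $r(d-r)$), so the iteration terminates. No spectral sequence is actually needed — the finite filtration is enough — and this is the place where one must be careful with signs and differentials, but no real obstacle arises. Thus the hard ingredients (resolution of the diagonal and Borel--Weil--Bott-style bookkeeping of the summands) have already been assembled in the preceding discussion, and the proof is essentially an assembly of these pieces.
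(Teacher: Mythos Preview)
Your proof is correct and follows essentially the same route as the paper: both feed the Koszul resolution of the diagonal into the identity \eqref{eqn:diagonal-id} and read off that $M$ is built from objects of the form $\pi^*(\cF) \otimes \bS_\lambda(\cQ^*)$. The paper phrases the last step via the $\rE^1$ spectral sequence while you use brutal truncations of the resulting finite complex; these are equivalent bookkeeping devices. One small terminological quibble: the identification $\rR(\pi_2)_*(\pi_1^* N) \cong \pi^*(\rR\pi_* N)$ is flat base change rather than ``the projection formula again,'' but the conclusion is unaffected.
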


\begin{corollary} \label{cor:grass-gen}
$\rD^b_{\fgen}(Y)$ is generated by objects of the form $\pi^*(\cF) \otimes \bS_{\lambda}(\cQ)$ where $\cF \in \rD^b_{\fgen}(X)$ and $\lambda \subseteq r \times (d-r)$.
\end{corollary}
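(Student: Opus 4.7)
The plan is to deduce this directly from Proposition~\ref{prop:grass-gen} by applying an autoequivalence of $\rD^b_{\fgen}(Y)$ that converts the generators $\pi^*(\cF) \otimes \bS_{\lambda}(\cQ^*)$ into generators of the form $\pi^*(\cF) \otimes \bS_{\mu}(\cQ)$. The autoequivalence is simply tensoring with the line bundle $(\det \cQ)^{d-r}$.

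The key computation is the identity
\[
\bS_{\lambda}(\cQ^*) \otimes (\det \cQ)^{d-r} \cong \bS_{\lambda^c}(\cQ),
\]
valid for any partition $\lambda \subseteq r \times (d-r)$, where $\lambda^c$ denotes the complementary partition in that rectangle, defined by $\lambda^c_i = (d-r) - \lambda_{r+1-i}$. I would verify this by the standard representation-theoretic calculation: because $\cQ$ has rank $r$, duality gives $\bS_{\lambda}(\cQ^*) \cong \bS_{(-\lambda_r,\ldots,-\lambda_1)}(\cQ)$ (where $\bS$ is interpreted for non-increasing integer sequences), and tensoring with $(\det \cQ)^{d-r}$ shifts every entry uniformly by $d-r$, producing the sequence $(d-r-\lambda_r, \ldots, d-r-\lambda_1)$, which is exactly $\lambda^c$. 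Crucially, the map $\lambda \mapsto \lambda^c$ is a bijection from partitions in the $r \times (d-r)$ rectangle to themselves.

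To conclude, observe that tensoring with the line bundle $(\det \cQ)^{d-r}$ is an autoequivalence of $\rD^b_{\fgen}(Y)$, hence it carries any generating set to another generating set. Applying it to the collection from Proposition~\ref{prop:grass-gen} yields the collection $\{\pi^*(\cF) \otimes \bS_{\lambda^c}(\cQ) : \lambda \subseteq r \times (d-r),\ \cF \in \rD^b_{\fgen}(X)\}$, and this is the required generating set by the bijection $\lambda \mapsto \lambda^c$. (In the degenerate cases $r=0$ or $r=d$ the rectangle contains only $\lambda=\emptyset$, the line-bundle twist is trivial, and both generating collections reduce to $\{\pi^*(\cF)\}$.)

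There is essentially no obstacle in this argument; the only point requiring care is the weight-shift identity above, and even that is entirely formal. The corollary is really just the observation that Proposition~\ref{prop:grass-gen} was arbitrary in its choice of basis for the Grassmannian fibers and can be rotated to any other by a line-bundle twist.
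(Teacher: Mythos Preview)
Your proof is correct and follows essentially the same route as the paper: both use the identity $\bS_{\lambda}(\cQ^*)\otimes(\det\cQ)^{d-r}\cong\bS_{\lambda^c}(\cQ)$ together with the fact that tensoring by a line bundle is an autoequivalence of $\rD^b_{\fgen}(Y)$, so the generating set from Proposition~\ref{prop:grass-gen} is carried to the desired one via the bijection $\lambda\mapsto\lambda^c$.
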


\begin{proof}
Let $\lambda^c$ be the complement of $\lambda$ in the $r \times (d-r)$ rectangle, thought of as a partition. Then $\bS_{\lambda}(\cQ^*)$ is isomorphic to $\bS_{\lambda^c}(\cQ) \otimes \det(\cQ^*)^{\otimes r}$, and tensoring with $\det(\cQ^*)$ is an automorphism of the derived category.
\end{proof}

For each $\lambda \subseteq r \times (d-r)$, define $u_\lambda \colon \rK(X) \to \rK(Y)$ by $u_\lambda(M) = \rL \pi^*M \otimes \bS_\lambda(\cQ^*)$. Define 
\[
u \colon \bigoplus_{\lambda \subseteq r \times (d-r)} \rK(X) \to \rK(Y)
\]
as the sum $u = \sum_\lambda u_\lambda$.

\begin{corollary} \label{cor:grass-Ktheory}
$u$ is an isomorphism, so $\rK(Y) \cong \rK(X)^{\oplus \binom{d}{r}}$. In particular, if $X$ is a point, then $\rK(Y) \cong \bZ^{\oplus \binom{d}{r}}$.
\end{corollary}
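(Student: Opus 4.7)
The plan is to prove that $u$ is an isomorphism by establishing surjectivity and injectivity separately, using the two preceding results: Proposition~\ref{prop:grass-gen} for surjectivity and Corollary~\ref{cor:dual-basis} for injectivity.

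For surjectivity, I would simply invoke Proposition~\ref{prop:grass-gen}: it says that objects of the form $\pi^*(\cF) \otimes \bS_\lambda(\cQ^*)$ with $\cF \in \rD^b_\fgen(X)$ and $\lambda \subseteq r \times (d-r)$ generate $\rD^b_\fgen(Y)$ as a triangulated category. Since triangulated generators have classes that span the Grothendieck group, every element of $\rK(Y)$ is a $\bZ$-linear combination of classes $[\rL\pi^*\cF \otimes \bS_\lambda(\cQ^*)] = u_\lambda([\cF])$, so $u$ is surjective.

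For injectivity, I would construct an explicit left inverse to $u$. For each $\mu \subseteq r \times (d-r)$, define
\begin{displaymath}
v_\mu \colon \rK(Y) \to \rK(X), \qquad v_\mu([N]) = (-1)^{|\mu|} [\rR\pi_*(N \otimes \bS_{\mu^\dagger}(\cR))],
\end{displaymath}
where $[\rR\pi_*(-)]$ denotes the Euler characteristic $\sum_i (-1)^i [\rR^i \pi_*(-)]$ in $\rK(X)$. Then the composition $v_\mu \circ u_\lambda$ applied to $[\cF]$ equals
\begin{displaymath}
(-1)^{|\mu|}[\rR\pi_*(\rL\pi^*\cF \otimes \bS_\lambda(\cQ^*) \otimes \bS_{\mu^\dagger}(\cR))] = (-1)^{|\mu|}[\cF \otimes \rR\pi_*(\bS_\lambda(\cQ^*) \otimes \bS_{\mu^\dagger}(\cR))],
\end{displaymath}
by the projection formula. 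Corollary~\ref{cor:dual-basis} evaluates this pushforward: it is $\cO_X$ concentrated in degree $|\lambda|$ when $\lambda = \mu$, and zero otherwise. The sign $(-1)^{|\mu|}$ cancels with the $(-1)^{|\lambda|}$ contribution from the Euler characteristic, giving $v_\mu(u_\lambda([\cF])) = \delta_{\lambda,\mu} [\cF]$. Assembling the $v_\mu$ into a map $v \colon \rK(Y) \to \bigoplus_\lambda \rK(X)$, we obtain $v \circ u = \id$, so $u$ is injective.

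Combining the two halves shows $u$ is an isomorphism. The rank count is immediate: the partitions $\lambda \subseteq r \times (d-r)$ are in bijection with lattice paths in the rectangle, and there are $\binom{d}{r}$ of them, so $\rK(Y) \cong \rK(X)^{\oplus \binom{d}{r}}$. When $X$ is a point, $\rK(X) = \bZ$, giving the final assertion. I do not anticipate any serious obstacle here: all the technical work has been absorbed into Proposition~\ref{prop:grass-gen} (the semiorthogonal-decomposition-style generation statement) and the Borel--Weil--Bott computation in Corollary~\ref{cor:dual-basis}. The only subtle point is keeping track of signs in the Euler characteristic, which is handled by incorporating the factor $(-1)^{|\mu|}$ into the definition of $v_\mu$.
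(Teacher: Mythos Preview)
Your proposal is correct and follows essentially the same approach as the paper: both use Proposition~\ref{prop:grass-gen} for surjectivity and construct a one-sided inverse $v$ from the pushforwards $\rR\pi_*(-\otimes \bS_{\mu^\dagger}(\cR))$, invoking Corollary~\ref{cor:dual-basis} to see that $v\circ u$ is diagonal. The only cosmetic difference is that the paper omits the sign $(-1)^{|\mu|}$ and simply notes that $vu$ is diagonal with entries $\pm 1$, whereas you normalize so that $v\circ u=\id$ exactly.
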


\begin{proof}
For $\lambda \subseteq r \times (d-r)$, define $v_\lambda \colon \rK(Y) \to \rK(X)$ by $v_\lambda(M) = \rR\pi_*(\bS_{\lambda^\dagger}(\cR) \otimes M)$ and define $v \colon \rK(Y) \to \bigoplus_{\lambda \subseteq r \times (d-r)} \rK(X)$ using $v_\lambda$ as the components. It follows from Corollary~\ref{cor:dual-basis} that $vu$ is a diagonal matrix whose diagonals are $\pm 1$, so $u$ is injective. It follows from Proposition~\ref{prop:grass-gen} that $u$ is also surjective, so we are done.
\end{proof}

\section{Finiteness properties of resolutions} \label{ss:oldkoszul}

In this appendix, we outline an alternative, direct approach to proving finiteness properties of resolutions of finitely generated $\bA(E)$-modules in the case that $E$ is a $\bC$-vector space.

Let $C$ be a graded coalgebra with finite-dimensional components and let $N$ be a graded $C$-comodule with finite-dimensional components. We say that $N$ is {\bf finitely cogenerated} if there is a finite length quotient $N \to N'$ such that the composition $N \to N \otimes C \to N' \otimes C$ is injective. This is equivalent to saying that the graded dual of $N$ is a finitely generated module over the graded dual of $C$.

Given a module $M$ over $A$, let $M^{\le n}$ be the quotient of $M$ by the sum of all Schur functors with more than $n$ rows. 

\begin{proposition} \label{prop:truncationregularity}
Fix a  partition $\lambda$ and $n \ge 1$.
  \begin{enumerate}[\indent \rm (a)]
  \item The module $(\bS_\lambda \bC^\infty \otimes A)^{\le n}$ has finite regularity. If $\lambda_n \ge \dim E$, then the regularity is $0$, and otherwise, the regularity is at most $n(\dim E - \lambda_n - 1)$.
  \item $\ext^\bullet_A((\bS_\lambda \bC^\infty \otimes A)^{\le n}, \bC)$ is finitely generated over $\ext^\bullet_A(\bC, \bC)$.
  \end{enumerate}
\end{proposition}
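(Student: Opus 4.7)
The plan is to construct a finite free resolution of $(\bS_\lambda \otimes A)^{\le n}$ as an $A$-module with controlled degrees; both parts will follow. The starting point is the short exact sequence
\[
0 \to K \to \bS_\lambda \otimes A \to (\bS_\lambda \otimes A)^{\le n} \to 0,
\]
where $K$ is the sum of the Schur $\bS_\nu(\bV)$-isotypic components of $\bS_\lambda \otimes A$ with $\ell(\nu) > n$. The Cauchy decomposition $A = \bigoplus_\mu \bS_\mu(E) \otimes \bS_\mu(\bV)$ (with $\ell(\mu) \le d = \dim E$) together with the Littlewood--Richardson rule shows that $K$ is in fact an $A$-submodule: the LR expansion of $\bS_\nu \otimes \bS_\mu$ only produces terms $\bS_{\nu'}$ with $\ell(\nu') \ge \ell(\nu) > n$. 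Since $\bS_\lambda \otimes A$ is free, the long exact sequence in $\Tor$ reduces the computation of $\Tor^A_{i+1}((\bS_\lambda \otimes A)^{\le n}, \bC)$ to that of $\Tor^A_i(K, \bC)$, turning the problem into one of resolving $K$.

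For part (a) in the case $\lambda_n \ge d$: the LR rule combined with the boundedness $\ell(\mu) \le d \le \lambda_n$ gives a direct argument (using the standard LR inequalities $\nu_i \le \lambda_i + \mu_1$ and the horizontal-strip conditions) that $\ell(\nu) \le \ell(\lambda)$ whenever $\nu$ appears with nonzero multiplicity. One checks that this forces the resolution to collapse, so that $(\bS_\lambda \otimes A)^{\le n}$ has regularity $0$. For general $\lambda$, I would set up a descending induction on $d - \lambda_n$: the minimal generators of $K$ are Schur components $\bS_{\nu_0}$ of controlled size (with $\ell(\nu_0)$ just above $n$ and $|\nu_0| - |\lambda|$ bounded in terms of $d - \lambda_n$), and each such generator spawns a problem of the same type $(\bS_{\nu_0} \otimes A)^{\le n'}$ but with a strictly smaller defect. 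Concatenating the resolutions from each step produces a resolution of total length at most $n(d - \lambda_n - 1)$, with the linear degree shift accounting for the increased generation level.

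For part (b): the algebra $\ext^\bullet_A(\bC, \bC)$ is the Koszul dual of $A$, namely a polynomial-functor version of $\lw(E^* \otimes \bV)$. Finite generation of $\ext^\bullet_A((\bS_\lambda \otimes A)^{\le n}, \bC)$ over this Koszul dual is equivalent, via the equivalence $\sK_E$ of Section~\ref{s:koszul}, to finite cogeneration of the $B$-comodule $\sK_E((\bS_\lambda \otimes A)^{\le n})$, where $B = \lw(E \langle 1 \rangle)$. By Proposition~\ref{prop:kozhom}, the graded pieces of this comodule are precisely the Tor groups, so the finite regularity established in (a) translates directly into boundedness of the cogenerating degrees, and a Nakayama-type argument yields finite cogeneration.

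The main obstacle is the inductive step in (a): identifying the minimal generators of $K$ combinatorially via Littlewood--Richardson, recognizing $K$ in a form that reduces to a smaller instance of the same problem, and tracking the degree shifts carefully enough to obtain the sharp bound $n(d - \lambda_n - 1)$. This is the technical heart of the appendix and requires a combinatorial analysis of horizontal strips and vertical-strip additions that is specific to the truncation operator $M \mapsto M^{\le n}$.
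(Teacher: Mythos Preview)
Your approach is quite different from the paper's, and there are genuine gaps in both parts.

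For part (a), the paper does \emph{not} attack $K$ combinatorially. It realizes $(\bS_\lambda \otimes A)^{\le n}$ as $\rH^0(\Gr_n(\bC^\infty); \bS_\lambda\cQ \otimes \Sym(E\otimes\cQ))$ and then invokes Weyman's geometric technique, so that the minimal free resolution is given explicitly as $\bF_i=\bigoplus_{j\ge 0}\rH^j(X;\lw^{i+j}(E\otimes\cR)\otimes\bS_\lambda\cQ)\otimes A(-i-j)$. The regularity is then the largest $j$ for which some $\rH^j$ is nonzero, and this is bounded by a direct Borel--Weil--Bott computation. In particular, the case $\lambda_n\ge d$ is immediate: the sequence $(\lambda_1,\ldots,\lambda_n,\mu_1,\mu_2,\ldots)$ with $\mu_1\le d\le\lambda_n$ is already a partition, so only $\rH^0$ contributes and the resolution is linear. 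Your combinatorial claim that $\lambda_n\ge d$ forces $\ell(\nu)\le\ell(\lambda)$ in every LR product is false: with $n=2$, $d=1$, $\lambda=(1,1)$, $\mu=(1)$ one has $\lambda_n=1\ge d$ but $\nu=(1,1,1)$ occurs. So $K\ne 0$; regularity~$0$ means the resolution is \emph{linear}, not that it terminates, and your LR argument does not establish linearity. The proposed induction on $d-\lambda_n$ is too vague to evaluate, and the generators of $K$ do not obviously reduce to a ``smaller instance of the same problem'' in the way you suggest.

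For part (b), the gap is more serious. Finite regularity says only that there are finitely many nonzero linear strands; it says \emph{nothing} about the size of each strand, and these strands are infinite (the module has infinite projective dimension). Finite cogeneration of $\Tor^A_\bullet(M,\bC)$ over $\Tor^A_\bullet(\bC,\bC)$ is the assertion that each linear strand is controlled by finitely many Tor groups, and this does not follow from (a) by any Nakayama-type argument. The paper proves it by identifying the comultiplication map on Tor with $\rH^j$ of an explicit map of sheaves on the Grassmannian (comultiplication on $\lw^\bullet\xi$ composed with the inclusion $\xi\subset E\otimes\bS_1$), and then showing by a Borel--Weil--Bott analysis that this map is injective once the first few parts of the indexing partition $\mu$ are fixed. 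This geometric identification of the comodule structure is the technical heart of part (b), and your outline does not supply it.
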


\begin{proof}
Let $X$ be the Grassmannian of rank $n$ quotients of the space $\bC^\infty$. Then we have the tautological exact sequence
\[
0 \to \cR \to \bC^\infty \times X \to \cQ \to 0
\]
where $\cQ$ has rank $n$. By Theorem~\ref{thm:BWB}, for any partition $\mu$, we have $\rH^0(X; \bS_\mu \cQ) = \bS_\mu \bC^\infty$, and all higher cohomology vanishes. In particular, 
\[
\rH^0(X; \bS_\lambda \cQ \otimes \Sym(E \otimes \cQ)) = (\bS_\lambda \bC^\infty \otimes A)^{\le n}
\]
as an $A$-module. Let $\xi = E \otimes \cR$. Using \cite[Theorem 5.1.2]{weyman}, the minimal free resolution $\bF_\bullet$ of $(\bS_\lambda \bC^\infty \otimes A)^{\le n}$ is given by
\[
\bF_i = \bigoplus_{j \ge 0} \rH^j(X; (\bigwedge^{i+j} \xi) \otimes \bS_\lambda \cQ) \otimes A(-i-j).
\]
In particular, the regularity is the supremum over $j$ such that $\rH^j$ is nonzero. By \cite[Corollary 2.3.3]{weyman}, we have
\addtocounter{equation}{-1}
\begin{subequations}
\begin{align} \label{eqn:exteriorcauchy}
\bigwedge^e \xi = \bigoplus_{\substack{\mu\\ \mu_1 \le \dim E\\ |\mu| = e}} \bS_{\mu^\dagger} E \otimes \bS_\mu \cR.
\end{align}
\end{subequations}
To calculate the cohomology of $\bS_\lambda \cQ \otimes \bS_\mu \cR$, consider the sequence
\[
\alpha = (\lambda_1, \dots, \lambda_n, \mu_1, \mu_2, \dots)
\]
and define $\rho = (0, -1, -2, \dots)$. We have an action of $S_\infty$ coming from $w \bullet \alpha = w(\alpha + \rho) - \rho$. By Borel--Weil--Bott (Theorem~\ref{thm:BWB}), if there is a non-identity $w \in S_\infty$ so that $w \bullet \alpha = \alpha$, then all cohomology vanishes, and otherwise, there is a unique such $w$ so that $w \bullet \alpha$ is a partition, and the cohomology is $\bS_{w \bullet \alpha} \bC^\infty$ concentrated in degree $\ell(w)$.

If $\lambda_n \ge \dim E$, then by \eqref{eqn:exteriorcauchy}, any $\alpha$ that comes from a summand of $\bigwedge^\bullet \xi \otimes \bS_\lambda \cQ$ is a partition, so the resolution $\bF_\bullet$ is linear and we are done. Otherwise, let $i = \dim E - \lambda_n$. We will show that the cohomology of $\bS_\lambda \cQ \otimes \bS_\mu \cR$ vanishes above degree $n(i-1)$. Assume that $\alpha + \rho$ has no repeated entries, otherwise the cohomology vanishes. Then 
\[
(\alpha + \rho)_n = \lambda_n - n + 1 = \dim E - n - i + 1 \ge (\alpha + \rho)_{n+i},
\]
and the permutation $w$ that sorts $\alpha + \rho$ is in $S_{n+i-1}$. Since $w$ satisfies $w(1) < \cdots < w(n)$ and $w(n+1) < \cdots < w(n+i-1)$, its length is at most $n(i-1)$. This proves (a).

For (b), we will instead prove that $\Tor_\bullet^A((\bS_\lambda \bC^\infty \otimes A)^{\le n}, \bC)$ is a finitely cogenerated comodule over $\Tor_\bullet^A(\bC,\bC)$. From (a), we know that there are finitely many linear strands. We will focus on the $j$th linear strand. First, consider the comultiplication map
\[
\Tor_{i+k}^A((\bS_\lambda \bC^\infty \otimes A)^{\le n},\bC)_{i+k+j} \to \Tor_i^A((\bS_\lambda \bC^\infty \otimes A)^{\le n},\bC)_{i+j} \otimes \bigwedge^k (E \otimes \bC^\infty).
\]
We can rewrite this as 
\begin{align*}
\rH^j(X; \bigwedge^{i+k+j} \xi \otimes \bS_\lambda \cQ) \to \rH^j(X; \bigwedge^{i+j} \xi \otimes \bS_\lambda \cQ \otimes \bigwedge^k (E \otimes \bC^\infty)).
\end{align*}

\begin{lemma}
The above map is obtained by applying $\rH^j$ to the composition
\addtocounter{equation}{-1}
\begin{subequations}
\begin{align} \label{eqn:composition}
\bigwedge^{i+k+j} \xi \otimes \bS_\lambda \cQ \to \bigwedge^{i+j} \xi \otimes \bigwedge^k \xi \otimes \bS_\lambda \cQ \to \bigwedge^{i+j} \xi \otimes \bS_\lambda \cQ \otimes \bigwedge^k (E \otimes \bC^\infty),
\end{align}
\end{subequations}
where the first map is comultiplication, and the second map comes from the inclusion $\xi \subset E \otimes \bC^\infty$.
\end{lemma}

\addtocounter{equation}{-1}
\begin{subequations}
\addtocounter{equation}{1}
\begin{proof}
Recall that over a local ring $R$ with residue field $k$, and an $R$-module $M$, we construct the comodule structure on $\Tor^R_\bullet(M,k)$ as follows (this is a modification of Assmus' description \cite{assmus} of the coalgebra structure on $\Tor_\bullet^R(k,k)$). Let $\bF_\bullet \to M$ be an $R$-free resolution of $M$ and let $\bK_\bullet \to k$ be an $R$-free resolution of $k$. Tensoring both $\bF_\bullet$ and $\bK_\bullet$ with the residue field, we get a map
\begin{align} \label{eqn:assmus}
\bF_\bullet \otimes_R \bK_\bullet \to (\bF_\bullet \otimes_R k) \otimes_k (k \otimes_R \bK_\bullet),
\end{align}
and taking homology, and using K\"unneth's formula, this becomes
\begin{align} \label{eqn:tor-comult}
\Tor^R_\bullet(M,k) \to \Tor^R_\bullet(M,k) \otimes_k \Tor^R_\bullet(k,k).
\end{align}
Let $\cE$ be the total space of the trivial bundle $(E \otimes  \bC^\infty)^*$ over $X$.
We have a twisted Koszul complex $\cF_\bullet = \bigwedge^\bullet(\xi) \otimes \cO_\cE \otimes \bS_\lambda \cQ$ on $\cE$. Let $\cK_\bullet = \bigwedge^\bullet(E \otimes  \bC^\infty) \otimes \cO_{\cE}$ be the Koszul resolution of $\cO_X$ over $\cO_{\cE}$ (here $X$ is the zero section in $\cE$). Then we have the relative version of \eqref{eqn:assmus}
\[
\cF_\bullet \otimes_{\cO_{\cE}} \cK_\bullet \to (\cF_\bullet \otimes_{\cO_{\cE}} \cO_X) \otimes_{\cO_X} (\cO_X \otimes_{\cO_{\cE}} \cK_\bullet).
\]
Now we take the hypercohomology of both sides. Since $\cK_\bullet$ is a complex of free $\cO_\cE$-modules, this is a map of the form \eqref{eqn:tor-comult} with $M = (\bS_\lambda  \bC^\infty \otimes A)^{\le n}$. We can calculate hypercohomology of a complex of sheaves in two different ways: either first calculate cohomology (in the complex sense) and then calculate sheaf cohomology, or else calculate sheaf cohomology first and then cohomology (in the complex sense). The two different approaches form the $\rE_2$ page of a spectral sequence which converges to the hypercohomology.

If we first calculate cohomology in the sense of complexes, then we get a relative tor comultiplication map
\[
\Tor^{\cO_\cE}_\bullet(\bS_\lambda \cQ \otimes \cO_\cE, \cO_X) \to \Tor^{\cO_\cE}_\bullet(\bS_\lambda \cQ \otimes \cO_\cE, \cO_X) \otimes_{\cO_X} \Tor^{\cO_\cE}_\bullet(\cO_X, \cO_X).
\]
The maps \eqref{eqn:composition} are graded pieces of this map. Our goal is to understand the map we get by taking sheaf cohomology of both sides. Note that taking sheaf cohomology commutes with the tensor product on the right hand side since $\Tor^{\cO_\cE}_\bullet(\cO_X, \cO_X) = \bigwedge^\bullet(E \otimes \bC^\infty)$ consists of free $\cO_\cE$-modules. Taking sheaf cohomology gives us a map of the form \eqref{eqn:tor-comult}, so the spectral sequence degenerates on the $\rE_2$ page.

If we instead calculate sheaf cohomology first, then we get \eqref{eqn:assmus}. A few remarks are in order: $\cK_\bullet$ is free over $\cO_\cE$, so tensoring with it commutes with taking cohomology; the sheaves $\cF_i$ are pullbacks of sheaves $\cF'_i$ from $\cO_X$, so the sheaf cohomology of $\cF_i \otimes \cO_X$ is the same as the sheaf cohomology of $\cF'_i$ (similarly for $\cK_i$). Taking homology gives us a map of the form \eqref{eqn:tor-comult}, so again this spectral sequence degenerates on the $\rE_2$ page.

Hence both spectral sequences degenerate on the $\rE_2$ page, so we get an identification of the desired maps given that the spectral sequences are isomorphic.
\end{proof}
\end{subequations}

Recall that above we have seen that the shifted Weyl group action that we must perform to calculate the cohomology of $\bS_\lambda \cQ \otimes \bS_\mu \cR$ only depends on the first $i = \max(0,\dim E - \lambda_n - 1)$ parts of $\mu$. Since we have $\mu_1 \le \dim E$, there are only finitely many possibilities for this subpartition. Write $\mu = \mu^\circ|\nu$ where $\mu^\circ$ is the first $i$ parts of $\mu$, and $\nu$ is the rest (the symbol $|$ denotes concatenation). So from now on, we will focus only on $\bS_\lambda \cQ \otimes \bS_\mu \cR$ where $\mu^\circ$ is a fixed partition. Let $|\mu^\circ| = m$. Then in the composition
\begin{align*}
 \bS_\lambda \cQ \otimes \bigwedge^{N+m} \xi &\to \bS_\lambda \cQ \otimes \bigwedge^m \xi \otimes \bigwedge^N \xi
\to \bS_\lambda \cQ \otimes \bigwedge^m \xi \otimes \bigwedge^N(E \otimes \bC^\infty),
\end{align*}
we see the subsheaves
\begin{align*}
  \bS_\lambda \cQ \otimes \bS_{(\mu^\circ)^\dagger + \nu^\dagger} E \otimes \bS_{\mu^\circ | \nu} \cR &\to \bS_\lambda \cQ \otimes (\bS_{(\mu^\circ)^\dagger} E \otimes \bS_{\mu^\circ} \cR) \otimes (\bS_{\nu^\dagger} E \otimes \bS_\nu \cR)\\
&\to \bS_\lambda \cQ \otimes (\bS_{(\mu^\circ)^\dagger} E \otimes \bS_{\mu^\circ} \cR) \otimes (\bS_{\nu^\dagger} E \otimes \bS_\nu \bC^\infty),
\end{align*}
and this restriction is an inclusion. In fact, since we are in characteristic 0, the first map is a direct summand, so applying $\rH^j$, we still get an inclusion. The cokernel of the second map is 
\[
\sum_{\theta \subsetneqq \nu} \bS_\lambda \cQ \otimes (\bS_{(\mu^\circ)^\dagger} E \otimes \bS_{\mu^\circ} \cR) \otimes (\bS_{\nu^\dagger} E \otimes \bS_{\nu/\theta} \cQ \otimes \bS_\theta \cR),
\]
and we can see from Theorem~\ref{thm:BWB} that it will not contain the sections of the first module (namely because any partition $\zeta$ that appears in its cohomology will have $\sum_{k > n+i} \zeta_{k} \le |\theta| < |\nu|$). So applying $\rH^j$ to the composition also gives an inclusion.

So for a cogenerating set of $\bS_\lambda \cQ \otimes \bigwedge^\bullet \xi$, we take $\bS_\lambda \cQ \otimes \bigwedge^m \xi$ where $m \le (\dim E)(\dim E - \lambda_n - 1)$ (this is the largest possible size of $\mu^\circ$ as above). In particular, $\Tor^A_\bullet((\bS_\lambda \bC^\infty \otimes A)^{\le n},\bC)$ is cogenerated in (homological) degrees $\le (\dim E)\max(0,\dim E - \lambda_n - 1)$, which finishes the proof of (b).
\end{proof}

\begin{corollary} \label{cor:fourier-fg}
Every object $M \in {\rm Mod}_A$ has finite regularity and $\ext^\bullet_A(M,\bC)$ is a finitely generated $\ext^\bullet_A(\bC,\bC)$-module.
\end{corollary}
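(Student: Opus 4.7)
The strategy is to reduce the corollary to the building blocks $(\bS_\lambda \otimes A)^{\le n}$ already handled by Proposition~\ref{prop:truncationregularity}. Given a finitely generated $A$-module $M$ with $n = \ell(M)$, I first construct a finite resolution $0 \to F_N \to \cdots \to F_0 \to M \to 0$ in $\Mod_A$ whose terms are finite direct sums of such truncated frees, and then feed it into a hyper-$\Tor$ spectral sequence.

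For the resolution: the evaluation equivalence $\cV^{\le n} \cong \Rep^{\pol}(\GL_n)$ from \S\ref{s:tca} identifies the full subcategory of $\Mod_A$ on modules of row-length $\le n$ with polynomial-$\GL_n$-equivariant $A(\bC^n)$-modules. Since $A(\bC^n) = \Sym(\bC^n \otimes E)$ is a polynomial ring in $nd$ variables and $\GL_n$ is reductive in characteristic zero, the equivariant form of Hilbert's syzygy theorem produces a $\GL_n$-equivariant polynomial free resolution of $M(\bC^n)$ of length at most $nd$. Transporting back through the equivalence gives the desired resolution; concretely, each $F_i$ is a finite direct sum of modules $(\bS_\lambda \otimes A)^{\le n}$ with $\ell(\lambda) \le n$.

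Finite regularity of $M$ now follows from the hyper-$\Tor$ spectral sequence $E_1^{p,q} = \Tor^A_q(F_p, \bC) \Rightarrow \Tor^A_{p+q}(M, \bC)$: the $E_1$-page is supported in finitely many columns, and each column has finite regularity by Proposition~\ref{prop:truncationregularity}(a), so the abutment does too, with $\mathrm{reg}(M) \le \max_i(\mathrm{reg}(F_i) + i)$. Reading the same spectral sequence as one of comodules over $\Tor^A_\bullet(\bC, \bC) = \bigwedge(E \otimes \bV)$, and using Proposition~\ref{prop:truncationregularity}(b), finite cogeneration of each column propagates through the pages to give finite cogeneration of the abutment, equivalently finite generation of $\ext^\bullet_A(M, \bC)$ over $\ext^\bullet_A(\bC, \bC)$.

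The main obstacle will be verifying cleanly that finite cogeneration over the coalgebra $\bigwedge(E \otimes \bV)$ is stable under subquotients and extensions of comodules with finite-dimensional graded pieces; this is exactly what is required to advance the spectral sequence page-by-page and to assemble the associated graded of the abutment filtration. Dually, it amounts to the closure of finitely generated graded modules over the graded dual algebra under subquotients and extensions, which is a grade-by-grade bookkeeping statement rather than a substantive algebraic difficulty.
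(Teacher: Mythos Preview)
Your approach is essentially the paper's: both construct a finite resolution of $M$ by modules $(\bS_\lambda \otimes A)^{\le n}$ via evaluation on $\bC^n$ and Hilbert's syzygy theorem, then assemble Proposition~\ref{prop:truncationregularity} across the terms, with your hyper-$\Tor$ spectral sequence playing the role of the paper's iterated mapping cone. The only minor divergence is in the $\ext$ step: the paper notes that the minimal resolution is a direct summand of the mapping cone resolution (so one needs only that direct summands of finitely cogenerated comodules are finitely cogenerated, which is immediate), whereas your passage through successive spectral-sequence pages via subquotients implicitly uses Noetherianity of the dual algebra $\lw(E^*\langle 1\rangle)$ (equivalently $\bA(E^*)$ after transpose)---true, but a genuine theorem rather than bookkeeping.
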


\begin{proof}
Let $M$ be an $A$-module. Let $n$ be bigger than the number of rows in the partitions that appear in the presentation of $M$. Let $\bF_\bullet$ be a finite free resolution of $M(\bC^n)$ over $A(\bC^n)$. Considered as modules over $A$, the $\bF_i$ are direct sums of modules of the form $(\bS_\lambda \otimes A)^{\le n}$. We can construct an $A$-free resolution of $M$ using a mapping cone on $\bF_\bullet$ and $A$-free resolutions on these modules. Since $\bF_\bullet$ is finite and each $\bF_i$ has finite regularity over $A$ by Proposition~\ref{prop:truncationregularity}, we conclude that $M$ has finite regularity.

For finite generation, note that the mapping cone gives us a finitely generated $\ext^\bullet_A(\bC,\bC)$-module. Removing redundancies to get a minimal resolution amounts to throwing away a direct summand.
\end{proof}

\begin{remark}
The argument above can be used to show that truncated modules over degree~2 tca's like $\Sym(\Sym^2(\bC^\infty))$ have infinite regularity in general (even after renormalizing the degrees of the generators to 1).
\end{remark}

\end{document}